\newtheoremstyle{mystyle}
  {}
  {}
  {\normalfont}
  { }
  {\bfseries}
  {}
  {10pt}
  { }
\theoremstyle{mystyle}
\newtheorem{thm}{Theorem}[section]
\newtheorem{prop}[thm]{Proposition}
\newtheorem{lem}[thm]{Lemma}
\newtheorem{asm}{Assumption}
\newtheorem{rmk}{Remark}
\newcommand{\ind}{\boldsymbol 1}
\newcommand{\dd}{\mathrm d}
\newcommand{\ee}{\mathrm e}
\newcommand{\EE}{\mathbb E}
\newcommand{\VV}{\mathrm{Var}}
\newcommand{\re}{\mathrm{Re}}
\newcommand{\im}{\mathrm{Im}}
\newcommand{\iu}{\mathrm{i}}
\newcommand{\Fou}{\mathcal F}
\newcommand{\pto}{\stackrel{p}{\longrightarrow}}
\newcommand{\dto}{\stackrel{d}{\longrightarrow}}
\newcommand{\TT}{\mathsf T}
\newcommand{\bs}{\boldsymbol}
\begin{document}
\bibliographystyle{plain}
\title
[Parameter estimation for
linear parabolic SPDEs in two space dimensions] 
{
Parameter estimation for linear parabolic SPDEs 
in two space dimensions based on high frequency data 
}
\date{}
\author[Y. Tonaki]{Yozo Tonaki}
\author[Y. Kaino]{Yusuke Kaino}
\address[Y. Tonaki]
{Graduate School of Engineering Science, Osaka University, 1-3, 
Machikaneyama, Toyonaka, Osaka, 560-8531, Japan}
\address[Y. Kaino]{
Graduate School of Maritime Sciences, Kobe University,
5-1-1, Fukaeminami-machi, Higashinada-ku, Kobe, 658-0022, Japan}
\author[M. Uchida]{Masayuki Uchida}
\address[M. Uchida]{
Graduate School of Engineering Science, 
and Center for Mathematical Modeling and Date Science (MMDS),
Osaka University, 1-3, Machikaneyama, Toyonaka, Osaka, 560-8531, Japan
}

\keywords{
Adaptive estimation,
high frequency data,
stochastic partial differential equations in two space dimensions, 
$Q$-Wiener process
}

\maketitle
\begin{abstract}
We consider parameter estimation for a linear parabolic second-order 
stochastic partial differential equation (SPDE) 
in two space dimensions driven by two types $Q$-Wiener processes 
based on high frequency data in time and space.
We first estimate the parameters which appear in the coordinate process of the SPDE 
using the minimum contrast estimator based on the thinned data with respect to space, 
and then construct an approximate coordinate process of the SPDE.
Furthermore, we propose estimators of the coefficient parameters of the SPDE 
utilizing the approximate coordinate process based on 
the thinned data with respect to time. 
We also give some simulation results.
\end{abstract}

\section{Introduction}\label{sec1}
We consider the following linear parabolic 
stochastic partial differential equation (SPDE)
in two space dimensions
\begin{align}
\dd X_t^{Q}(y,z)
&=\biggl\{
\theta_2
\biggl(
\frac{\partial^2}{\partial y^2}+\frac{\partial^2}{\partial z^2}
\biggr)
+\theta_1\frac{\partial}{\partial y}
+\eta_1\frac{\partial}{\partial z}
+\theta_0
\biggr\}
X_t^{Q}(y,z)
\dd t
\nonumber
\\
&\qquad+\sigma\dd W_t^Q(y,z),
\quad
(t,y,z)\in[0,1]\times D,
\label{2d_spde}
\\
X_0^{Q}(y,z)
&=\xi(y,z),
\quad (y,z)\in D,
\nonumber
\\
X_t^{Q}(y,z)&=0,
\quad (t,y,z)\in [0,1]\times \partial D,
\nonumber
\end{align}
where $D=[0,1]^2$, 
$W_t^Q$ is a $Q$-Wiener process in a Sobolev space on $D$,
an initial value $\xi$ is independent of $W_t^Q$,
$\theta=(\theta_0,\theta_1,\eta_1,\theta_2)$ and $\sigma$ are unknown parameters
and $(\theta_0,\theta_1,\eta_1,\theta_2,\sigma)\in\mathbb R^3\times(0,\infty)^2$.
Moreover, the parameter space $\Theta$ is a compact convex subset of 
$\mathbb R^3\times(0,\infty)^2$, 
$\theta^*=(\theta_0^*,\theta_1^*,\eta_1^*,\theta_2^*)$ and $\sigma^*$
are true values of the parameters 
and we assume that $(\theta^*,\sigma^*)\in \mathrm{Int}\,\Theta$.
The data are discrete observations 
$\{X_{t_i}^{Q}(y_{j_1},z_{j_2})\}$, 
$i=0,\ldots,N$, $j_1=0,\ldots,M_1$, $j_2=0,\ldots,M_2$
with $t_i=i\Delta_N$, $y_{j_1}=j_1/M_1$ and $z_{j_2}=j_2/M_2$,  
where $\Delta_N=1/N$ and $M:=M_1M_2$.

SPDEs appear in various fields
such as physics, engineering, biology, and economics.
In particular, the linear parabolic SPDE is a fundamental equation,
and its typical example is the stochastic heat equation.
The heat equation is known as the equation which describes the heat conduction 
in an object and the diffusion phenomenon of particles, 
and it is an essential equation which appears in various situations.
For example, in the heat equation with one space dimension, 
we can consider the temperature variability of a thin object such as a wire, 
or a sea surface on a straight line. 
However, it is insufficient to consider practical problems 
with only a heat equation in one space dimension.
Actually, in our daily problems, 
we often deal with two or three dimensional heat phenomena, 
such as temperature variability of a thin steel plate, sea surface, solid, or seawater.
For instance, Piterbarg and Ostrovskii \cite{Piterbarg_Ostrovskii1997}
treated sea surface temperature variability.
Thus, it is important to analyze the SPDEs in two space dimensions 
because it can address more general problems than SPDEs in one space dimension.

Statistical inference for SPDE models based on 
discrete observations has been studied by many researchers, see, for example, 
Markussen \cite{Markussen2003},
Bibinger and Trabs \cite{Bibinger_Trabs2020},
Chong  \cite{Chong2020}, \cite{Chong2019arXiv},
Cialenco et al. \cite{Cialenco_etal2020},
Cialenco and Huang \cite{Cialenco_Huang2020},
Hildebrandt \cite{Hildebrandt2020}, 
Kaino and Uchida \cite{Kaino_Uchida2020} 
and references therein.
Recently, Kaino and Uchida \cite{Kaino_Uchida2021} proposed 
the adaptive maximum likelihood type estimation for 
the coefficient parameters of linear parabolic second-order 
SPDEs in one space dimension with a small noise. 
Hildebrandt and Trabs 
\cite{Hildebrandt_Trabs2021}, \cite{Hildebrandt_Trabs2021arXiv}
studied the estimation for the coefficient parameters of 
linear parabolic and semilinear SPDEs in one space dimension, respectively, 
using a contrast function with double increments in time and space.

In particular, 
Bibinger and Trabs \cite{Bibinger_Trabs2020} 
treated the following linear parabolic SPDE in one space dimension 
\begin{align}
\dd X_t(y)
&=\biggl(
\theta_2\frac{\partial^2}{\partial y^2}
+\theta_1\frac{\partial}{\partial y}
+\theta_0
\biggr)
X_t(y)
\dd t
+\sigma\dd B_t(y),
\quad
(t,y)\in[0,T]\times[0,1],
\label{1d_spde}
\\
X_0(y)
&=\xi(y),
\quad y\in [0,1],
\qquad
X_t(0)=X_t(1)=0,
\quad t\in [0,T],
\nonumber
\end{align}
where $T>0$, $B_t$ is a cylindrical Brownian motion in a Sobolev space on $[0,1]$,
$\xi$ is an initial value,
$\theta_0,\theta_1,\theta_2$ and $\sigma$ are unknown parameters
and $(\theta_0,\theta_1,\theta_2,\sigma)\in\mathbb R^2\times(0,\infty)^2$.
They proposed minimum contrast estimators for 
$\kappa=\theta_1/\theta_2$, $\sigma_0^2=\sigma^2/\theta_2^{1/2}$
in the case where $T$ is fixed.
Since the coordinate process $x_k(t)$, $k\ge1$ of 
the SPDE \eqref{1d_spde} is expressed as 
\begin{equation*}
x_k(t)
=\sqrt2\int_0^1 X_t(y)\sin(\pi k y)\ee^{\kappa y/2} \dd y
\end{equation*}
and is a diffusion process satisfying the stochastic differential equation
\begin{equation}\label{cp0}
\dd x_{k}(t)
=-\lambda_{k} x_{k}(t)\dd t+\sigma\dd w_{k}(t),
\quad
x_k(0)=\sqrt2\int_0^1 \xi(y)\sin(\pi k y)\ee^{\kappa y/2} \dd y
\end{equation}
where $\{w_k\}_{k\ge1}$ is independent real valued standard Brownian motions
and $\lambda_k=-\theta_0+\frac{\theta_1^2}{4\theta_2}+\pi^2k^2\theta_2$,
Kaino and Uchida \cite{Kaino_Uchida2020} constructed 
an approximate coordinate process by a Riemann sum 
\begin{equation*}
\hat x_k(t)=\frac{1}{M}\sum_{j=1}^{M}
\sqrt2X_t(y_j)\sin(\pi k y_j)\ee^{\hat\kappa y_j/2}
\end{equation*}
with observations of space $\{y_j\}_{j=1}^M$ and an estimator $\hat\kappa$ of $\kappa$,
and proposed estimators of $\sigma^2$, $\theta_2$ and $\theta_1$
using the approximate coordinate process based on 
the thinned data with respect to time
and statistical inference for diffusion processes.
Furthermore, they extended the results of \cite{Bibinger_Trabs2020} to the case where
$T$ is large and proposed estimators of 
$\sigma^2$, $\theta_2$, $\theta_1$ and $\theta_0$.
For details of statistical inference for diffusion processes
based on discrete observations,
see 
Genon-Catalot and Jacod \cite{Genon-Catalot_Jacod1993},
Kessler \cite{Kessler1997},
and
Uchida and Yoshida \cite{Uchida_Yoshida2012}, \cite{Uchida_Yoshida2013}.

In this paper, we apply 
the estimation method based on the approximate coordinate process
to SPDEs in two space dimensions 
and consider the estimation for each coefficient parameter of the SPDE \eqref{2d_spde}.
In this case, we need to be careful in setting a noise $W_t^Q$ 
because $X_t^Q$ may not be square integrable for any $t>0$ 
depending on the choice of the $Q$-Wiener process.
Moreover, since the method of \cite{Kaino_Uchida2020} is  
based on the property that the random filed $X_t$ admits a spectral decomposition
\begin{equation*}
X_t(y)=\sum_{k=1}^{\infty}x_k(t)e_k(y)
\end{equation*}
with $e_k(y)=\sqrt{2}\sin(\pi k y)\ee^{-\kappa y/2}$ and $x_k(t)$ in \eqref{cp0},
it is also required to set a noise $W_t^Q$ such that 
the random field $X_t^Q$ in the SPDE \eqref{2d_spde} can be decomposed.
See H\"{u}bner et al. \cite{Hubner_etal1993}
and Cialenco and Glatt-Holtz \cite{Cialenco_Glatt-Holtz2011}
for parameter estimation for SPDEs driven by a $Q$-Wiener process. 
See also the surveys by Lototsky \cite{Lototsky2009} and Cialenco \cite{Cialenco2018}
for statistical inference based on the spectral approach.

This paper is organized as follows.
In Section \ref{sec2}, we present the setting of our model.
We also discuss how to choose a $Q$-Wiener process 
and introduce two types of $Q$-Wiener processes.
In Section \ref{sec3}, we first propose minimum contrast estimators 
of the parameters appearing in the coordinate process of the SPDE 
by using the thinned data with respect to space.
Next, we construct an approximate coordinate process 
by using these minimum contrast estimators, 
and provide estimators of the coefficient parameters of the SPDE 
based on the thinned data with respect to time.
We then show that the estimators of the coefficient parameters 
are asymptotically normal.
In Section \ref{sec4}, we give some simulation studies.
Section \ref{sec5} is devoted to the proofs of our assertions in Section \ref{sec3}.
In order to understand the characteristics of the parameters 
of the SPDE \eqref{2d_spde},
the sample paths with different values of the 
parameters are provided in Appendix section.

\section{Preliminaries}\label{sec2}
Let $(\Omega,\mathscr F, \{{\mathscr F}_t\}_{t\ge0} P)$ 
be a stochastic basis with usual conditions,
and let $\{w_{k,\ell}\}_{k,\ell\in\mathbb N}$
be independent real valued standard Brownian motions on this basis.

Let $A_\theta$ be the differential operator defined by 
\begin{equation*}
-A_\theta
=\theta_2
\biggl(
\frac{\partial^2}{\partial y^2}+\frac{\partial^2}{\partial z^2}
\biggr)
+\theta_1\frac{\partial}{\partial y}
+\eta_1\frac{\partial}{\partial z}
+\theta_0.
\end{equation*}
Note that the SPDE \eqref{2d_spde} is represented as  
\begin{equation*}
\dd X_t^{Q}(y,z)=-A_\theta X_t^{Q}(y,z)\dd t+\sigma\dd W_t^Q(y,z).
\end{equation*}
For $k,\ell\in\mathbb N$,
the eigenfunctions $e_{k,\ell}$ of $A_\theta$ and 
the corresponding eigenvalues $\lambda_{k,\ell}$ are given by 
\begin{align*}
e_{k,\ell}(y,z)
&=2\sin(\pi k y)\sin(\pi \ell z)
\ee^{-\frac{\theta_1}{2\theta_2}y}\ee^{-\frac{\eta_1}{2\theta_2}z},
\quad (y,z)\in D,
\\
\lambda_{k,\ell}
&=-\theta_0+\frac{\theta_1^2+\eta_1^2}{4\theta_2}+\pi^2(k^2+\ell^2)\theta_2.
\end{align*}
We then obtain that 
$A_\theta e_{k,\ell}=\lambda_{k,\ell}e_{k,\ell}$
for $k,\ell\in\mathbb N$.
For real valued functions $f$ and $g$ defined on $D$, let
\begin{equation*}
\langle f,g\rangle_\theta
=\int_0^1\int_0^1
f(y,z)g(y,z)
\ee^{\frac{\theta_1}{\theta_2}y}\ee^{\frac{\eta_1}{\theta_2}z} 
\dd y\dd z,
\quad
\|f\|_\theta=\langle f,f\rangle_\theta^{1/2}.
\end{equation*}
Moreover, set
\begin{equation*}
H_\theta
=\{f:D\to\mathbb R|\, \|f\|_\theta<\infty \text{ and } 
f(y,z)=0\text{ for } (y,z)\in \partial D  \}.
\end{equation*}
We consider two types of $Q$-Wiener processes,
$\{W_t^{Q_1}\}_{t\ge0}$ and $\{W_t^{Q_2}\}_{t\ge0}$, defined as follows.
\begin{equation}\label{QWp_ver1}
\langle W_t^{Q_1},f\rangle_\theta
=\sum_{k,\ell\ge1}\lambda_{k,\ell}^{-\alpha/2}
\langle f,e_{k,\ell}\rangle_\theta w_{k,\ell}(t), 
\end{equation}
\begin{equation}\label{QWp_ver2}
\langle W_t^{Q_2},f\rangle_\theta
=\sum_{k,\ell\ge1}\mu_{k,\ell}^{-\alpha/2}
\langle f,e_{k,\ell}\rangle_\theta w_{k,\ell}(t)
\end{equation}
for $f\in H_\theta$ and $t\ge0$, 
where 
$\mu_{k,\ell}=\pi^2(k^2+\ell^2)+\mu_0$, $\mu_0\in(-2\pi^2,\infty)$
and $\alpha\in(0,1)$.
$\mu_0$ is an unknown parameter (may be known),
the parameter space of $\mu_0$ is a compact convex subset of 
$(-2\pi^2,\infty)$ and the true value $\mu_0^*$ belongs to its interior.
The restriction $\alpha>0$ is for mathematical reasons, 
and the restriction $\alpha<1$ is for statistical inferences.
See Remarks \ref{rmk1}, \ref{rmk2} and \ref{rmk4} 
for details of the $Q$-Wiener processes
and the restriction of $\alpha$.

We assume that $\xi\in H_\theta$ and 
$\lambda_{1,1}^*=-\theta_0^*
+\frac{(\theta_1^*)^2+(\eta_1^*)^2}{4\theta_2^*}+2\pi^2\theta_2^*>0$.
$X_t^{Q}$ is called a mild solution of \eqref{2d_spde} on $D$ 
if it satisfies that for any $t\in[0,1]$, 
\begin{equation*}
X_t^{Q}=
\ee^{-tA_\theta}\xi+\sigma\int_0^t\ee^{-(t-s)A_\theta}\dd W_s^Q
\quad \mathrm{a.s.},
\end{equation*}
where
$\ee^{-tA_\theta}u=\sum_{k,\ell\ge1}\ee^{-\lambda_{k,\ell}t}
\langle u,e_{k,\ell}\rangle_\theta e_{k,\ell}$ for $u\in H_\theta$.
By defining the $Q_1$-Wiener process $W_t^{Q_1}$ in \eqref{QWp_ver1}, 
the random field $X_t^{Q_1}(y,z)$ is expressed as
\begin{equation}\label{sd_ver1}
X_t^{Q_1}(y,z)
=\sum_{k,\ell\ge1}x_{k,\ell}^{Q_1}(t)e_{k,\ell}(y,z),
\end{equation}
where the coordinate process 
\begin{equation*}
x_{k,\ell}^{Q_1}(t)
=\langle X_t^{Q_1},e_{k,\ell}\rangle_\theta
=\ee^{-\lambda_{k,\ell}t}\langle \xi,e_{k,\ell}\rangle_\theta
+\sigma\int_0^t\lambda_{k,\ell}^{-\alpha/2}
\ee^{-\lambda_{k,\ell}(t-s)}\dd w_{k,\ell}(s)
\end{equation*}
is the Ornstein-Uhlenbeck process 
which satisfies the stochastic differential equation
\begin{align*}
\dd x_{k,\ell}^{Q_1}(t)
=-\lambda_{k,\ell} x_{k,\ell}^{Q_1}(t)\dd t
+\sigma\lambda_{k,\ell}^{-\alpha/2}\dd w_{k,\ell}(t),
\quad
x_{k,\ell}^{Q_1}(0)=\langle \xi,e_{k,\ell}\rangle_\theta.
\end{align*}
Note that
\begin{equation}\label{cp2_ver1}
x_{k,\ell}^{Q_1}(t)
=
2\int_0^1\int_0^1
X_t^{Q_1}(y,z)\sin(\pi k y)\sin(\pi \ell z)
\ee^{\frac{\theta_1}{2\theta_2}y}\ee^{\frac{\eta_1}{2\theta_2}z}
\dd y\dd z.
\end{equation}
Similarly,  by defining the $Q_2$-Wiener process $W_t^{Q_2}$ in \eqref{QWp_ver2}, 
the random field $X_t^{Q_2}(y,z)$ is spectrally decomposed as
\begin{equation*}
X_t^{Q_2}(y,z)
=\sum_{k,\ell\ge1}x_{k,\ell}^{Q_2}(t)e_{k,\ell}(y,z),
\end{equation*}
where the coordinate process 
\begin{equation*}
x_{k,\ell}^{Q_2}(t)
=\langle X_t^{Q_2},e_{k,\ell}\rangle_\theta
=\ee^{-\lambda_{k,\ell}t}\langle \xi,e_{k,\ell}\rangle_\theta
+\sigma\int_0^t\mu_{k,\ell}^{-\alpha/2}
\ee^{-\lambda_{k,\ell}(t-s)}\dd w_{k,\ell}(s)
\end{equation*}
is described by the stochastic differential equation
\begin{equation}\label{dp_ver2}
\dd x_{k,\ell}^{Q_2}(t)
=-\lambda_{k,\ell} x_{k,\ell}^{Q_2}(t) \dd t
+\sigma\mu_{k,\ell}^{-\alpha/2}\dd w_{k,\ell}(t),
\quad
x_{k,\ell}^{Q_2}(0)=\langle \xi,e_{k,\ell}\rangle_\theta
\end{equation}
and is also represented as
\begin{equation*}
x_{k,\ell}^{Q_2}(t)
=2\int_0^1\int_0^1
X_t^{Q_2}(y,z)\sin(\pi k y)\sin(\pi \ell z)
\ee^{\frac{\theta_1}{2\theta_2}y}\ee^{\frac{\eta_1}{2\theta_2}z}
\dd y\dd z.
\end{equation*}

\begin{rmk}\label{rmk1}
Consider the case where the SPDE \eqref{2d_spde} 
is driven by a cylindrical Brownian motion $\{B_t\}_{t\ge0}$ defined as 
\begin{equation}\label{cBm}
\langle B_t,f\rangle_\theta
=\sum_{k,\ell\ge1}
\langle f,e_{k,\ell}\rangle_\theta w_{k,\ell}(t), 
\quad
f\in H_\theta,\ t\ge0
\end{equation}
as in the setting of previous studies on parameter estimation 
for SPDEs in one space dimension, in other words, 
consider the SPDE represented by
\begin{equation}\label{2d_spde_ver2}
\dd X_t^{I}(y,z)=-A_\theta X_t^{I}(y,z)\dd t+\sigma\dd W_t^I(y,z),
\end{equation}
where $I$ is the identity operator and $B_t=W_t^I$.
In this case, the coordinate process 
$x_{k,\ell}^I(t)$ of the SPDE \eqref{2d_spde_ver2} is expressed as
$x_{k,\ell}^I(t)
=\ee^{-\lambda_{k,\ell}t}\langle \xi,e_{k,\ell}\rangle_\theta$  
$+\sigma\int_0^t \ee^{-\lambda_{k,\ell}(t-s)}\dd w_{k,\ell}(s)$.
Since there exists a constant $C_1>0$ such that 
$\lambda_{k,\ell}\le C_1(k^2+\ell^2)$ for $k,\ell\ge 1$,
it follows that for any $t>0$,
\begin{align*}
\EE[\|X_t^I\|_\theta^2]
=\sum_{k,\ell\ge1}\EE[x_{k,\ell}^I(t)^2]
&=
\sum_{k,\ell\ge1}
\ee^{-2\lambda_{k,\ell}t}
\EE[\langle \xi,e_{k,\ell}\rangle_\theta^2]
+\sigma^2\sum_{k,\ell\ge1}
\frac{1-\ee^{-2\lambda_{k,\ell}t}}{2\lambda_{k,\ell}}
\\
&\ge \sigma^2\sum_{k,\ell\ge1}\frac{1-\ee^{-2\lambda_{k,\ell}t}}{2\lambda_{k,\ell}}
\\
&\ge C \sum_{k,\ell\ge 1}\frac{1}{\lambda_{k,\ell}}
=\infty,
\end{align*}
and then $\EE[X_t^I(y,z)^2]=\infty$ for some $(y,z)\in D$.
In order to avoid this inconvenience, 
we consider a noise $W_t^Q$ with a damping factor such as 
$\{\lambda_{k,\ell}^{-\alpha/2}\}_{k,\ell\ge1}$ in \eqref{QWp_ver1}.
Let $\mathscr D(A_{\theta}^{-1/2})\supset H_\theta$ 
be the domain of $A_{\theta}^{-1/2}$.
Define the covariance operator $Q_1$ on $\mathscr D(A_\theta^{-1/2})$ by
\begin{equation*}
Q_1 e_{k,\ell}=\lambda_{k,\ell}^{-\alpha}\|e_{k,\ell}\|_{\theta,-1/2}^2 e_{k,\ell},
\end{equation*}
where $\alpha>0$, $\|u\|_{\theta,-1/2}=\|A_\theta^{-1/2}u\|_\theta$, 
and then $\{e_{k,\ell}/\|e_{k,\ell}\|_{\theta,-1/2}\}_{k,\ell\ge1}$
is the complete orthonormal system on $\mathscr D(A_\theta^{-1/2})$ 
and the corresponding eigenvalues of $Q_1$ are 
$\lambda_{k,\ell}^{-\alpha}\|e_{k,\ell}\|_{\theta,-1/2}^2$. 
Noting that
\begin{equation*}
\sum_{k,\ell\ge1}\|A_\theta^{-\alpha/2}e_{k,\ell}\|_{\theta,-1/2}^2
=\sum_{k,\ell\ge1}\frac{1}{\lambda_{k,\ell}^{1+\alpha}}
<\infty
\end{equation*}
for $\alpha>0$, the $Q_1$-Wiener process $\{W_t^{Q_1}\}_{t\ge0}$ 
is well-defined in $\mathscr D(A_\theta^{-1/2})$ and 
it follows that for $f\in H_\theta$,
\begin{equation*}
\langle W^{Q_1}_t,f\rangle_\theta
=\sum_{k,\ell \ge1}
\lambda_{k,\ell}^{-\alpha/2}\|e_{k,\ell}\|_{\theta,-1/2}
\biggl\langle f,\frac{e_{k,\ell}}{\|e_{k,\ell}\|_{\theta,-1/2}}\biggr\rangle_\theta
w_{k,\ell}(t)
=\sum_{k,\ell\ge1}
\lambda_{k,\ell}^{-\alpha/2}
\langle f,e_{k,\ell}\rangle_\theta
w_{k,\ell}(t),
\end{equation*}
and thus we obtain \eqref{QWp_ver1}.
Refer to 
Lord et al. \cite{Lord_etal2014},
Da Prato and Zabczyk \cite{DaPrato_Zabczyk2014}
and
Lototsky and Rozovsky \cite{Lototsky_Rozovsky2017}
for details on the $Q$-Wiener process.
\end{rmk}

\begin{rmk}\label{rmk2}
Unlike the damping factor $\{\lambda_{k,\ell}^{-\alpha/2}\}_{k,\ell\ge1}$
of the $Q_1$-Wiener process,
the damping factor $\{\mu_{k,\ell}^{-\alpha/2}\}_{k,\ell\ge1}$ 
of the $Q_2$-Wiener process 
does not include the parameter $\theta$ of the differential operator $A_\theta$. 
Moreover, by setting $\kappa=\theta_1/\theta_2$, $\eta=\eta_1/\theta_2$ and 
$\zeta=(\frac{\kappa^2+\eta^2}{4}-\mu_0,\kappa,\eta,1)$,
it can be regarded as
\begin{equation*}
\mu_{k,\ell}
=\pi^2(k^2+\ell^2)+\mu_0
=-\biggl(\frac{\kappa^2+\eta^2}{4}-\mu_0\biggr)+\frac{\kappa^2+\eta^2}{4\cdot1}
+\pi^2(k^2+\ell^2)\cdot1,
\end{equation*}
and it holds that 
$\langle \cdot,\cdot\rangle_{\zeta}=\langle \cdot,\cdot\rangle_\theta$,
$H_{\zeta}=H_\theta$ and 
$A_{\zeta} e_{k,\ell}=\mu_{k,\ell} e_{k,\ell}$, where
\begin{equation*}
-A_\zeta
=
\biggl(\frac{\partial^2}{\partial y^2}+\frac{\partial^2}{\partial z^2}\biggr)
+\kappa\frac{\partial}{\partial y}
+\eta\frac{\partial}{\partial z}
+\biggl(\frac{\kappa^2+\eta^2}{4}-\mu_0\biggr).
\end{equation*}
Hence, the $Q_2$-Wiener process can be constructed in the same way. 
Indeed, by choosing $Q_2$ as the covariance operator 
on $\mathscr D(A_{\zeta}^{-1/2})\supset H_{\zeta} (=H_\theta)$ that satisfies 
\begin{equation*}
Q_2 e_{k,\ell}
=\mu_{k,\ell}^{-\alpha}\|e_{k,\ell}\|_{\zeta,-1/2}^2 e_{k,\ell},
\end{equation*}
where $\alpha>0$, we can see that
the $Q_2$-Wiener process $\{W_t^{Q_2}\}_{t\ge0}$ 
is well-defined in $\mathscr D(A_{\zeta}^{-1/2})$ and 
\begin{equation*}
\langle W^{Q_2}_t,f\rangle_\theta
=\langle W^{Q_2}_t,f\rangle_{\zeta}
=\sum_{k,\ell\ge1}
\mu_{k,\ell}^{-\alpha/2}
\langle f,e_{k,\ell}\rangle_{\zeta}
w_{k,\ell}(t)
=\sum_{k,\ell\ge1}
\mu_{k,\ell}^{-\alpha/2}
\langle f,e_{k,\ell}\rangle_\theta
w_{k,\ell}(t)
\end{equation*}
for $f\in H_\theta$ and $t\ge0$.

In this paper, we consider only two types of $Q$-Wiener processes 
given by \eqref{QWp_ver1} or \eqref{QWp_ver2}. 
However, the same argument can be developed by choosing 
a $Q$-Wiener process with a damping factor 
such that the random field $X_t^Q(y,z)$ is spectrally decomposable 
as in \eqref{sd_ver1}.
\end{rmk}

We assume the following conditions 
on the initial value $\xi\in H_\theta$ of the SPDE \eqref{2d_spde}.
\begin{asm}\label{asm}
The initial value $\xi$ satisfies either (i) or (ii), and both (iii) and (iv).
\begin{enumerate}
\item[(i)]
$\EE[\langle \xi,e_{k,\ell}\rangle_\theta]=0$ for all $k,\ell\ge1$
and $\sup_{k,\ell\ge1}\lambda_{k,\ell}^{1+\alpha}
\EE[\langle \xi,e_k\rangle_\theta^2]<\infty$. 

\item[(ii)]
$\EE[\|A_\theta^{(1+\alpha)/2}\xi\|_\theta^2]<\infty$.

\item[(iii)]
$\sup_{k,\ell\ge1}\lambda_{k,\ell}^{1+\alpha}
\EE[\langle \xi,e_{k,\ell}\rangle_\theta^4]<\infty$. 

\item[(iv)]
$\{\langle \xi,e_{k,\ell}\rangle_\theta\}_{k,\ell\ge1}$ are independent.

\end{enumerate}
\end{asm}

\begin{rmk}
From $\|A_\theta^{(1+\alpha)/2}\xi\|_\theta^2
=\sum_{k,\ell\ge1}\lambda_{k,\ell}^{1+\alpha}
\langle\xi,e_{k,\ell}\rangle_\theta^2$, 
(ii) in Assumption \ref{asm} can be replaced by
\begin{equation*}
\sum_{k,\ell\ge1}\lambda_{k,\ell}^{1+\alpha}
\EE[\langle\xi,e_{k,\ell}\rangle_\theta^2]<\infty.
\end{equation*}
Moreover, for a non-random function $\xi(y,z)=\xi_1(y)\xi_2(z)\in H_\theta$, 
if $\xi_1,\xi_2\in C^2([0,1])$ for $0<\alpha<1/2$, 
or if $\xi_1,\xi_2\in C^3([0,1])$ for $1/2\le\alpha<1$,
then $\xi$ satisfies (ii) and (iii). 
Indeed, noting that 
\begin{equation*}
\langle \xi,e_{k,\ell}\rangle_\theta
=2\int_0^1 \xi_1(y)\sin(\pi k y)\ee^{\frac{\theta_1}{2\theta_2}y}\dd y
\int_0^1 \xi_2(z)\sin(\pi \ell z)\ee^{\frac{\eta_1}{2\theta_2}z}\dd z
\end{equation*}
and that for $\xi_1\in C^p([0,1])$ ($p=2,3$) such that $\xi_1(0)=\xi_1(1)=0$,
\begin{equation*}
\Biggl|
\int_0^1 \xi_1(y)\sin(\pi k y)\ee^{\frac{\theta_1}{2\theta_2}y}\dd y
\Biggr|
\le \frac{C_1}{k^p},
\end{equation*}
we obtain that $|\langle \xi,e_{k,\ell}\rangle_\theta|\le C_2/k^p\ell^p$. 
Since
$\lambda_{k,\ell}\le C_3(k^2+\ell^2)$ and 
\begin{equation*}
\sum_{k,\ell\ge1}\lambda_{k,\ell}^{1+\alpha}\langle \xi,e_{k,\ell}\rangle_\theta^2
\le
C\sum_{k,\ell\ge1}\frac{(k^2+\ell^2)^{1+\alpha}}{k^{2p}\ell^{2p}}
\le
C'\sum_{k\ge1}\frac{1}{k^{2(p-1)-2\alpha}}
\sum_{\ell\ge1}\frac{1}{\ell^{2p}},
\end{equation*}
$\sum_{k,\ell\ge1}\lambda_{k,\ell}^{1+\alpha}\langle \xi,e_{k,\ell}\rangle_\theta^2$
converges if $p>\alpha+3/2$, and therefore (ii) is satisfied 
by setting $p=2$ for $0<\alpha<1/2$ and $p=3$ for $1/2\le\alpha<1$.
Similarly, $\xi$ also satisfies (iii).
\end{rmk}

By setting the $Q_1$-Wiener process $W_t^{Q_1}$ by \eqref{QWp_ver1},
there exists a unique mild solution $X_t^{Q_1}$ of the SPDE \eqref{2d_spde}
which satisfies $\sup_{0\le t\le1}\EE[\|X_t^{Q_1}\|_\theta^2]<\infty$ 
under $\lambda_{1,1}^*>0$ and Assumption \ref{asm}.
Indeed, since  
$\sup_{k,\ell\ge1}\lambda_{k,\ell}^{1+\alpha}
\EE[\langle \xi,e_k\rangle_\theta^2]<\infty$ under (i) or (ii) in Assumption \ref{asm}
and that there exists a constant $C_1>0$ such that 
$\lambda_{k,\ell}\ge C_1(k^2+\ell^2)$ for $k,\ell\ge1$,
it holds that
\begin{align*}
\EE[\|X_t^{Q_1}\|_\theta^2]
=\sum_{k,\ell\ge1}\EE[x_{k,\ell}^{Q_1}(t)^2]
&=
\sum_{k,\ell\ge1}
\ee^{-2\lambda_{k,\ell}t}
\EE[\langle \xi,e_{k,\ell}\rangle_\theta^2]
+\sigma^2\sum_{k,\ell\ge1}
\frac{1-\ee^{-2\lambda_{k,\ell}t}}{2\lambda_{k,\ell}^{1+\alpha}}
\\
&\le 
\sum_{k,\ell\ge1}
\frac{1}{\lambda_{k,\ell}^{1+\alpha}}
\lambda_{k,\ell}^{1+\alpha}\EE[\langle \xi,e_{k,\ell}\rangle_\theta^2]
+\sigma^2\sum_{k,\ell\ge1}
\frac{1}{2\lambda_{k,\ell}^{1+\alpha}}
\\
&\le
C\sum_{k,\ell\ge1}
\frac{1}{\lambda_{k,\ell}^{1+\alpha}}
<\infty
\end{align*}
for $\alpha>0$, and the mild solution $X_t^{Q_1}$ satisfies 
$\sup_{0\le t\le1}\EE[\|X_t^{Q_1}\|_\theta^2]<\infty$.
The same is true for $X_t^{Q_2}$.

\section{Main results}\label{sec3}

\subsection{SPDE driven by $Q_1$-Wiener process}\label{sec3.1}
In this subsection, we deal with the SPDE \eqref{2d_spde}
driven by the $Q_1$-Wiener process defined as \eqref{QWp_ver1}.
We first consider the estimation for the parameters 
which appear in the coordinate process \eqref{cp2_ver1} 
based on the thinned  data with respect to space.
Set $\overline m_1\le M_1$ and $\overline m_2\le M_2$
such that $\overline m:=\overline m_1\overline m_2=O(N^\rho)$
for some $0<\rho<1\land2(1-\alpha)$, and let
\begin{align*}
\overline y_{j_1}
&=\biggl\lfloor\frac{M_1}{\overline m_1}\biggr\rfloor\frac{j_1}{M_1},
\quad j_1=0,\ldots,\overline m_1,
\\
\overline z_{j_2}
&=\biggl\lfloor\frac{M_2}{\overline m_2}\biggr\rfloor\frac{j_2}{M_2},
\quad j_2=0,\ldots,\overline m_2.
\end{align*}
For $\delta\in(0,1/2)$, 
there exist $J_1,J_2\ge1$, $m_1,m_2\ge1$ such that 
\begin{align*}
&\overline y_{J_1}<\delta \le \overline y_{J_1+1} < \cdots 
< \overline y_{J_1+m_1}\le 1-\delta < \overline y_{J_1+m_1+1},
\\
&\overline z_{J_2}<\delta \le \overline z_{J_2+1} < \cdots 
< \overline z_{J_2+m_2}\le 1-\delta < \overline z_{J_2+m_2+1},
\end{align*}
and let 
\begin{align*}
\widetilde y_{j_1}
&=\overline y_{J_1+j_1}
=\biggl\lfloor\frac{M_1}{\overline m_1}\biggr\rfloor\frac{J_1+j_1}{M_1},
\quad j_1=1,\ldots, m_1,
\\
\widetilde z_{j_2}
&=\overline z_{J_2+j_2}
=\biggl\lfloor\frac{M_2}{\overline m_2}\biggr\rfloor\frac{J_2+j_2}{M_2},
\quad j_2=1,\ldots, m_2
\end{align*}
and $D_\delta=[\delta,1-\delta]^2\subset D$.
Note that $m:=m_1m_2=O(N^\rho)$ and 
$(\widetilde y_{j_1},\widetilde z_{j_2})\in D_\delta$ 
for any $j_1=1,\ldots, m_1$ and $j_2=1,\ldots, m_2$.

We write $\Delta_i X^{Q}(y,z)=X_{t_i}^{Q}(y,z)-X_{t_{i-1}}^{Q}(y,z)$.
Let $\Gamma(s)=\int_0^\infty x^{s-1}\ee^{-x}\dd x$, $s>0$.

\begin{prop}[]\label{prop1}
Under Assumption \ref{asm}, 
it holds that uniformly in $(y,z)\in D_\delta$,
\begin{equation}\label{prop1-eq1}
\EE[(\Delta_i X^{Q_1})^2(y,z)]
=\Delta_N^\alpha\frac{\sigma^2\Gamma(1-\alpha)}{4\pi\alpha\theta_2}
\ee^{-\frac{\theta_1}{\theta_2}y}\ee^{-\frac{\eta_1}{\theta_2}z}
+r_{N,i}+O(\Delta_N),
\end{equation}
where $\sum_{i=1}^N |r_{N,i}|=O(\Delta_N^{\alpha})$,
and thus
\begin{equation*}
\EE\biggl[
\frac{1}{N\Delta_N^\alpha}\sum_{i=1}^N(\Delta_i X^{Q_1})^2(y,z)
\biggr]
=\frac{\sigma^2\Gamma(1-\alpha)}{4\pi\alpha\theta_2}
\ee^{-\frac{\theta_1}{\theta_2}y}\ee^{-\frac{\eta_1}{\theta_2}z}
+O(\Delta_N^{1-\alpha}).
\end{equation*}
\end{prop}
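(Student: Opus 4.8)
The plan is to expand the squared increment of the mild solution using the spectral decomposition \eqref{sd_ver1}, reduce the problem to a sum over modes $(k,\ell)$ of increments of the Ornstein--Uhlenbeck coordinate processes, and then identify the leading-order term as the one coming from the high-frequency modes, which is governed by a Riemann-sum approximation of an integral that produces the constant $\sigma^2\Gamma(1-\alpha)/(4\pi\alpha\theta_2)$. Concretely, write $X_t^{Q_1}(y,z)=\sum_{k,\ell\ge1}x_{k,\ell}^{Q_1}(t)e_{k,\ell}(y,z)$, so that $\Delta_i X^{Q_1}(y,z)=\sum_{k,\ell}\Delta_i x_{k,\ell}^{Q_1}\,e_{k,\ell}(y,z)$. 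Since the initial value contribution and the stochastic integral contribution to $x_{k,\ell}^{Q_1}$ are independent and, by Assumption \ref{asm}, the modes of $\xi$ are independent of each other and of the $w_{k,\ell}$, taking expectations of the square leaves a double sum $\sum_{k,\ell}\sum_{k',\ell'}\EE[\Delta_i x_{k,\ell}^{Q_1}\Delta_i x_{k',\ell'}^{Q_1}]\,e_{k,\ell}(y,z)e_{k',\ell'}(y,z)$; the stochastic-integral part is diagonal in $(k,\ell)$ while the initial-value cross terms must be controlled separately and shown to be $O(\Delta_N)$ uniformly on $D_\delta$ using condition (i) or (ii).

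Next I would compute, for the stochastic-integral part, $\EE[(\Delta_i x_{k,\ell}^{Q_1,\mathrm{mart}})^2]=\sigma^2\lambda_{k,\ell}^{-\alpha}\cdot\frac{(1-\ee^{-\lambda_{k,\ell}\Delta_N})^2}{\lambda_{k,\ell}}\cdot\ee^{-2\lambda_{k,\ell}t_{i-1}}\cdot(\text{stationary-type factor})$ --- more precisely the exact OU increment variance, whose transient part in $t_{i-1}$ is absorbed into $r_{N,i}$ with $\sum_i|r_{N,i}|=O(\Delta_N^\alpha)$ (the geometric sum $\sum_i \ee^{-2\lambda_{k,\ell}t_{i-1}}\approx 1/(\lambda_{k,\ell}\Delta_N)$ compensates one power of $\Delta_N$), and whose stationary part contributes $\sum_{k,\ell}\frac{\sigma^2(1-\ee^{-\lambda_{k,\ell}\Delta_N})}{\lambda_{k,\ell}^{1+\alpha}}\cdot 2\sin^2(\pi k y)\sin^2(\pi\ell z)\cdot 2$ after inserting $e_{k,\ell}(y,z)^2 = 4\sin^2(\pi k y)\sin^2(\pi \ell z)\ee^{-\frac{\theta_1}{\theta_2}y}\ee^{-\frac{\eta_1}{\theta_2}z}$. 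Replacing $\sin^2$ by its average $1/2$ (the oscillatory remainder is lower order after summation, uniformly on $D_\delta$, which is where $\delta>0$ is used to keep the exponential weights bounded) and using $\lambda_{k,\ell}\sim \pi^2\theta_2(k^2+\ell^2)$, the sum becomes $\sigma^2\ee^{-\frac{\theta_1}{\theta_2}y}\ee^{-\frac{\eta_1}{\theta_2}z}\sum_{k,\ell}\frac{1-\ee^{-\pi^2\theta_2(k^2+\ell^2)\Delta_N}}{(\pi^2\theta_2)^{1+\alpha}(k^2+\ell^2)^{1+\alpha}}$ up to lower-order corrections.

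The remaining analytic core is to show that $\sum_{k,\ell\ge1}\frac{1-\ee^{-\pi^2\theta_2(k^2+\ell^2)\Delta_N}}{(k^2+\ell^2)^{1+\alpha}}=\Delta_N^\alpha\cdot\frac{\pi(\pi^2\theta_2)^\alpha\Gamma(1-\alpha)}{4\alpha}+O(\Delta_N)$, i.e. a precise asymptotic of a lattice sum as $\Delta_N\to0$. I would handle this by comparing the sum over the lattice $\mathbb{Z}_{\ge1}^2$ with the integral $\iint_{(0,\infty)^2}\frac{1-\ee^{-\pi^2\theta_2(u^2+v^2)\Delta_N}}{(u^2+v^2)^{1+\alpha}}\,\dd u\,\dd v$, passing to polar coordinates on the first quadrant (which gives the angular factor $\pi/2$), substituting $r^2 = s/(\pi^2\theta_2\Delta_N)$, and recognizing $\int_0^\infty \frac{1-\ee^{-s}}{s^{1+\alpha}}\,\dd s = \frac{\Gamma(1-\alpha)}{\alpha}$ (valid for $0<\alpha<1$, by integration by parts against $\Gamma(1-\alpha)=\int_0^\infty s^{-\alpha}\ee^{-s}\,\dd s$); the scaling pulls out exactly $\Delta_N^\alpha$. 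The Euler--Maclaurin / boundary correction between the sum and the integral is $O(\Delta_N)$, which is why the condition $\rho<2(1-\alpha)$ never enters here but the remainder $O(\Delta_N)$ in \eqref{prop1-eq1} does. I expect the main obstacle to be making the sum-to-integral comparison rigorous and uniform in $(y,z)\in D_\delta$ and in $i$ simultaneously --- in particular, tracking that the oscillatory $\sin^2$ remainders, the non-stationary $\ee^{-2\lambda_{k,\ell}t_{i-1}}$ terms, and the $\xi$ cross terms all collapse into the advertised $r_{N,i}$ (with $\sum_i|r_{N,i}|=O(\Delta_N^\alpha)$) and $O(\Delta_N)$ buckets without losing uniformity; the final averaged statement then follows immediately by dividing by $N\Delta_N^\alpha$ and noting $\frac1{N\Delta_N^\alpha}\sum_i|r_{N,i}|=O(\Delta_N^{1-\alpha})$ and $\frac1{N\Delta_N^\alpha}\cdot N\cdot O(\Delta_N)=O(\Delta_N^{1-\alpha})$.
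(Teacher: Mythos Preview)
Your plan matches the paper's strategy closely: decompose $\Delta_i x_{k,\ell}^{Q_1}=A_{i,(k,\ell)}+B_{i,(k,\ell)}^{Q_1}$ into initial-value and stochastic-integral pieces, split the $B$-part into a stationary main term and a transient $\ee^{-2\lambda_{k,\ell}t_{i-1}}$-term absorbed by $r_{N,i}$, and evaluate the lattice sum $\sum_{k,\ell}\frac{1-\ee^{-\lambda_{k,\ell}\Delta_N}}{\lambda_{k,\ell}^{1+\alpha}}e_{k,\ell}^2(y,z)$ via an integral approximation that produces $\Gamma(1-\alpha)/\alpha$. Two corrections are worth flagging.

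First, the initial-value contribution does \emph{not} land in the $O(\Delta_N)$ bucket: for $i=1$ one has $\sum_{k,\ell}\EE[A_{1,(k,\ell)}^2]\asymp\sum_{k,\ell}\frac{(1-\ee^{-\lambda_{k,\ell}\Delta_N})^2}{\lambda_{k,\ell}^{1+\alpha}}=O(\Delta_N^{\alpha})$, which for $\alpha<1$ is larger than $\Delta_N$. The correct statement (the paper's Lemma~\ref{lem3}) is that the $A$-terms satisfy $\sum_{i=1}^N\sum_{\bs k,\bs\ell}|\EE[A_{i,\bs k}A_{i,\bs\ell}]e_{\bs k}e_{\bs\ell}|=O(\Delta_N^{\alpha})$, so they belong to $r_{N,i}$ alongside the transient $B$-terms.

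Second, ``replace $\sin^2$ by $1/2$ and use Euler--Maclaurin'' understates the work. Writing $e_{k,\ell}^2(y,z)=(1-\cos 2\pi k y)(1-\cos 2\pi\ell z)\ee^{-\kappa y-\eta z}$, you must show that each of the three oscillatory pieces $\sum_{k,\ell}f_\alpha(\lambda_{k,\ell}\Delta_N)\cos(\cdots)$ is $O(\Delta_N^{1-\alpha})$ uniformly on $D_\delta$. The paper's Lemma~\ref{lem1} does this via Fourier-transform decay: it rewrites the cosine sums as (real parts of) Fourier transforms of step-function approximants of $x\mapsto f_\alpha(\theta_2\pi^2|x|^2)$ evaluated at frequencies $\sim\Delta_N^{-1/2}$, and exploits decay of $\Fou[\partial_{x_1}^2 g]$ together with $1/\sin(\pi y)\le\delta^{-1}$ on $[\delta,1-\delta]$. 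This is where the $\delta$-dependence actually enters (powers of $\delta^{-1}$ from the oscillatory bounds), not merely through the boundedness of $\ee^{-\kappa y-\eta z}$. A naive Euler--Maclaurin argument does not automatically deliver the required uniformity in $(y,z)$ for these oscillatory terms.
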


\begin{rmk}\label{rmk4}
It follows from the proofs of Lemmas \ref{lem3}, \ref{lem4} 
and Proposition \ref{prop1} that
\begin{align}
&\EE[(\Delta_i X^{Q_1})^2(y,z)]
\nonumber
\\
&=\sigma^2
\sum_{k,\ell\ge1}
\frac{1-\ee^{-\lambda_{k,\ell}\Delta_N}}{\lambda_{k,\ell}^{1+\alpha}}
\biggl(
1-\frac{1-\ee^{-\lambda_{k,\ell}\Delta_N}}{2}
\ee^{-2\lambda_{k,\ell}(i-1)\Delta_N }
\biggr)
e_{k,\ell}^2(y,z)
+r_{N,i},
\label{rmk3-eq1}
\end{align}
where $r_{N,i}$ is the sequence in \eqref{prop1-eq1}.
According to Lemmas \ref{lem2} and \ref{lem4}, 
the restriction of $\alpha$ allows us to 
approximate the summation in \eqref{rmk3-eq1} containing $\lambda_{k,\ell}$ 
with unknown parameters by an explicit expression 
such as the main part in \eqref{prop1-eq1}. 
This makes it possible to estimate 
$\sigma^2/\theta_2$, $\theta_1/\theta_2$ and $\eta_1/\theta_2$.
\end{rmk}

Let $s=\sigma^2/\theta_2$, $\kappa=\theta_1/\theta_2$, $\eta=\eta_1/\theta_2$ and
\begin{equation*}
Z_{N}^{Q}(y,z)
=\frac{1}{N\Delta_N^\alpha}\sum_{i=1}^N(\Delta_i X^Q)^2(y,z).
\end{equation*}
By setting 
 the contrast function
\begin{equation*}
U_{N,m}^{(1)}(s,\kappa,\eta)
=\sum_{j_1=1}^{m_1}\sum_{j_2=1}^{m_2}
\biggl(Z_{N}^{Q_1}(\widetilde y_{j_1},\widetilde z_{j_2})
-\frac{\Gamma(1-\alpha)}{4\pi\alpha}s\,
\ee^{-(\kappa \widetilde y_{j_1}+\eta \widetilde z_{j_2})}
\biggr)^2,
\end{equation*}
the minimum contrast estimators of $s$, $\kappa$ and $\eta$ are defined as 
\begin{equation*}
(\hat s,\hat \kappa, \hat \eta)
=\underset{(s,\kappa,\eta)\in\Xi_1}{\mathrm{arginf}}\, U_{N,m}^{(1)}(s,\kappa,\eta),
\end{equation*}
where $\Xi_1$ is a compact convex subset of
$(0,\infty)\times\mathbb R^2$.
We suppose that the true value $(s^*,\kappa^*,\eta^*)$ 
belongs to $\mathrm{Int}\, \Xi_1$.

\begin{thm}[]\label{th1}
Under Assumption \ref{asm}, 
it holds that for any $\gamma<\frac{1}{2}\land(1-\alpha)-\frac{\rho}{2}$,
as $N\to\infty$ and $m\to\infty$,
\begin{equation*}
m^{1/2}N^\gamma
\begin{pmatrix}
\hat s-s^*
\\
\hat \kappa-\kappa^*
\\
\hat \eta-\eta^*
\end{pmatrix}
\pto 0.
\end{equation*}
\end{thm}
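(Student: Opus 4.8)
The plan is to establish the consistency of $(\hat s,\hat\kappa,\hat\eta)$ at the stated rate via the standard argument for minimum contrast estimators, combined with a careful analysis of the fluctuation of the random field $Z_N^{Q_1}(y,z)$ around its limiting profile $\frac{\Gamma(1-\alpha)}{4\pi\alpha}s^*\ee^{-(\kappa^* y+\eta^* z)}$. First I would introduce the limiting contrast
\begin{equation*}
U^{(1)}_\infty(s,\kappa,\eta)
=\int_{D_\delta}
\biggl(\frac{\Gamma(1-\alpha)}{4\pi\alpha}s^*\ee^{-(\kappa^* y+\eta^* z)}
-\frac{\Gamma(1-\alpha)}{4\pi\alpha}s\,\ee^{-(\kappa y+\eta z)}\biggr)^2\dd y\,\dd z,
\end{equation*}
which is minimized uniquely at $(s^*,\kappa^*,\eta^*)$ because the family of functions $(y,z)\mapsto s\,\ee^{-(\kappa y+\eta z)}$ is identifiable on the open square $D_\delta$ (matching values on a two-dimensional set forces the parameters to coincide). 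The Riemann sum $\frac1m U^{(1)}_{N,m}(s,\kappa,\eta)$ should converge to $U^{(1)}_\infty$ uniformly on $\Xi_1$, which yields consistency $(\hat s,\hat\kappa,\hat\eta)\pto(s^*,\kappa^*,\eta^*)$; the rate then comes from a Taylor expansion of the contrast around the true value together with the rate of the key fluctuation bound below.

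Next I would quantify the fluctuation of $Z_N^{Q_1}$. By Proposition \ref{prop1}, the mean of $Z_N^{Q_1}(y,z)$ equals the target profile plus $O(\Delta_N^{1-\alpha})$ uniformly on $D_\delta$, so the bias contributes at rate $N^{-(1-\alpha)}$. For the stochastic part I would show that
\begin{equation*}
\max_{j_1,j_2}\Bigl|Z_N^{Q_1}(\widetilde y_{j_1},\widetilde z_{j_2})
-\EE[Z_N^{Q_1}(\widetilde y_{j_1},\widetilde z_{j_2})]\Bigr|=O_p(N^{-1/2}\,m^{\text{(small power)}})
\end{equation*}
up to logarithmic or $m$-power corrections coming from the maximum over $m$ points; here $Z_N^{Q_1}(y,z)-\EE[\cdot]$ is an average over $i=1,\dots,N$ of weakly dependent (exponentially mixing) increments $(\Delta_iX^{Q_1})^2(y,z)$ of an Ornstein--Uhlenbeck-type field, so its variance is $O(1/N)$ by the same covariance estimates used for Proposition \ref{prop1} (this is where Lemmas \ref{lem2}, \ref{lem3}, \ref{lem4} and Assumption \ref{asm}(iii),(iv) enter). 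Combining the bias rate $N^{-(1-\alpha)}$, the stochastic rate $N^{-1/2}$, and the extra $N^{\rho/2}$ factor from summing $m=O(N^\rho)$ squared terms and taking $m^{1/2}$ out front, one gets that $m^{1/2}N^\gamma$ times the estimation error is $o_p(1)$ for any $\gamma<\tfrac12\wedge(1-\alpha)-\tfrac\rho2$.

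The main obstacle I anticipate is the uniform (in $(y,z)$ over the grid) control of the stochastic fluctuation of $Z_N^{Q_1}$ with a sharp enough rate: one must bound $\max_{j_1,j_2}|Z_N^{Q_1}-\EE Z_N^{Q_1}|$ without losing more than a negligible $m$-power, which requires either a moment bound of sufficiently high order (using Assumption \ref{asm}(iii) for fourth moments of the initial values and the Gaussian structure of the stochastic convolution to get higher moments of $(\Delta_iX^{Q_1})^2$) together with a union bound over the $m$ grid points, or a chaining argument exploiting the smoothness of $(y,z)\mapsto Z_N^{Q_1}(y,z)$. Handling the cross terms between the initial-condition part $\ee^{-\lambda_{k,\ell}t}\langle\xi,e_{k,\ell}\rangle_\theta$ and the stochastic-convolution part in the expansion of $(\Delta_iX^{Q_1})^2$, and checking that their contribution to both the bias and the variance is absorbed into the stated remainders, is the delicate bookkeeping step; once the fluctuation bound is in hand, the passage to the rate for $(\hat s,\hat\kappa,\hat\eta)$ via invertibility of the Hessian of $U^{(1)}_\infty$ at $(s^*,\kappa^*,\eta^*)$ is routine.
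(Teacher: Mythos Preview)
Your overall architecture---uniform convergence of the normalized contrast to obtain consistency, then a Taylor expansion of $\partial U_{N,m}^{(1)}$ around $(s^*,\kappa^*,\eta^*)$ combined with invertibility of the limiting Hessian and a rate bound for the score at the true value---matches the paper's proof. Two points deserve correction.

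First, the obstacle you flag (a uniform-in-grid bound on $\max_{j_1,j_2}|Z_N^{Q_1}-\EE Z_N^{Q_1}|$ via high moments or chaining) is not needed, and the paper avoids it entirely. Writing $\xi_N^*(y,z)=Z_N^{Q_1}(y,z)-\frac{\Gamma(1-\alpha)}{4\pi\alpha}s^*\ee^{-(\kappa^* y+\eta^* z)}$, the score at the true parameter has the form $\sum_{j_1,j_2}\xi_N^*(\widetilde y_{j_1},\widetilde z_{j_2})\varphi(\widetilde y_{j_1},\widetilde z_{j_2})$ with bounded deterministic weights $\varphi$, so Cauchy--Schwarz gives
\[
\frac{N^\gamma}{m^{1/2}}\bigl|\partial U_{N,m}^{(1)}(\nu^*)\bigr|
\ \lesssim\ m^{1/2}N^\gamma\Bigl(\frac1m\sum_{j_1,j_2}\bigl(\xi_N^*-\EE\xi_N^*\bigr)^2\Bigr)^{1/2}
+\ m^{1/2}N^\gamma\cdot O(N^{-(1-\alpha)}).
\]
Taking expectation of the average of squares, it suffices to control $\sup_{(y,z)\in D_\delta}\EE[\xi_N^*(y,z)^2]$---a single pointwise second-moment estimate, uniform in the spatial point but not a maximum over random variables. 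No union bound and no chaining enter; the factor $m^{1/2}=O(N^{\rho/2})$ is exactly what produces the $-\rho/2$ in the rate condition.

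Second, your claim that $\VV[Z_N^{Q_1}(y,z)]=O(1/N)$ is only correct for $\alpha<1/2$. For $\alpha\ge 1/2$ the off-diagonal covariances $\sum_{i\neq j}\cov\bigl((\Delta_iX^{Q_1})^2,(\Delta_jX^{Q_1})^2\bigr)$ dominate: by \eqref{eq4-lem4-2} in Lemma~\ref{lem4} these contribute $O(\Delta_N^{2(1-\alpha)})$, which is larger than $\Delta_N$. The paper obtains $\sup_{(y,z)\in D_\delta}\EE[\xi_N^*(y,z)^2]=O(\Delta_N^{1\land 2(1-\alpha)})$ by decomposing via Isserlis' theorem and applying Lemmas~\ref{lem3}--\ref{lem4}; this is what produces the $\tfrac12\land(1-\alpha)$ in the rate. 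Your attribution of the $(1-\alpha)$ part of the constraint solely to the bias is therefore incomplete---for $\alpha\ge 1/2$ the stochastic fluctuation already lives at rate $N^{-(1-\alpha)}$, not $N^{-1/2}$. The cross-term bookkeeping between the initial-condition part and the stochastic convolution that you worry about is absorbed into Lemmas~\ref{lem3} and~\ref{lem4} and causes no additional difficulty.
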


\begin{rmk}
Bibinger and Trabs \cite{Bibinger_Trabs2020}, 
which dealt with the parameter estimation for linear parabolic SPDEs 
in one space dimension, showed that 
the estimators have asymptotic normality.
On the other hand, in two space dimensions, 
the evaluation of the remainder in \eqref{prop1-eq1} 
is worse than that in one space dimension 
because of the increase in dimension, 
and the asymptotic normality can not be derived. 
However, it is possible to estimate each coefficient parameter 
even if the estimators do not have asymptotic normality.
Theorem \ref{th1} shows that in two space dimensions, 
the estimators have $m^{1/2}N^\gamma$-consistency
and $m^{1/2}N^\gamma=o(N^{\frac{1}{2}\land(1-\alpha)})$.
In other words, the assertion in Theorem \ref{th1} can be regarded as 
\begin{equation*}
N^{\gamma'}
\begin{pmatrix}
\hat s-s^*
\\
\hat \kappa-\kappa^*
\\
\hat \eta-\eta^*
\end{pmatrix}
\pto 0
\end{equation*}
for any $\gamma'<\frac{1}{2}\land(1-\alpha)$.
\end{rmk}

Once we estimate $(s,\kappa, \eta)$, we can construct an approximation for 
the coordinate process 
\begin{equation*}
x_{k,\ell}^{Q_1}(t)
=\langle X_t^{Q_1},e_{k,\ell}\rangle_\theta
=2\int_0^1\int_0^1
X_t^{Q_1}(y,z)\sin(\pi k y)\sin(\pi \ell z)
\ee^{\frac{\kappa}{2}y}\ee^{\frac{\eta}{2}z}
\dd y\dd z
\end{equation*}
by a Riemann sum. Furthermore, 
noting that the coordinate process $x_{k,\ell}^{Q_1}(t)$ satisfies 
\begin{equation*}
\dd x_{k,\ell}^{Q_1}(t)
=-\lambda_{k,\ell} x_{k,\ell}^{Q_1}(t)\dd t
+\sigma\lambda_{k,\ell}^{-\alpha/2}\dd w_{k,\ell}(t),
\quad
x_{k,\ell}^{Q_1}(0)=\langle \xi,e_{k,\ell}\rangle_\theta,
\end{equation*}
we can estimate the volatility parameter
$\sigma_{k,\ell}:=\sigma\lambda_{k,\ell}^{-\alpha/2}$
by using statistical inference for diffusion processes. Since
\begin{equation*}
\lambda_{1,2}
=\biggl(\frac{\sigma_{1,1}^2}{\sigma_{1,2}^2}\biggr)^{1/\alpha}\lambda_{1,1},
\quad
\lambda_{1,1}
=\biggl(\frac{\sigma^2}{\sigma_{1,1}^2}\biggr)^{1/\alpha}
=\biggl(\frac{s\theta_2}{\sigma_{1,1}^2}\biggr)^{1/\alpha}
\end{equation*}
and $\lambda_{1,2}-\lambda_{1,1}=3\pi^2\theta_2$, 
$\theta_2$ can be expressed by using $s$, $\sigma_{1,1}$ and $\sigma_{1,2}$ as follows.
\begin{equation*}
\theta_2
=
\Biggl\{
\frac{3\pi^2}{s^{1/\alpha}}
\Biggl(
\frac{1}{\sigma_{1,2}^{2/\alpha}}
-\frac{1}{\sigma_{1,1}^{2/\alpha}}
\Biggr)^{-1}
\Biggr\}^{\frac{\alpha}{1-\alpha}}.
\end{equation*}
$\sigma^2$, $\theta_1$, $\eta_1$ and $\theta_0$
can also be expressed by using  
$\theta_2$, $s$, $\kappa$, $\eta$ and $\lambda_{1,1}$ as follows.
\begin{equation*}
\sigma^2=s\theta_2,
\quad 
\theta_1=\kappa\theta_2, 
\quad
\eta_1=\eta\theta_2,
\end{equation*}
\begin{equation*}
\theta_0
=-\lambda_{1,1}
+\biggl(
\frac{\kappa^2+\eta^2}{4}+2\pi^2
\biggr)\theta_2.
\end{equation*}

With the above in mind, 
we construct an approximate coordinate process 
by using the thinned data with respect to time,  
and consider the estimation for each coefficient parameter.
Let $n\le N$ and
\begin{equation*}
\widetilde t_i=
\biggl\lfloor\frac{N}{n}\biggr\rfloor
\frac{i}{N},
\quad i=0,\ldots, n.
\end{equation*}
As an approximation of $x_{k,\ell}^{Q_1}(t)$, we consider
\begin{equation*}
\hat x_{k,\ell}^{Q_1}(\widetilde t_i)
=\frac{2}{M}\sum_{j_1=1}^{M_1}\sum_{j_2=1}^{M_2}
X_{\widetilde t_i}^{Q_1}(y_{j_1},z_{j_2})
\sin(\pi k y_{j_1})\sin(\pi \ell z_{j_2})
\ee^{\frac{\hat\kappa}{2}y_{j_1}}\ee^{\frac{\hat\eta}{2}z_{j_2}},
\quad i=1,\ldots,n.
\end{equation*}
By using the thinned data based on the approximate coordinate process
$\{\hat x_{k,\ell}^{Q_1}(\widetilde t_i)\}_{i=1}^n$,
the estimator of $\sigma_{k,\ell}^2$ is defined as
\begin{equation*}
\hat\sigma_{k,\ell}^2
=\sum_{i=1}^{n} \bigl(\hat x_{k,\ell}^{Q_1}(\widetilde t_i)
-\hat x_{k,\ell}^{Q_1}(\widetilde t_{i-1})\bigr)^2.
\end{equation*}
Moreover, the estimators of $\theta_2$, $\sigma^2$, $\theta_1$, $\eta_1$ and $\theta_0$
are defined as
\begin{align*}
\hat\theta_2
&=
\Biggl\{
\frac{3\pi^2}{\hat s^{1/\alpha}}
\Biggl(
\frac{1}{(\hat \sigma_{1,2}^2)^{1/\alpha}}
-\frac{1}{(\hat \sigma_{1,1}^2)^{1/\alpha}}
\Biggr)^{-1}
\Biggr\}^{\frac{\alpha}{1-\alpha}},
\\
\hat \sigma^2 &= \hat s \hat \theta_2,
\quad
\hat \theta_1 = \hat \kappa \hat \theta_2,
\quad
\hat \eta_1 = \hat \eta \hat \theta_2,
\\
\hat \theta_0
&=-
\hat \lambda_{1,1}
+\biggl(
\frac{\hat\kappa^2+\hat\eta^2}{4}+2\pi^2
\biggr)\hat \theta_2,
\quad
\hat \lambda_{1,1}
=
\biggl(\frac{\hat s \hat \theta_2}{\hat\sigma_{1,1}^2}\biggr)^{1/\alpha}.
\end{align*}
Let
$\bs \vartheta_{-1}^*=(\theta_1^*,\eta_1^*,\theta_2^*,(\sigma^*)^2)^\TT$,
\begin{align*}
c_1
&=
(\lambda_{1,1}^*)^2
\biggl(
\frac{\lambda_{1,2}^*}{\alpha}-\frac{\theta_0^*}{1-\alpha}
\biggr)^2
+(\lambda_{1,2}^*)^2
\biggl(
\frac{\lambda_{1,1}^*}{\alpha}-\frac{\theta_0^*}{1-\alpha}
\biggr)^2,
\\
c_2
&=
\frac{-1}{1-\alpha}
\biggl\{
(\lambda_{1,1}^*)^2
\biggl(
\frac{\lambda_{1,2}^*}{\alpha}-\frac{\theta_0^*}{1-\alpha}
\biggr)
+(\lambda_{1,2}^*)^2
\biggl(
\frac{\lambda_{1,1}^*}{\alpha}-\frac{\theta_0^*}{1-\alpha}
\biggr)
\biggr\},
\\
c_3
&=
\frac{1}{(1-\alpha)^2}
\bigl\{
(\lambda_{1,1}^*)^2+(\lambda_{1,2}^*)^2
\bigr\}
\\
J
&=
\frac{2}{9\pi^4(\theta_2^*)^2}
\begin{pmatrix}
c_1 & c_2(\bs\vartheta_{-1}^*)^\TT
\\
c_2\bs\vartheta_{-1}^* & c_3\bs\vartheta_{-1}^*(\bs\vartheta_{-1}^*)^\TT
\end{pmatrix},
\end{align*}
where $\TT$ denotes the transpose.

\begin{thm}\label{th2}
Suppose Assumption \ref{asm} holds.
Let $\gamma<\frac{1}{2}\land(1-\alpha)-\frac{\rho}{2}$.
\begin{enumerate}
\item[(1)]
Under $\frac{n^{1-\alpha}}{mN^{2\gamma}}\to0$ and 
$\frac{n^{1-\alpha+\epsilon}}{M_1^{2\epsilon} \land M_2^{2\epsilon}}\to0$
for some $\epsilon<\alpha$, it holds that as $n\to\infty$,
\begin{equation*}
(\hat\theta_0, \hat\theta_1,\hat\eta_1,\hat\theta_2,\hat\sigma^2)
\pto (\theta_0^*,\theta_1^*,\eta_1^*,\theta_2^*,(\sigma^*)^2).
\end{equation*}

\item[(2)]
Under $\frac{n^{2-\alpha}}{mN^{2\gamma}}\to0$ and 
$\frac{n^{2-\alpha+\epsilon}}{M_1^{2\epsilon} \land M_2^{2\epsilon}}\to0$
for some $\epsilon<\alpha$, it holds that as $n\to\infty$,
\begin{equation*}
\sqrt n
\begin{pmatrix}
\hat \theta_0-\theta_0^*
\\
\hat \theta_1-\theta_1^*
\\
\hat \eta_1-\eta_1^*
\\
\hat \theta_2-\theta_2^*
\\
\hat \sigma^2-(\sigma^*)^2
\end{pmatrix}
\dto N(0,J).
\end{equation*}
\end{enumerate}
\end{thm}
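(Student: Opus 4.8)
\textbf{Proof proposal for Theorem \ref{th2}.}

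The plan is to reduce everything to the asymptotic behavior of the estimators $\hat\sigma_{k,\ell}^2$ for $(k,\ell)\in\{(1,1),(1,2)\}$, together with the already-established rate for $(\hat s,\hat\kappa,\hat\eta)$ from Theorem \ref{th1}, and then propagate these through the explicit (smooth) maps defining $\hat\theta_2,\hat\sigma^2,\hat\theta_1,\hat\eta_1,\hat\theta_0$ by the delta method. The first and main step is to analyze $\hat\sigma_{k,\ell}^2=\sum_{i=1}^n(\hat x_{k,\ell}^{Q_1}(\widetilde t_i)-\hat x_{k,\ell}^{Q_1}(\widetilde t_{i-1}))^2$. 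I would split the error $\hat x_{k,\ell}^{Q_1}(\widetilde t_i)-x_{k,\ell}^{Q_1}(\widetilde t_i)$ into two pieces: (a) the Riemann-sum discretization error of the spatial integral defining $x_{k,\ell}^{Q_1}$, and (b) the error coming from plugging in $\hat\kappa,\hat\eta$ in place of $\kappa,\eta$ in the exponential weights. For (b) a first-order Taylor expansion in $(\hat\kappa-\kappa,\hat\eta-\eta)$ combined with the $m^{1/2}N^\gamma$-consistency of Theorem \ref{th1} should show this contribution to $\hat\sigma_{k,\ell}^2$ is of order $n\cdot\Delta_{N,n}\cdot(N^{-2\gamma}/m)$-type, which is controlled by the hypothesis $n^{2-\alpha}/(mN^{2\gamma})\to0$ (resp.\ $n^{1-\alpha}/(mN^{2\gamma})\to0$ for consistency); for (a) one uses smoothness of $y\mapsto X_t^{Q_1}(y,z)$ restricted to $D_\delta$ together with the bound on the number of spatial points, giving the condition on $M_1,M_2$. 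The spatial-regularity estimates needed here are presumably packaged in the lemmas of Section \ref{sec5}, so I would cite those.

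Second, I would treat the ``ideal'' quadratic variation $\sum_{i=1}^n(x_{k,\ell}^{Q_1}(\widetilde t_i)-x_{k,\ell}^{Q_1}(\widetilde t_{i-1}))^2$ of the genuine Ornstein--Uhlenbeck coordinate process. Since $x_{k,\ell}^{Q_1}$ solves $\dd x_{k,\ell}^{Q_1}=-\lambda_{k,\ell}x_{k,\ell}^{Q_1}\dd t+\sigma_{k,\ell}\dd w_{k,\ell}$ with $\sigma_{k,\ell}=\sigma\lambda_{k,\ell}^{-\alpha/2}$, this is exactly the classical realized-volatility situation for a scalar diffusion observed at $n$ equispaced times on $[0,1]$: standard results (cf.\ Genon-Catalot and Jacod \cite{Genon-Catalot_Jacod1993}) give $\hat\sigma_{k,\ell}^2\to\sigma_{k,\ell}^2$ in probability and $\sqrt n(\hat\sigma_{k,\ell}^2-\sigma_{k,\ell}^2)\dto N(0,2\sigma_{k,\ell}^4)$, with the drift $-\lambda_{k,\ell}x_{k,\ell}^{Q_1}$ contributing only lower-order terms. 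The two coordinates $(1,1)$ and $(1,2)$ use independent Brownian motions $w_{1,1},w_{1,2}$, so the joint limit of $\sqrt n(\hat\sigma_{1,1}^2-\sigma_{1,1}^2,\hat\sigma_{1,2}^2-\sigma_{1,2}^2)$ is a diagonal Gaussian with variances $2\sigma_{1,1}^4,2\sigma_{1,2}^4$. Combining with the previous paragraph, under the stated rate conditions $\sqrt n(\hat\sigma_{1,1}^2-\sigma_{1,1}^2,\hat\sigma_{1,2}^2-\sigma_{1,2}^2)$ has this same limit, and $\sqrt n(\hat s-s^*,\hat\kappa-\kappa^*,\hat\eta-\eta^*)\pto0$ because $\sqrt n=o(m^{1/2}N^\gamma)$ under $n^{1-\alpha}/(mN^{2\gamma})\to0$ (a fortiori under the part (2) hypothesis).

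Third, I would assemble the final estimators via the continuous mappings
\begin{equation*}
\theta_2=\Bigl\{\tfrac{3\pi^2}{s^{1/\alpha}}\bigl(\sigma_{1,2}^{-2/\alpha}-\sigma_{1,1}^{-2/\alpha}\bigr)^{-1}\Bigr\}^{\frac{\alpha}{1-\alpha}},
\quad \sigma^2=s\theta_2,\quad\theta_1=\kappa\theta_2,\quad\eta_1=\eta\theta_2,
\end{equation*}
and $\theta_0=-\lambda_{1,1}+(\tfrac{\kappa^2+\eta^2}{4}+2\pi^2)\theta_2$ with $\lambda_{1,1}=(s\theta_2/\sigma_{1,1}^2)^{1/\alpha}$, all of which are smooth in a neighborhood of the true values (using $\sigma_{1,1}^2\ne\sigma_{1,2}^2$, guaranteed since $\lambda_{1,1}^*\ne\lambda_{1,2}^*$, so the inner bracket is bounded away from $0$). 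Part (1) is then the continuous mapping theorem. For part (2) the delta method applies: only $(\hat\sigma_{1,1}^2,\hat\sigma_{1,2}^2)$ contribute asymptotically (the $(\hat s,\hat\kappa,\hat\eta)$ fluctuations are $o_p(n^{-1/2})$), so I would compute the Jacobian of $(\sigma_{1,1}^2,\sigma_{1,2}^2)\mapsto(\theta_0,\theta_1,\eta_1,\theta_2,\sigma^2)$ at the true point and check that pushing the diagonal covariance $\mathrm{diag}(2\sigma_{1,1}^4,2\sigma_{1,2}^4)$ through it yields exactly the matrix $J$ displayed before the theorem. This last computation is a finite, if tedious, chain-rule exercise using $\lambda_{1,j}^*=(s^*\theta_2^*/(\sigma_{1,j}^*)^2)^{1/\alpha}$ and $\sigma_{1,j}^2=s\theta_2\lambda_{1,j}^{-\alpha}$; the structure $\bs\vartheta_{-1}^*(\bs\vartheta_{-1}^*)^\TT$ in $J$ reflects that $\theta_1,\eta_1,\theta_2,\sigma^2$ are all proportional to $\theta_2$ times a factor involving only $\hat s,\hat\kappa,\hat\eta$ (which are asymptotically nonrandom at rate $n^{-1/2}$), so their joint fluctuation is rank-one along $\bs\vartheta_{-1}^*$, while $\theta_0$ picks up the extra $\lambda_{1,1}$ term producing the block with $c_1,c_2$.

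The step I expect to be the main obstacle is the first one: controlling the perturbation of the realized quadratic variation caused by (i) the plug-in estimators $\hat\kappa,\hat\eta$ and (ii) the spatial Riemann-sum approximation, simultaneously and uniformly enough to preserve the $\sqrt n$-CLT. The delicate point is that $\hat\sigma_{k,\ell}^2$ is a sum of $n$ squared increments, so an error of size $\varepsilon_i$ per increment feeds in as roughly $n\,\mathbb E[\varepsilon_i^2]$ plus a cross term of order $\sqrt n\,(\sum\mathbb E[\varepsilon_i^2])^{1/2}$ against the $O(n^{-1/2})$-scale true increments; making both $o_p(n^{-1/2})$ after multiplying by $\sqrt n$ is exactly what forces the precise exponents $n^{2-\alpha}/(mN^{2\gamma})\to0$ and $n^{2-\alpha+\epsilon}/(M_1^{2\epsilon}\land M_2^{2\epsilon})\to0$, and verifying that the lemmas of Section \ref{sec5} (bounds on $\mathbb E[(\Delta_i X^{Q_1})^2]$, spatial smoothness of $X_t^{Q_1}$ on $D_\delta$, and moment bounds on the coordinate processes) are strong enough to close this estimate is where the real work lies.
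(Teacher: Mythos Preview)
Your proposal is correct and matches the paper's approach in all essential respects: the decomposition of $\hat x_{k,\ell}^{Q_1}-x_{k,\ell}^{Q_1}$, the use of Theorem \ref{th1} for the plug-in piece, classical realized-volatility asymptotics for the true OU coordinate processes (with joint independence of $w_{1,1},w_{1,2}$ giving the diagonal limit), and the delta method through the explicit maps to recover $J$. The only refinement the paper makes that you leave implicit is that your Riemann-sum error (a) is split into two further pieces, $\mathcal B_{2,i}$ (replacing the smooth weight $\sin(\pi k y)\sin(\pi \ell z)e^{\kappa^* y/2+\eta^* z/2}$ at grid points, controlled by ordinary Lipschitz bounds) and $\mathcal B_{3,i}$ (replacing $\Delta_i X^{Q_1}(y,z)$ itself at grid points); for the latter $X_t^{Q_1}$ is not smooth but only H\"older, and the paper uses the interpolation $(a\land 1)^2\le a^{2\epsilon}$ on $|e_{\bs k}(y_{j_1},z_{j_2})-e_{\bs k}(y,z)|\lesssim \lambda_{\bs k}^{1/2}/(M_1\land M_2)\land 1$ together with the summability of $\sum_{\bs k}(1-e^{-\lambda_{\bs k}\Delta_n})/\lambda_{\bs k}^{1+\alpha-\epsilon}$ for $\epsilon<\alpha$, which is exactly where the condition $n^{2-\alpha+\epsilon}/(M_1^{2\epsilon}\land M_2^{2\epsilon})\to 0$ arises.
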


\begin{rmk}
Set $M_1=M_2$, $M=O(N^\beta)$, $\beta>0$, 
$n=O(N^{\zeta})$, $0<\zeta\le1$, 
and $m=O(N^\rho)$, $0<\rho<1\land2(1-\alpha)$. 
For the condition of (1) in Theorem \ref{th2}, one has from
$\frac{n^{1-\alpha}}{mN^{2\gamma}}
=O(N^{\zeta(1-\alpha)-\rho-2\gamma})$,
$\frac{n^{1-\alpha+\epsilon}}{M_1^{2\epsilon}\land M_2^{2\epsilon}}
=O(N^{\zeta(1-\alpha+\epsilon)-\epsilon\beta})$
that
\begin{equation}\label{eq-zeta1}
\zeta(1-\alpha)-\rho-2\gamma<0,
\quad
\zeta(1-\alpha+\epsilon)-\epsilon\beta<0
\end{equation}
for some $\epsilon <\alpha$.
For the existence of $\epsilon<\alpha$ such that 
$\zeta(1-\alpha+\epsilon)-\epsilon\beta<0$, 
it suffices that 
$\zeta(1-\alpha)/(\beta-\zeta)<\epsilon<\alpha$,
i.e., $\zeta<\alpha\beta$.
Therefore, \eqref{eq-zeta1} is fulfilled if $\zeta$ is chosen as follows.
\begin{equation}\label{eq-zeta2}
\zeta<\frac{\rho+2\gamma}{1-\alpha} \land \alpha\beta.
\end{equation}
Since we can choose $\gamma$ such that $1-\alpha<\rho+2\gamma$, 
\eqref{eq-zeta2} is represented by $\zeta<\alpha\beta$.
In the same way as (2) in Theorem \ref{th2}, 
we can choose $0<\zeta\le1$ such that
\begin{equation}\label{eq-zeta3}
\zeta<\frac{\rho+2\gamma}{2-\alpha} \land \frac{\alpha\beta}{2},
\end{equation}
and since $\gamma$ can be chosen to satisfy $\rho+2\gamma<1\land2(1-\alpha)$, 
\eqref{eq-zeta3} is represented by 
$\zeta<\frac{1\land2(1-\alpha)}{2-\alpha} \land \frac{\alpha\beta}{2}$.
We also see from \eqref{eq-zeta2} and \eqref{eq-zeta3} that
for some $\zeta<\frac{\rho+2\gamma}{2-\alpha}$, 
we need to set $\beta>\zeta/\alpha$ for consistency, 
and $\beta>2\zeta/\alpha$ for asymptotic normality.
That is, in order to obtain asymptotic normality of the estimators, 
it is necessary to increase the number of observations in space $M$ compared to 
the number of observations in space required for consistency to hold.
\end{rmk}

\subsection{SPDE driven by $Q_2$-Wiener process}
In this subsection, we consider the SPDE \eqref{2d_spde}
driven by the $Q_2$-Wiener process defined as \eqref{QWp_ver2}.
In this case, as in Subsection \ref{sec3.1}, 
we first estimate the parameters $\theta_1/\theta_2$ and $\eta_1/\theta_2$
which appear in the coordinate process, 
and then estimate the coefficient parameters 
using the approximate coordinate process.

In a similar way to Proposition \ref{prop1}, the following proposition holds.

\begin{prop}\label{prop2}
Under Assumption \ref{asm}, 
it holds that uniformly in $(y,z)\in D_\delta$,
\begin{equation}\label{prop2-eq1}
\EE[(\Delta_i X^{Q_2})^2(y,z)]
=
\Delta_N^\alpha\frac{\sigma^2\Gamma(1-\alpha)}{4\pi\alpha\theta_2^{1-\alpha}}
\ee^{-\frac{\theta_1}{\theta_2}y}\ee^{-\frac{\eta_1}{\theta_2}z}
+r_{N,i}+O(\Delta_N),
\end{equation}
where $\sum_{i=1}^N |r_{N,i}|=O(\Delta_N^{\alpha})$,
and thus
\begin{align*}
\EE\biggl[
\frac{1}{N\Delta_N^\alpha}\sum_{i=1}^N(\Delta_i X^{Q_2})^2(y,z)
\biggr]
&=
\frac{\sigma^2\Gamma(1-\alpha)}{4\pi\alpha\theta_2^{1-\alpha}}
\ee^{-\frac{\theta_1}{\theta_2}y}\ee^{-\frac{\eta_1}{\theta_2}z}
+O(\Delta_N^{1-\alpha}).
\end{align*}
\end{prop}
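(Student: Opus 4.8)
The plan is to mirror the proof of Proposition~\ref{prop1}, tracking where the $Q_2$-damping factor $\mu_{k,\ell}^{-\alpha/2}$ replaces the $Q_1$-damping factor $\lambda_{k,\ell}^{-\alpha/2}$. First I would write out, exactly as in Remark~\ref{rmk4}, the decomposition
\begin{align*}
\EE[(\Delta_i X^{Q_2})^2(y,z)]
&=\sigma^2
\sum_{k,\ell\ge1}
\frac{(1-\ee^{-\lambda_{k,\ell}\Delta_N})}{\lambda_{k,\ell}\mu_{k,\ell}^{\alpha}}
\biggl(
1-\frac{1-\ee^{-\lambda_{k,\ell}\Delta_N}}{2}
\ee^{-2\lambda_{k,\ell}(i-1)\Delta_N}
\biggr)
e_{k,\ell}^2(y,z)
+r_{N,i},
\end{align*}
obtained from \eqref{dp_ver2} by the same computation of $\EE[(x_{k,\ell}^{Q_2}(t_i)-x_{k,\ell}^{Q_2}(t_{i-1}))^2]$ together with the cross-term estimates that produce the error $r_{N,i}$ with $\sum_i|r_{N,i}|=O(\Delta_N^\alpha)$; this part is identical to the $Q_1$ case because the decay/summability of $e_{k,\ell}^2$ and the Ornstein--Uhlenbeck structure are unchanged — only the constant in front of each mode differs. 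The transient term involving $\ee^{-2\lambda_{k,\ell}(i-1)\Delta_N}$ is handled exactly as in Lemma~\ref{lem3} (its $i$-sum is $O(\Delta_N^\alpha)$), so it can be absorbed into $r_{N,i}$.

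Next I would analyze the leading term $\sigma^2\sum_{k,\ell\ge1}\frac{1-\ee^{-\lambda_{k,\ell}\Delta_N}}{\lambda_{k,\ell}\mu_{k,\ell}^{\alpha}}e_{k,\ell}^2(y,z)$. Writing $\lambda_{k,\ell}=\theta_2(\pi^2(k^2+\ell^2)+c_\theta)$ with $c_\theta=\frac{\kappa^2+\eta^2}{4}-\frac{\theta_0}{\theta_2}$ and $\mu_{k,\ell}=\pi^2(k^2+\ell^2)+\mu_0$, I would substitute $r=\Delta_N\lambda_{k,\ell}$, approximate the double sum over $(k,\ell)$ by a polar integral over the quarter-plane (the same Riemann-sum-to-integral passage used in the proof of Proposition~\ref{prop1}, valid for $\alpha\in(0,1)$), and observe that asymptotically $\mu_{k,\ell}\sim\lambda_{k,\ell}/\theta_2$. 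Hence $\lambda_{k,\ell}\mu_{k,\ell}^{\alpha}\sim\theta_2^{-\alpha}\lambda_{k,\ell}^{1+\alpha}$, which replaces the $\lambda_{k,\ell}^{1+\alpha}$ of the $Q_1$ computation by $\theta_2^{-\alpha}\lambda_{k,\ell}^{1+\alpha}$; this is precisely the source of the extra factor $\theta_2^{\alpha}$ that turns the $Q_1$ constant $\frac{\sigma^2\Gamma(1-\alpha)}{4\pi\alpha\theta_2}$ into the $Q_2$ constant $\frac{\sigma^2\Gamma(1-\alpha)}{4\pi\alpha\theta_2^{1-\alpha}}$. The angular integral contributes the exponential weight $\ee^{-\frac{\theta_1}{\theta_2}y}\ee^{-\frac{\eta_1}{\theta_2}z}$ through the $e_{k,\ell}^2(y,z)$ factor exactly as before, and the radial integral $\int_0^\infty \frac{1-\ee^{-r}}{r^{1+\alpha}}\,\dd r=\frac{\Gamma(1-\alpha)}{\alpha}$ gives the $\Gamma$-constant; the difference between $\sum$ and $\int$, and the replacement of $\mu_{k,\ell}^\alpha$ by its leading behavior, together produce the $O(\Delta_N)$ term. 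Uniformity in $(y,z)\in D_\delta$ follows because the bounds on $e_{k,\ell}^2(y,z)$ are uniform on the compact set $D_\delta$, just as in Proposition~\ref{prop1}.

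The main obstacle is controlling the error incurred by replacing $\mu_{k,\ell}^{\alpha}$ with $(\lambda_{k,\ell}/\theta_2)^{\alpha}$ inside the sum: one must show $\sum_{k,\ell\ge1}\frac{1-\ee^{-\lambda_{k,\ell}\Delta_N}}{\lambda_{k,\ell}}\bigl(\mu_{k,\ell}^{-\alpha}-\theta_2^{\alpha}\lambda_{k,\ell}^{-\alpha}\bigr)e_{k,\ell}^2(y,z)=O(\Delta_N)$ uniformly on $D_\delta$. Since $\mu_{k,\ell}^{-\alpha}-\theta_2^{\alpha}\lambda_{k,\ell}^{-\alpha}=O((k^2+\ell^2)^{-\alpha-1})$ — the leading $(k^2+\ell^2)^{-\alpha}$ terms cancel by construction of $\mu_{k,\ell}$ — the summand is $O\bigl(\Delta_N\,(k^2+\ell^2)^{1-1-\alpha-1}\bigr)=O(\Delta_N(k^2+\ell^2)^{-1-\alpha})$ after using $1-\ee^{-\lambda_{k,\ell}\Delta_N}\le\lambda_{k,\ell}\Delta_N$, whose double sum converges; a matching upper estimate using $1-\ee^{-\lambda_{k,\ell}\Delta_N}\le1$ for large $k,\ell$ closes the gap, giving the claimed $O(\Delta_N)$ uniformly. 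Everything else transfers verbatim from the proof of Proposition~\ref{prop1}, and the final display follows by summing \eqref{prop2-eq1} over $i=1,\dots,N$, dividing by $N\Delta_N^\alpha$, and using $\sum_i|r_{N,i}|=O(\Delta_N^\alpha)$ together with $N\cdot O(\Delta_N)=O(\Delta_N^{\alpha})\cdot\Delta_N^{1-\alpha}/\Delta_N^{?}$, i.e. $\frac{1}{N\Delta_N^\alpha}\bigl(\sum_i r_{N,i}+N\,O(\Delta_N)\bigr)=O(\Delta_N^{1-\alpha})$.
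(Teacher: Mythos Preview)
Your approach is correct and very close in spirit to the paper's. The paper organizes the reduction slightly differently: instead of writing $\mu_{k,\ell}^{-\alpha}=\theta_2^{\alpha}\lambda_{k,\ell}^{-\alpha}+O(\lambda_{k,\ell}^{-1-\alpha})$ and bounding the difference additively as you do, it proves a general lemma (Lemma~\ref{lem5}) for sums of the form $\sum_{\bs k}g_1(\lambda_{\bs k}\Delta_N)g_2(\mu_{\bs k}\Delta_N)$ where $g_2$ is multiplicative, and then uses $g_2(s)=s^{-\alpha}$, $g_2(\pi^2|\bs k|^2\Delta_N)=g_2(\theta_2\pi^2|\bs k|^2\Delta_N)/g_2(\theta_2)$ to pull out the factor $\theta_2^\alpha$; this yields a $Q_2$-analogue of Lemma~\ref{lem4} (stated as Lemma~\ref{lem6}), after which Proposition~\ref{prop2} follows from Lemmas~\ref{lem3} and~\ref{lem6} exactly as Proposition~\ref{prop1} follows from Lemmas~\ref{lem3} and~\ref{lem4}. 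Your direct comparison is the same Taylor step carried out ad~hoc rather than packaged as a lemma, and your error bound is valid: $1-\ee^{-x}\le x$ holds for all $x\ge0$, so $\sum_{\bs k}\frac{1-\ee^{-\lambda_{\bs k}\Delta_N}}{\lambda_{\bs k}}|\mu_{\bs k}^{-\alpha}-\theta_2^\alpha\lambda_{\bs k}^{-\alpha}|\,e_{\bs k}^2\lesssim\Delta_N\sum_{\bs k}\lambda_{\bs k}^{-1-\alpha}=O(\Delta_N)$ directly, and no second estimate for ``large $k,\ell$'' is needed. Two small corrections: the transient piece $\ee^{-2\lambda_{k,\ell}(i-1)\Delta_N}$ is handled in Lemma~\ref{lem4} (the $B$-part), not Lemma~\ref{lem3} (which treats the initial-condition $A$-terms and indeed transfers verbatim since $A_{i,\bs k}$ is independent of the choice of $Q$); and your last display should simply read $\frac{1}{N\Delta_N^\alpha}\bigl(\sum_i r_{N,i}+N\cdot O(\Delta_N)\bigr)=O(\Delta_N^{0})\cdot\Delta_N^{? }$---just compute $\frac{1}{N\Delta_N^\alpha}\cdot N\cdot O(\Delta_N)=O(\Delta_N^{1-\alpha})$ and $\frac{1}{N\Delta_N^\alpha}\cdot O(\Delta_N^\alpha)=O(\Delta_N)$.
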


\begin{rmk}
The only difference between \eqref{prop1-eq1} and \eqref{prop2-eq1} is 
the exponent of $\theta_2$ in the denominator. 
This is caused by the fact that the coefficients of $k^2+\ell^2$ 
in $\lambda_{k,\ell}$ and $\mu_{k,\ell}$ are different, 
which are $\theta_2\pi^2$ and $\pi^2$ respectively.
See Lemmas \ref{lem1} and \ref{lem5} for details.
Proposition \ref{prop2} allows us to estimate 
$\sigma^2/\theta_2^{1-\alpha}$, $\theta_1/\theta_2$ and $\eta_1/\theta_2$
when the SPDE \eqref{2d_spde} is driven by the $Q_2$-Wiener process.
\end{rmk}
Let $S=\sigma^2/\theta_2^{1-\alpha}$, $\kappa=\theta_1/\theta_2$, 
and $\eta=\eta_1/\theta_2$, and let 
\begin{equation*}
U_{N,m}^{(2)}(S,\kappa,\eta)
=\sum_{j_1=1}^{m_1}\sum_{j_2=1}^{m_2}
\biggl(Z_N^{Q_2}(\widetilde y_{j_1},\widetilde z_{j_2})
-\frac{\Gamma(1-\alpha)}{4\pi\alpha}S\,
\ee^{-(\kappa \widetilde y_{j_1}+\eta \widetilde z_{j_2})}
\biggr)^2,
\end{equation*}
which is the contrast function of $S$, $\kappa$ and $\eta$.
Let $\check S$, $\check \kappa$ and $\check \eta$ be
minimum contrast estimators defined as
\begin{equation*}
(\check S,\check \kappa, \check \eta)
=\underset{(S,\kappa,\eta)\in\Xi_2}{\mathrm{arginf}}\, U_{N,m}^{(2)}(S,\kappa,\eta),
\end{equation*}
where $\Xi_2$ is a compact convex subset of
$(0,\infty)\times\mathbb R^2$, and  we assume that 
the true value $(S^*,\kappa^*,\eta^*)$ belongs to $\mathrm{Int}\, \Xi_2$.

\begin{thm}\label{th3}
Under Assumption \ref{asm}, 
it holds that for any $\gamma<\frac{1}{2}\land(1-\alpha)-\frac{\rho}{2}$,
as $N\to\infty$ and $m\to\infty$,
\begin{equation*}
m^{1/2}N^\gamma
\begin{pmatrix}
\check S-S^*
\\
\check \kappa-\kappa^*
\\
\check \eta-\eta^*
\end{pmatrix}
\pto0.
\end{equation*}
\end{thm}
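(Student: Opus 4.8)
The plan is to mirror the argument used for Theorem \ref{th1}, replacing Proposition \ref{prop1} by Proposition \ref{prop2} throughout; the structural identity of the two contrast functions $U_{N,m}^{(1)}$ and $U_{N,m}^{(2)}$ means that essentially the same machinery applies, with $s$ replaced by $S=\sigma^2/\theta_2^{1-\alpha}$. First I would introduce the ``true'' profile function
\begin{equation*}
g^*(y,z)=\frac{\Gamma(1-\alpha)}{4\pi\alpha}S^*\,\ee^{-(\kappa^* y+\eta^* z)},
\qquad
g_{(S,\kappa,\eta)}(y,z)=\frac{\Gamma(1-\alpha)}{4\pi\alpha}S\,\ee^{-(\kappa y+\eta z)},
\end{equation*}
and split
\begin{equation*}
Z_N^{Q_2}(\widetilde y_{j_1},\widetilde z_{j_2})
-g_{(S,\kappa,\eta)}(\widetilde y_{j_1},\widetilde z_{j_2})
=\bigl(g^*-g_{(S,\kappa,\eta)}\bigr)(\widetilde y_{j_1},\widetilde z_{j_2})
+\xi_{N}(\widetilde y_{j_1},\widetilde z_{j_2}),
\end{equation*}
where $\xi_N(y,z)=Z_N^{Q_2}(y,z)-g^*(y,z)$. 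By Proposition \ref{prop2}, $\EE[\xi_N(y,z)]=O(\Delta_N^{1-\alpha})$ uniformly on $D_\delta$, and by the variance/covariance estimates underlying the proof of Theorem \ref{th1} (the second-moment bounds for the normalized quadratic variation $Z_N^{Q_2}$, which come from the same Ornstein--Uhlenbeck structure in \eqref{dp_ver2}) one obtains $\EE[\xi_N(y,z)^2]=O(N^{-(1\land 2(1-\alpha))})$ uniformly on $D_\delta$. Hence $\frac{1}{m}U_{N,m}^{(2)}(S,\kappa,\eta)\pto \int_{D_\delta}(g^*-g_{(S,\kappa,\eta)})^2$ type behaviour holds, and the identifiability of $(S^*,\kappa^*,\eta^*)$ — the map $(S,\kappa,\eta)\mapsto g_{(S,\kappa,\eta)}$ is injective on $(0,\infty)\times\mathbb R^2$ since $S$, $\kappa$, $\eta$ are recovered from the value and the two partial logarithmic derivatives of $g_{(S,\kappa,\eta)}$ — gives consistency $(\check S,\check\kappa,\check\eta)\pto(S^*,\kappa^*,\eta^*)$.

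Next I would upgrade consistency to the rate $m^{1/2}N^\gamma$. Writing $\bs\phi=(S,\kappa,\eta)$ and $\bs\phi^*=(S^*,\kappa^*,\eta^*)$, a Taylor expansion of the estimating equation $\partial_{\bs\phi}U_{N,m}^{(2)}(\check{\bs\phi})=0$ around $\bs\phi^*$ gives
\begin{equation*}
\bigl(\check{\bs\phi}-\bs\phi^*\bigr)
=-\Bigl(\tfrac{1}{m}\partial_{\bs\phi}^2 U_{N,m}^{(2)}(\bar{\bs\phi})\Bigr)^{-1}
\tfrac{1}{m}\partial_{\bs\phi}U_{N,m}^{(2)}(\bs\phi^*)
\end{equation*}
for some intermediate point $\bar{\bs\phi}$. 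The normalized Hessian converges in probability to a deterministic invertible matrix (the Gram matrix of $\partial_{\bs\phi}g_{\bs\phi^*}$ against the empirical measure on $D_\delta$, which is nondegenerate because $1$, $y$, $z$ are linearly independent on $D_\delta$), so the rate is governed by $\frac{1}{m}\partial_{\bs\phi}U_{N,m}^{(2)}(\bs\phi^*)=-\frac{2}{m}\sum_{j_1,j_2}\xi_N(\widetilde y_{j_1},\widetilde z_{j_2})\,\partial_{\bs\phi}g_{\bs\phi^*}(\widetilde y_{j_1},\widetilde z_{j_2})$. I would bound its $L^2$ norm by controlling $\EE[\xi_N]^2$ (contributing $O(\Delta_N^{2(1-\alpha)})$) and $\VV$-type cross terms $\frac{1}{m^2}\sum_{j_1,j_2}\sum_{k_1,k_2}\cov(\xi_N(\widetilde y_{j_1},\widetilde z_{j_2}),\xi_N(\widetilde y_{k_1},\widetilde z_{k_2}))$, which is $O(N^{-(1\land 2(1-\alpha))}/m)$ plus off-diagonal contributions — exactly the bound that for Theorem \ref{th1} produced $m^{1/2}N^\gamma$-consistency with $\gamma<\frac12\land(1-\alpha)-\frac\rho2$. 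Since the stochastic structure of $X^{Q_2}$ differs from that of $X^{Q_1}$ only through the replacement of the damping factors $\lambda_{k,\ell}^{-\alpha/2}$ by $\mu_{k,\ell}^{-\alpha/2}$, and since $\mu_{k,\ell}\asymp k^2+\ell^2\asymp\lambda_{k,\ell}$ (uniformly in $k,\ell$, using Lemmas \ref{lem1} and \ref{lem5}), every such sum obeys the same order bounds, and the conclusion of Theorem \ref{th1} carries over verbatim.

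The main obstacle, as in the one-dimensional and $Q_1$ cases, is the sharp bookkeeping of the covariances of the spatially sampled, time-averaged squared increments $\xi_N(\widetilde y_{j_1},\widetilde z_{j_2})$: one must show that the bound $\EE[\xi_N(y,z)^2]=O(N^{-(1\land 2(1-\alpha))})$ from Proposition \ref{prop2} and its proof is uniform over $D_\delta$ and, crucially, that the off-diagonal covariances decay fast enough in the separation of the sampling points $(\widetilde y_{j_1},\widetilde z_{j_2})$ so that summing over the $m=O(N^\rho)$ design points loses only a factor $m$ rather than $m^2$. This is where the spacing of the thinned spatial grid $\{(\widetilde y_{j_1},\widetilde z_{j_2})\}$ and the restriction $\rho<1\land 2(1-\alpha)$ enter. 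Once these covariance estimates are in hand — and they are the two-dimensional analogues of the bounds already established for Proposition \ref{prop1}/Theorem \ref{th1} — the remaining arguments (consistency, Taylor expansion, invertibility of the limiting Hessian) are routine and identical to those in the proof of Theorem \ref{th1}, so I would state them briefly and refer back to Section \ref{sec5}.
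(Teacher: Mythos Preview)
Your proposal is correct and follows exactly the approach the paper takes: the paper's proof of Theorem \ref{th3} is the single sentence ``It is shown similarly to the proof of Theorem \ref{th1},'' and your outline is a faithful expansion of that reduction, using Proposition \ref{prop2} and Lemma \ref{lem6} in place of Proposition \ref{prop1} and Lemma \ref{lem4}.

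One small simplification relative to what you wrote: in the rate step you do \emph{not} need any spatial decay of the off-diagonal covariances $\cov(\xi_N(\widetilde y_{j_1},\widetilde z_{j_2}),\xi_N(\widetilde y_{k_1},\widetilde z_{k_2}))$. The paper's proof of \eqref{eq5-0106} bounds the score by Cauchy--Schwarz,
\[
\Bigl|\tfrac{N^\gamma}{m^{1/2}}\sum_{j_1,j_2}\xi_N^*(\widetilde y_{j_1},\widetilde z_{j_2})\varphi_{\nu^*}(\widetilde y_{j_1},\widetilde z_{j_2})\Bigr|
\le m^{1/2}N^\gamma\Bigl(\tfrac{1}{m}\sum_{j_1,j_2}\xi_N^*(\widetilde y_{j_1},\widetilde z_{j_2})^2\Bigr)^{1/2}\cdot O(1),
\]
and then uses only the \emph{uniform} bound $\sup_{D_\delta}\EE[\xi_N^*(y,z)^2]=O(\Delta_N^{1\land 2(1-\alpha)})$ from \eqref{eq5-0202}. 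This already gives $O_p(m^{1/2}N^{\gamma-\frac12\land(1-\alpha)})=o_p(1)$ under the stated restriction on $\gamma$, so no spatial correlation analysis is required; the thinning parameter $\rho$ enters only through the factor $m^{1/2}=O(N^{\rho/2})$.
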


We construct the following approximate coordinate process 
by using the estimators $\check \kappa$ and $\check \eta$.
\begin{equation*}
\check x_{k,\ell}^{Q_2}(\widetilde t_i)
=\frac{2}{M}\sum_{j_1=1}^{M_1}\sum_{j_2=1}^{M_2}
X_{\widetilde t_i}^{Q_2}(y_{j_1},z_{j_2})
\sin(\pi k y_{j_1})\sin(\pi \ell z_{j_2})
\ee^{\frac{\hat\kappa}{2}y_{j_1}}\ee^{\frac{\hat\eta}{2}z_{j_2}},
\quad i=1,\ldots,n.
\end{equation*}
Noting that the coordinate process $x_{k,\ell}^{Q_2}(t)$ is 
a diffusion process given by \eqref{dp_ver2},
we can estimate the volatility parameter
$\tau_{k,\ell}:=\sigma\mu_{k,\ell}^{-\alpha/2}$.

If $\mu_0$ is known, then the estimator of $\sigma^2$ is defined as
\begin{equation*}
\check \sigma^2
=\mu_{1,1}^\alpha \sum_{i=1}^{n} \bigl(\check x_{1,1}^{Q_2}(\widetilde t_i)
-\check x_{1,1}^{Q_2}(\widetilde t_{i-1})\bigr)^2,
\end{equation*}
and the estimators of $\theta_2$, $\theta_1$ and $\eta_1$ are defined as
\begin{equation*}
\check \theta_2
=
\biggl(
\frac{\check\sigma^2}{\check S}
\biggr)^{\frac{1}{1-\alpha}},
\quad
\check \theta_1=\check \kappa \check \theta_2,
\quad
\check \eta_1=\check \eta \check \theta_2.
\end{equation*}
On the other hand, since 
\begin{equation*}
\mu_{1,1}=\biggl(\frac{\tau_{1,2}^2}{\tau_{1,1}^2}\biggr)^{1/\alpha}\mu_{1,2},
\quad
\mu_{1,2}=\mu_{1,1}+3\pi^2,
\end{equation*}
$\mu_{1,1}$ and $\sigma^2$ can be expressed as
\begin{equation*}
\mu_{1,1}
=\frac{3\pi^2(\tau_{1,2}^2/\tau_{1,1}^2)^{1/\alpha}}
{1-(\tau_{1,2}^2/\tau_{1,1}^2)^{1/\alpha}}
=\frac{3\pi^2}{\tau_{1,1}^{2/\alpha}}
\biggl(
\frac{1}{\tau_{1,2}^{2/\alpha}}-\frac{1}{\tau_{1,1}^{2/\alpha}}
\biggr)^{-1},
\end{equation*}
\begin{equation*}
\sigma^2=\tau_{1,1}^2\mu_{1,1}^\alpha
=\Biggl\{3\pi^2\biggl(
\frac{1}{\tau_{1,2}^{2/\alpha}}-\frac{1}{\tau_{1,1}^{2/\alpha}}
\biggr)^{-1}\Biggr\}^\alpha.
\end{equation*}
Therefore, if $\mu_0$ is unknown, then
the estimators of $\tau_{k,\ell}^2$ and $\sigma^2$ are defined as
\begin{equation*}
\bar \tau_{k,\ell}^2
=\sum_{i=1}^{n} \bigl(\check x_{k,\ell}^{Q_2}(\widetilde t_i)
-\check x_{k,\ell}^{Q_2}(\widetilde t_{i-1})\bigr)^2,
\end{equation*}
\begin{equation*}
\bar \sigma^2=
\Biggl\{3\pi^2\biggl(
\frac{1}{(\bar\tau_{1,2}^2)^{1/\alpha}}-\frac{1}{(\bar\tau_{1,1}^2)^{1/\alpha}}
\biggr)^{-1}\Biggr\}^\alpha
\end{equation*}
and the estimators of $\theta_2$, $\theta_1$, $\eta_1$ and $\mu_0$ are defined as
\begin{equation*}
\bar \theta_2
=\biggl(\frac{\bar\sigma^2}{\check S}\biggr)^{\frac{1}{1-\alpha}},
\quad
\bar \theta_1=\check \kappa \bar \theta_2,
\quad
\bar \eta_1=\check \eta \bar \theta_2,
\end{equation*}
\begin{equation*}
\bar \mu_0=
\bar \mu_{1,1}-2\pi^2=
\frac{3\pi^2}{(\bar \tau_{1,1}^2)^{1/\alpha}}
\biggl(
\frac{1}{(\bar \tau_{1,2}^2)^{1/\alpha}}-\frac{1}{(\bar \tau_{1,1}^2)^{1/\alpha}}
\biggr)^{-1}-2\pi^2.
\end{equation*}

Let $\bs \nu_{-1}^*
=(\theta_1^*,\eta_1^*,\theta_2^*,(1-\alpha)(\sigma^*)^2)^\TT$,
\begin{equation*}
d_1=\frac{2(\mu_{1,1}^*)^2(\mu_{1,2}^*)^2}{\alpha^2},
\quad
d_2=\frac{\mu_{1,1}^*\mu_{1,2}^*(\mu_{1,1}^*+\mu_{1,2}^*)}{\alpha(1-\alpha)},
\quad
d_3=\frac{(\mu_{1,1}^*)^2+(\mu_{1,2}^*)^2}{(1-\alpha)^2},
\end{equation*}
\begin{equation*}
K=\frac{2}{(1-\alpha)^2}
\bs \nu_{-1}^*(\bs \nu_{-1}^*)^\TT,
\quad
L=
\frac{2}{9\pi^4}
\begin{pmatrix}
d_1 & d_2 (\bs \nu_{-1}^*)^\TT
\\
d_2 \bs \nu_{-1}^* & d_3 \bs \nu_{-1}^*(\bs \nu_{-1}^*)^\TT
\end{pmatrix}.
\end{equation*}

\begin{thm}\label{th4}
Suppose Assumption \ref{asm} holds.
Let $\gamma<\frac{1}{2}\land(1-\alpha)-\frac{\rho}{2}$.

\begin{enumerate}
\item[(a)]
Let $\mu_0$ be known.
\begin{enumerate}
\item[(1)]
Under $\frac{n^{1-\alpha}}{mN^{2\gamma}}\to0$ and
$\frac{n^{1-\alpha+\epsilon}}{M_1^{2\epsilon} \land M_2^{2\epsilon}}\to0$ 
for some $\epsilon<\alpha$,
it holds that as $n\to\infty$,
\begin{equation*}
(\check\theta_1, \check\eta_1, \check\theta_2, \check\sigma^2)
\pto (\theta_1^*, \eta_1^*, \theta_2^*, (\sigma^*)^2).
\end{equation*}

\item[(2)]
Under $\frac{n^{2-\alpha}}{mN^{2\gamma}}\to0$ and 
$\frac{n^{2-\alpha+\epsilon}}{M_1^{2\epsilon} \land M_2^{2\epsilon}}\to0$
for some $\epsilon<\alpha$, it holds that as $n\to\infty$,
\begin{equation*}
\sqrt n
\begin{pmatrix}
\check\theta_1-\theta_1^*
\\
\check\eta_1-\eta_1^*
\\
\check\theta_2-\theta_2^*
\\
\check\sigma^2-(\sigma^*)^2
\end{pmatrix}
\dto N(0,K).
\end{equation*}
\end{enumerate}

\item[(b)]
Let $\mu_0$ be unknown.
\begin{enumerate}
\item[(1)]
Under $\frac{n^{1-\alpha}}{mN^{2\gamma}}\to0$ and
$\frac{n^{1-\alpha+\epsilon}}{M_1^{2\epsilon} \land M_2^{2\epsilon}}\to0$ 
for some $\epsilon<\alpha$,
it holds that as $n\to\infty$,
\begin{equation*}
(\bar\mu_0, \bar\theta_1, \bar\eta_1, \bar\theta_2, \bar\sigma^2)
\pto (\mu_0^*, \theta_1^*, \eta_1^*, \theta_2^*, (\sigma^*)^2).
\end{equation*}

\item[(2)]
Under $\frac{n^{2-\alpha}}{mN^{2\gamma}}\to0$ and 
$\frac{n^{2-\alpha+\epsilon}}{M_1^{2\epsilon} \land M_2^{2\epsilon}}\to0$
for some $\epsilon<\alpha$, it holds that as $n\to\infty$,
\begin{equation*}
\sqrt n
\begin{pmatrix}
\bar\mu_0-\mu_0^*
\\
\bar\theta_1-\theta_1^*
\\
\bar\eta_1-\eta_1^*
\\
\bar\theta_2-\theta_2^*
\\
\bar\sigma^2-(\sigma^*)^2
\end{pmatrix}
\dto N(0,L).
\end{equation*}
\end{enumerate}
\end{enumerate}
\end{thm}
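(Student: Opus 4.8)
The plan is to follow the two-step scheme used for Theorem~\ref{th2}, replacing the eigenvalues $\lambda_{k,\ell}$ and the associated damping exponents by the quantities $\mu_{k,\ell}$ attached to the $Q_2$-noise: first establish a central limit theorem for the realized volatilities built from the approximate coordinate process, then read off the joint limit law of the five estimators by the delta method. I describe the unknown-$\mu_0$ case~(b); case~(a) is the same with a single volatility estimator in place of two.

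For the first step I would work with $\bar\tau_{k,\ell}^2=\sum_{i=1}^n\bigl(\check x_{k,\ell}^{Q_2}(\widetilde t_i)-\check x_{k,\ell}^{Q_2}(\widetilde t_{i-1})\bigr)^2$ for $(k,\ell)\in\{(1,1),(1,2)\}$, decompose $\check x_{k,\ell}^{Q_2}=x_{k,\ell}^{Q_2}+(\check x_{k,\ell}^{Q_2}-x_{k,\ell}^{Q_2})$, and expand the square. The leading term $\sum_{i=1}^n\bigl(x_{k,\ell}^{Q_2}(\widetilde t_i)-x_{k,\ell}^{Q_2}(\widetilde t_{i-1})\bigr)^2$ is the realized quadratic variation on $[0,1]$, along the grid $\{\widetilde t_i\}$ of mesh $\sim 1/n$, of the Ornstein--Uhlenbeck process \eqref{dp_ver2} with constant diffusion coefficient $\tau_{k,\ell}=\sigma\mu_{k,\ell}^{-\alpha/2}$; by the standard central limit theorem for quadratic variation of a diffusion it converges in probability to $\tau_{k,\ell}^2=\sigma^2\mu_{k,\ell}^{-\alpha}$, and $\sqrt n$ times the centered version is asymptotically $N(0,2\tau_{k,\ell}^4)$, the drift contributing only lower-order terms. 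Since $x_{1,1}^{Q_2}$ and $x_{1,2}^{Q_2}$ are driven by the independent Brownian motions $w_{1,1},w_{1,2}$, the leading terms for $(1,1)$ and $(1,2)$ are jointly asymptotically normal with covariance $\mathrm{diag}(2\tau_{1,1}^4,2\tau_{1,2}^4)$. What remains is to show that the contribution of $\check x_{k,\ell}^{Q_2}-x_{k,\ell}^{Q_2}$ to $\bar\tau_{k,\ell}^2$ is negligible at rate $\sqrt n$ (and only $o_p(1)$ for part~(1)). I would split this error into a Riemann-sum part (spatial discretization) and a plug-in part coming from replacing $(\kappa,\eta)$ by $(\check\kappa,\check\eta)$; bounding its effect on $\sum_i(\cdot)^2$ by Cauchy--Schwarz against the leading term, the plug-in part is controlled by the $m^{1/2}N^\gamma$-consistency of Theorem~\ref{th3}, and the spatial part by $L^2$ and $L^4$ bounds on the aliasing terms in the spectral expansion of $X^{Q_2}$ (convergent thanks to $\alpha<1$, cf.\ Remark~\ref{rmk4} and the auxiliary lemmas of Section~\ref{sec5}). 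Tracking the resulting powers of $n,m,N,M_1,M_2$ is exactly what the hypotheses $\frac{n^{2-\alpha}}{mN^{2\gamma}}\to0$, $\frac{n^{2-\alpha+\epsilon}}{M_1^{2\epsilon}\land M_2^{2\epsilon}}\to0$ for part~(2) --- and their $1-\alpha$ analogues for part~(1) --- are designed to kill, so that $\sqrt n\,(\bar\tau_{1,1}^2-\tau_{1,1}^2,\bar\tau_{1,2}^2-\tau_{1,2}^2)^\TT\dto N(0,\mathrm{diag}(2\tau_{1,1}^4,2\tau_{1,2}^4))$.

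For the second step I note that $\bar\sigma^2$, $\bar\theta_2=(\bar\sigma^2/\check S)^{1/(1-\alpha)}$, $\bar\theta_1=\check\kappa\bar\theta_2$, $\bar\eta_1=\check\eta\bar\theta_2$ and $\bar\mu_0=\bar\mu_{1,1}-2\pi^2$ are fixed $C^1$ functions $g$ of $(\check S,\check\kappa,\check\eta,\bar\tau_{1,1}^2,\bar\tau_{1,2}^2)$, well defined and smooth near the true value because $\tau_{1,1}\neq\tau_{1,2}$, $S^*>0$ and $\theta_2^*>0$. Part~(1) is then the continuous mapping theorem applied to Step~1 and Theorem~\ref{th3}. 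For part~(2), the hypothesis $\frac{n^{2-\alpha}}{mN^{2\gamma}}\to0$ --- stronger than $\frac{n}{mN^{2\gamma}}\to0$ since $\alpha<1$ --- together with Theorem~\ref{th3} gives $\sqrt n\,(\check S-S^*,\check\kappa-\kappa^*,\check\eta-\eta^*)^\TT\pto 0$, so that only the two $\bar\tau$-coordinates fluctuate at rate $\sqrt n$; the delta method with the Jacobian $\nabla g$ at the true value and the covariance from Step~1 then yields the stated $N(0,L)$, the entries $d_1,d_2,d_3$ and the vector $\bs\nu_{-1}^*$ arising by differentiating the closed forms for $\bar\mu_{1,1}$, $\bar\sigma^2$, $\bar\theta_2$, $\bar\theta_1$, $\bar\eta_1$ with respect to $\bar\tau_{1,1}^2,\bar\tau_{1,2}^2$. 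Since the limit is a linear image of a two-dimensional Gaussian, $L$ has rank two. Case~(a) runs identically with $(\check S,\check\kappa,\check\eta,\check\sigma^2)$ replacing the five-vector --- only $\bar\tau_{1,1}^2$ enters when $\mu_0$ is known --- which is why $K$ has rank one.

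I expect the one genuine difficulty to be the negligibility claim in Step~1: showing that the compound approximation error of $\check x_{k,\ell}^{Q_2}$, namely the Riemann sum in space plus the plugged-in $\check\kappa,\check\eta$, does not survive in the quadratic variation after multiplication by $\sqrt n$. This needs sharp moment bounds, uniform in $i$, on the increments $\check x_{k,\ell}^{Q_2}(\widetilde t_i)-\check x_{k,\ell}^{Q_2}(\widetilde t_{i-1})-\bigl(x_{k,\ell}^{Q_2}(\widetilde t_i)-x_{k,\ell}^{Q_2}(\widetilde t_{i-1})\bigr)$, and is where Assumption~\ref{asm} on the initial value and the restriction $\alpha<1$ are genuinely used; the precise rate conditions in the statement are dictated by this estimate. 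Computing $K$ and $L$ explicitly afterwards is a routine, if somewhat lengthy, Jacobian calculation.
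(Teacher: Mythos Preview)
Your proposal is correct and follows essentially the same approach as the paper: reduce to a CLT for the realized volatilities $\bar\tau_{1,1}^2,\bar\tau_{1,2}^2$ by bounding the approximation error $\check x_{k,\ell}^{Q_2}-x_{k,\ell}^{Q_2}$ in quadratic variation (via Cauchy--Schwarz against the leading term), then apply the delta method using that $\sqrt n(\check S-S^*,\check\kappa-\kappa^*,\check\eta-\eta^*)=o_p(1)$ under the rate conditions. The paper's only refinement over your sketch is that the ``Riemann-sum part'' is itself split in two --- one piece from the discretization of the smooth test function $\sin(\pi r_1 y)\sin(\pi r_2 z)e^{\kappa^* y/2}e^{\eta^* z/2}$, and a second piece from the discretization of the rough field $X^{Q_2}(y,z)$ --- with the latter handled by interpolating $|e_{\bs k}(y_{j_1},z_{j_2})-e_{\bs k}(y,z)|\lesssim(\lambda_{\bs k}^{1/2}/(M_1\land M_2))\land 1$ and raising this to the power $2\epsilon$ to extract the factor $(M_1\land M_2)^{-2\epsilon}$; this is where the parameter $\epsilon<\alpha$ enters and is the concrete mechanism behind the ``$L^2$ bounds on the aliasing terms'' you allude to.
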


\begin{rmk}
If $\mu_0$ is known, then the damping factor $\mu_{k,\ell}^{-\alpha/2}$ 
of the $Q_2$-Wiener process is known. 
Moreover,  since the cylindrical Brownian motion is given by \eqref{cBm}, 
the value corresponding to $\mu_{k,\ell}^{-\alpha/2}$ in \eqref{QWp_ver2} is $1$, 
that is, it is known.
Therefore, it is natural to consider the $Q_2$-Wiener process when $\mu_0$ is known, 
since the $Q_2$-Wiener process can be regarded as the driving noise 
corresponding to a cylindrical Brownian motion 
in the sense that the value of the damping factor is known.
Indeed, (a)-(2) in Theorem \ref{th4}, especially for $\alpha=0.5$,
corresponds to the result of Kaino and Uchida \cite{Kaino_Uchida2020}.
On the other hand, 
when $\mu_0$ is unknown, the $Q_2$-Wiener process corresponds to 
the $Q_1$-Wiener process in the sense that the damping factor is unknown.
Theorem \ref{th2} and (b) in Theorem \ref{th4} show that 
$\theta_0$ can be estimated 
for the SPDE \eqref{2d_spde} driven by the $Q_1$-Wiener process, 
while $\mu_0$ can be estimated instead of $\theta_0$
for the SPDE \eqref{2d_spde} driven by the $Q_2$-Wiener process.
\end{rmk}

\section{Simulations}\label{sec4}

The numerical solution of the SPDE \eqref{2d_spde} is generated by
\begin{eqnarray}
\tilde{X}^{Q_1}_{t_{i}}(y_{j_1},z_{j_2})
= \sum^{K}_{k = 1}\sum^{L}_{\ell = 1}x^{Q_1}_{k,\ell}(t_{i})
e_{k,\ell}(y_{j_1},z_{j_2}), 
\quad i = 1, ..., N, j_1 = 1, ..., M_1, j_2 = 1, ..., M_2.
\label{appro-SPDE}
\end{eqnarray}
For the characteristics of the parameters of the SPDE \eqref{2d_spde}, 
see the Appendix below.
In this simulation, the true values of parameters
$(\theta_0^*, \theta_1^*,\eta_1^*, \theta_2^*, \sigma^*) = (0,0.2,0.2,0.2,1)$.
We set that $N = 10^3$, $M_1 = M_2 = 200$, $K = L = 10^5$,  
$\xi = 0$,
$\alpha = 0.5$,
$\lambda_{1,1}^* \approx 4.05$.
When $N=10^3, M_1=M_2=200$, the size of data is about 10 GB. 
We used R language to compute the estimators of Theorems \ref{th1} and \ref{th2}.
The computation time of \eqref{appro-SPDE} is directly proportional to $K \times L$.
Therefore, the computation time for the numerical solution of 
the SPDE \eqref{2d_spde} is 
directly proportional to $N \times M_1 \times  M_2 \times K \times L$.
In the setting of this simulation, $N \times M_1 \times  M_2 \times K \times L = 4 \times 10^{17}$.
Three personal computers were used for this simulation,
and it takes about 100h to generate one sample path of the SPDE \eqref{2d_spde}.
The number of iteration is $25$ because the calculation time is enormous.

Figures $\ref{fig1}$-$\ref{fig3}$ are sample paths of $X_t^{Q_1}(y,z)$ 
when $(\theta_0^*, \theta_1^*,\eta_1^*, \theta_2^*, \sigma^*) = (0,0.2,0.2,0.2,1)$.
Figure \ref{fig1} is 
a cross section of 
the sample path at $t=0.1, 0.5 $ and $0.9$.
Figure \ref{fig2} is 
a cross section of 
the sample path at $y=0.1, 0.5 $ and $0.9$.
From Figure \ref{fig2}, it can be seen that there is not much difference 
between the variation of the sample path near $z=0$ and that near $z=1$.
Figure \ref{fig3} is 
a cross section of 
the sample path at $z=0.1, 0.5 $ and $0.9$.
From Figure \ref{fig3}, it can be seen that there is not much difference 
between the variation of the sample path near $y=0$ and that near $y=1$.

\begin{figure}[h] 
\begin{center}
\includegraphics[width=4cm,pagebox=cropbox,clip]{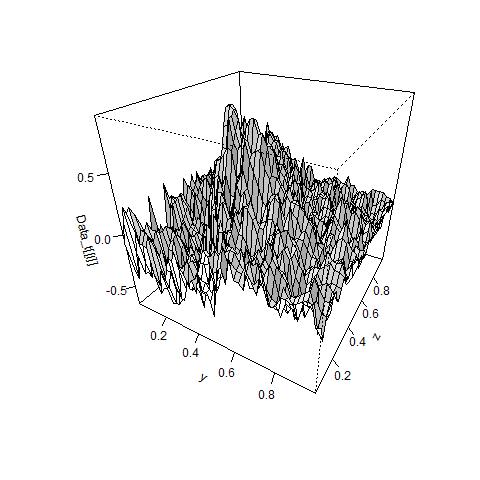}
\includegraphics[width=4cm,pagebox=cropbox,clip]{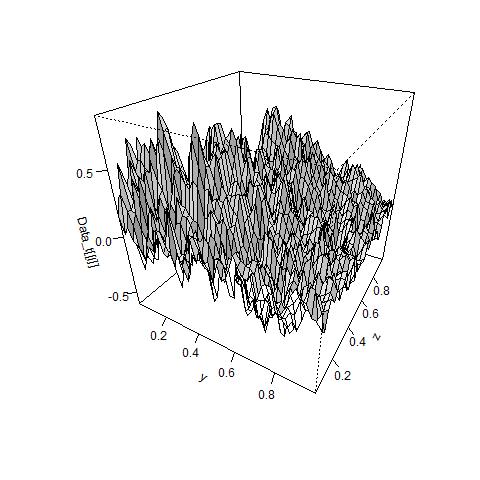}
\includegraphics[width=4cm,pagebox=cropbox,clip]{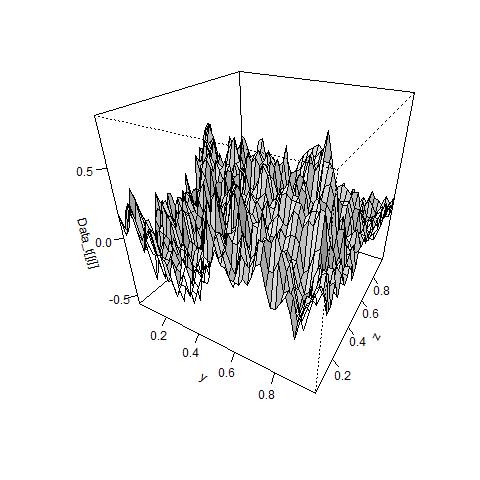}\\
\caption{Cross sections of 
the sample path at $t=0.1$(left), $0.5 $(center) and $0.9$(right)\label{fig1}}
\end{center}
\end{figure}

\begin{figure}[h] 
\begin{center}
\includegraphics[width=4cm,pagebox=cropbox,clip]{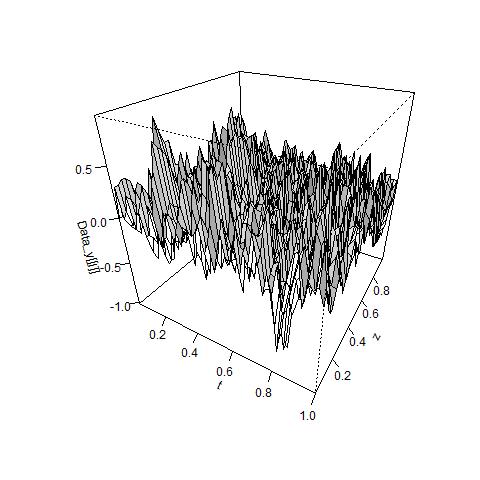}
\includegraphics[width=4cm,pagebox=cropbox,clip]{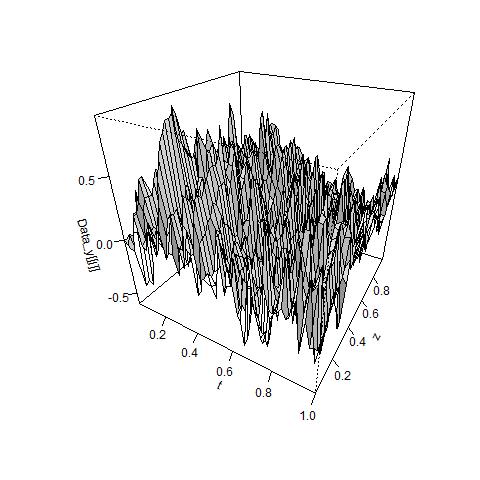}
\includegraphics[width=4cm,pagebox=cropbox,clip]{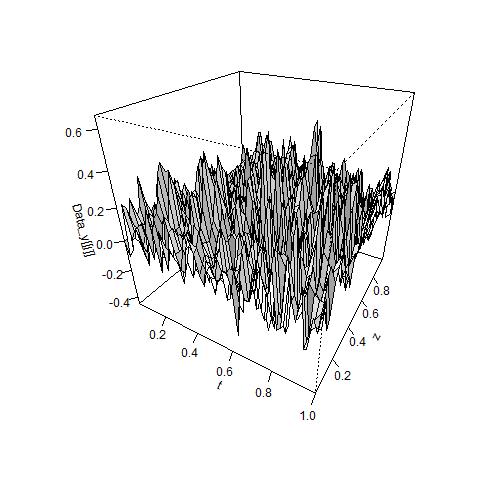}\\
\caption{Cross sections of 
the sample path at $y=0.1$(left), $0.5 $(center) and $0.9$(right) \label{fig2}}
\end{center}
\end{figure}

\begin{figure}[h] 
\begin{center}
\includegraphics[width=4cm,pagebox=cropbox,clip]{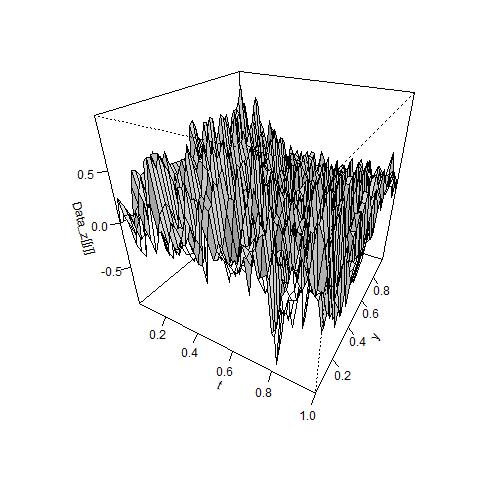}
\includegraphics[width=4cm,pagebox=cropbox,clip]{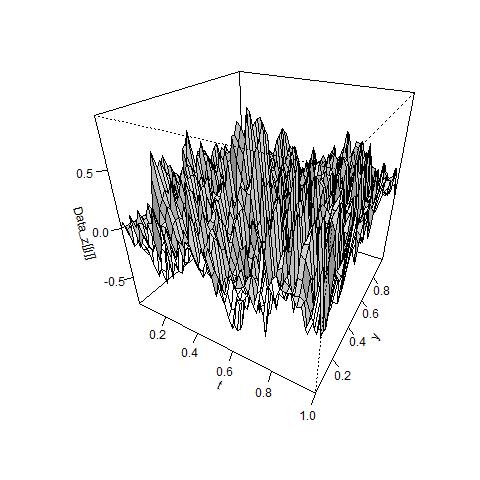}
\includegraphics[width=4cm,pagebox=cropbox,clip]{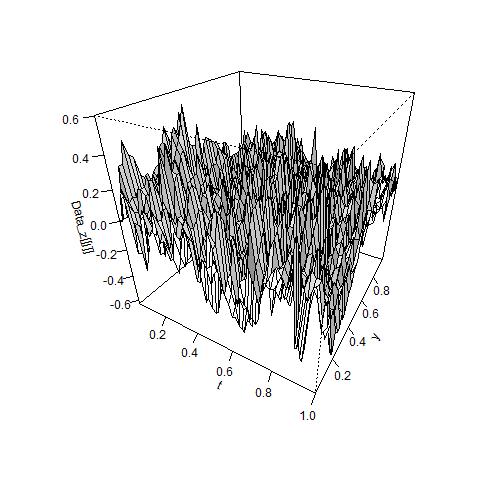}\\
\caption{Cross sections of 
the sample path at $z=0.1$(left), $0.5 $(center) and $0.9$(right) \label{fig3}}
\end{center}
\end{figure}

First, we estimated $s=\sigma^2/\theta_2$, $\kappa=\theta_1/\theta_2$ and $\eta = \eta_1/\theta_2$.
Table \ref{tab1} is the simulation results of 
{\color{black} the means and the standard deviations (s.d.s) of} $\hat{s}$, $\hat{\kappa}$  and $\hat{\eta}$ 
with $(N, m_1,m_2,\alpha,\rho, \gamma ) = (10^3, 5,5,0.5, 0.47,0.26)$.
\begin{table}[h]
\caption{Simulation results of $\hat{s}$, $\hat{\kappa}$  and $\hat{\eta}$ \label{tab1}}
\begin{center}
\begin{tabular}{c|ccc} \hline
		&$\hat{s}$&$\hat{\kappa}$&$\hat{\eta}$
\\ \hline
{\color{black} true value} &5 & 1 & 1
\\ \hline
mean &4.784&  0.987&  0.992
 \\
s.d. & (0.153)& (0.037)& (0.036)
 \\   \hline
\end{tabular}
\end{center}
\end{table}
It seems from Table \ref{tab1} that the biases of $\hat{\kappa}$ and $\hat{\eta}$ are both very small and the result of $\hat{s}$ is not bad.
The estimators of $s$, $\kappa$ and $\eta$ have good performances.

Next, we estimated $(\theta_0,\theta_1,\eta_1,\theta_2,\sigma^2)$.
Table \ref{tab2} is the simulation results of 
{\color{black} the means and the standard deviations (s.d.s) of} $\hat{\theta}_0$, $\hat{\theta}_1$, $\hat{\eta}_1$, $\hat{\theta}_2$ and $\hat{\sigma}^2$
with $(N, m_1,m_2,n,\alpha,\rho, \gamma ,\epsilon)$ $ = (10^3, 5,5,50,0.5,$  $0.47,0.26,0.499)$.
In this case, $\frac{n^{1-\alpha}}{mN^{2\gamma}} \approx 0.008 $ and 
$\frac{n^{1-\alpha+\epsilon}}{M_1^{2\epsilon} \land M_2^{2\epsilon}} \approx 0.25$.
\begin{table}[h]
\caption{Simulation results of $\hat{\theta}_0$, $\hat{\theta}_1$, $\hat{\eta}_1$, $\hat{\theta}_2$ and $\hat{\sigma}^2$ \label{tab2}}
\begin{center}
\begin{tabular}{c|ccccc} \hline
		&$\hat{\theta}_{0}$&$\hat{\theta}_{1}$&$\hat{\eta}_{1}$&$\hat{\theta}_{2}$&$\hat{\sigma}^2$
\\ \hline
{\color{black} true value} &0 & 0.2 & 0.2 &0.2 &1 
\\ \hline
mean &-0.291&  0.150&  0.152 & 0.152& 0.732
 \\
s.d. & (2.710)& (0.081)& (0.085) &(0.083) & (0.403)
 \\   \hline
\end{tabular}
\end{center}
\end{table}
It seems from Table \ref{tab2} that $\hat{\theta}_0$ has good performance.
Although $\hat{\theta}_1$, $\hat{\eta}_1$, $\hat{\theta}_2$ and $\hat{\sigma}^2$ have small biases, their results are not bad.

Table \ref{tab3} is the simulation results of 
{\color{black} the means and the standard deviations (s.d.s) of} $\hat{\theta}_0$, $\hat{\theta}_1$, $\hat{\eta}_1$, $\hat{\theta}_2$ and $\hat{\sigma}^2$
with $(N, m_1,m_2,n,\alpha,\rho, \gamma ,\epsilon) = (10^3, 5,5,100,0.5, 0.47,0.26,0.499)$.
In this case, $\frac{n^{1-\alpha}}{mN^{2\gamma}} \approx 0.011 $ and 
$\frac{n^{1-\alpha+\epsilon}}{M_1^{2\epsilon} \land M_2^{2\epsilon}} \approx 0.50$.
\begin{table}[h]
\caption{Simulation results of $\hat{\theta}_0$, $\hat{\theta}_1$, $\hat{\eta}_1$, $\hat{\theta}_2$ and $\hat{\sigma}^2$ \label{tab3}}
\begin{center}
\begin{tabular}{c|ccccc} \hline
		&$\hat{\theta}_{0}$&$\hat{\theta}_{1}$&$\hat{\eta}_{1}$&$\hat{\theta}_{2}$&$\hat{\sigma}^2$
\\ \hline
{\color{black} true value} &0 & 0.2 & 0.2 &0.2 &1 
\\ \hline
mean &-0.862&  0.190&  0.191 & 0.191& 0.904
 \\
s.d. & (3.393)& (0.080)& (0.081) &(0.081) & (0.416)
 \\   \hline
\end{tabular}
\end{center}
\end{table}
It seems from Table \ref{tab3} that 
$\hat{\theta}_1$, $\hat{\eta}_1$, $\hat{\theta_2}$ and $\hat{\sigma}^2$ 
have good performances and that $\hat{\theta}_0$ has a small bias.

Table \ref{tab4} is the simulation results of 
{\color{black} the means and the standard deviations (s.d.s) of} $\hat{\theta}_0$, $\hat{\theta}_1$, $\hat{\eta}_1$, $\hat{\theta}_2$ and $\hat{\sigma}^2$
with $(N, m_1,m_2,n,\alpha,\rho, \gamma ,\epsilon) = (10^3, 5,5,150,0.5, 0.47,0.26,0.499)$.
In this case, $\frac{n^{1-\alpha}}{mN^{2\gamma}} \approx 0.013 $ and 
$\frac{n^{1-\alpha+\epsilon}}{M_1^{2\epsilon} \land M_2^{2\epsilon}} \approx 0.75$.
\begin{table}[h]
\caption{Simulation results of $\hat{\theta}_0$, $\hat{\theta}_1$, $\hat{\eta}_1$, $\hat{\theta}_2$ and $\hat{\sigma}^2$ \label{tab4}}
\begin{center}
\begin{tabular}{c|ccccc} \hline
		&$\hat{\theta}_{0}$&$\hat{\theta}_{1}$&$\hat{\eta}_{1}$&$\hat{\theta}_{2}$&$\hat{\sigma}^2$
\\ \hline
{\color{black} true value} &0 & 0.2 & 0.2 &0.2 &1 
\\ \hline
mean &-1.235&  0.176&  0.179 & 0.179& 0.85
 \\
s.d. & (3.238)& (0.077)& (0.081) &(0.082) & (0.381)
 \\   \hline
\end{tabular}
\end{center}
\end{table}
It seems from Table \ref{tab4} that 
$\hat{\theta}_1$, $\hat{\eta}_1$, $\hat{\theta_2}$ and $\hat{\sigma}^2$ 
have good performances and that $\hat{\theta}_0$ has a small bias.

At the moment, the number of iteration is only $25$, so the results are not very good. 
However, it is expected that the estimators have good performances if the number of iteration is increased.

\section{Proofs}\label{sec5}

We set the following notation.
\begin{enumerate}
\item[1.]
Let $\mathbb R_{+}=(0,\infty)$ and $\bs k=(k_1,k_2)\in \mathbb N^2$.

\item[2.]
For $A,B\ge0$, we write $A \lesssim B$ if $A\le CB$ for some constant $C>0$.

\item[3.]
For $x=(x_1,\ldots,x_d)\in\mathbb R^d$ and $f:\mathbb R^d\to\mathbb R$, 
we write $\partial_{x_1} f(x)=\frac{\partial}{\partial x_1}f(x)$,
$\partial f(x)=(\partial_{x_1}f(x),\ldots,\partial_{x_d}f(x))$
and $\partial^2 f(x)=(\partial_{x_j}\partial_{x_i} f(x))_{i,j=1}^d$.

\item[4.]
For two functions $f$ and $g$, we write
$f(s) \sim g(s)$ ($s\to s_0$) 
if there exists $c\in(0,\infty)$ such that 
$\lim_{s\to s_0} f(s)/g(s)=c$. 

\item[5.]
Let $\ind_A$ be the indicator function of $A$. 

\item[6.]
Let $\iu$ be the imaginary unit, and
let $\re (z)$ and $\im (z)$ denote the real and imaginary parts of $z\in\mathbb C$, 
respectively.

\end{enumerate}

\subsection{Proofs of lemmas}
In this subsection, 
we prepare some lemmas before proving our assertions in Section \ref{sec3}.
For $\beta<2$, let $\mathcal F_\beta$ be the space of 
all twice continuously differentiable 
functions $f:\mathbb R_{+}\to \mathbb R$ satisfying the following conditions. 
\begin{enumerate}
\item[(a)]
$s f(s^2)$,
$s^2 f'(s^2)\ind_{[1,\infty)}$, 
$s^3 f''(s^2)\ind_{[1,\infty)}\in L^1(\mathbb R_{+})$.

\item[(b)]
$s^\beta f(s^2)\sim 1$ ($s\downarrow0$).

\end{enumerate}
For $f\in\mathcal F_\beta$, we define
\begin{equation*}
R_{1,N}
=
O\Biggl(
\Delta_N
\biggl(
\int_{\Delta_N^{1/2}}^1s|f'(s^2)|\dd s
\lor 
\int_{\Delta_N^{1/2}}^1s^3|f''(s^2)|\dd s
\biggr)
\Biggr)
\end{equation*}
\begin{equation*}
R_{2,N}
=
O\Biggl(
\Delta_N^{3/2}
\int_{\Delta_N^{1/2}}^1|f'(s^2)|\dd s
\Biggr),
\quad
R_{N}^{(\beta)}
=
O\Biggl(
\Delta_N^{1/2}
\int_{\Delta_N^{1/2}}^{1} s^{-\beta}\dd s
\Biggr).
\end{equation*}
Let
\begin{align*}
D_N^{(1)}
&=\biggl\{
(x_1,x_2)\in\mathbb R^2\biggl| 
0< x_1 \le \frac{\Delta_N^{1/2}}{2},\ 
\frac{\Delta_N^{1/2}}{2}< x_2 
\biggl\}, 
\\
D_N^{(2)}
&=\biggl\{
(x_1,x_2)\in\mathbb R^2\biggl| 
\frac{\Delta_N^{1/2}}{2}< x_1,\  
0< x_2 \le \frac{\Delta_N^{1/2}}{2}
\biggl\}.
\end{align*}

\begin{lem}\label{lem1}
If $f\in \mathcal F_\beta$, then the followings hold.
\begin{enumerate}
\item[(1)]
$\displaystyle
\Delta_N\sum_{\bs k\in\mathbb N^2}f(\lambda_{\bs k}\Delta_N)
=\frac{1}{4\pi\theta_2}
\int_0^\infty f(s)\dd s
-\int_{D_N^{(1)}\cup D_N^{(2)}}f(\theta_2\pi^2|x|^2) \dd x
+O(R_{1,N}\lor \Delta_N^{1-\beta/2})$.

\item[(2)]
For any $y,z\in[\delta,1-\delta]$, 
\begin{align*}
\Delta_N\sum_{\bs k\in\mathbb N^2}f(\lambda_{\bs k}\Delta_N)
\cos(2\pi k_1y)
&=
-\int_{D_N^{(1)}}f(\theta_2\pi^2|x|^2) \dd x
+O\biggl(\frac{R_{1,N}\lor R_{2,N}}{\delta^2}\biggr),
\\
\Delta_N\sum_{\bs k\in\mathbb N^2}f(\lambda_{\bs k}\Delta_N)
\cos(2\pi k_2z)
&=
-\int_{D_N^{(2)}}f(\theta_2\pi^2|x|^2) \dd x
+O\biggl(\frac{R_{1,N}\lor R_{2,N}}{\delta^2}\biggr).
\end{align*}

\item[(3)]
For any $y,z\in[\delta,1-\delta]$,
\begin{equation*}
\Delta_N\sum_{\bs k\in\mathbb N^2}f(\lambda_{\bs k}\Delta_N)
\cos(2\pi k_1 y)\cos(2\pi k_2 z)
=
O\biggl(\frac{R_{1,N}\lor R_{2,N}\lor\Delta_N^{1-\beta/2}}{\delta^3}\biggr).
\end{equation*}
\end{enumerate}
Furthermore, it holds that
\begin{equation}\label{eq1-lem1-1}
\int_{D_N^{(1)}\cup D_N^{(2)}}f(\theta_2\pi^2|x|^2)\dd x
=O(R_N^{(\beta)}).
\end{equation}
\end{lem}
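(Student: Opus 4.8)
The overall strategy is to compare the discrete sum $\Delta_N\sum_{\bs k\in\mathbb N^2}f(\lambda_{\bs k}\Delta_N)$ (and its cosine-weighted variants) with the corresponding integral over $\mathbb R_+^2$ via an Euler--Maclaurin / Poisson-summation-type argument, treating the spatial frequencies $\bs k = (k_1,k_2)$ as sample points of a smooth function on a grid of mesh-width $\Delta_N^{1/2}$ after rescaling. Concretely, writing $\lambda_{\bs k} = \pi^2\theta_2(k_1^2+k_2^2) + c_\theta$ with $c_\theta = -\theta_0 + \frac{\theta_1^2+\eta_1^2}{4\theta_2}$, one sets $g_N(x) = f(\theta_2\pi^2\Delta_N|x|^2 + c_\theta\Delta_N)$ for $x\in\mathbb R_+^2$, so that $\Delta_N\sum_{\bs k} f(\lambda_{\bs k}\Delta_N) = \Delta_N\sum_{\bs k\in\mathbb N^2} g_N(\bs k)$, and then approximates this by $\Delta_N\int_{(0,\infty)^2} g_N(x)\,\dd x = \frac{1}{4\pi\theta_2}\int_0^\infty f(s)\,\dd s + O(\Delta_N)$ (the last step by the change of variables $s = \theta_2\pi^2\Delta_N|x|^2$, polar coordinates contributing the factor $\pi/2$ from the quarter-plane and $1/(2\theta_2\pi^2)$ from the radial substitution, and the shift $c_\theta\Delta_N$ costing $O(\Delta_N)$ because of hypothesis (a)). The error in the sum-to-integral comparison is governed by the first and second derivatives of $g_N$, which in the original variable $s = \theta_2\pi^2\Delta_N|x|^2$ are exactly what appears in $R_{1,N}$ after undoing the scaling; the region near the origin $x_1,x_2 \le \Delta_N^{1/2}/2$ is where the estimate $s^\beta f(s^2)\sim 1$ in hypothesis (b) forces us to subtract off the explicit integral over $D_N^{(1)}\cup D_N^{(2)}$ rather than bound it, which gives the stated form of (1).

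For parts (2) and (3), I would insert the oscillatory factors $\cos(2\pi k_1 y)$, $\cos(2\pi k_2 z)$ and run the same comparison, but now the discrete-to-continuous error is improved by the oscillation: summation by parts (or, equivalently, comparing $\sum_k h(k)\cos(2\pi k y)$ with $\int h$) produces a gain of a factor $1/\delta$ or $1/\delta^2$ from each oscillating direction when $y,z$ are bounded away from the endpoints of $[0,1]$, since the Dirichlet-kernel-type sums $\sum_k \cos(2\pi k y)$ are $O(1/\delta)$ uniformly on $[\delta,1-\delta]$. The integral $\int_{(0,\infty)^2} g_N(x)\cos(2\pi x_1 y)\,\dd x$ over the full quarter-plane is negligible (again by oscillation and the $L^1$ bounds in (a)), except for the non-oscillatory sliver near the axes: when $\cos(2\pi k_1 y)$ is present but $k_2$ ranges freely, the contribution of $x_1$ small, $x_2 \le \Delta_N^{1/2}/2$, is where $D_N^{(1)}$ re-enters — hence the $-\int_{D_N^{(1)}}$ term in (2). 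In (3) both directions oscillate, so there is no leftover explicit integral and the whole thing is an error of size $O((R_{1,N}\lor R_{2,N}\lor \Delta_N^{1-\beta/2})/\delta^3)$. Finally, \eqref{eq1-lem1-1} is a direct estimate: on $D_N^{(1)}\cup D_N^{(2)}$ one has $|x|\gtrsim \Delta_N^{1/2}$ while one coordinate is $\le \Delta_N^{1/2}/2$, so integrating $f(\theta_2\pi^2|x|^2)$ — which behaves like $|x|^{-\beta}$ near the origin by (b) — over this thin region and changing variables gives $\Delta_N^{1/2}\int_{\Delta_N^{1/2}}^1 s^{-\beta}\,\dd s = O(R_N^{(\beta)})$.

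The main obstacle I anticipate is the careful bookkeeping of the boundary/corner regions in the sum-to-integral passage: the function $f(s^2)$ is only assumed to be $\sim s^{-\beta}$ near $0$ and to have $f$, $s^2 f'$, $s^3 f''$ integrable (with the derivative conditions only away from the origin, via the $\ind_{[1,\infty)}$ factors in the original variable), so the Euler--Maclaurin remainder has to be split into a near-origin piece (handled by the explicit subtracted integrals and the $\Delta_N^{1-\beta/2}$ term, using (b)) and an away-from-origin piece (handled by the $L^1$ conditions in (a), giving $R_{1,N}$, $R_{2,N}$). Getting the powers of $\delta$ right in (2)--(3) requires tracking how many summation-by-parts steps are available in each oscillating variable and checking that the resulting boundary terms at $k_j = 1$ and the decay at $k_j \to\infty$ are both controlled; this is routine but delicate, and is really the technical heart of the lemma. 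Everything downstream (Proposition~\ref{prop1} and the remainder estimates in Remark~\ref{rmk4}) is then obtained by applying this lemma to the specific $f$ arising from $\frac{1-\ee^{-s}}{s^{1+\alpha}}$-type integrands, which lie in $\mathcal F_\beta$ with $\beta = 2\alpha$.
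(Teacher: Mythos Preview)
Your outline for (1) and \eqref{eq1-lem1-1} is essentially the paper's argument. One correction: the shift by $c_\theta\Delta_N$ costs $O(R_{1,N})$, not $O(\Delta_N)$ --- hypothesis (a) only gives $s^2 f'(s^2)\ind_{[1,\infty)}\in L^1$, so the Taylor remainder $\Delta_N\sum_{\bs k}|f'(\cdots)|$ picks up an extra $\int_{\Delta_N^{1/2}}^1 s|f'(s^2)|\,\dd s$ from the near-origin modes; this is harmless since $R_{1,N}$ already appears in the statement, but your justification is off.

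For (2)--(3) your summation-by-parts / Dirichlet-kernel idea is a viable alternative, but the paper takes a different route: it rewrites the cosine-weighted sum as the real part of a Fourier transform of the step function $\sum_{\bs k}g(\bs k\Delta_N^{1/2})\ind_{E_N^{(\bs k)}}$ evaluated at frequency $\sim\Delta_N^{-1/2}$, via the exact identity $\int_{E_N^{(\bs k)}}\ee^{2\pi\iu(u_1 y+u_2 z)\Delta_N^{-1/2}}\dd u = \Delta_N\frac{\sin(\pi y)\sin(\pi z)}{\pi^2 yz}\ee^{2\pi\iu(k_1 y+k_2 z)}$. The task then becomes estimating $\Fou[g\ind_{E_N}]$ at high frequency, split into a step-to-smooth error $T_1$ (bounded via $\partial^2 g$, after one further iteration of the same trick at first-derivative level), a smooth Fourier tail $T_2$ over the half-plane $\mathbb R\times(a_N^{(0)},\infty)$ (bounded by $|x_1|^{-2}\|\partial_{x_1}^2 g\|_{L^1}$), and the explicit sliver $T_3=\Fou[g\ind_{D_N^{(1)}}]$ handled by integration by parts in the integral. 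The $\delta^{-1}$ factors come from $1/\sin(\pi y)\le\delta^{-1}$, not from Dirichlet partial-sum bounds. This Fourier packaging makes the bookkeeping of $R_{1,N}$ vs.\ $R_{2,N}$ vs.\ $\Delta_N^{1-\beta/2}$ and the $\delta$-powers mechanical; your Abel-summation route should reach the same endpoint but would require the same boundary analysis by hand. (Minor slip: $D_N^{(1)}$ has $x_1\le\Delta_N^{1/2}/2$ and $x_2>\Delta_N^{1/2}/2$, not the reverse; it is precisely what completes $E_N$ to the strip $(0,\infty)\times(a_N^{(0)},\infty)$, which after even reflection in $x_1$ becomes a full line where Fourier decay in $x_1$ applies.)
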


\begin{proof}
Let $K_N=\{x\in\mathbb R_+^2| c_1\Delta_N^{1/2}< |x|\}$ for $c_1>0$.
Let $\varphi:\mathbb R_{+}\to\mathbb R$ be a function satisfying (a), 
and let $h(x)=\varphi(c_2|x|^2)$,
$c_2>0$. 
Since
\begin{align*}
\int_{\{x\in\mathbb R_+^2|c_1\Delta_N^{1/2}< |x|\le 1\}}|\varphi'(c_2|x|^2)|\dd x
&
=\int_0^{\pi/2}\dd \theta
\int_{c_1\Delta_N^{1/2}}^{1} s|\varphi'(c_2s^2)|\dd s
=O\biggl(
\int_{\Delta_N^{1/2}}^{1} s|\varphi'(s^2)|\dd s
\biggr),
\\
\int_{\{x\in\mathbb R_+^2| 1 <|x|\}}|\varphi'(c_2|x|^2)|\dd x
&=\int_0^{\pi/2}\dd \theta
\int_{1}^\infty s|\varphi'(c_2s^2)|\dd s
=O(1),
\end{align*}
we have 
\begin{align}
\int_{K_N}|\varphi'(c_2|x|^2)|\dd x
&=
O\biggl(
\int_{\Delta_N^{1/2}}^{1} s|\varphi'(s^2)|\dd s \lor 1
\biggr)
=
O\biggl(
\int_{\Delta_N^{1/2}}^{1} s|\varphi'(s^2)|\dd s
\biggr).
\label{eq1-0100}
\end{align}
Similarly, we obtain
\begin{align}
\int_{K_N}|x|^2|\varphi''(c_2|x|^2)|\dd x
=
O\biggl(
\int_{\Delta_N^{1/2}}^{1} s^3|\varphi''(s^2)|\dd s
\biggr).
\label{eq1-0101}
\end{align}
Therefore, it follows from \eqref{eq1-0100}, \eqref{eq1-0101} and  
\begin{align*}
\partial^2 h(x)
&=4c_2^2\varphi''(c_2|x|^2)
\begin{pmatrix}
x_1^2 & x_1x_2\\
x_1x_2 & x_2^2
\end{pmatrix}
+2c_2\varphi'(c_2|x|^2)
\begin{pmatrix}
1 & 0\\
0 & 1
\end{pmatrix}
\end{align*}
that
\begin{align}\label{eq1-0102}
\Delta_N\int_{K_N}| \partial^2 h(x) |\dd x
&=
O\Biggl(
\Delta_N
\biggl(
\int_{K_N}|x|^2|\varphi''(c_2|x|^2)|\dd x
+\int_{K_N}|\varphi'(c_2|x|^2)|\dd x
\biggr)
\Biggr)
=O(R_{1,N}).
\end{align}

(1) Let $g(x)=f(\theta_2\pi^2|x|^2)$, 
$a_N^{(k)}=(k+1/2)\Delta_N^{1/2}$ for $k\ge0$,
\begin{align*}
E_N^{(\bs k)}
&=
(a_N^{(k_1-1)},a_N^{(k_1)}]\times(a_N^{(k_2-1)},a_N^{(k_2)}]
\subset \mathbb R_+^2, 
\end{align*}
$E_N=\bigcup_{\bs k\in\mathbb N^2} E_N^{(\bs k)}$ 
and $F_N=\{x\in\mathbb R_{+}^2| 
(2\theta_2\pi^2 \land \lambda_{1,1})^{1/2}\Delta_N^{1/2}<|x|\}$.
By the Taylor expansion and \eqref{eq1-0102},  it follows that
\begin{align}
&
\Biggl|
\Delta_N\sum_{\bs k\in\mathbb N^2}f(\lambda_{\bs k}\Delta_N)
-\sum_{\bs k\in\mathbb N^2}
\int_{E_N^{(\bs k)}}f(\theta_2\pi^2|\bs k|^2\Delta_N)\dd x
\Biggr|
\nonumber
\\
&=
\Biggl|
\Delta_N\sum_{\bs k\in\mathbb N^2}
\biggl(
f(\theta_2\pi^2|\bs k|^2\Delta_N)
\nonumber
\\
&\qquad\qquad
+\int_0^1 f'(\theta_2\pi^2|\bs k|^2\Delta_N
+u(\lambda_{\bs k}-\theta_2\pi^2|\bs k|^2)\Delta_N
)\dd u\Bigl(\frac{\theta_1^2+\eta_1^2}{4\theta_2}-\theta_0\Bigr)\Delta_N
\biggr)
\nonumber
\\
&\qquad\qquad
-\sum_{\bs k\in\mathbb N^2}
\int_{E_N^{(\bs k)}}f(\theta_2\pi^2|\bs k|^2\Delta_N)\dd x
\Biggr|
\nonumber
\\
&\le
\Delta_N\sum_{\bs k\in\mathbb N^2}
\int_0^1 
\bigl|f'(\theta_2\pi^2|\bs k|^2\Delta_N
+u(\lambda_{\bs k}-\theta_2\pi^2|\bs k|^2)\Delta_N)\bigr|\dd u
\Bigl|\frac{\theta_1^2+\eta_1^2}{4\theta_2}-\theta_0\Bigr|\Delta_N
\nonumber
\\
&=O\biggl(\Delta_N\int_{F_N}|f'(|x|^2)|\dd x\biggr)
=O(R_{1,N})
\label{eq1-0201}
\end{align}
and that
\begin{align}
&
\Biggl|
\sum_{\bs k\in\mathbb N^2}
\int_{E_N^{(\bs k)}}f(\theta_2\pi^2|\bs k|^2\Delta_N)\dd x-
\int_{E_N} f(\theta_2\pi^2|x|^2)\dd x
\Biggr|
\nonumber
\\
&=
\Biggl|
\sum_{\bs k\in\mathbb N^2}
\int_{E_N^{(\bs k)}}\bigl(g(\bs k\Delta_N^{1/2})-g(x)\bigr)\dd x
\Biggr|
\nonumber
\\
&=
\Biggl|
\sum_{\bs k\in\mathbb N^2}
\int_{E_N^{(\bs k)}}
\biggl(
\partial g(\bs k\Delta_N^{1/2})(x-\bs k\Delta_N^{1/2})
\nonumber
\\
&\qquad\qquad
+\frac{1}{2}
(x-\bs k\Delta_N^{1/2})^\TT
\int_0^1\partial^2 g(\bs k\Delta_N^{1/2}+u(x-\bs k\Delta_N^{1/2}))\dd u
(x-\bs k\Delta_N^{1/2})
\biggr)
\dd x
\Biggr|
\nonumber
\\
&=
\frac{1}{2}
\Biggl|
\sum_{\bs k\in\mathbb N^2}
\int_{E_N^{(\bs k)}}
(x-\bs k\Delta_N^{1/2})^\TT
\int_0^1\partial^2 g(\bs k\Delta_N^{1/2}+u(x-\bs k\Delta_N^{1/2}))\dd u
(x-\bs k\Delta_N^{1/2})
\dd x
\Biggr|
\nonumber
\\
&\le
\frac{1}{2}
\sum_{\bs k\in\mathbb N^2}
\int_{E_N^{(\bs k)}}
\int_0^1\bigl|\partial^2 g(\bs k\Delta_N^{1/2}+u(x-\bs k\Delta_N^{1/2}))\bigr|\dd u
|x-\bs k\Delta_N^{1/2}|^2
\dd x
\nonumber
\\
&\lesssim
\Delta_N
\sum_{\bs k\in\mathbb N^2}
\int_{E_N^{(\bs k)}}|\partial^2 g(x)|\dd x
\nonumber
\\
&=
O\biggl(
\Delta_N
\int_{E_N} |\partial^2 g(x)| \dd x
\biggr)
=
O(R_{1,N}).
\label{eq1-0202}
\end{align}
Therefore, it follows from \eqref{eq1-0201} and \eqref{eq1-0202} that 
\begin{align}\label{eq1-0203}
\Delta_N\sum_{\bs k\in\mathbb N^2}f(\lambda_{\bs k}\Delta_N)
&=\int_{E_N} f(\theta_2\pi^2|x|^2)\dd x
+O(R_{1,N}).
\end{align}
Set $D_N^{(3)}=\{(x_1,x_2)\in\mathbb R^2|0< x_1, x_2\le \Delta_N^{1/2}/2\}$
and $D_N=\bigcup_{j=1,2,3} D_N^{(j)}$.
Since $sf(s^2)\in L^1(\mathbb R_+)$,
$\int_{\mathbb R_{+}^2} |f(\theta_2\pi^2|x|^2)|\dd x<\infty$
and 
\begin{equation*}
\int_{\mathbb R_{+}^2} f(\theta_2\pi^2|x|^2)\dd x
=\frac{1}{\pi^2\theta_2}\int_0^{\pi/2}\dd \theta\int_0^\infty s f(s^2)\dd s
=\frac{1}{4\pi\theta_2}\int_0^\infty f(s)\dd s,
\end{equation*}
\eqref{eq1-0203} can be expressed from $E_N=\mathbb R_+^2\setminus D_N$ as
\begin{equation}\label{eq1-0204}
\Delta_N\sum_{\bs k\in\mathbb N^2}f(\lambda_{\bs k}\Delta_N)
=
\frac{1}{4\pi\theta_2}\int_0^\infty f(s)\dd s
-\int_{D_N} f(\theta_2\pi^2|x|^2)\dd x
+O(R_{1,N}).
\end{equation}
Let $a_N=\Delta_N^{1/2}/\sqrt2$. 
From $s^\beta f(s^2)\sim 1$ $(s\downarrow0)$, 
we see that $s|f(s^2)|\lesssim s^{1-\beta}$ in the vicinity of $s=0$ 
and thus
\begin{align*}
\int_{D_N^{(3)}}|f(\theta_2\pi^2|x|^2)|\dd x
&\lesssim
\int_0^{a_N} s |f(\theta_2\pi^2s^2)|\dd s
\\
&=O\biggl(\int_0^{\Delta_N^{1/2}} s|f(s^2)|\dd s\biggr)
\\
&=O\biggl(\int_0^{\Delta_N^{1/2}} s^{1-\beta}\dd s\biggr)
=O(\Delta_N^{1-\beta/2}).
\end{align*} 
Therefore, from \eqref{eq1-0204} and 
\begin{align*}
&\int_{D_N}f(\theta_2\pi^2|x|^2)\dd x
=\int_{D_N^{(1)}\cup D_N^{(2)}}f(\theta_2\pi^2|x|^2)\dd x
+O(\Delta_N^{1-\beta/2}),
\end{align*}
the proof of (1) is complete.
Furthermore, it follows from $\arcsin x\le \pi x/2$ and 
$|f(s^2)|\lesssim s^{-\beta}$ that 
\begin{align*}
\int_{D_N^{(1)}}|f(\theta_2\pi^2|x|^2)|\dd x
&\le
\int_{a_N}^\infty s|f(\theta_2\pi^2s^2)|
\int_0^{\arcsin(\Delta_N^{1/2}/2s)}\dd \theta \dd s
\\
&=
\int_{a_N}^\infty s|f(\theta_2\pi^2s^2)|
\arcsin\biggl(\frac{\Delta_N^{1/2}}{2s}\biggr)\dd s
\\
&=
O\biggl(
\Delta_N^{1/2}
\int_{\Delta_N^{1/2}}^{\infty} |f(s^2)|\dd s
\biggr)
\\
&=
O\biggl(
\Delta_N^{1/2}
\int_{\Delta_N^{1/2}}^{1} s^{-\beta}\dd s
\biggr)
=
O(R_{N}^{(\beta)}),
\end{align*}
and thus we obtain \eqref{eq1-lem1-1}.


(3) In a similar way to \eqref{eq1-0201}, it holds that
\begin{align}\label{eq1-0301}
&\Delta_N\sum_{\bs k\in\mathbb N^2}f(\lambda_{\bs k}\Delta_N)
\cos(2\pi k_1 y)\cos(2\pi k_2 z)
\nonumber
\\
&=
\Delta_N\sum_{\bs k\in\mathbb N^2}g(\bs k\Delta_N^{1/2})
\cos(2\pi k_1 y)\cos(2\pi k_2 z)
+O(R_{1,N}).
\end{align}
For a real-valued function $f\in L^1(\mathbb R^2)$, 
$\Fou[f]:\mathbb R^2\to\mathbb C$ denotes its Fourier transform  
\begin{align*}
\Fou[f](x)
=\int_{\mathbb R^2} f(u)\ee^{-\iu u^\TT x}\dd u
=\iint_{\mathbb R^2} f(u_1,u_2)\ee^{-\iu(u_1x_1+u_2x_2)}\dd u_1\dd u_2.
\end{align*}
Since
\begin{align*}
&\int_{{E_N^{(\bs k)}}}
\ee^{2\pi\iu (u_1y+u_2z)\Delta_N^{-1/2}}\dd u
\nonumber
\\
&=
\int_{a_N^{({k_1}-1)}}^{a_N^{(k_1)}}
\ee^{2\pi\iu u_1y\Delta_N^{-1/2}}\dd u_1
\int_{a_N^{({k_2}-1)}}^{a_N^{(k_2)}}
\ee^{2\pi\iu u_2z\Delta_N^{-1/2}}\dd u_2
\nonumber
\\
&=
\frac{\Delta_N}{(2\pi\iu)^2 y z}
\bigl(
\ee^{2\pi\iu a_N^{(k_1)} y\Delta_N^{-1/2}}
-\ee^{2\pi\iu a_N^{(k_1-1)}y\Delta_N^{-1/2}}
\bigr)
\bigl(
\ee^{2\pi\iu a_N^{(k_2)} z\Delta_N^{-1/2}}
-\ee^{2\pi\iu a_N^{(k_2-1)}z\Delta_N^{-1/2}}
\bigr)
\nonumber
\\
&=
\frac{\Delta_N}{(2\pi\iu)^2 y z}
(\ee^{\pi\iu y}-\ee^{-\pi\iu y})(\ee^{\pi\iu z}-\ee^{-\pi\iu z})
\ee^{2\pi\iu (k_1y+k_2z)}
\nonumber
\\
&=
\Delta_N\frac{\sin(\pi y)\sin(\pi z)}{\pi^2 y z}
\ee^{2\pi\iu (k_1y+k_2z)},
\end{align*}
it follows that
\begin{align}
&\Delta_N\sum_{\bs k\in\mathbb N^2}g(\bs k\Delta_N^{1/2})
\cos(2\pi k_1 y)\cos(2\pi k_2 z)
\nonumber
\\
&=
\frac{\Delta_N}{2}\sum_{\bs k\in\mathbb N^2}g(\bs k\Delta_N^{1/2})
\bigl(
\cos(2\pi(k_1 y+k_2 z))+\cos(2\pi(k_1 y-k_2 z))
\bigr)
\nonumber
\\
&=
\re\biggl(
\frac{\Delta_N}{2}
\sum_{\bs k\in\mathbb N^2}g(\bs k\Delta_N^{1/2})
\bigl(
\ee^{2\pi\iu(k_1 y+k_2 z)}+\ee^{2\pi\iu(k_1 y-k_2 z)}
\bigr)
\biggr)
\nonumber
\\
&=
\re\biggl(
\frac{\pi^2 y z}{2\sin(\pi y)\sin(\pi z)}
\sum_{\bs k\in\mathbb N^2}g(\bs k\Delta_N^{1/2})
\int_{E_N^{(\bs k)}}
\bigl(
\ee^{2\pi\iu (u_1y+u_2z)\Delta_N^{-1/2}}
+\ee^{2\pi\iu (u_1y-u_2z)\Delta_N^{-1/2}}
\bigr)
\dd u
\biggr)
\nonumber
\\
&=
\re\biggl(
\frac{\pi^2 y z}{2\sin(\pi y)\sin(\pi z)}
\Fou\biggl[
\sum_{\bs k\in\mathbb N^2}g(\bs k\Delta_N^{1/2})
\ind_{E_N^{(\bs k)}}
\bigg]
(-2\pi y\Delta_N^{-1/2},-2\pi z\Delta_N^{-1/2})
\biggr)
\nonumber
\\
&\qquad+
\re\biggl(
\frac{\pi^2 y z}{2\sin(\pi y)\sin(\pi z)}
\Fou\biggl[
\sum_{\bs k\in\mathbb N^2}g(\bs k\Delta_N^{1/2})
\ind_{E_N^{(\bs k)}}
\bigg]
(-2\pi y\Delta_N^{-1/2},2\pi z\Delta_N^{-1/2})
\biggr)
\nonumber
\\
&=
T_1+T_2-T_3,
\label{eq1-0303}
\end{align}
where
\begin{align*}
T_1
&=
\re\biggl(
\frac{\pi^2 y z}{2\sin(\pi y)\sin(\pi z)}
\Fou\biggl[
\sum_{\bs k\in\mathbb N^2}g(\bs k\Delta_N^{1/2})
\ind_{{E_N^{(\bs k)}}}
-g\ind_{E_N}
\bigg]
(-2\pi y\Delta_N^{-1/2}, -2\pi z\Delta_N^{-1/2})
\biggr)
\\
&\qquad+
\re\biggl(
\frac{\pi^2 y z}{2\sin(\pi y)\sin(\pi z)}
\Fou\biggl[
\sum_{\bs k\in\mathbb N^2}g(\bs k\Delta_N^{1/2})
\ind_{{E_N^{(\bs k)}}}
-g\ind_{E_N}
\bigg]
(-2\pi y\Delta_N^{-1/2}, 2\pi z\Delta_N^{-1/2})
\biggr),
\\
T_2
&=
\re\biggl(
\frac{\pi^2 y z}{2\sin(\pi y)\sin(\pi z)}
\Fou[g\ind_{E_N\cup D_N^{(1)}}]
(-2\pi y\Delta_N^{-1/2},-2\pi z\Delta_N^{-1/2})
\biggr)
\\
&\qquad
+\re\biggl(
\frac{\pi^2 y z}{2\sin(\pi y)\sin(\pi z)}
\Fou[g\ind_{E_N\cup D_N^{(1)}}]
(-2\pi y\Delta_N^{-1/2},2\pi z\Delta_N^{-1/2})
\biggr),
\\
T_3
&=
\re\biggl(
\frac{\pi^2 y z}{2\sin(\pi y)\sin(\pi z)}
\Fou[g\ind_{D_N^{(1)}}]
(-2\pi y\Delta_N^{-1/2},-2\pi z\Delta_N^{-1/2})
\biggr)
\\
&\qquad+
\re\biggl(
\frac{\pi^2 y z}{2\sin(\pi y)\sin(\pi z)}
\Fou[g\ind_{D_N^{(1)}}]
(-2\pi y\Delta_N^{-1/2},2\pi z\Delta_N^{-1/2})
\biggr).
\end{align*}
As proved below, $T_1$, $T_2$, and $T_3$ can be evaluated as 
\begin{align}
T_1&=O\bigl(\delta^{-3}(R_{1,N}\lor R_{2,N})\bigr),
\label{eq1-0304}
\\
T_2&=O(\delta^{-3}R_{1,N}),
\label{eq1-0305}
\\
T_3&=O\bigl(\delta^{-2}(\Delta_N^{1-\beta/2} \lor R_{1,N})\bigr),
\label{eq1-0306}
\end{align}
and we obtain the desired result from \eqref{eq1-0301} and \eqref{eq1-0303}.

\textit{Proof of \eqref{eq1-0305}.}
Since
\begin{align*}
x_1^2\Fou[g\ind_{\mathbb R\times(a_N^{(0)},\infty)}](x_1,x_2)
=(-\iu)^2\Fou[\partial_{x_1}^2 g\ind_{\mathbb R\times(a_N^{(0)},\infty)}](x_1,x_2),
\end{align*}
it holds that 
\begin{align*}
|x_1^2\Fou[g\ind_{\mathbb R\times(a_N^{(0)},\infty)}](x_1,x_2)|
&=
|\Fou[\partial_{x_1}^2 g\ind_{\mathbb R\times(a_N^{(0)},\infty)}](x_1,x_2)|
\le
\|\partial_{x_1}^2 g\ind_{\mathbb R\times(a_N^{(0)},\infty)}\|_{L^1(\mathbb R^2)},
\end{align*}
that is, for $x_1\neq0$, 
\begin{align}\label{eq1-0401}
|\Fou[g\ind_{\mathbb R\times(a_N^{(0)},\infty)}](x_1,x_2)|
\le
|x_1|^{-2}
\|\partial_{x_1}^2 g\ind_{\mathbb R\times(a_N^{(0)},\infty)}\|_{L^1(\mathbb R^2)}.
\end{align}
Let
$G_N^{(1)}=E_N\cup D_N^{(1)}=(0,\infty)\times(a_N^{(0)},\infty)$
and
$G_N^{(2)}=(-\infty,0]\times(a_N^{(0)},\infty)$.
From
\begin{align*}
\re\bigl(\Fou[g\ind_{G_N^{(2)}}](x_1,x_2)\bigr)
&=\int_{G_N^{(2)}}g(u)\cos(u_1x_1+u_2x_2)\dd u
\\
&=\int_{G_N^{(1)}}g(u)\cos(u_1x_1-u_2x_2)\dd u
=\re\bigl(\Fou[g\ind_{G_N^{(1)}}](x_1,-x_2)\bigr),
\end{align*}
we see that
\begin{align}\label{eq1-0402}
\re\bigl(
\Fou[g\ind_{E_N\cup D_N^{(1)}}](x_1,x_2)
\bigr)
+\re\bigl(
\Fou[g\ind_{E_N\cup D_N^{(1)}}](x_1,-x_2)
\bigr)
=
\re\bigl(
\Fou[g\ind_{\mathbb R\times(a_N^{(0)},\infty)}](x_1,x_2)
\bigr).
\end{align}
Since $1/\sin(\pi y)\le \delta^{-1}$, $y\in[\delta,1-\delta]$ and
$\Delta_N
\|\partial_{x_1}^2 g\ind_{\mathbb R\times(a_N^{(0)},\infty)}\|_{L^1(\mathbb R^2)}
=O(R_{1,N})$,
it follows from \eqref{eq1-0401} and \eqref{eq1-0402} that  
\begin{align}
|T_2|
&=
\frac{\pi^2 y z}{2\sin(\pi y)\sin(\pi z)}
\bigl|
\re\bigl(
\Fou[g\ind_{\mathbb R\times(a_N^{(0)},\infty)}]
(-2\pi y\Delta_N^{-1/2},-2\pi z\Delta_N^{-1/2})
\bigr)
\bigr|
\nonumber
\\
&\lesssim 
\frac{\pi z \Delta_N}{\pi y\sin(\pi y)\sin(\pi z)}
\|\partial_{x_1}^2 g\ind_{\mathbb R\times(a_N^{(0)},\infty)}\|_{L^1(\mathbb R^2)}
\nonumber
\\
&=O(\delta^{-3}R_{1,N}).
\label{eq1-0403}
\end{align}

\textit{Proof of \eqref{eq1-0306}.}
Applying integration by parts to $T_3$, we obtain that
\begin{align}
T_3
&=
\frac{\pi^2 y z}{\sin(\pi y)\sin(\pi z)}
\int_{D_N^{(2)}}
g(x)\cos(2\pi x_1y\Delta_N^{-1/2})\cos(2\pi x_2z\Delta_N^{-1/2})
\dd x
\nonumber
\\
&=
\frac{\pi^2 y z}{\sin(\pi y)\sin(\pi z)}
\int_{a_N^{(0)}}^\infty
\cos(2\pi x_2z\Delta_N^{-1/2})
\int_0^{a_N^{(0)}}
g(x_1,x_2)\cos(2\pi x_1y\Delta_N^{-1/2})
\dd x_1\dd x_2
\nonumber
\\
&=
\frac{\pi z\Delta_N^{1/2}}{2\sin(\pi z)}
\int_{a_N^{(0)}}^\infty
g(a_N^{(0)},x_2)\cos(2\pi x_2z\Delta_N^{-1/2})\dd x_2
\nonumber
\\
&\qquad-
\frac{\pi^2 y z\Delta_N}{\sin(\pi y)\sin(\pi z)}
\int_{a_N^{(0)}}^\infty
\cos(2\pi x_2z\Delta_N^{-1/2})
\int_0^{1/2}
\partial_{x_1} g(x_1\Delta_N^{1/2},x_2)
\frac{\sin(2\pi x_1y)}{2\pi y}\dd x_1 \dd x_2
\nonumber
\\
&=:I_1-I_2.
\label{eq1-0501}
\end{align}
Applying the integration by parts to $I_1$ again, we see that 
\begin{align*}
I_1
&=\frac{\pi z\Delta_N^{1/2}}{2\sin(\pi z)}
\int_{a_N^{(0)}}^\infty
g(a_N^{(0)},x_2)\cos(2\pi x_2z\Delta_N^{-1/2})\dd x_2
\\
&=
-\frac{\Delta_N}{4}g(a_N^{(0)},a_N^{(0)})
-\frac{\pi z\Delta_N}{2\sin(\pi z)}
\int_{a_N^{(0)}}^\infty
\partial_{x_2}g(a_N^{(0)},x_2)\frac{\sin(2\pi x_2z\Delta_N^{-1/2})}{2\pi z}\dd x_2
\\
&=
-\frac{\Delta_N}{4}g(a_N^{(0)},a_N^{(0)})
-\frac{\Delta_N}{4\sin(\pi z)}
\int_{a_N^{(0)}}^\infty
\partial_{x_2}g(a_N^{(0)},x_2)\sin(2\pi x_2z\Delta_N^{-1/2})\dd x_2.
\end{align*}
Noting that
\begin{equation}\label{eq1-0502}
|\partial_{x_1} g(x_1,x_2)|
\lesssim  |x_1||f'(\theta_2\pi^2|x|^2)|,
\end{equation}
\begin{equation*}
-\frac{\Delta_N}{4}g(a_N^{(0)},a_N^{(0)})
=O(\Delta_N f(\Delta_N))=O(\Delta_N^{1-\beta/2})
\end{equation*}
and
\begin{align*}
&\Biggl|
\frac{\Delta_N}{4\sin(\pi z)}
\int_{a_N^{(0)}}^\infty
\partial_{x_2}g(a_N^{(0)},x_2)\sin(2\pi x_2z\Delta_N^{-1/2})\dd x_2
\Biggr|
\\
&=
O\biggl(
\delta^{-1}\Delta_N
\int_{\Delta_N^{1/2}}^\infty
s|f'(s^2)|\dd s
\biggr)
=O(\delta^{-1}R_{1,N}),
\end{align*}
we have $I_1=O(\Delta_N^{1-\beta/2} \lor \delta^{-1}R_{1,N})$.
Furthermore, since $I_2$ can be evaluated as 
\begin{align*}
|I_2|
&=\Biggl|
\frac{\pi^2 y z\Delta_N}{\sin(\pi y)\sin(\pi z)}
\int_{a_N^{(0)}}^\infty
\cos(2\pi x_2z\Delta_N^{-1/2})
\int_0^{1/2}
\partial_{x_1} g(x_1\Delta_N^{1/2},x_2)
\frac{\sin(2\pi x_1y)}{2\pi y}\dd x_1 \dd x_2
\Biggr|
\\
&=
O\biggl(
\delta^{-2}\Delta_N
\int_{a_N^{(0)}}^\infty\int_0^{1/2}
|\partial_{x_1} g(x_1\Delta_N^{1/2},x_2)|\dd x_1 \dd x_2
\biggr)
\\
&=
O\biggl(
\delta^{-2}\Delta_N
\int_{D_N^{(2)}}
|\partial_{x_1} g(x)|\dd x
\biggr)
=O(\delta^{-2}R_{1,N}),
\end{align*}
we obtain
$T_3=O(\delta^{-2}(\Delta_N^{1-\beta/2} \lor R_{1,N}))$.

\textit{Proof of \eqref{eq1-0304}.}
Since
\begin{align*}
\re(\Fou[g](y,z)+\Fou[g](y,-z))
&=\re
\biggl(
\int_{\mathbb R^2}g(x)
(\ee^{-\iu(x_1y+x_2z)}+\ee^{-\iu(x_1y-x_2z)})\dd x
\biggr)
\\
&=2\int_{\mathbb R^2}g(x)
\cos(x_1y)\cos(x_2z)\dd x,
\end{align*}
$T_1$ can be evaluated as follows.
\begin{align}
|T_1|
&=
\frac{\pi^2 yz}{\sin(\pi y)\sin(\pi z)}
\Biggl|
\sum_{\bs k\in\mathbb N^2}
\int_{E_N^{(\bs k)}}
\bigl(
g(\bs k\Delta_N^{1/2})
-g(x)
\bigr)
\cos(2\pi x_1y\Delta_N^{-1/2})
\cos(2\pi x_2z\Delta_N^{-1/2})
\dd x
\Biggr|
\nonumber
\\
&=
\frac{\pi^2 yz}{\sin(\pi y)\sin(\pi z)}
\Biggl|
\sum_{\bs k\in\mathbb N^2}
\int_{E_N^{(\bs k)}}
\biggl(
\partial g(\bs k\Delta_N^{1/2})(x-\bs k\Delta_N^{1/2})
\nonumber
\\
&\qquad\qquad
+\frac12
(x-\bs k\Delta_N^{1/2})^\TT
\int_0^1(1-u)\partial^2 g(\bs k\Delta_N^{1/2}+u(x-\bs k\Delta_N^{1/2}))
\dd u
(x-\bs k\Delta_N^{1/2})
\biggr)
\nonumber
\\
&\qquad\qquad\times
\cos(2\pi x_1y\Delta_N^{-1/2})
\cos(2\pi x_2z\Delta_N^{-1/2})
\dd x
\Biggr|
\nonumber
\\
&\le
\frac{\pi^2 yz}{\sin(\pi y)\sin(\pi z)}
\Biggl|
\sum_{\bs k\in\mathbb N^2}
\int_{E_N^{(\bs k)}}
\partial g(\bs k\Delta_N^{1/2})(x-\bs k\Delta_N^{1/2})
\cos(2\pi x_1y\Delta_N^{-1/2})
\cos(2\pi x_2z\Delta_N^{-1/2})
\dd x
\Biggr|
\nonumber
\\
&\qquad+
\frac{\pi^2 yz}{2\sin(\pi y)\sin(\pi z)}
\sum_{\bs k\in\mathbb N^2}
\int_{E_N^{(\bs k)}}
\biggl|
\int_0^1(1-u)\partial^2 g(\bs k\Delta_N^{1/2}+u(x-\bs k\Delta_N^{1/2}))\dd u
\biggr|
|x-\bs k\Delta_N^{1/2}|^2\dd x
\nonumber
\\
&=
\frac{\pi^2 yz}{\sin(\pi y)\sin(\pi z)}
\Biggl|
\sum_{\bs k\in\mathbb N^2}
\partial g(\bs k\Delta_N^{1/2})
\int_{E_N^{(\bs k)}}
(x-\bs k\Delta_N^{1/2})
\cos(2\pi x_1y\Delta_N^{-1/2})
\cos(2\pi x_2z\Delta_N^{-1/2})
\dd x
\Biggr|
\nonumber
\\
&\qquad+O\biggl(
\delta^{-2}\Delta_N\int_{E_N}|\partial^2 g(x)|\dd x
\biggr).
\label{eq1-0601}
\end{align}
Since
\begin{align*}
\int_{a_N^{(k_1-1)}}^{a_N^{(k_1)}}
(x_1-k_1\Delta_N^{1/2})\cos(2\pi x_1y\Delta_N^{-1/2})\dd x_1
&=
\frac{\Delta_N}{2\pi^2y^2}
\sin(2\pi k_1y)
\bigl(\pi y\cos(\pi y)-\sin(\pi y)\bigr),
\\
\int_{a_N^{(k_2-1)}}^{a_N^{(k_2)}}
\cos(2\pi x_2z\Delta_N^{-1/2})
\dd x_2
&=
\frac{\Delta_N^{1/2}}{2\pi z}
\Bigl(
\sin\bigl(\pi z(2k_2+1)\bigr)
-\sin\bigl(\pi z(2k_2-1)\bigr)
\Bigr)
\\
&=
\Delta_N^{1/2}\frac{\sin(\pi z)}{\pi z}\cos(2\pi k_2z),
\end{align*}
the integral of the first term of \eqref{eq1-0601} can be expressed as 
\begin{align*}
&\int_{E_N^{(\bs k)}}
(x-\bs k\Delta_N^{1/2})
\cos(2\pi x_1y\Delta_N^{-1/2})\cos(2\pi x_2z\Delta_N^{-1/2})\dd x
\\
&=
\begin{pmatrix}
\displaystyle\int_{a_N^{(k_1-1)}}^{a_N^{(k_1)}}
(x_1-k_1\Delta_N^{1/2})\cos(2\pi x_1y\Delta_N^{-1/2})
\dd x_1
\int_{a_N^{(k_2-1)}}^{a_N^{(k_2)}}
\cos(2\pi x_2z\Delta_N^{-1/2})
\dd x_2
\\
\displaystyle\int_{a_N^{(k_1-1)}}^{a_N^{(k_1)}}
\cos(2\pi x_1y\Delta_N^{-1/2})
\dd x_1
\int_{a_N^{(k_2-1)}}^{a_N^{(k_2)}}
(x_2-k_2\Delta_N^{1/2})\cos(2\pi x_2z\Delta_N^{-1/2})
\dd x_2
\end{pmatrix}
\\
&=
\Delta_N^{3/2}
\begin{pmatrix}
\displaystyle
\sin(2\pi k_1y)\cos(2\pi k_2z)
\frac{\pi y\cos(\pi y)-\sin(\pi y)}{2\pi^2y^2}
\frac{\sin(\pi z)}{\pi z}
\\
\displaystyle
\cos(2\pi k_1y)\sin(2\pi k_2z)
\frac{\sin(\pi y)}{\pi y}
\frac{\pi z\cos(\pi z)-\sin(\pi z)}{2\pi^2z^2}
\end{pmatrix}.
\end{align*}
Since $(\pi y\cos(\pi y)-\sin(\pi y))/2\pi^2y^2$ is bounded in $y\in[0,1]$, 
the first term of \eqref{eq1-0601} can be evaluated as follows.
\begin{align*}
&\frac{\pi^2 yz}{\sin(\pi y)\sin(\pi z)}
\Biggl|
\sum_{\bs k\in\mathbb N^2}
\partial g(\bs k\Delta_N^{1/2})
\int_{E_N^{(\bs k)}}
(x-\bs k\Delta_N^{1/2})
\cos(2\pi x_1y\Delta_N^{-1/2})
\cos(2\pi x_2z\Delta_N^{-1/2})
\dd x
\Biggr|
\\
&\le
\frac{\pi y}{\sin(\pi y)}
\frac{\pi y\cos(\pi y)-\sin(\pi y)}{2\pi^2y^2}
\Biggl|
\Delta_N^{3/2}
\sum_{\bs k\in\mathbb N^2}
\partial g(\bs k\Delta_N^{1/2})
\sin(2\pi k_1y)\cos(2\pi k_2z)
\Biggr|
\\
&\qquad+
\frac{\pi z}{\sin(\pi z)}
\frac{\pi z\cos(\pi z)-\sin(\pi z)}{2\pi^2z^2}
\Biggl|
\Delta_N^{3/2}
\sum_{\bs k\in\mathbb N^2}
\partial g(\bs k\Delta_N^{1/2})
\cos(2\pi k_1y)\sin(2\pi k_2z)
\Biggr|
\\
&=
O\Biggl(\delta^{-1}
\Delta_N^{3/2}
\sum_{\bs k\in\mathbb N^2}
\partial g(\bs k\Delta_N^{1/2})\sin(2\pi k_1y)\cos(2\pi k_2z)
\Biggr).
\end{align*}
According to
\begin{align}\label{eq1-0602}
\Delta_N^{3/2}
\sum_{\bs k\in\mathbb N^2}
\partial g(\bs k\Delta_N^{1/2})\sin(2\pi k_1 y)\cos(2\pi k_2 z)
=O\bigl(\delta^{-2}(R_{1,N}\lor R_{2,N})\bigr),
\end{align}
the first term of \eqref{eq1-0601} is 
$O(\delta^{-3}(R_{1,N}\lor R_{2,N}))$, 
and thus we obtain \eqref{eq1-0304}.
Next, we will show \eqref{eq1-0602}.

\textit{Proof of \eqref{eq1-0602}. }
In a similar way to \eqref{eq1-0303}, it follows that
\begin{align}
&\Delta_N^{3/2}\sum_{\bs k\in\mathbb N^2}
\partial g(\bs k\Delta_N^{1/2})\sin(2\pi k_1 y)\cos(2\pi k_2 z)
\nonumber
\\
&=
\frac{\Delta_N^{3/2}}{2}\sum_{\bs k\in\mathbb N^2}
\partial g(\bs k\Delta_N^{1/2})
\bigl(
\sin(2\pi (k_1 y+k_2 z))+\sin(2\pi (k_1 y-k_2 z))
\bigr)
\nonumber
\\
&=
\im\biggl(
\frac{\pi^2 y z\Delta_N^{1/2}}{2\sin(\pi y)\sin(\pi z)}
\Fou\biggl[
\sum_{\bs k\in\mathbb N^2}\partial g(\bs k\Delta_N^{1/2})
\ind_{{E_N^{(\bs k)}}}
\biggl]
(-2\pi y\Delta_N^{-1/2},-2\pi z\Delta_N^{-1/2})
\biggr)
\nonumber
\\
&\qquad+
\im\biggl(
\frac{\pi^2 y z\Delta_N^{1/2}}{2\sin(\pi y)\sin(\pi z)}
\Fou\biggl[
\sum_{\bs k\in\mathbb N^2}\partial g(\bs k\Delta_N^{1/2})
\ind_{{E_N^{(\bs k)}}}
\biggl]
(-2\pi y\Delta_N^{-1/2},2\pi z\Delta_N^{-1/2})
\biggr)
\nonumber
\\
&=:
S_1+S_2-S_3,
\label{eq1-0603}
\end{align}
where
\begin{align*}
S_1
&=
\im\biggl(
\frac{\pi^2 y z\Delta_N^{1/2}}{2\sin(\pi y)\sin(\pi z)}
\Fou\biggl[
\sum_{\bs k\in\mathbb N^2}\partial g(\bs k\Delta_N^{1/2})
\ind_{{E_N^{(\bs k)}}}
-\partial g\ind_{E_N}
\biggl]
(-2\pi y\Delta_N^{-1/2},-2\pi y\Delta_N^{-1/2})
\biggr)
\\
&\qquad+
\im\biggl(
\frac{\pi^2 y z\Delta_N^{1/2}}{2\sin(\pi y)\sin(\pi z)}
\Fou\biggl[
\sum_{\bs k\in\mathbb N^2}\partial g(\bs k\Delta_N^{1/2})
\ind_{{E_N^{(\bs k)}}}
-\partial g\ind_{E_N}
\biggl]
(-2\pi y\Delta_N^{-1/2},2\pi y\Delta_N^{-1/2})
\biggr),
\\
S_2
&=
\im\biggl(
\frac{\pi^2 y z\Delta_N^{1/2}}{2\sin(\pi y)\sin(\pi z)}
\Fou[\partial g\ind_{E_N\cup D_N^{(1)}}]
(-2\pi y\Delta_N^{-1/2},-2\pi z\Delta_N^{-1/2})
\biggr)
\\
&\qquad+
\im\biggl(
\frac{\pi^2 y z\Delta_N^{1/2}}{2\sin(\pi y)\sin(\pi z)}
\Fou[\partial g\ind_{E_N\cup D_N^{(1)}}]
(-2\pi y\Delta_N^{-1/2},2\pi z\Delta_N^{-1/2})
\biggr),
\\
S_3
&=
\im\biggl(
\frac{\pi^2 y z\Delta_N^{1/2}}{2\sin(\pi y)\sin(\pi z)}
\Fou[\partial g\ind_{D_N^{(1)}}]
(-2\pi y\Delta_N^{-1/2},-2\pi z\Delta_N^{-1/2})
\biggr)
\\
&\qquad+
\im\biggl(
\frac{\pi^2 y z\Delta_N^{1/2}}{2\sin(\pi y)\sin(\pi z)}
\Fou[\partial g\ind_{D_N^{(1)}}]
(-2\pi y\Delta_N^{-1/2},2\pi z\Delta_N^{-1/2})
\biggr).
\end{align*}
Since
\begin{align}\label{eq1-0604}
|S_1|
\le
\frac{\pi^2 y z\Delta_N^{1/2}}{\sin(\pi y)\sin(\pi z)}
\biggl\|
\sum_{\bs k\in\mathbb N^2}
\partial g(\bs k\Delta_N^{1/2})\ind_{{E_N^{(\bs k)}}}-\partial g\ind_{E_N}
\biggr\|_{L^1(\mathbb R^2)},
\end{align}
$\pi y/\sin(\pi y)\le \delta^{-1}$, $y\in[\delta,1-\delta]$ and
\begin{align*}
&\Delta_N^{1/2}\biggl\|
\sum_{\bs k\in\mathbb N^2}
\partial g(\bs k\Delta_N^{1/2})\ind_{{E_N^{(\bs k)}}}
-\partial g\ind_{E_N}
\biggr\|_{L^1(\mathbb R^2)}
\\
&=
\Delta_N^{1/2}
\int_{\mathbb R_+^2}
\biggl|
\sum_{\bs k\in\mathbb N^2}
\partial g(\bs k\Delta_N^{1/2})\ind_{{E_N^{(\bs k)}}}(x)
-\partial g(x)\ind_{E}(x)
\biggr|\dd x
\\
&\le
\Delta_N^{1/2}
\int_{\mathbb R_+^2}
\sum_{\bs k\in\mathbb N^2}
\bigl|
\partial g(\bs k\Delta_N^{1/2})-\partial g(x)
\bigr|\ind_{{E_N^{(\bs k)}}}(x)\dd x
\\
&=
\Delta_N^{1/2}
\sum_{\bs k\in\mathbb N^2}
\int_{{E_N^{(\bs k)}}}
\bigl|
\partial g(\bs k\Delta_N^{1/2})-\partial g(x)
\bigr|\dd x
\\
&=
\Delta_N^{1/2}
\sum_{\bs k\in\mathbb N^2}
\int_{E_N^{(\bs k)}}
\biggl\|
\int_0^1\partial^2 g(\bs k\Delta_N^{1/2}+u(x-\bs k\Delta_N^{1/2}))\dd u
\biggr\|
|x-\bs k\Delta_N^{1/2}|\dd x
\\
&=O\biggl(
\Delta_N
\int_{E_N}\|\partial^2 g(x)\|\dd x
\biggr)
=O(R_{1,N}),
\end{align*}
we have $|S_1|=O(\delta^{-2}R_{1,N})$. 
Noting that for $x_1\neq0$, 
\begin{align*}
|\Fou[\partial g\ind_{\mathbb R\times(a_N^{(0)},\infty)}](x_1,x_2)|
\le
|x_1|^{-1}\|\partial_{x_1} \partial g\ind_{\mathbb R\times(a_N^{(0)},\infty)}
\|_{L^1(\mathbb R^2)},
\end{align*}
we see that
\begin{align}\label{eq1-0605}
|S_2|
\le
\frac{\pi z\Delta_N}{\sin(\pi y)\sin(\pi z)}
\|\partial_{x_1} \partial g\ind_{\mathbb R\times(a_N^{(0)},\infty)}
\|_{L^1(\mathbb R^2)}
=O(\delta^{-2}R_{1,N}).
\end{align}
Moreover, it follows from \eqref{eq1-0502} that 
\begin{align}
S_3
&=
\frac{\pi^2 y z\Delta_N^{1/2}}{\sin(\pi y)\sin(\pi z)}
\int_{D_N^{(1)}}
\partial g(x)\sin(2\pi x_1y\Delta_N^{-1/2})\cos(2\pi x_2z\Delta_N^{-1/2})
\dd x
\nonumber
\\
&=
\frac{\pi^2 y z\Delta_N^{1/2}}{\sin(\pi y)\sin(\pi z)}
\int_{a_N^{(0)}}^\infty \cos(2\pi x_2z\Delta_N^{-1/2})
\int_0^{a_N^{(0)}} \partial g(x_1,x_2)\sin(2\pi x_1y\Delta_N^{-1/2})
\dd x_1\dd x_2 
\nonumber
\\
&=
\frac{\pi z\Delta_N}{2\sin(\pi y)\sin(\pi z)}
\biggl(
\int_{a_N^{(0)}}^\infty 
(\partial g(0,x_2)-\partial g(a_N^{(0)},x_2)\cos(\pi y)) 
\cos(2\pi x_2z\Delta_N^{-1/2})
\dd x_2
\nonumber
\\
&\qquad\qquad-
\int_{a_N^{(0)}}^\infty\int_0^{a_N^{(0)}}
\partial_{x_1} \partial g(x)
\cos(2\pi x_1y\Delta_N^{-1/2})
\cos(2\pi x_2z\Delta_N^{-1/2})
\dd x_1\dd x_2 
\biggr)
\nonumber
\\
&=
O\Biggl(
\delta^{-2}\Delta_N
\biggl(
\int_{\Delta_N^{1/2}}^\infty s|f'(s^2)|\dd s
\lor \int_{D_N^{(1)}}\|\partial^2 g(x)\|\dd x 
\biggr)
\lor
\delta^{-2}\Delta_N^{3/2}
\int_{\Delta_N^{1/2}}^\infty |f'(s^2)|\dd s
\Biggr)
\nonumber
\\
&=
O\bigl(\delta^{-2}(R_{1,N}\lor R_{2,N})\bigr).
\label{eq1-0606}
\end{align}
Therefore, this completes the proof of \eqref{eq1-0602}.


(2) In the same way as \eqref{eq1-0201}, we have
\begin{align}\label{eq1-0701}
\Delta_N\sum_{\bs k\in\mathbb N^2}f(\lambda_{\bs k}\Delta_N)
\cos(2\pi k_1 y)
=
\Delta_N\sum_{\bs k\in\mathbb N^2}g(\bs k\Delta_N^{1/2})
\cos(2\pi k_1 y)
+O(R_{1,N}).
\end{align}
Note that the first term on the right hand side of \eqref{eq1-0701}
is obtained by  substituting $0$ for $z$ for \eqref{eq1-0303}. 
Thus, 
setting $\pi z/\sin(\pi z)=1$ if $z=0$, we have
\begin{align*}
\Delta_N\sum_{\bs k\in\mathbb N^2}g(\bs k\Delta_N^{1/2})\cos(2\pi k_1 y)
=T_1+T_2-T_3,
\end{align*}
where
\begin{align*}
T_1
&=
\re\biggl(
\frac{\pi y}{\sin(\pi y)}
\Fou\biggl[
\sum_{\bs k\in\mathbb N^2}g(\bs k\Delta_N^{1/2})
\ind_{{E_N^{(\bs k)}}}
-g\ind_{E_N}
\biggl]
(-2\pi y\Delta_N^{-1/2},0)
\biggr),
\\
T_2
&=
\re\biggl(
\frac{\pi y}{\sin(\pi y)}
\Fou[g\ind_{E_N\cup D_N^{(1)}}]
(-2\pi y\Delta_N^{-1/2},0)
\biggr),
\\
T_3
&=
\re\biggl(
\frac{\pi y}{\sin(\pi y)}
\Fou[g\ind_{D_N^{(1)}}]
(-2\pi y\Delta_N^{-1/2},0)
\biggr).
\end{align*}
From \eqref{eq1-0603}-\eqref{eq1-0606}, it holds that
\begin{align*}
\Delta_N^{3/2}
\sum_{\bs k\in\mathbb N^2}
\partial g(\bs k\Delta_N^{1/2})
\sin(2\pi k_1y)
=O\bigl(\delta^{-1}(R_{1,N}\lor R_{2,N})\bigr).
\end{align*}
Setting $(\pi z\cos(\pi z)-\sin(\pi z))/2\pi^2z^2=0$ if $z=0$, we see that
\begin{align*}
|T_1|
&\le\frac{\pi y}{\sin(\pi y)}
\frac{\pi y\cos(\pi y)-\sin(\pi y)}{2\pi^2y^2}
\Biggl|
\Delta_N^{3/2}
\sum_{\bs k\in\mathbb N^2}
\partial g(\bs k\Delta_N^{1/2})
\sin(2\pi k_1y)
\Biggr|
\\
&=O\Biggl(\delta^{-1}
\Delta_N^{3/2}
\sum_{\bs k\in\mathbb N^2}
\partial g(\bs k\Delta_N^{1/2})
\sin(2\pi k_1y)
\Biggr)
\\
&=O\bigl(\delta^{-2}(R_{1,N}\lor R_{2,N})\bigr).
\end{align*}
Furthermore, we obtain from \eqref{eq1-0403} and \eqref{eq1-0501} that
$T_2=O(\delta^{-2}R_{1,N})$ and
$T_3=I_1-I_2$, where
\begin{align*}
I_1
&=
\frac{\Delta_N^{1/2}}{2}
\int_{a_N^{(0)}}^\infty 
g(a_N^{(0)},x_2) \dd x_2
=\int_{a_N^{(0)}}^\infty\int_0^{a_N^{(0)}}
g(a_N^{(0)},x_2)\dd x_1\dd x_2,
\\
I_2
&=
\frac{\pi y\Delta_N}{\sin(\pi y)}
\int_{a_N^{(0)}}^\infty\int_0^{1/2}
\partial_{x_1} g(x_1\Delta_N^{1/2},x_2)
\frac{\sin(2\pi x_1y)}{2\pi y}\dd x_1 \dd x_2.
\end{align*}
Noting that \eqref{eq1-0502} and
\begin{align*}
&\Biggl|
\int_{a_N^{(0)}}^\infty\int_0^{a_N^{(0)}}
g(a_N^{(0)},x_2)\dd x_1\dd x_2
-
\int_{a_N^{(0)}}^\infty\int_0^{a_N^{(0)}}
g(x_1,x_2)\dd x_1\dd x_2
\Biggr|
\\
&\le
\int_{a_N^{(0)}}^\infty\int_0^{a_N^{(0)}}
|g(a_N^{(0)},x_2)-g(x_1,x_2)|\dd x_1\dd x_2
\\
&=
\int_{a_N^{(0)}}^\infty\int_0^{a_N^{(0)}}
\Biggl|
\int_0^1\partial_{x_1} g(a_N^{(0)}+u(x_1-a_N^{(0)}),x_2)\dd u
\Biggr|
|x_1-a_N^{(0)}|\dd x_1\dd x_2
\\
&\lesssim 
\Delta_N^{1/2}
\int_{a_N^{(0)}}^\infty |f'(x_2^2)|
\int_0^{a_N^{(0)}}|x_1-a_N^{(0)}|\dd x_1\dd x_2
\\
&\lesssim
\Delta_N^{3/2}
\int_{a_N^{(0)}}^{\infty}|f'(x_2^2)|\dd x_2
\\
&=O(R_{2,N}),
\end{align*}
we have that
$I_1
=\int_{a_N^{(0)}}^\infty\int_0^{a_N^{(0)}} g(x_1,x_2)\dd x_1\dd x_2
+O(R_{2,N})$,
$I_2=O(\delta^{-1}R_{1,N})$
and 
\begin{align*}
T_3
&=\int_{a_N^{(0)}}^\infty\int_0^{a_N^{(0)}}
g(x_1,x_2)\dd x_1\dd x_2
+O(R_{2,N})+O(\delta^{-1}R_{1,N})
\nonumber
\\
&=\int_{D_N^{(1)}}g(x)\dd x
+O\bigl(\delta^{-1}(R_{1,N} \lor R_{2,N})\bigr).
\end{align*}
Therefore, the desired result can be obtained.
\end{proof}

Let
\begin{equation}\label{functions}
f_\alpha(s)=\frac{1-\ee^{-s}}{s^{1+\alpha}},
\quad
f_{\alpha,\tau}(s)=\frac{(1-\ee^{-s})^2}{s^{1+\alpha}}\ee^{-\tau s}
\end{equation}
for $\alpha\in(0,1)$ and $\tau\ge0$.

\begin{lem}\label{lem2}
$f_\alpha\in \mathcal F_{2\alpha}$ 
and $f_{\alpha,\tau} \in \mathcal F_{-2+2\alpha}$.
Moreover, it holds that
\begin{align}
&\Delta_N
\sum_{\bs k\in\mathbb N^2}
f_\alpha(\lambda_{\bs k}\Delta_N)e_{\bs k}^2(y,z)
=
\frac{\Gamma(1-\alpha)}{4\pi\alpha\theta_2}
\ee^{-\frac{\theta_1}{\theta_2}y}\ee^{-\frac{\eta_1}{\theta_2}z}
+O(\delta^{-3}\Delta_{N}^{1-\alpha}),
\label{eq2-lem2-1}
\\
&\Delta_N
\sum_{\bs k\in\mathbb N^2}
f_{\alpha,\tau}(\lambda_{\bs k}\Delta_N)e_{\bs k}^2(y,z)
\nonumber
\\
&=
\frac{\Gamma(1-\alpha)}{4\pi\alpha\theta_2}
\bigl(
-\tau^\alpha+2(1+\tau)^\alpha-(2+\tau)^\alpha
\bigr)
\ee^{-\frac{\theta_1}{\theta_2}y}\ee^{-\frac{\eta_1}{\theta_2}z}
+O(\delta^{-3}\Delta_{N}^{1-\alpha}).
\label{eq2-lem2-2}
\end{align}
\end{lem}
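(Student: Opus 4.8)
The plan is to reduce both displays to Lemma~\ref{lem1}, after first checking the membership claims by direct asymptotics. Since $f_\alpha(s)=(1-\ee^{-s})s^{-1-\alpha}$, we have $f_\alpha(s^2)\sim s^{-2\alpha}$ as $s\downarrow0$, so condition~(b) holds with $\beta=2\alpha$; also $f_\alpha(s^2)=O(s^{-2-2\alpha})$ as $s\to\infty$, and $f_\alpha'(s)=\ee^{-s}s^{-1-\alpha}-(1+\alpha)(1-\ee^{-s})s^{-2-\alpha}=-\alpha s^{-1-\alpha}+O(s^{-\alpha})$, $f_\alpha''(s)=O(s^{-2-\alpha})$ near $0$ with polynomial decay at infinity, so the three $L^1$-conditions in~(a) follow from $0<\alpha<1$. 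Writing $(1-\ee^{-s})^2=s^2\phi(s)$ with $\phi$ smooth and $\phi(0)=1$ gives $f_{\alpha,\tau}(s)=s^{1-\alpha}\phi(s)\ee^{-\tau s}$, hence $f_{\alpha,\tau}(s^2)\sim s^{2-2\alpha}$ as $s\downarrow0$, i.e.\ (b) with $\beta=-2+2\alpha$; at infinity $f_{\alpha,\tau}$ decays exponentially when $\tau>0$ and like $f_\alpha$ when $\tau=0$, and its first two derivatives are $O(s^{-\alpha})$ and $O(s^{-1-\alpha})$ near $0$, so~(a) holds. In both cases $\beta<2$.

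For the two displays, take $(y,z)\in[\delta,1-\delta]^2$. From $4\sin^2(\pi k_1y)\sin^2(\pi k_2z)=(1-\cos2\pi k_1y)(1-\cos2\pi k_2z)$ we get
\[
e_{\bs k}^2(y,z)=\ee^{-\frac{\theta_1}{\theta_2}y}\ee^{-\frac{\eta_1}{\theta_2}z}\bigl(1-\cos(2\pi k_1y)-\cos(2\pi k_2z)+\cos(2\pi k_1y)\cos(2\pi k_2z)\bigr),
\]
so $\Delta_N\sum_{\bs k}f(\lambda_{\bs k}\Delta_N)e_{\bs k}^2(y,z)$ equals $\ee^{-\frac{\theta_1}{\theta_2}y}\ee^{-\frac{\eta_1}{\theta_2}z}$ times the combination, with signs $+,-,-,+$, of the four sums handled by Lemma~\ref{lem1}: part~(1), part~(2) applied with $y$ and with $z$, and part~(3). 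The key point is that the term $-\int_{D_N^{(1)}\cup D_N^{(2)}}f(\theta_2\pi^2|x|^2)\dd x$ from part~(1) is cancelled exactly by $+\int_{D_N^{(1)}}$ and $+\int_{D_N^{(2)}}$ from the two single-cosine sums (these regions being disjoint with union $D_N^{(1)}\cup D_N^{(2)}$), leaving main term $\frac{1}{4\pi\theta_2}\int_0^\infty f(s)\dd s$ and a remainder $O(\delta^{-3}(R_{1,N}\lor R_{2,N}\lor\Delta_N^{1-\beta/2}))$, the bounded exponential prefactor being absorbed into the $O$.

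It then remains to evaluate the integrals and the remainders. Integration by parts gives $\int_0^\infty f_\alpha(s)\dd s=\tfrac1\alpha\int_0^\infty\ee^{-s}s^{-\alpha}\dd s=\Gamma(1-\alpha)/\alpha$, which yields the constant $\Gamma(1-\alpha)/(4\pi\alpha\theta_2)$ in \eqref{eq2-lem2-1}. For \eqref{eq2-lem2-2} I would write $(1-\ee^{-s})^2\ee^{-\tau s}=-(1-\ee^{-\tau s})+2(1-\ee^{-(1+\tau)s})-(1-\ee^{-(2+\tau)s})$ (the constant terms cancelling) and apply $\int_0^\infty s^{-1-\alpha}(1-\ee^{-cs})\dd s=(\Gamma(1-\alpha)/\alpha)c^\alpha$ for $c\ge0$, each integral converging since $0<\alpha<1$, obtaining $\int_0^\infty f_{\alpha,\tau}(s)\dd s=(\Gamma(1-\alpha)/\alpha)(-\tau^\alpha+2(1+\tau)^\alpha-(2+\tau)^\alpha)$.

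For the remainders, with $\beta=2\alpha$ one has $\Delta_N^{1-\beta/2}=\Delta_N^{1-\alpha}$, and since $s|f_\alpha'(s^2)|\lor s^3|f_\alpha''(s^2)|\lesssim s^{-1-2\alpha}$ and $|f_\alpha'(s^2)|\lesssim s^{-2-2\alpha}$ near $0$, the bounds $\int_{\Delta_N^{1/2}}^1 s^{-1-2\alpha}\dd s=O(\Delta_N^{-\alpha})$ and $\int_{\Delta_N^{1/2}}^1 s^{-2-2\alpha}\dd s=O(\Delta_N^{-1/2-\alpha})$ give $R_{1,N}\lor R_{2,N}=O(\Delta_N^{1-\alpha})$; with $\beta=-2+2\alpha$ one has $\Delta_N^{1-\beta/2}=\Delta_N^{2-\alpha}$, and the analogous estimates (using $f_{\alpha,\tau}'(s)=O(s^{-\alpha})$ near $0$) give $R_{1,N}=O(\Delta_N)$ and $R_{2,N}=o(\Delta_N^{1-\alpha})$. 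In every case the remainder is $O(\delta^{-3}\Delta_N^{1-\alpha})$, which finishes both displays. I expect the main work to be the bookkeeping — checking that the $D_N^{(1)}$, $D_N^{(2)}$ integrals cancel exactly and verifying the handful of remainder integrals for $f_\alpha$ and $f_{\alpha,\tau}$ — together with the decomposition needed to make $\int_0^\infty f_{\alpha,\tau}$ converge piece by piece.
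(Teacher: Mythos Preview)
Your approach is essentially the same as the paper's: expand $e_{\bs k}^2$ via the product-of-cosines identity, apply Lemma~\ref{lem1}(1)--(3) and observe that the $\int_{D_N^{(1)}}$, $\int_{D_N^{(2)}}$ boundary integrals cancel, then evaluate $\int_0^\infty f(s)\,\dd s$ (using the same three-term decomposition for $f_{\alpha,\tau}$) and bound $R_{1,N}$, $R_{2,N}$, $\Delta_N^{1-\beta/2}$ from the near-zero asymptotics of $f$, $f'$, $f''$. The only cosmetic difference is that for $f_{\alpha,\tau}$ you obtain the slightly sharper $R_{1,N}=O(\Delta_N)$ at fixed $\tau$, whereas the paper records the coarser $O(\Delta_N^{1-\alpha})$ (obtained via $\tau\ee^{-\tau s^2}\le s^{-2}$), which has the advantage of being uniform in $\tau\ge0$ --- a point that matters later in Lemma~\ref{lem4} where $\tau=2(i-1)$ varies.
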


\begin{proof}
Since 
$e_{\bs k}^2(y,z)=(1-\cos(2\pi k_1y))(1-\cos(2\pi k_2z))
\ee^{-\frac{\theta_1}{\theta_2}y}\ee^{-\frac{\eta_1}{\theta_2}z}$,
it holds from Lemma \ref{lem1} that 
for $f\in \mathcal F_\beta$,
\begin{align}
&\Delta_N
\sum_{\bs k\in\mathbb N^2}
f(\lambda_{\bs k}\Delta_N)e_{\bs k}^2(y,z)
\nonumber
\\
&=
\ee^{-\frac{\theta_1}{\theta_2}y}\ee^{-\frac{\eta_1}{\theta_2}z}
\Biggl(
\Delta_N
\sum_{\bs k\in\mathbb N^2}
f(\lambda_{\bs k}\Delta_N)
-
\Delta_N
\sum_{\bs k\in\mathbb N^2}
f(\lambda_{\bs k}\Delta_N)
(\cos(2\pi k_1y)+\cos(2\pi k_2z))
\nonumber
\\
&\qquad\qquad+
\Delta_N
\sum_{\bs k\in\mathbb N^2}
f(\lambda_{\bs k}\Delta_N)
\cos(2\pi k_1y)\cos(2\pi k_2z)
\Biggr)
\nonumber
\\
&=
\ee^{-\frac{\theta_1}{\theta_2}y}\ee^{-\frac{\eta_1}{\theta_2}z}
\Biggl(
\frac{1}{4\pi\theta_2}
\int_0^\infty f(s)\dd s
-\int_{D_N^{(1)}\cup D_N^{(2)}}f(\theta_2\pi^2|x|^2)\dd x
\nonumber
\\
&\qquad\qquad+
\int_{D_N^{(1)}}f(\theta_2\pi^2|x|^2)\dd x
+\int_{D_N^{(2)}}f(\theta_2\pi^2|x|^2)\dd x
+O\bigl(\delta^{-3}(R_{1,N} \lor R_{2,N} \lor \Delta_N^{1-\beta/2})\bigr)
\Biggr)
\nonumber
\\
&=
\frac{1}{4\pi\theta_2}
\ee^{-\frac{\theta_1}{\theta_2}y}\ee^{-\frac{\eta_1}{\theta_2}z}
\int_0^\infty f(s)\dd s
+O\bigl(\delta^{-3}(R_{1,N} \lor R_{2,N} \lor \Delta_N^{1-\beta/2})\bigr).
\label{eq2-0001}
\end{align}

Since
\begin{align*}
f'_\alpha(s)
&=\frac{1}{s^{2+\alpha}}
\bigl(-(1-\ee^{-s})(s+1+\alpha)+s\bigr),
\\
f''_\alpha(s)
&=\frac{1}{s^{3+\alpha}}
\bigl\{
(1-\ee^{-s})\bigl(s^2+2(1+\alpha)s+(1+\alpha)(2+\alpha)\bigr)
-s\bigl(s+2(1+\alpha)\bigr)
\bigr\},
\end{align*}
$1-\ee^{-s} \sim s$ ($s\to0$) 
and $1-\ee^{-s} \sim 1$ ($s\to\infty$),
it follows that
\begin{equation}\label{eq2-0101}
f_\alpha(s^2)\sim \frac{1}{s^{2\alpha}},
\quad 
f'_\alpha(s^2)\sim \frac{1}{s^{2+2\alpha}},
\quad
f''_\alpha(s^2)\sim \frac{1}{s^{4+2\alpha}}
\ (s\to0),
\end{equation}
\begin{equation*}
f_\alpha(s^2)\sim \frac{1}{s^{2+2\alpha}},
\quad 
f'_\alpha(s^2)\sim \frac{1}{s^{4+2\alpha}},
\quad
f''_\alpha(s^2)\sim \frac{1}{s^{6+2\alpha}}
\ (s\to\infty),
\end{equation*}
and therefore $f_\alpha\in\mathcal F_{2\alpha}$. 
Moreover, we have from \eqref{eq2-0101} that 
\begin{align*}
R_{1,N}
&=
O\Biggl(
\Delta_N
\biggl(
\int_{\Delta_N^{1/2}}^1 s|f_\alpha'(s^2)|\dd s
\lor 
\int_{\Delta_N^{1/2}}^1 s^3|f_\alpha''(s^2)|\dd s
\biggr)
\Biggr)
=O\biggl(\Delta_N\int_{\Delta_N^{1/2}}^1\frac{\dd s}{s^{1+2\alpha}}\biggr)
=O(\Delta_N^{1-\alpha}),
\\
R_{2,N}
&=
O\biggl(
\Delta_N^{3/2}
\int_{\Delta_N^{1/2}}^1 |f_\alpha'(s^2)|\dd s
\biggr)
=O\biggl(\Delta_N^{3/2}\int_{\Delta_N^{1/2}}^1\frac{\dd s}{s^{2+2\alpha}}\biggr)
=O(\Delta_N^{1-\alpha})
\end{align*}
and $R_{1,N} \lor R_{2,N} \lor \Delta_{N}^{1-\alpha}=O(\Delta_N^{1-\alpha})$.

In the same way, noting that
\begin{align*}
f'_{\alpha,\tau}(s)
&=
\frac{\ee^{-\tau s}}{s^{2+\alpha}}
\bigl\{
-(1-\ee^{-s})^2\bigl((2-\tau)s+1+\alpha\bigr)
+2s(1-\ee^{-s})
\bigr\},
\\
f''_{\alpha,\tau}(s)
&=
\frac{\ee^{-\tau s}}{s^{3+\alpha}}
\bigl\{
(1-\ee^{-s})^2
\bigl((\tau+2)^2s^2+2(1+\alpha)(\tau+2)s+(1+\alpha)(2+\alpha)\bigr)
\\
&\qquad
-4(1-\ee^{-s})s(2s+1+\alpha+\tau)+2s^2
\bigr\},
\end{align*}
\begin{equation*}
f_{\alpha,\tau}(s^2)\sim s^{2-2\alpha}, 
\quad
f'_{\alpha,\tau}(s^2)\sim \frac{1}{s^{2\alpha}},
\quad
f''_{\alpha,\tau}(s^2)\sim \frac{\tau\ee^{-\tau s^2}}{s^{2+2\alpha}}
\ (s\to0),
\end{equation*}
\begin{equation*}
f_{\alpha,\tau}(s^2)\sim \frac{1}{s^{2+2\alpha}}, 
\quad
f'_{\alpha,\tau}(s^2)
\sim \frac{\tau \ee^{-\tau s^2}}{s^{2+2\alpha}}
\le \frac{1}{s^{4+2\alpha}},
\quad
f''_{\alpha,\tau}(s^2)
\sim \frac{\tau^2 \ee^{-\tau s^2}}{s^{2+2\alpha}}
\le \frac{1}{s^{6+2\alpha}}
\ (s\to\infty),
\end{equation*}
we obtain that
$f_{\alpha,\tau}\in\mathcal F_{-2+2\alpha}$ and 
$R_{1,N} \lor R_{2,N} \lor \Delta_N^{1-(-1+\alpha)}=O(\Delta_N^{1-\alpha})$. 

Since for $\beta>0$,
\begin{align*}
\int_0^\infty 
\frac{1-\ee^{-\beta s}}{s^{1+\alpha}}
\dd s
=
\frac{\beta}{\alpha}
\int_0^\infty 
s^{(1-\alpha)-1}\ee^{-\beta s}
\dd s
=\frac{\beta^\alpha}{\alpha}\Gamma(1-\alpha),
\end{align*}
it follows that
\begin{align*}
\int_0^\infty f_\alpha(s)\dd s
&=\frac{\Gamma(1-\alpha)}{\alpha},
\\
\int_0^\infty f_{\alpha,\tau}(s)\dd s
&=
-\int_0^\infty
\frac{1-\ee^{-\tau s}}{s^{1+\alpha}} \dd s
+2\int_0^\infty
\frac{1-\ee^{-(1+\tau)s}}{s^{1+\alpha}} \dd s
-\int_0^\infty
\frac{1-\ee^{-(2+\tau)s}}{s^{1+\alpha}}
\dd s
\nonumber
\\
&=
\frac{\Gamma(1-\alpha)}{\alpha}
\bigl(
-\tau^\alpha+2(1+\tau)^\alpha-(2+\tau)^\alpha
\bigr).
\end{align*}
Therefore, we obtain from \eqref{eq2-0001} 
that \eqref{eq2-lem2-1} and \eqref{eq2-lem2-2}.
\end{proof}
Since $\Delta_N^{(\beta)}=o(1)$ for $\beta<2$
and (1) in Lemma \ref{lem1} and Lemma \ref{lem2},
it follows that
\begin{align}
\sum_{\bs k\in\mathbb N^2}
\frac{1-\ee^{-\lambda_{\bs k}\Delta_N}}{\lambda_{\bs k}^{1+\alpha}}
&=
\Delta_N^{1+\alpha}
\sum_{\bs k\in\mathbb N^2}
f_\alpha(\lambda_{\bs k}\Delta_N)
=O(\Delta_N^\alpha),
\label{eq2-0002}
\\
\sum_{\bs k\in\mathbb N^2}
\frac{(1-\ee^{-\lambda_{\bs k}\Delta_N})^2}{\lambda_{\bs k}^{1+\alpha}}
&=
\Delta_N^{1+\alpha}
\sum_{\bs k\in\mathbb N^2}
f_{\alpha,0}(\lambda_{\bs k}\Delta_N)
=O(\Delta_N^\alpha).
\label{eq2-0003}
\end{align}

For the coordinate process $x_{\bs k}^{Q}(t)$, let 
$\Delta_i x_{\bs k}^{Q}=x_{\bs k}^{Q}(t_i)-x_{\bs k}^{Q}(t_{i-1})$.
By setting
\begin{align*}
A_{i,\bs k}
&=
-\langle \xi,e_{\bs k}\rangle_\theta
(1-\ee^{-\lambda_{\bs k}\Delta_N})\ee^{-\lambda_{\bs k}(i-1)\Delta_N},
\\
B_{1,i,\bs k}^{Q_1}
&=
-\frac{\sigma(1-\ee^{-\lambda_{\bs k}\Delta_N})}{\lambda_{\bs k}^{\alpha/2}}
\int_0^{(i-1)\Delta_N}\ee^{-\lambda_{\bs k}((i-1)\Delta_N-s)}\dd w_{\bs k}(s),
\\
B_{2,i,\bs k}^{Q_1}
&=
\frac{\sigma}{\lambda_{\bs k}^{\alpha/2}}
\int_{(i-1)\Delta_N}^{i\Delta_N}
\ee^{-\lambda_{\bs k}(i\Delta_N-s)}\dd w_{\bs k}(s),
\end{align*}
and $B_{i,\bs k}^{Q_1}=B_{1,i,\bs k}^{Q_1}+B_{2,i,\bs k}^{Q_1}$,
the increment $\Delta_i x_{\bs k}^{Q_1}$ can be expressed as 
$\Delta_i x_{\bs k}^{Q_1}=A_{i,\bs k}+B_{i,\bs k}^{Q_1}$.

\begin{lem}\label{lem3}
Under Assumption \ref{asm}, 
it holds that uniformly in $(y,z)\in D_\delta$,
\begin{align}
&\sum_{i=1}^N\sum_{\bs k_1,\bs k_2\in\mathbb N^2}
\Bigl|\EE[A_{i,\bs k_1}A_{i,\bs k_2}]e_{\bs k_1}(y,z)e_{\bs k_2}(y,z)\Bigr|
=O(\Delta_N^{\alpha}),
\label{eq3-lem3-1}
\\
&\sup_{j\ge1}\sum_{i=1}^N\sum_{\bs k_1,\bs k_2\in\mathbb N^2}
\Bigl|\EE[A_{i,\bs k_1}A_{j,\bs k_2}]e_{\bs k_1}(y,z)e_{\bs k_2}(y,z)\Bigr|
=O(\Delta_N^{\alpha/2}),
\label{eq3-lem3-2}
\\
&\sum_{i,j=1}^N\sum_{\bs k_1,\ldots,\bs k_4\in\mathbb N^2}
\Bigl|
\EE[A_{i,\bs k_1}A_{i,\bs k_2}A_{j,\bs k_3}A_{j,\bs k_4}]
e_{\bs k_1}(y,z)\cdots e_{\bs k_4}(y,z)
\Bigr|=O(\Delta_N^{\alpha}).
\label{eq3-lem3-3}
\end{align}
\end{lem}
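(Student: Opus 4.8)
The plan is to expand each expectation into a double series over spatial frequencies $\bs k\in\mathbb N^2$ and time indices, control the time sums by geometric series, and reduce the spatial sums to the standard series that already appear in the excerpt. Throughout I abbreviate $a_{\bs k}=(\EE[\langle\xi,e_{\bs k}\rangle_\theta^2])^{1/2}$ and $b_{\bs k}=(\EE[\langle\xi,e_{\bs k}\rangle_\theta^4])^{1/4}$, and I use: the eigenfunctions are uniformly bounded, $\sup_{\bs k}\sup_{(y,z)\in D}|e_{\bs k}(y,z)|<\infty$ (immediate from their explicit form), so every $e_{\bs k}(y,z)$ may be replaced by a fixed constant and all estimates below are automatically uniform in $(y,z)\in D_\delta$; $\sum_{\bs k}\lambda_{\bs k}^{-(1+\alpha)}<\infty$ (since $\lambda_{\bs k}\ge C_1|\bs k|^2$) together with \eqref{eq2-0002}--\eqref{eq2-0003}; and the moment bounds $b_{\bs k}^{4}\lesssim\lambda_{\bs k}^{-(1+\alpha)}$ from condition (iii) of Assumption \ref{asm}, whence $a_{\bs k}\le b_{\bs k}\lesssim\lambda_{\bs k}^{-(1+\alpha)/4}$, while condition (i) gives the stronger $a_{\bs k}^{2}\lesssim\lambda_{\bs k}^{-(1+\alpha)}$ and condition (ii) gives $\sum_{\bs k}\lambda_{\bs k}^{1+\alpha}a_{\bs k}^2<\infty$ (whence also $\sum_{\bs k}a_{\bs k}<\infty$ by Cauchy--Schwarz). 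For the time sums I use $\sum_{i=1}^{N}\ee^{-c\lambda_{\bs k}(i-1)\Delta_N}\le(1-\ee^{-c\lambda_{\bs k}\Delta_N})^{-1}\le(1-\ee^{-\lambda_{\bs k}\Delta_N})^{-1}$ for $c\ge1$ and, when two frequencies are coupled on a single time index, $(1-\ee^{-(\lambda_{\bs p}+\lambda_{\bs q})\Delta_N})^{-1}\le\{(1-\ee^{-\lambda_{\bs p}\Delta_N})(1-\ee^{-\lambda_{\bs q}\Delta_N})\}^{-1/2}$ (the geometric mean of the two obvious bounds). The basic spatial estimate I rely on is that, under condition (ii), $\sum_{\bs k}w_{\bs k}(1-\ee^{-\lambda_{\bs k}\Delta_N})^{\gamma}=O(\Delta_N^{\alpha/2})$ for $\gamma\ge\tfrac12$ and $w_{\bs k}\in\{a_{\bs k},a_{\bs k}^{2},a_{\bs k}b_{\bs k}^{2}\}$ (use $(1-\ee^{-s})^{\gamma}\le(1-\ee^{-s})^{1/2}$, Cauchy--Schwarz pairing $\lambda_{\bs k}^{(1+\alpha)/2}a_{\bs k}$ against the remainder, $\sum_{\bs k}\lambda_{\bs k}^{1+\alpha}a_{\bs k}^2<\infty$, the pointwise bounds on $a_{\bs k},b_{\bs k}$, and \eqref{eq2-0002}), whereas $\sum_{\bs k}b_{\bs k}^{4}(1-\ee^{-\lambda_{\bs k}\Delta_N})^{\gamma}=O(\Delta_N^{\alpha})$ for $\gamma\ge1$ directly from $b_{\bs k}^{4}\lesssim\lambda_{\bs k}^{-(1+\alpha)}$ and \eqref{eq2-0003}.

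For \eqref{eq3-lem3-1} and \eqref{eq3-lem3-2}: under condition (i) the centering $\EE[\langle\xi,e_{\bs k}\rangle_\theta]=0$ and the independence (iv) kill every cross term $\EE[A_{i,\bs k_1}A_{j,\bs k_2}]$ with $\bs k_1\ne\bs k_2$, so both sums reduce to diagonal ones; summing the geometric series in $i$ and using $a_{\bs k}^{2}\lesssim\lambda_{\bs k}^{-(1+\alpha)}$ they become $\lesssim\sum_{\bs k}\lambda_{\bs k}^{-(1+\alpha)}(1-\ee^{-\lambda_{\bs k}\Delta_N})=O(\Delta_N^{\alpha})$ by \eqref{eq2-0002}. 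Under condition (ii) one instead bounds $|\EE[\langle\xi,e_{\bs k_1}\rangle_\theta\langle\xi,e_{\bs k_2}\rangle_\theta]|\le a_{\bs k_1}a_{\bs k_2}$, so the sums factorize; the factor carrying the free index $j$ (or, for \eqref{eq3-lem3-1}, the diagonal factor) is $O(\Delta_N^{\alpha/2})$ uniformly in $j$ by the spatial estimate, while the remaining $\sum_{i=1}^{N}\sum_{\bs k}a_{\bs k}(1-\ee^{-\lambda_{\bs k}\Delta_N})\ee^{-\lambda_{\bs k}(i-1)\Delta_N}\le\sum_{\bs k}a_{\bs k}$ is $O(1)$. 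Since $O(\Delta_N^{\alpha})\subseteq O(\Delta_N^{\alpha/2})$, we obtain \eqref{eq3-lem3-1} with remainder $O(\Delta_N^{\alpha})$ and \eqref{eq3-lem3-2} with remainder $O(\Delta_N^{\alpha/2})$, uniformly in $(y,z)$.

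For \eqref{eq3-lem3-3} under condition (i): centering and independence force $\EE[A_{i,\bs k_1}A_{i,\bs k_2}A_{j,\bs k_3}A_{j,\bs k_4}]$ to vanish unless the four frequencies are all equal or split into two equal pairs. The all-equal term contributes $\lesssim\sum_{\bs p}b_{\bs p}^{4}(1-\ee^{-\lambda_{\bs p}\Delta_N})^{2}=O(\Delta_N^{\alpha})$ by \eqref{eq2-0003}; each of the three pair configurations, after summing the geometric series in $i$ and $j$ (using the coupled bound when one frequency sits on both time indices), collapses to $\bigl(\sum_{\bs p}a_{\bs p}^{2}(1-\ee^{-\lambda_{\bs p}\Delta_N})\bigr)^{2}\lesssim\bigl(\sum_{\bs p}\lambda_{\bs p}^{-(1+\alpha)}(1-\ee^{-\lambda_{\bs p}\Delta_N})\bigr)^{2}=O(\Delta_N^{2\alpha})=o(\Delta_N^{\alpha})$. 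Summing these finitely many pieces gives \eqref{eq3-lem3-3}.

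For \eqref{eq3-lem3-3} under condition (ii) no term vanishes, and one must run through the partitions of $\{1,2,3,4\}$ into groups of equal frequencies (all four equal; one triple and one single; two pairs, in three arrangements; one pair and two singles; all distinct). The key bookkeeping device is: if a frequency $\bs p$ has total multiplicity $m$, split as $m_i\in\{0,1,2\}$ on the first two ($i$-)positions and $m_j\in\{0,1,2\}$ on the last two ($j$-)positions, then the four $A$'s carry $(1-\ee^{-\lambda_{\bs p}\Delta_N})^{m}$, and each of the two geometric sums, bounded by a product of factors $(1-\ee^{-\lambda\Delta_N})^{-1}$ or $(1-\ee^{-\lambda\Delta_N})^{-1/2}$ over the frequencies present, costs $\bs p$ at most $(1-\ee^{-\lambda_{\bs p}\Delta_N})^{-m_i/2}$ and $(1-\ee^{-\lambda_{\bs p}\Delta_N})^{-m_j/2}$, so that $\bs p$ ends up with the clean net power $(1-\ee^{-\lambda_{\bs p}\Delta_N})^{m/2}$; its $\xi$-moment factor, bounded by independence and H\"older, is $b_{\bs p}^{4},\ a_{\bs p}b_{\bs p}^{2},\ a_{\bs p}^{2},\ a_{\bs p}$ for $m=4,3,2,1$. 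Hence a group of multiplicity $m\le3$ contributes $\sum_{\bs p}(\text{moment factor})(1-\ee^{-\lambda_{\bs p}\Delta_N})^{m/2}=O(\Delta_N^{\alpha/2})$ by the spatial estimate, a group of multiplicity $4$ contributes $O(\Delta_N^{\alpha})$, and multiplying the (at most two) group contributions in each partition type gives $O(\Delta_N^{\alpha})$ for that type; adding the finitely many types yields \eqref{eq3-lem3-3}. The main obstacle is exactly this last case analysis: one has to keep track of which frequency is shared between the two time indices so that the coupled geometric bound is applied in the right places, and then choose the Cauchy--Schwarz split in each resulting spatial sum so that it lands exactly at the borderline rate $\Delta_N^{\alpha/2}$ — a cruder estimate such as $1-\ee^{-s}\le s$ would only give $\Delta_N^{\beta}$ with $\beta<\alpha$, which is insufficient. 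Once these choices are made the whole argument reduces to \eqref{eq2-0002}--\eqref{eq2-0003} and the finiteness of $\sum_{\bs k}\lambda_{\bs k}^{-(1+\alpha)}$ and $\sum_{\bs k}\lambda_{\bs k}^{1+\alpha}a_{\bs k}^2$.
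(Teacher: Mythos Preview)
Your overall strategy matches the paper's: split into the subcases (i)+(iii)+(iv) and (ii)+(iii)+(iv) of Assumption~\ref{asm}, reduce the time sums via geometric series, use independence to organize the fourth-moment expectation by partitions of the four frequencies, and close each spatial sum with Cauchy--Schwarz against \eqref{eq2-0002}--\eqref{eq2-0003}. Your symmetric ``net power $m/2$'' bookkeeping via the geometric-mean bound $(1-\ee^{-(\lambda_{\bs p}+\lambda_{\bs q})\Delta_N})^{-1}\le\{(1-\ee^{-\lambda_{\bs p}\Delta_N})(1-\ee^{-\lambda_{\bs q}\Delta_N})\}^{-1/2}$ is a clean organizational device; the paper instead uses the one-sided bound $(1-\ee^{-(\lambda_{\bs p}+\lambda_{\bs q})\Delta_N})\ge 1-\ee^{-\lambda_{\bs p}\Delta_N}$ and distributes the residual powers asymmetrically case by case, but the two routes are equivalent in effort and outcome. (Minor slip: ``at most two group contributions'' should be ``at most four'', but extra groups only give extra factors of $O(\Delta_N^{\alpha/2})$.)

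There is, however, a gap in your treatment of \eqref{eq3-lem3-1} under condition (ii). As written, you factorize into ``the diagonal factor $=O(\Delta_N^{\alpha/2})$'' times ``$\sum_{i}\sum_{\bs k}a_{\bs k}(1-\ee^{-\lambda_{\bs k}\Delta_N})\ee^{-\lambda_{\bs k}(i-1)\Delta_N}\le\sum_{\bs k}a_{\bs k}=O(1)$'', which only yields $O(\Delta_N^{\alpha/2})$, not the required $O(\Delta_N^{\alpha})$; the remark ``$O(\Delta_N^{\alpha})\subseteq O(\Delta_N^{\alpha/2})$'' does not repair this. The fix is immediate with your own machinery: since both $\bs k_1,\bs k_2$ sit on the \emph{same} time index $i$, apply your coupled bound to the $i$-sum to get net power $1/2$ on each, and the whole expression becomes $\bigl(\sum_{\bs k}a_{\bs k}(1-\ee^{-\lambda_{\bs k}\Delta_N})^{1/2}\bigr)^2=O(\Delta_N^{\alpha})$ by your spatial estimate. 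The paper handles this step slightly differently: it keeps the expectation, writes $\sum_i\EE\bigl[(\sum_{\bs k}(1-\ee^{-\lambda_{\bs k}\Delta_N})\ee^{-\lambda_{\bs k}(i-1)\Delta_N}|\langle\xi,e_{\bs k}\rangle_\theta|)^2\bigr]$, applies Cauchy--Schwarz inside the square with weights $\lambda_{\bs k}^{\pm(1+\alpha)/2}$, and then sums the geometric series in $i$ to reach $\bigl(\sum_{\bs k}\lambda_{\bs k}^{1+\alpha}a_{\bs k}^2\bigr)\sum_{\bs k}\lambda_{\bs k}^{-(1+\alpha)}(1-\ee^{-\lambda_{\bs k}\Delta_N})=O(\Delta_N^{\alpha})$ directly via \eqref{eq2-0002}.
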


\begin{proof}
First of all, we show \eqref{eq3-lem3-1}-\eqref{eq3-lem3-3} under 
(i), (iii) and (iv) in Assumption \ref{asm}. 
It follows from (i) and (iv) that
\begin{align*}
&\sum_{\bs k_1,\bs k_2\in\mathbb N^2}
\Bigl|\EE[A_{i,\bs k_1}A_{j,\bs k_2}]e_{\bs k_1}(y,z)e_{\bs k_2}(y,z)\Bigr|
\\
&\lesssim
\sum_{\bs k_1,\bs k_2\in\mathbb N^2}
(1-\ee^{-\lambda_{\bs k_1}\Delta_N})(1-\ee^{-\lambda_{\bs k_2}\Delta_N})
\ee^{-\lambda_{\bs k_1}(i-1)\Delta_N }
\ee^{-\lambda_{\bs k_2}(j-1)\Delta_N }
\bigl|\EE[\langle\xi,e_{\bs k_1}\rangle_\theta 
\langle\xi,e_{\bs k_2}\rangle_\theta]\bigr|
\\
&=
\sum_{\bs k\in\mathbb N^2}
(1-\ee^{-\lambda_{\bs k}\Delta_N})^2
\ee^{-\lambda_{g k}(i-1)\Delta_N }
\ee^{-\lambda_{\bs k}(j-1)\Delta_N }
\EE[\langle\xi,e_{\bs k}\rangle_\theta^2]
\\
&\lesssim
\sum_{\bs k\in\mathbb N^2}
\frac{(1-\ee^{-\lambda_{\bs k}\Delta_N})^2}{\lambda_{\bs k}^{1+\alpha}}
\ee^{-\lambda_{\bs k}(i-1)\Delta_N}
\ee^{-\lambda_{\bs k}(j-1)\Delta_N}.
\end{align*}
Therefore, we have from \eqref{eq2-0002} and \eqref{eq2-0003} that 
\begin{align*}
&\sum_{i=1}^N\sum_{\bs k_1,\bs k_2\in\mathbb N^2}
\Bigl|\EE[A_{i,\bs k_1}A_{i,\bs k_2}]e_{\bs k_1}(y,z)e_{\bs k_2}(y,z)\Bigr|
\\
&\lesssim
\sum_{i=1}^N\sum_{\bs k\in\mathbb N^2}
\frac{(1-\ee^{-\lambda_{\bs k}\Delta_N})^2}{\lambda_{\bs k}^{1+\alpha}}
\ee^{-2\lambda_{\bs k}(i-1)\Delta_N }
\\
&\le
\sum_{\bs k\in\mathbb N^2}
\frac{1-\ee^{-\lambda_{\bs k}\Delta_N}}{\lambda_{\bs k}^{1+\alpha}}
=
O(\Delta_N^{\alpha}),
\end{align*}
and
\begin{align*}
&\sup_{j\ge1}\sum_{i=1}^N\sum_{\bs k_1,\bs k_2\in\mathbb N^2}
\Bigl|\EE[A_{i,\bs k_1}A_{j,\bs k_2}]e_{\bs k_1}(y,z)e_{\bs k_2}(y,z)\Bigr|
\\
&\lesssim
\sup_{j\ge1}\sum_{i=1}^N
\sum_{\bs k\in\mathbb N^2}
\frac{(1-\ee^{-\lambda_{\bs k}\Delta_N})^2}{\lambda_{\bs k}^{1+\alpha}}
\ee^{-\lambda_{\bs k}(i-1)\Delta_N}
\ee^{-\lambda_{\bs k}(j-1)\Delta_N}
\\
&\le
\sum_{\bs k_1\in\mathbb N^2}
\frac{1-\ee^{-\lambda_{\bs k_1}\Delta_N}}{\lambda_{\bs k_1}^{1+\alpha}}
=O(\Delta_N^{\alpha}).
\end{align*}
Moreover, since (i), (iii), (iv) and 
\begin{align*}
&\sum_{\bs k_1,\ldots,\bs k_4\in\mathbb N^2}
\Bigl|\EE[A_{i,\bs k_1}A_{i,\bs k_2}A_{j,\bs k_3}A_{j,\bs k_4}]
e_{\bs k_1}(y,z)\cdots e_{\bs k_4}(y,z)\Bigr|
\\
&\lesssim
\sum_{\bs k_1,\ldots,\bs k_4\in\mathbb N^2}
(1-\ee^{-\lambda_{\bs k_1}\Delta_N})\cdots(1-\ee^{-\lambda_{\bs k_4}\Delta_N})
\ee^{-(\lambda_{\bs k_1}+\lambda_{\bs k_2})(i-1)\Delta_N}
\ee^{-(\lambda_{\bs k_3}+\lambda_{\bs k_4})(j-1)\Delta_N}
\\
&\qquad\qquad\times
\bigl|
\EE[\langle \xi,e_{\bs k_1}\rangle_\theta\cdots\langle \xi,e_{\bs k_4}\rangle_\theta]
\bigr|
\\
&=
\sum_{\bs k\in\mathbb N^2}
(1-\ee^{-\lambda_{\bs k}\Delta_N})^4
\ee^{-2\lambda_{\bs k}(i-1)\Delta_N}
\ee^{-2\lambda_{\bs k}(j-1)\Delta_N}
\EE[\langle \xi,e_{\bs k}\rangle_\theta^4]
\\
&\qquad+
\sum_{\bs k_1\neq\bs  k_2}
(1-\ee^{-\lambda_{\bs k_1}\Delta_N})^2(1-\ee^{-\lambda_{\bs k_2}\Delta_N})^2
\\
&\qquad\qquad\times
\bigl(
\ee^{-2\lambda_{\bs k_1}(i-1)\Delta_N}
\ee^{-2\lambda_{\bs k_2}(j-1)\Delta_N}
+2\ee^{-(\lambda_{\bs k_1}+\lambda_{\bs k_2})(i-1)\Delta_N}
\ee^{-(\lambda_{\bs k_1}+\lambda_{\bs k_2})(j-1)\Delta_N}
\bigr)
\\
&\qquad\qquad\times
\EE[\langle \xi,e_{\bs k_1}\rangle_\theta^2]
\EE[\langle \xi,e_{\bs k_2}\rangle_\theta^2]
\\
&\lesssim
\sum_{\bs k\in\mathbb N^2}
\frac{(1-\ee^{-\lambda_{\bs k}\Delta_N})^4}{\lambda_{\bs k}^{1+\alpha}}
\ee^{-2\lambda_{\bs k}(i-1)\Delta_N}
\ee^{-2\lambda_{\bs k}(j-1)\Delta_N}
\\
&\qquad+
\Biggl(
\sum_{\bs k\in\mathbb N^2}
\frac{(1-\ee^{-\lambda_{\bs k}\Delta_N})^2}{\lambda_{\bs k}^{1+\alpha}}
\ee^{-2\lambda_{\bs k}(i-1)\Delta_N}
\Biggr)
\Biggl(
\sum_{\bs k\in\mathbb N^2}
\frac{(1-\ee^{-\lambda_{\bs k}\Delta_N})^2}{\lambda_{\bs k}^{1+\alpha}}
\ee^{-2\lambda_{\bs k}(j-1)\Delta_N}
\Biggr)
\\
&\qquad+
\Biggl(
\sum_{\bs k\in\mathbb N^2}
\frac{(1-\ee^{-\lambda_{\bs k}\Delta_N})^2}{\lambda_{\bs k}^{1+\alpha}}
\ee^{-\lambda_{\bs k}(i+j-2)\Delta_N}
\Biggr)^2,
\end{align*}
it holds from \eqref{eq2-0002} and \eqref{eq2-0003} that
\begin{align*}
&\sum_{i,j=1}^N\sum_{\bs k_1,\ldots,\bs k_4\in\mathbb N^2}
\Bigl|\EE[A_{i,\bs k_1}A_{i,\bs k_2}A_{j,\bs k_3}A_{j,\bs k_4}]
e_{\bs k_1}(y,z)\cdots e_{\bs k_4}(y,z)\Bigr|
\\
&\lesssim
\sum_{\bs k\in\mathbb N^2}
\frac{(1-\ee^{-\lambda_{\bs k}\Delta_N})^4}{\lambda_{\bs k}^{1+\alpha}}
\sum_{i=1}^N\ee^{-2\lambda_{\bs k}(i-1)\Delta_N}
\sum_{j=1}^N\ee^{-2\lambda_{\bs k}(j-1)\Delta_N}
\\
&\qquad+
\Biggl(
\sum_{\bs k\in\mathbb N^2}
\frac{(1-\ee^{-\lambda_{\bs k}\Delta_N})^2}{\lambda_{\bs k}^{1+\alpha}}
\sum_{i=1}^N\ee^{-2\lambda_{\bs k}(i-1)\Delta_N}
\Biggr)
\Biggl(
\sum_{\bs k\in\mathbb N^2}
\frac{(1-\ee^{-\lambda_{\bs k}\Delta_N})^2}{\lambda_{\bs k}^{1+\alpha}}
\sum_{j=1}^N\ee^{-2\lambda_{\bs k}(j-1)\Delta_N}
\Biggr)
\\
&\qquad+
\Biggl(
\sum_{\bs k\in\mathbb N^2}
\frac{(1-\ee^{-\lambda_{\bs k}\Delta_N})^2}{\lambda_{\bs k}^{1+\alpha}}
\sum_{i=1}^N\ee^{-\lambda_{\bs k}(i-1)\Delta_N}
\Biggr)
\Biggl(
\sum_{\bs k\in\mathbb N^2}
\frac{(1-\ee^{-\lambda_{\bs k}\Delta_N})^2}{\lambda_{\bs k}^{1+\alpha}}
\sum_{j=1}^N\ee^{-\lambda_{\bs k}(j-1)\Delta_N}
\Biggr)
\\
&\lesssim
\sum_{\bs k\in\mathbb N^2}
\frac{(1-\ee^{-\lambda_{\bs k}\Delta_N})^2}{\lambda_{\bs k}^{1+\alpha}}
+
\Biggl(
\sum_{\bs k\in\mathbb N^2}
\frac{1-\ee^{-\lambda_{\bs k}\Delta_N}}{\lambda_{\bs k}^{1+\alpha}}
\Biggr)^2
\\
&=O(\Delta_N^{\alpha} \lor \Delta_N^{2\alpha})
\\
&=O(\Delta_N^\alpha).
\end{align*}

We next prove \eqref{eq3-lem3-1}-\eqref{eq3-lem3-3} 
under (ii)-(iv) in Assumption \ref{asm}. 
We obtain from (ii), (iii) and the Schwarz inequality that 
\begin{align*}
&\sum_{i=1}^N\sum_{\bs k_1,\bs k_2\in\mathbb N^2}
\Bigl|\EE[A_{i,\bs k_1}A_{i,\bs k_2}]e_{\bs k_1}(y,z)e_{\bs k_2}(y,z)\Bigr|
\\
&\lesssim 
\sum_{i=1}^N\sum_{\bs k_1,\bs k_2\in\mathbb N^2}
(1-\ee^{-\lambda_{\bs k_1}\Delta_N})(1-\ee^{-\lambda_{\bs k_2}\Delta_N})
\\
&\qquad\qquad\times
\ee^{-\lambda_{\bs k_1}(i-1)\Delta_N }
\ee^{-\lambda_{\bs k_2}(i-1)\Delta_N }
\EE\bigl[
|\langle\xi,e_{\bs k_1}\rangle_\theta \langle\xi,e_{\bs k_2}\rangle_\theta|
\bigr]
\\
&=
\sum_{i=1}^N
\EE\Biggl[
\biggl(
\sum_{\bs k\in\mathbb N^2}
(1-\ee^{-\lambda_{\bs k}\Delta_N})
\ee^{-\lambda_{\bs k}(i-1)\Delta_N }
|\langle\xi,e_{\bs k}\rangle_\theta|
\biggr)^2
\Biggr]
\\
&\le
\Biggl(
\sum_{\bs k\in\mathbb N^2}
\lambda_{\bs k}^{1+\alpha}
\EE[\langle\xi,e_{\bs k}\rangle_\theta^2]
\Biggr)
\Biggl(
\sum_{\bs k\in\mathbb N^2}
\frac{(1-\ee^{-\lambda_{\bs k}\Delta_N})^2}{\lambda_{\bs k}^{1+\alpha}}
\sum_{i=1}^N
\ee^{-2\lambda_{\bs k}(i-1)\Delta_N }
\Biggr)
\\
&\lesssim
\sum_{\bs k\in\mathbb N^2}
\frac{1-\ee^{-\lambda_{\bs k}\Delta_N}}{\lambda_{\bs k}^{1+\alpha}}
\\
&=
O(\Delta_N^{\alpha})
\end{align*}
and
\begin{align*}
&\sup_{j\ge1}\sum_{i=1}^N\sum_{\bs k_1,\bs k_2\in\mathbb N^2}
\Bigl|\EE[A_{i,\bs k_1}A_{j,\bs k_2}]e_{\bs k_1}(y,z)e_{\bs k_2}(y,z)\Bigr|
\\
&\lesssim 
\sup_{j\ge1}\sum_{i=1}^N\sum_{\bs k_1,\bs k_2\in\mathbb N^2}
(1-\ee^{-\lambda_{\bs k_1}\Delta_N})(1-\ee^{-\lambda_{\bs k_2}\Delta_N})
\\
&\qquad\qquad\times
\ee^{-\lambda_{\bs k_1}(i-1)\Delta_N }
\ee^{-\lambda_{\bs k_2}(j-1)\Delta_N }
\EE\bigl[
|\langle\xi,e_{\bs k_1}\rangle_\theta \langle\xi,e_{\bs k_2}\rangle_\theta|
\bigr]
\\
&\le 
\sum_{\bs k_1,\bs k_2\in\mathbb N^2}
(1-\ee^{-\lambda_{\bs k_2}\Delta_N})
\EE\bigl[
|\langle\xi,e_{\bs k_1}\rangle_\theta \langle\xi,e_{\bs k_2}\rangle_\theta|
\bigr]
\\
&=
\EE\Biggl[
\biggl(
\sum_{\bs k\in\mathbb N^2}
|\langle\xi,e_{\bs k}\rangle_\theta|
\biggr)
\biggl(
\sum_{\bs k\in\mathbb N^2}
(1-\ee^{-\lambda_{\bs k}\Delta_N})
|\langle\xi,e_{\bs k}\rangle_\theta|
\biggr)
\Biggr]
\\
&\le
\Biggl(
\sum_{\bs k\in\mathbb N^2}
\frac{1}{\lambda_{\bs k}^{1+\alpha}}
\Biggr)^{1/2}
\Biggl(
\sum_{\bs k\in\mathbb N^2}
\frac{(1-\ee^{-\lambda_{\bs k}\Delta_N})^2}{\lambda_{\bs k}^{1+\alpha}}
\Biggr)^{1/2}
\Biggl(
\sum_{\bs k\in\mathbb N^2}
\lambda_{\bs k}^{1+\alpha}
\EE[\langle\xi,e_{\bs k}\rangle_\theta^2]
\Biggr)
\\
&=
O(\Delta_N^{\alpha/2}).
\end{align*}
Noting that for $\ell=3,4$,  
\begin{align*}
&\sum_{\bs k\in\mathbb N^2}
(1-\ee^{-\lambda_{\bs k}\Delta_N})^2
\EE\bigl[|\langle \xi,e_{\bs k}\rangle_\theta|^\ell\bigr]
\\
&\le
\sum_{\bs k\in\mathbb N^2}
(1-\ee^{-\lambda_{\bs k}\Delta_N})^2
\EE\bigl[\langle \xi,e_{\bs k}\rangle_\theta^4\bigr]^{1/2}
\EE\bigl[\langle \xi,e_{\bs k}\rangle_\theta^{2(\ell-2)}\bigr]^{1/2}
\\
&\lesssim
\sum_{\bs k\in\mathbb N^2}
\frac{(1-\ee^{-\lambda_{\bs k}\Delta_N})^2}{\lambda_{\bs k}^{1+\alpha}}
=
O(\Delta_N^{\alpha}),
\end{align*}
\begin{equation*}
\sum_{\bs k\in\mathbb N^2}
(1-\ee^{-\lambda_{\bs k}\Delta_N})
\EE\bigl[\langle \xi,e_{\bs k}\rangle_\theta^2\bigr]
\lesssim
\sum_{\bs k\in\mathbb N^2}
\frac{1-\ee^{-\lambda_{\bs k}\Delta_N}}{\lambda_{\bs k}^{1+\alpha}}
=O(\Delta_N^{\alpha}),
\end{equation*}
and that for $j=0,1$, 
\begin{align*}
&\sum_{\bs k\in\mathbb N^2}
(1-\ee^{-\lambda_{\bs k}\Delta_N})^j
\EE\bigl[|\langle \xi,e_{\bs k}\rangle_\theta|\bigr]
\\
&\le
\Biggl(
\sum_{\bs k\in\mathbb N^2}
\frac{(1-\ee^{-\lambda_{\bs k}\Delta_N})^{2j}}{\lambda_{\bs k}^{1+\alpha}}
\Biggr)^{1/2}
\Biggl(
\sum_{\bs k\in\mathbb N^2}
\lambda_{\bs k}^{1+\alpha}
\EE\bigl[\langle \xi,e_{\bs k}\rangle_\theta^2\bigr]
\Biggr)^{1/2}
\\
&=
\begin{cases}
O(1), & j=0,
\\
O(\Delta_N^{\alpha/2}), & j=1,
\end{cases}
\end{align*}
we obtain from (ii)-(iv) that
\begin{align*}
&\sum_{i,j=1}^N\sum_{\bs k_1,\ldots,\bs k_4\in\mathbb N^2}
\Bigl|\EE[A_{i,\bs k_1}A_{i,\bs k_2}A_{j,\bs k_3}A_{j,\bs k_4}]
e_{\bs k_1}(y,z)\cdots e_{\bs k_4}(y,z)\Bigr|
\\
&\lesssim
\sum_{\bs k_1,\ldots,\bs k_4\in\mathbb N^2}
(1-\ee^{-\lambda_{\bs k_1}\Delta_N})\cdots(1-\ee^{-\lambda_{\bs k_4}\Delta_N})
\EE[|\langle \xi,e_{\bs k_1}\rangle_\theta\cdots\langle \xi,e_{\bs k_4}\rangle_\theta|]
\\
&\qquad\qquad\times
\sum_{i=1}^N
\ee^{-(\lambda_{\bs k_1}+\lambda_{\bs k_2})(i-1)\Delta_N}
\sum_{j=1}^N
\ee^{-(\lambda_{\bs k_3}+\lambda_{\bs k_4})(j-1)\Delta_N}
\\
&\le
\sum_{\bs k_1,\ldots,\bs k_4\in\mathbb N^2}
\frac{(1-\ee^{-\lambda_{\bs k_1}\Delta_N})\cdots(1-\ee^{-\lambda_{\bs k_4}\Delta_N})}
{(1-\ee^{-(\lambda_{\bs k_1}+\lambda_{\bs k_2})\Delta_N})
(1-\ee^{-(\lambda_{\bs k_3}+\lambda_{\bs k_4})\Delta_N})}
\EE[|\langle \xi,e_{\bs k_1}\rangle_\theta\cdots\langle \xi,e_{\bs k_4}\rangle_\theta|]
\\
&\lesssim
\sum_{\bs k\in\mathbb N^2}
(1-\ee^{-\lambda_{\bs k}\Delta_N})^2
\EE[\langle \xi,e_{\bs k}\rangle_\theta^4]
\\
&\qquad+
\sum_{\bs k_1,\bs k_2\in\mathbb N^2}
(1-\ee^{-\lambda_{\bs k_1}\Delta_N})^2
\EE\bigl[|\langle \xi,e_{\bs k_1}\rangle_\theta|^3\bigr]
\EE[|\langle \xi,e_{\bs k_2}\rangle_\theta|]
\\
&\qquad+
\sum_{\bs k_1,\bs k_2\in\mathbb N^2}
(1-\ee^{-\lambda_{\bs k_1}\Delta_N})(1-\ee^{-\lambda_{\bs k_2}\Delta_N})
\EE\bigl[\langle \xi,e_{\bs k_1}\rangle_\theta^2\bigr]
\EE\bigl[\langle \xi,e_{\bs k_2}\rangle_\theta^2\bigr]
\\
&\qquad+
\sum_{\bs k_1,\ldots,\bs k_3\in\mathbb N^2}
(1-\ee^{-\lambda_{\bs k_1}\Delta_N})(1-\ee^{-\lambda_{\bs k_2}\Delta_N})
\EE\bigl[\langle \xi,e_{\bs k_1}\rangle_\theta^2\bigr]
\EE[|\langle \xi,e_{\bs k_2}\rangle_\theta|]
\EE[|\langle \xi,e_{\bs k_3}\rangle_\theta|]
\\
&\qquad+
\sum_{\bs k_1,\ldots,\bs k_4\in\mathbb N^2}
(1-\ee^{-\lambda_{\bs k_1}\Delta_N})(1-\ee^{-\lambda_{\bs k_2}\Delta_N})
\EE[|\langle \xi,e_{\bs k_1}\rangle_\theta|]
\cdots
\EE[|\langle \xi,e_{\bs k_4}\rangle_\theta|]
\\
&=
\sum_{\bs k\in\mathbb N^2}
(1-\ee^{-\lambda_{\bs k}\Delta_N})^2
\EE[\langle \xi,e_{\bs k}\rangle_\theta^4]
\\
&\qquad+
\Biggl(
\sum_{\bs k\in\mathbb N^2}
(1-\ee^{-\lambda_{\bs k}\Delta_N})^2
\EE\bigl[|\langle \xi,e_{\bs k}\rangle_\theta|^3\bigr]
\Biggr)
\Biggl(
\sum_{\bs k\in\mathbb N^2}
\EE[|\langle \xi,e_{\bs k}\rangle_\theta|]
\Biggr)
\\
&\qquad+
\Biggl(
\sum_{\bs k\in\mathbb N^2}
(1-\ee^{-\lambda_{\bs k}\Delta_N})
\EE\bigl[\langle \xi,e_{\bs k}\rangle_\theta^2\bigr]
\Biggr)^2
\\
&\qquad+
\Biggl(
\sum_{\bs k\in\mathbb N^2}
(1-\ee^{-\lambda_{\bs k}\Delta_N})
\EE\bigl[\langle \xi,e_{\bs k}\rangle_\theta^2\bigr]
\Biggr)
\Biggl(
\sum_{\bs k\in\mathbb N^2}
(1-\ee^{-\lambda_{\bs k}\Delta_N})
\EE[|\langle \xi,e_{\bs k}\rangle_\theta|]
\Biggr)
\Biggl(
\sum_{\bs k\in\mathbb N^2}
\EE[|\langle \xi,e_{\bs k}\rangle_\theta|]
\Biggr)
\\
&\qquad+
\Biggl(
\sum_{\bs k\in\mathbb N^2}
(1-\ee^{-\lambda_{\bs k}\Delta_N})
\EE[|\langle \xi,e_{\bs k}\rangle_\theta|]
\Biggr)^2
\Biggl(
\sum_{\bs k\in\mathbb N^2}
\EE[|\langle \xi,e_{\bs k}\rangle_\theta|]
\Biggr)^2
\\
&=
O(\Delta_N^{\alpha} \lor \Delta_N^{\alpha} \lor \Delta_N^{2\alpha}
\lor \Delta_N^{3\alpha/2} \lor \Delta_N^{\alpha})
\\
&=O(\Delta_N^{\alpha}).
\end{align*}
\end{proof}

\begin{lem}\label{lem4}
It holds that uniformly in $(y,z)\in D_\delta$,
\begin{align}\label{eq4-lem4-1}
\sum_{\bs k,\bs \ell\in\mathbb N^2}
\EE[B_{i,\bs k}^{Q_1}B_{i,\bs \ell}^{Q_1}]e_{\bs k}(y,z)e_{\bs \ell}(y,z)
=
\Delta_N^{\alpha}\frac{\sigma^2\Gamma(1-\alpha)}{4\pi\alpha\theta_2}
\ee^{-\frac{\theta_1}{\theta_2}y}\ee^{-\frac{\eta_1}{\theta_2}z}
+r_{N,i}+O(\Delta_N),
\end{align}
where $\sum_{i=1}^N |r_{N,i}|=O(\Delta_N^{\alpha})$.
Moreover, it holds that uniformly in $(y,z)\in D_\delta$,
\begin{align}\label{eq4-lem4-2}
\sup_{i\neq j}\sum_{j=1}^N
\sum_{\bs k,\bs \ell\in\mathbb N^2}
\Bigl|
\EE[B_{i,\bs k}^{Q_1}B_{j,\bs \ell}^{Q_1}]e_{\bs k}(y,z)e_{\bs \ell}(y,z)
\Bigr|
=O(1).
\end{align}
\end{lem}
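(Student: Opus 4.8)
The plan is to exploit the mutual independence of the Brownian motions $\{w_{\bs k}\}_{\bs k\in\mathbb N^2}$, which annihilates every off‑diagonal term $\EE[B_{i,\bs k}^{Q_1}B_{j,\bs\ell}^{Q_1}]$ with $\bs k\neq\bs\ell$ and so reduces both assertions to single sums over $\bs k$; after that the task is to evaluate these sums by the It\^o isometry and to recognise the resulting series as ones already controlled by Lemma \ref{lem2} and by \eqref{eq2-0002}. Throughout I use that $e_{\bs k}^2(y,z)=(1-\cos(2\pi k_1y))(1-\cos(2\pi k_2z))\ee^{-\frac{\theta_1}{\theta_2}y}\ee^{-\frac{\eta_1}{\theta_2}z}$ is bounded uniformly in $\bs k$ and in $(y,z)\in D$, so that weighting an absolutely summable series by $e_{\bs k}^2(y,z)$ costs only a fixed constant.

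For \eqref{eq4-lem4-1}: since the stochastic integrals defining $B_{1,i,\bs k}^{Q_1}$ and $B_{2,i,\bs k}^{Q_1}$ run over the disjoint intervals $[0,(i-1)\Delta_N]$ and $[(i-1)\Delta_N,i\Delta_N]$, the It\^o isometry gives
\[
\EE[(B_{i,\bs k}^{Q_1})^2]
=\frac{\sigma^2(1-\ee^{-\lambda_{\bs k}\Delta_N})}{\lambda_{\bs k}^{1+\alpha}}
\Bigl(1-\frac{1-\ee^{-\lambda_{\bs k}\Delta_N}}{2}\ee^{-2\lambda_{\bs k}(i-1)\Delta_N}\Bigr),
\]
which is exactly the summand appearing in \eqref{rmk3-eq1}. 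Multiplying by $e_{\bs k}^2(y,z)$ and summing over $\bs k$, the $i$‑independent piece equals $\sigma^2\Delta_N^{\alpha}\cdot\Delta_N\sum_{\bs k}f_\alpha(\lambda_{\bs k}\Delta_N)e_{\bs k}^2(y,z)$, which by \eqref{eq2-lem2-1} of Lemma \ref{lem2} is $\Delta_N^{\alpha}\frac{\sigma^2\Gamma(1-\alpha)}{4\pi\alpha\theta_2}\ee^{-\frac{\theta_1}{\theta_2}y}\ee^{-\frac{\eta_1}{\theta_2}z}+O(\Delta_N)$ uniformly on $D_\delta$; the $i$‑dependent piece I take to be $r_{N,i}$, and using $\sum_{i=1}^N\ee^{-2\lambda_{\bs k}(i-1)\Delta_N}\le(1-\ee^{-2\lambda_{\bs k}\Delta_N})^{-1}$ together with $\frac{(1-\ee^{-\lambda_{\bs k}\Delta_N})^2}{1-\ee^{-2\lambda_{\bs k}\Delta_N}}\le 1-\ee^{-\lambda_{\bs k}\Delta_N}$ yields $\sum_{i=1}^N|r_{N,i}|\lesssim\sum_{\bs k}\frac{1-\ee^{-\lambda_{\bs k}\Delta_N}}{\lambda_{\bs k}^{1+\alpha}}=O(\Delta_N^{\alpha})$ by \eqref{eq2-0002}, uniformly in $(y,z)$.

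For \eqref{eq4-lem4-2}: by symmetry of the covariance in $i,j$ it suffices to fix $i$ and estimate the sum over $j>i$. When $i<j$ the integral defining $B_{2,j,\bs k}^{Q_1}$ sits on $[(j-1)\Delta_N,j\Delta_N]$, which is disjoint from the supports of $B_{1,i,\bs k}^{Q_1}$ and $B_{2,i,\bs k}^{Q_1}$, so $\EE[B_{i,\bs k}^{Q_1}B_{j,\bs k}^{Q_1}]=\EE[B_{1,i,\bs k}^{Q_1}B_{1,j,\bs k}^{Q_1}]+\EE[B_{2,i,\bs k}^{Q_1}B_{1,j,\bs k}^{Q_1}]$, and the It\^o isometry evaluates both pieces explicitly; in each case the modulus is bounded by a constant times $\frac{(1-\ee^{-\lambda_{\bs k}\Delta_N})^2}{\lambda_{\bs k}^{1+\alpha}}\ee^{-\lambda_{\bs k}(j-i-1)\Delta_N}$. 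Summing the geometric factor over $j$ gives $\sum_{j\neq i}|\EE[B_{i,\bs k}^{Q_1}B_{j,\bs k}^{Q_1}]|\lesssim\frac{(1-\ee^{-\lambda_{\bs k}\Delta_N})^2}{\lambda_{\bs k}^{1+\alpha}}(1-\ee^{-\lambda_{\bs k}\Delta_N})^{-1}=\frac{1-\ee^{-\lambda_{\bs k}\Delta_N}}{\lambda_{\bs k}^{1+\alpha}}$ uniformly in $i$, and summing over $\bs k$ against the bounded weight $e_{\bs k}^2(y,z)$ gives $O(\Delta_N^{\alpha})=O(1)$ by \eqref{eq2-0002}, uniformly in $(y,z)\in D_\delta$.

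The computation is otherwise routine Gaussian‑moment bookkeeping; the only genuinely delicate point is the identification of the leading term in \eqref{eq4-lem4-1}, where the weight $e_{\bs k}^2(y,z)$ cannot be crudely bounded but must be carried through the Abel/Fourier‑type estimate \eqref{eq2-lem2-1} of Lemma \ref{lem2} to produce the precise constant $\frac{\Gamma(1-\alpha)}{4\pi\alpha\theta_2}$ and the factor $\ee^{-\frac{\theta_1}{\theta_2}y}\ee^{-\frac{\eta_1}{\theta_2}z}$ with an error no worse than $O(\Delta_N)$, and where one must separate cleanly the genuinely $O(\Delta_N)$ remainder from the summable sequence $r_{N,i}$.
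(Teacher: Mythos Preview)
Your proposal is correct and follows the same skeleton as the paper: reduce to the diagonal via independence of the $w_{\bs k}$, compute $\EE[(B_{i,\bs k}^{Q_1})^2]$ and $\EE[B_{i,\bs k}^{Q_1}B_{j,\bs k}^{Q_1}]$ by the It\^o isometry, and identify the leading term in \eqref{eq4-lem4-1} via \eqref{eq2-lem2-1} of Lemma~\ref{lem2}.

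Where you genuinely diverge is in the treatment of the remainders. The paper does \emph{not} take $r_{N,i}$ to be the raw $i$-dependent piece $s_{N,i}$; instead it applies the second estimate \eqref{eq2-lem2-2} of Lemma~\ref{lem2} with $\tau=2(i-1)$ to evaluate $s_{N,i}$ explicitly as $\Delta_N^{\alpha}\frac{\Gamma(1-\alpha)}{4\pi\alpha\theta_2}J(i)\ee^{-\kappa y-\eta z}+O(\Delta_N)$, where $J(i)=-(2(i-1))^\alpha+2(2i-1)^\alpha-(2i)^\alpha$, and then proves $\sum_{i\ge1}J(i)<\infty$ by a binomial expansion showing $J(i)\lesssim(i-1)^{-(2-\alpha)}$. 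The same device (with $\tau=j-i-1$) is used for \eqref{eq4-lem4-2}. Your route is more elementary: you keep $r_{N,i}=s_{N,i}$, bound $e_{\bs k}^2(y,z)$ by a constant, and sum the geometric factor $\sum_i\ee^{-2\lambda_{\bs k}(i-1)\Delta_N}\le(1-\ee^{-2\lambda_{\bs k}\Delta_N})^{-1}$ before invoking \eqref{eq2-0002}. This is legitimate because the lemma only requires $\sum_i|r_{N,i}|=O(\Delta_N^{\alpha})$ and no explicit form of $r_{N,i}$ is used downstream. As a byproduct your argument for \eqref{eq4-lem4-2} actually yields the sharper $O(\Delta_N^{\alpha})$ rather than $O(1)$; the paper's $O(1)$ arises only because the per-term $O(\Delta_N)$ error from \eqref{eq2-lem2-2} accumulates over $N$ values of $j$. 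What the paper's approach buys is an explicit closed-form remainder, at the cost of the extra combinatorial work on $J(i)$; what yours buys is brevity.
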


\begin{proof}
First of all, we prove \eqref{eq4-lem4-1}. 
Noting that
$\EE[B_{1,i,\bs k}^{Q_1}B_{2,i,\bs \ell}^{Q_1}]=0$, 
$\EE[B_{1,i,\bs k}^{Q_1}B_{1,i,\bs \ell}^{Q_1}]
=\EE[B_{2,i,\bs k}^{Q_1}B_{2,i,\bs \ell}^{Q_1}]=0$ 
($\bs k\neq\bs \ell$),
\begin{align*}
\EE[(B_{1,i,\bs k}^{Q_1})^2]
&=
\frac{\sigma^2(1-\ee^{-\lambda_{\bs k}\Delta_N})^2}{\lambda_{\bs k}^{\alpha}}
\int_0^{(i-1)\Delta_N}
\ee^{-2\lambda_{\bs k}((i-1)\Delta_N-s)}\dd s
\\
&=
\frac{\sigma^2(1-\ee^{-\lambda_{\bs k}\Delta_N})^2}{2\lambda_{\bs k}^{1+\alpha}}
(1-\ee^{-2\lambda_{\bs k}(i-1)\Delta_N}),
\\
\EE[(B_{2,i,\bs k}^{Q_1})^2]
&=
\frac{\sigma^2}{\lambda_{\bs k}^{\alpha}}
\int_{(i-1)\Delta_N}^{i\Delta_N}
\ee^{-2\lambda_{\bs k}(i\Delta_N-s)}\dd s
\\
&=
\frac{\sigma^2(1-\ee^{-\lambda_{\bs k}\Delta_N})(1+\ee^{-\lambda_{\bs k}\Delta_N})}
{2\lambda_{\bs k}^{1+\alpha}},
\end{align*}
we have
\begin{align}
&\sum_{\bs k,\bs \ell\in\mathbb N^2}
\EE[B_{i,\bs k}^{Q_1}B_{i,\bs \ell}^{Q_1}]e_{\bs k}(y,z)e_{\bs \ell}(y,z)
\nonumber
\\
&=
\sum_{\bs k\in\mathbb N^2}
\bigl(\EE[(B_{1,i,\bs k}^{Q_1})^2]+\EE[(B_{2,i,\bs k}^{Q_1})^2]\bigr)e_{\bs k}^2(y,z)
\nonumber
\\
&=
\sigma^2
\sum_{\bs k\in\mathbb N^2}
\frac{1-\ee^{-\lambda_{\bs k}\Delta_N}}{2\lambda_{\bs k}^{1+\alpha}}
\bigl(
(1-\ee^{-\lambda_{\bs k}\Delta_N})
(1-\ee^{-2\lambda_{\bs k}(i-1)\Delta_N})
+(1+\ee^{-\lambda_{\bs k}\Delta_N})
\bigr)
e_{\bs k}^2(y,z)
\nonumber
\\
&=
\sigma^2
\sum_{\bs k\in\mathbb N^2}
\frac{1-\ee^{-\lambda_{\bs k}\Delta_N}}{\lambda_{\bs k}^{1+\alpha}}
\biggl(
1-\frac{1-\ee^{-\lambda_{\bs k}\Delta_N}}{2}
\ee^{-2\lambda_{\bs k}(i-1)\Delta_N }
\biggr)
e_{\bs k}^2(y,z)
\nonumber
\\
&=
\sigma^2\sum_{\bs k\in\mathbb N^2}
\frac{1-\ee^{-\lambda_{\bs k}\Delta_N}}{\lambda_{\bs k}^{1+\alpha}}e_{\bs k}^2(y,z)
+s_{N,i},
\label{eq4-0101}
\end{align}
where
\begin{align*}
s_{N,i}
=-\frac{\sigma^2}{2}\sum_{\bs k\in\mathbb N^2}
\frac{(1-\ee^{-\lambda_{\bs k}\Delta_N})^2}{\lambda_{\bs k}^{1+\alpha}}
\ee^{-2\lambda_{\bs k}(i-1)\Delta_N }e_{\bs k}^2(y,z).
\end{align*}
It follows from Lemma \ref{lem2} on fixed $\delta>0$ 
that uniformly in $(y,z)\in D_\delta$,
\begin{align}
\sum_{\bs k\in\mathbb N^2}
\frac{1-\ee^{-\lambda_{\bs k}\Delta_N}}{\lambda_{\bs k}^{1+\alpha}}
e_{\bs k}^2(y,z)
&=
\Delta_N^{1+\alpha}
\sum_{\bs k\in\mathbb N^2}
f_\alpha(\lambda_{\bs k}\Delta_N)e_{\bs k}^2(y,z)
\nonumber
\\
&=
\frac{\Delta_N^{\alpha}\Gamma(1-\alpha)}{4\pi\alpha\theta_2}
\ee^{-\frac{\theta_1}{\theta_2}y}\ee^{-\frac{\eta_1}{\theta_2}z}
+O(\Delta_N),
\label{eq4-0102}
\\
\sum_{\bs k\in\mathbb N^2}
\frac{(1-\ee^{-\lambda_{\bs k}\Delta_N})^2}{\lambda_{\bs k}^{1+\alpha}}
\ee^{-2\lambda_{\bs k}(i-1)\Delta_N }e_{\bs k}^2(y,z)
&=
\Delta_N^{1+\alpha}
\sum_{\bs k\in\mathbb N^2}
f_{\alpha,2(i-1)}(\lambda_{\bs k}\Delta_N)e_{\bs k}^2(y,z)
\nonumber
\\
&=
\frac{\Delta_N^{\alpha}\Gamma(1-\alpha)}{4\pi\alpha\theta_2}J(i)
\ee^{-\frac{\theta_1}{\theta_2}y}\ee^{-\frac{\eta_1}{\theta_2}z}
+O(\Delta_N)
\nonumber
\\
&=:
r_{N,i}+O(\Delta_N),
\label{eq4-0103}
\end{align}
where
$J(i)=-(2(i-1))^\alpha+2(1+2(i-1))^\alpha-(2+2(i-1))^\alpha$.
Noting that for $|x|<1$, 
\begin{align*}
(1+x)^\alpha
&=\sum_{m\ge0}{\alpha\choose m}x^m, 
\ 
\text{where }
{\alpha\choose m}
=\frac{\alpha(\alpha-1)\cdots(\alpha-m+1)}{m!},
\end{align*}
and that
\begin{align*}
\biggl|{\alpha\choose m}\biggr|
&=
\frac{\alpha(1-\alpha)\cdots(m-1-\alpha)}{m!}
\le
\frac{\alpha}{m}
\le1,
\end{align*}
we obtain that for $i\ge3$, 
\begin{align*}
J(i)
&=
\bigl(2(i-1)\bigr)^\alpha
\Biggl\{
2\biggl(1+\frac{1}{2(i-1)}\biggr)^\alpha-1-\biggl(1+\frac{1}{i-1}\biggr)^\alpha
\Biggr\}
\\
&=
\bigl(2(i-1)\bigr)^\alpha
\Biggl\{
2\Biggl(
1+\frac{\alpha}{2(i-1)}
+\sum_{m\ge2}{\alpha\choose m}\biggl(\frac{1}{2(i-1)}\biggr)^m
\Biggr)
\\
&\qquad\qquad
-1-
\Biggl(
1+\frac{\alpha}{i-1}
+\sum_{m\ge2}{\alpha\choose m}\biggl(\frac{1}{i-1}\biggr)^m
\Biggr)
\Biggr\}
\\
&=
\bigl(2(i-1)\bigr)^\alpha
\sum_{m\ge2}{\alpha\choose m}\biggl(\frac{1}{2^{m-1}}-1\biggr)
\frac{1}{(i-1)^m}
\\
&\lesssim
\frac{1}{(i-1)^{2-\alpha}},
\end{align*}
that is,
$\sum_{i\ge1} J(i)< \infty$.
Therefore, we obtain that
\begin{equation*}
\sum_{i=1}^N |r_{N,i}|
\lesssim
\Delta_N^{\alpha}\sum_{i=1}^N J(i)
\lesssim
\Delta_N^{\alpha},
\end{equation*}
and from \eqref{eq4-0101}-\eqref{eq4-0103}, we have \eqref{eq4-lem4-1}. 

Next, we prove \eqref{eq4-lem4-2}.
Without loss of generality, consider 
\begin{align*}
\sup_{i< j}\sum_{j=1}^N
\sum_{\bs k,\bs \ell\in\mathbb N^2}
\Bigl|
\EE[B_{i,\bs k}^{Q_1}B_{j,\bs \ell}^{Q_1}]e_{\bs k}(y,z)e_{\bs \ell}(y,z)
\Bigr|.
\end{align*}
Since for $i<j$, 
\begin{align*}
\EE[B_{1,i,\bs k}^{Q_1}B_{1,j,\bs k}^{Q_1}]
&=
\frac{\sigma^2(1-\ee^{-\lambda_{\bs k}\Delta_N})^2}{\lambda_{\bs k}^{\alpha}}
\int_0^{(i-1)\Delta_N}\ee^{-\lambda_{\bs k}((i+j-2)\Delta_N-2s)}\dd s
\\
&=
\frac{\sigma^2(1-\ee^{-\lambda_{\bs k}\Delta_N})^2}{2\lambda_{\bs k}^{1+\alpha}}
(\ee^{-\lambda_{\bs k}(j-i)\Delta_N}-\ee^{-\lambda_{\bs k}(i+j-2)\Delta_N}),
\\
\EE[B_{1,i,\bs k}^{Q_1}B_{2,j,\bs k}^{Q_1}]&
=\EE[B_{2,i,\bs k}^{Q_1}B_{2,j,k}^{Q_1}]=0,
\\
\EE[B_{2,i,\bs k}^{Q_1}B_{1,j,\bs k}^{Q_1}]
&=
-\frac{\sigma^2(1-\ee^{-\lambda_{\bs k}\Delta_N})}{\lambda_{\bs k}^{\alpha}}
\int_{(i-1)\Delta_N}^{i\Delta_N}\ee^{-\lambda_{\bs k}((i+j-1)\Delta_N-2s)}\dd s
\\
&=
-\frac{\sigma^2(1-\ee^{-\lambda_{\bs k}\Delta_N})^2}{2\lambda_{\bs k}^{1+\alpha}}
\ee^{-\lambda_{\bs k}(j-i-1)\Delta_N}(1+\ee^{-\lambda_{\bs k}\Delta_N}),
\end{align*}
it holds that
\begin{align*}
&\sum_{\bs k,\bs \ell\in\mathbb N^2}
\Bigl|
\EE[B_{i,\bs k}^{Q_1}B_{j,\bs \ell}^{Q_1}]
e_{\bs k}(y,z)e_{\bs \ell}(y,z)
\Bigr|
\\
&=\sum_{\bs k\in\mathbb N^2}
\Bigl|
\EE[B_{i,\bs k}^{Q_1}B_{j,\bs k}^{Q_1}]
e_{\bs k}^2(y,z)
\Bigr|
\\
&=\sum_{\bs k\in\mathbb N^2}
\Bigl|
\EE[B_{1,i,\bs k}^{Q_1}B_{1,j,\bs k}^{Q_1}]
+\EE[B_{1,i,\bs k}^{Q_1}B_{2,j,\bs k}^{Q_1}]
+\EE[B_{2,i,\bs k}^{Q_1}B_{1,j,\bs k}^{Q_1}]
+\EE[B_{2,i,\bs k}^{Q_1}B_{2,j,\bs k}^{Q_1}]
\Bigr|
e_{\bs k}^2(y,z)
\\
&=
\sum_{\bs k\in\mathbb N^2}
\frac{\sigma^2(1-\ee^{-\lambda_{\bs k}\Delta_N})^2}{2\lambda_{\bs k}^{1+\alpha}}
\bigl|
\ee^{-\lambda_{\bs k}(j-i)\Delta_N}-\ee^{-\lambda_{\bs k}(i+j-2)\Delta_N}
-\ee^{-\lambda_{\bs k}(j-i-1)\Delta_N}(1+\ee^{-\lambda_{\bs k}\Delta_N})
\bigr|
e_{\bs k}^2(y,z)
\\
&\le
\sum_{\bs k\in\mathbb N^2}
\frac{\sigma^2(1-\ee^{-\lambda_{\bs k}\Delta_N})^2}{2\lambda_{\bs k}^{1+\alpha}}
\ee^{-\lambda_{\bs k}(j-i-1)\Delta_N}
e_{\bs k}^2(y,z)
+
\sum_{\bs k\in\mathbb N^2}
\frac{\sigma^2(1-\ee^{-\lambda_{\bs k}\Delta_N})^2}{2\lambda_{\bs k}^{1+\alpha}}
\ee^{-\lambda_{\bs k}(i+j-2)\Delta_N}
e_{\bs k}^2(y,z)
\\
&=:
\sum_{\bs k\in\mathbb N^2}
\frac{\sigma^2(1-\ee^{-\lambda_{\bs k}\Delta_N})^2}{2\lambda_{\bs k}^{1+\alpha}}
\ee^{-\lambda_{\bs k}(j-i-1)\Delta_N}
e_{\bs k}^2(y,z)
+s_{i,j}.
\end{align*}
Noting that
\begin{align*}
\sum_{\bs k\in\mathbb N^2}
\frac{(1-\ee^{-\lambda_{\bs k}\Delta_N})^2}{\lambda_{\bs k}^{1+\alpha}}
\ee^{-\lambda_{\bs k}(j-i-1)\Delta_N}
e_{\bs k}^2(y,z)
=
\frac{\Delta_N^{\alpha}}{4\pi\theta_2}
\ee^{-\frac{\theta_1}{\theta_2}y}\ee^{-\frac{\eta_1}{\theta_2}z}
J(j-i)
+O(\Delta_N),
\end{align*}
\begin{equation*}
\sup_{i\ge1}\sum_{j-i\ge1}J(j-i)
\le
\sum_{k\ge1}J(k)<\infty
\end{equation*}
and
\begin{align*}
(0\le)\ 
\sup_{i\ge1}\sum_{j=1}^N s_{i,j}
&=
\sup_{i\ge1}\sum_{j=1}^N
\sum_{\bs k\in\mathbb N^2}
\frac{(1-\ee^{-\lambda_{\bs k}\Delta_N})^2}{\lambda_{\bs k}^{1+\alpha}}
\ee^{-\lambda_{\bs k}(i+j-2)\Delta_N}
e_{\bs k}^2(y,z)
\\
&\lesssim
\sum_{\bs k\in\mathbb N^2}
\frac{1-\ee^{-\lambda_{\bs k}\Delta_N}}{\lambda_{\bs k}^{1+\alpha}}
=O(\Delta_N^{\alpha}),
\end{align*}
we see that
\begin{align*}
&\sup_{i< j}\sum_{j=1}^N
\sum_{\bs k,\bs \ell\in\mathbb N^2}
\Bigl|
\EE[B_{i,\bs k}^{Q_1}B_{j,\bs \ell}^{Q_1}]e_{\bs k}(y_1,z_1)e_{\bs \ell}(y_2,z_2)
\Bigr|
\\
&\le
\sup_{i\ge1}\sum_{j-i\ge1}^N
\sum_{\bs k\in\mathbb N^2}
\frac{\sigma^2(1-\ee^{-\lambda_{\bs k}\Delta_N})^2}{2\lambda_{\bs k}^{1+\alpha}}
\ee^{-\lambda_{\bs k}(j-i-1)\Delta_N}
e_{\bs k}^2(y,z)
+\sup_{i\ge1}\sum_{j=1}^N
s_{i,j}
\\
&=O(1).
\end{align*}
\end{proof}

\subsection{Proofs of Proposition \ref{prop1}, Theorems \ref{th1} and \ref{th2}}
\begin{proof}[\bf Proof of Proposition \ref{prop1}]
Since $\delta$ is fixed, it holds from 
$\EE[A_{i,\bs k}B_{i,\bs \ell}^{Q_1}]=0$, Lemmas \ref{lem3} and \ref{lem4} that
uniformly in $(y,z)\in D_\delta$,
\begin{align*}
\EE[(\Delta_i X^{Q_1})^2(y,z)]
&=\sum_{\bs k,\bs \ell\in\mathbb N^2}
\EE[\Delta_ix_{\bs k}^{Q_1}\Delta_ix_{\bs \ell}^{Q_1}]
e_{\bs k}(y,z)e_{\bs \ell}(y,z)
\\
&=
\sum_{\bs k,\bs \ell\in\mathbb N^2}
\EE[A_{i,\bs k}A_{i,\bs \ell}]e_{\bs k}(y,z)e_{\bs \ell}(y,z)
+
\sum_{\bs k,\bs \ell\in\mathbb N^2}
\EE[B_{i,\bs k}^{Q_1}B_{i,\bs \ell}^{Q_1}]e_{\bs k}(y,z)e_{\bs \ell}(y,z)
\\
&=
\Delta_N^\alpha\frac{\sigma^2\Gamma(1-\alpha)}{4\pi\alpha\theta_2}
\ee^{-\frac{\theta_1}{\theta_2}y}\ee^{-\frac{\eta_1}{\theta_2}z}
+r_{N,i}+O(\Delta_N),
\end{align*}
where
$\sum_{i=1}^N |r_{N,i}|=O(\Delta_N^{\alpha})$, and
this completes the proof.
\end{proof}

\begin{proof}[\bf Proof of Theorem \ref{th1}]
Let
$\nu=(s,\kappa,\eta)^\TT$, $\nu^*=(s^*,\kappa^*,\eta^*)^\TT$, 
and $\hat\nu=(\hat s,\hat \kappa,\hat \eta)^\TT$.
Set 
$g_\nu(y,z)=\frac{\Gamma(1-\alpha)}{4\pi\alpha}s\, \ee^{-(\kappa y+\eta z)}$, 
$\xi_N(y,z,\nu)=Z_{N}^{Q_1}(y,z)-g_\nu(y,z)$ and $\xi_N^*(y,z)=\xi_N(y,z,\nu^*)$.
Note that
\begin{align}
\partial{U}_{N,m}^{(1)}(\nu)
&=-
\frac{\Gamma(1-\alpha)}{2\pi\alpha}
\sum_{j_1=1}^{m_1}\sum_{j_2=1}^{m_2}
\xi_{N}(\widetilde y_{j_1},\widetilde z_{j_2},\nu)
\ee^{-(\kappa \widetilde y_{j_1}+\eta \widetilde z_{j_2})}
(1, -s\widetilde y_{j_1}, -s\widetilde z_{j_2}),
\nonumber
\\
\partial^2{U}_{N,m}^{(1)}(\nu)
&=
\sum_{j_1=1}^{m_1}\sum_{j_2=1}^{m_2}
\xi_{N}(\widetilde y_{j_1},\widetilde z_{j_2},\nu)
K_\nu(\widetilde y_{j_1},\widetilde z_{j_2})
+\sum_{j_1=1}^{m_1}\sum_{j_2=1}^{m_2}
L_\nu(\widetilde y_{j_1},\widetilde z_{j_2}),
\label{eq5-0102}
\end{align}
where
\begin{align*}
K_\nu(y,z)
&=
-\frac{\Gamma(1-\alpha)}{2\pi\alpha}
\ee^{-(\kappa y+\eta z)}
\begin{pmatrix}
0 & -y & -z\\
-y & sy^2 & syz \\
-z & syz & sz^2
\end{pmatrix},
\\
L_\nu(y,z)
&=
\frac{\Gamma(1-\alpha)^2}{8(\pi\alpha)^2}
\ee^{-2(\kappa y+\eta z)}
\begin{pmatrix}
1 & -sy & -sz \\
-sy & s^2y^2 & s^2yz \\
-sz & s^2yz & s^2z^2
\end{pmatrix}.
\end{align*}
By the Taylor expansion,
\begin{align*}
-\partial{U}_{N,m}^{(1)}(\nu^*)^\TT
=\partial{U}_{N,m}^{(1)}(\hat\nu)^\TT-\partial{U}_{N,m}^{(1)}(\nu^*)^\TT
=\int_0^1\partial^2{U}_{N,m}^{(1)}(\nu^*+u(\hat\nu-\nu^*))\dd u(\hat\nu-\nu^*),
\end{align*}
that is, for $\gamma<\frac{1}{2}\land(1-\alpha)-\frac{\rho}{2}$,
\begin{align*}
-\frac{N^{\gamma}}{m^{1/2}}\partial{U}_{N,m}^{(1)}(\nu^*)^\TT
=\frac{1}{m}\int_0^1\partial^2{U}_{N,m}^{(1)}(\nu^*+u(\hat\nu-\nu^*))\dd u\, 
m^{1/2}N^{\gamma}(\hat\nu-\nu^*).
\end{align*}
Let
\begin{align*}
L(\nu)=\frac{1}{|D_\delta|}\iint_{D_\delta} L_{\nu}(y,z)\dd y\dd z.
\end{align*}
Note that by computing the determinant of $L(\nu)$ and the Schwartz inequality,
\begin{align*}
\det L(\nu)
&=
\biggl(\frac{\Gamma(1-\alpha)^2}{8(\pi\alpha)^2|D_\delta|}\biggr)^3
\int_{\delta}^{1-\delta}\ee^{-2\kappa y}\dd y
\int_{\delta}^{1-\delta}\ee^{-2\eta z}\dd z
\\
&\qquad\times
\Biggl(
\int_{\delta}^{1-\delta}\ee^{-2\kappa y}\dd y
\int_{\delta}^{1-\delta}\ee^{-2\kappa y}s^2y^2\dd y
-\biggl(
\int_{\delta}^{1-\delta}\ee^{-2\kappa y}sy\dd y
\biggr)^2
\Biggr)
\\
&\qquad\times
\Biggl(
\int_{\delta}^{1-\delta}\ee^{-2\eta z}\dd z
\int_{\delta}^{1-\delta}\ee^{-2\eta z}s^2z^2\dd z
-\biggl(
\int_{\delta}^{1-\delta}\ee^{-2\eta z}sz\dd z
\biggr)^2
\Biggr)
\\
&>0,
\end{align*}
and $L(\nu)$ is non-singular. 
To complete the proof, it suffice to show that
\begin{equation}\label{eq5-0103}
\hat\nu\pto\nu^*,
\end{equation}
\begin{equation}\label{eq5-0104}
\frac{\partial^2{U}_{N,m}^{(1)}(\nu^*)}{m}
\pto L(\nu^*),
\end{equation}
\begin{equation}\label{eq5-0105}
\frac{1}{m}\sup_{|\nu-\nu^*|\le\epsilon_N}
|\partial^2{U}_{N,m}^{(1)}(\nu)-\partial^2{U}_{N,m}^{(1)}(\nu^*)|\pto0
\quad \text{for } \epsilon_N\downarrow0,
\end{equation}
\begin{equation}\label{eq5-0106}
-\frac{N^{\gamma}}{m^{1/2}}\partial{U}_{N,m}^{(1)}(\nu^*)^\TT \pto 0. 
\end{equation}

First, note that 
\begin{align}
\sup_{(y,z)\in D_\delta}\EE[Z_{N}^{Q_1}(y,z)]
&\lesssim 1,
\label{eq5-0201}
\\
\sup_{(y,z)\in D_\delta}\EE[\xi_N^*(y,z)^2]
&=O(\Delta_N^{1\land2(1-\alpha)}).
\label{eq5-0202}
\end{align}
We can see from Proposition \ref{prop1} that \eqref{eq5-0201} holds.

\textit{Proof of \eqref{eq5-0202}. }
It follows that
\begin{align*}
\sup_{(y,z)\in D_\delta}\EE[\xi_N^*(y,z)^2]
\le
\sup_{(y,z)\in D_\delta}\VV[Z_N^{Q_1}(y,z)]
+\sup_{(y,z)\in D_\delta}\EE[\xi_N^*(y,z)]^2
\end{align*}
and from Proposition \ref{prop1}, one has
$\sup_{(y,z)\in D_\delta}\EE[\xi_N^*(y,z)]^2=
O(\Delta_N^{2(1-\alpha)})$.
It also follows from the Isserlis' theorem that
\begin{align*}
&\VV[Z_N^{Q_1}(y,z)]
=\EE[Z_N^{Q_1}(y,z)^2]-\EE[Z_N^{Q_1}(y,z)]^2
\\
&=\biggl(\frac{1}{N\Delta_N^{\alpha}}\biggr)^2
\sum_{i,j=1}^N
\Bigl(
\EE[(\Delta_i X^{Q_1})^2(y,z)(\Delta_j X^{Q_1})^2(y,z)]
-\EE[(\Delta_i X^{Q_1})^2(y,z)]\EE[(\Delta_j X^{Q_1})^2(y,z)]
\Bigr)
\\
&=\biggl(\frac{1}{N\Delta_N^{\alpha}}\biggr)^2
\sum_{i,j=1}^N
\sum_{\bs k_1,\ldots,\bs k_4\in\mathbb N^2}
\Bigl(
\EE[A_{i,\bs k_1}A_{i,\bs k_2}A_{j,\bs k_3}A_{j,\bs k_4}]
+2\EE[A_{i,\bs k_1}A_{i,\bs k_2}]\EE[B_{j,\bs k_3}^{Q_1}B_{j,\bs k_4}^{Q_1}]
\nonumber
\\
&\qquad\qquad\qquad
+4\EE[A_{i,\bs k_1}A_{j,\bs k_2}]\EE[B_{i,\bs k_3}^{Q_1}B_{j,\bs k_4}^{Q_1}]
+\EE[B_{i,\bs k_1}^{Q_1}B_{i,\bs k_2}^{Q_1}B_{j,\bs k_3}^{Q_1}B_{j,\bs k_4}^{Q_1}]
\nonumber
\\
&\qquad\qquad\qquad
-\EE[A_{i,\bs k_1}A_{i,\bs k_2}]\EE[A_{j,\bs k_3}A_{j,\bs k_4}]
-2\EE[A_{i,\bs k_1}A_{j,\bs k_2}]\EE[B_{i,\bs k_3}^{Q_1}B_{j,\bs k_4}^{Q_1}]
\nonumber
\\
&\qquad\qquad\qquad-
\EE[B_{i,\bs k_1}^{Q_1}B_{i,\bs k_2}^{Q_1}]\EE[B_{j,\bs k_3}^{Q_1}B_{j,\bs k_4}^{Q_1}]
\Bigr)
e_{\bs k_1}(y,z)\cdots e_{\bs k_4}(y,z)
\\
&\lesssim
\biggl(\frac{1}{N\Delta_N^{\alpha}}\biggr)^2
\sum_{i,j=1}^N
\sum_{\bs k_1,\ldots,\bs k_4\in\mathbb N^2}
\Bigl(
\EE[A_{i,\bs k_1}A_{i,\bs k_2}A_{j,\bs k_3}A_{j,\bs k_4}]
+\EE[A_{i,\bs k_1}A_{i,\bs k_2}]\EE[A_{j,\bs k_3}A_{j,\bs k_4}]
\nonumber
\\
&\qquad\qquad\qquad
+\EE[A_{i,\bs k_1}A_{i,\bs k_2}]\EE[B_{j,\bs k_3}^{Q_1}B_{j,\bs k_4}^{Q_1}]
\Bigr)
e_{\bs k_1}(y,z)\cdots e_{\bs k_4}(y,z)
\nonumber
\\
&\qquad+
\biggl(\frac{1}{N\Delta_N^{\alpha}}\biggr)^2
\sum_{i=1}^N
\sum_{\bs k_1,\ldots,\bs k_4\in\mathbb N^2}
\EE[B_{i,\bs k_1}^{Q_1}B_{i,\bs k_2}^{Q_1}]\EE[B_{i,\bs k_3}^{Q_1}B_{i,\bs k_4}^{Q_1}]
e_{\bs k_1}(y,z)\cdots e_{\bs k_4}(y,z)
\nonumber
\\
&\qquad+
\biggl(\frac{1}{N\Delta_N^{\alpha}}\biggr)^2
\sum_{i\neq j}
\sum_{\bs k_1,\ldots,\bs k_4\in\mathbb N^2}
\Bigl(
\EE[A_{i,\bs k_1}A_{j,\bs k_2}]\EE[B_{i,\bs k_3}^{Q_1}B_{j,\bs k_4}^{Q_1}]
\nonumber
\\
&\qquad\qquad\qquad+
\EE[B_{i,\bs k_1}^{Q_1}B_{j,\bs k_2}^{Q_1}]\EE[B_{i,\bs k_3}^{Q_1}B_{j,\bs k_4}^{Q_1}]
\Bigr)
e_{\bs k_1}(y,z)\cdots e_{\bs k_4}(y,z)
\\
&=:S_1+S_2+S_3.
\end{align*}
From Lemmas \ref{lem3} and \ref{lem4}, we have 
\begin{align*}
S_1
&\le
\biggl(\frac{1}{N\Delta_N^{\alpha}}\biggr)^2
\Biggl\{
\sum_{i,j=1}^N
\sum_{\bs k_1,\ldots,\bs k_4\in\mathbb N^2}
\Bigl|
\EE[A_{i,\bs k_1}A_{i,\bs k_2}A_{j,\bs k_3}A_{j,\bs k_4}]
e_{\bs k_1}(y,z)\cdots e_{\bs k_4}(y,z)
\Bigr|
\\
&\qquad+
\Biggl(
\sum_{i=1}^N
\sum_{\bs k_1,\bs k_2\in\mathbb N^2}
\Bigl|
\EE[A_{i,\bs k_1}A_{i,\bs k_2}]
e_{\bs k_1}(y,z)e_{\bs k_2}(y,z)
\Bigr|
\Biggr)^2
\\
&\qquad+
\Biggl(
\sum_{i=1}^N
\sum_{\bs k_1,\bs k_2\in\mathbb N^2}
\Bigl|
\EE[A_{i,\bs k_1}A_{i,\bs k_2}]
e_{\bs k_1}(y,z)e_{\bs k_2}(y,z)
\Bigr|
\Biggr)
\\
&\qquad\qquad\times
\Biggl(
\sum_{i=1}^N
\sum_{\bs k_1,\bs k_2\in\mathbb N^2}
\Bigl|
\EE[B_{i,\bs k_1}^{Q_1}B_{i,\bs k_2}^{Q_1}]
e_{\bs k_1}(y,z)e_{\bs k_2}(y,z)
\Bigr|
\Biggr)
\Biggr\}
\\
&=O\Biggl(
\biggl(\frac{1}{N\Delta_N^\alpha}\biggr)^2
(\Delta_N^\alpha \lor \Delta_N^{2\alpha} \lor \Delta_N^{2\alpha-1})
\Biggr)
=O(\Delta_N),
\\
S_2
&=
\biggl(\frac{1}{N\Delta_N^{\alpha}}\biggr)^2
\sum_{i=1}^N
\Biggl(
\sum_{\bs k_1,\bs k_2\in\mathbb N^2}
\EE[B_{i,\bs k_1}^{Q_1}B_{i,\bs k_2}^{Q_1}]
e_{\bs k_1}(y,z)e_{\bs k_2}(y,z)
\Biggr)^2
\\
&=O\Biggl(
\biggl(\frac{1}{N\Delta_N^\alpha}\biggr)^2
\Delta_N^{2\alpha-1}
\Biggr)
=O(\Delta_N),
\\
S_3
&\le 
\biggl(\frac{1}{N\Delta_N^{\alpha}}\biggr)^2
\Biggl\{
\Biggl(
\sup_{j\ge1}\sum_{i=1}^N
\sum_{\bs k_1,\bs k_2\in\mathbb N^2}
\Bigl|
\EE[A_{i,\bs k_1}A_{j,\bs k_2}]
e_{\bs k_1}(y,z)e_{\bs k_2}(y,z)
\Bigr|
\Biggr)
\\
&\qquad\qquad\qquad\times
\Biggl(
\sup_{i\neq j}
\sum_{j=1}^N
\sum_{\bs k_1,\bs k_2\in\mathbb N^2}
\Bigl|
\EE[B_{i,\bs k_1}^{Q_1}B_{j,\bs k_2}^{Q_1}]
e_{\bs k_1}(y,z)e_{\bs k_2}(y,z)
\Bigr|
\Biggr)
\\
&\qquad\qquad+ 
\Biggl(
\sup_{j\neq i}\sum_{i=1}^N
\sum_{\bs k_1,\bs k_2\in\mathbb N^2}
\Bigl|
\EE[B_{i,\bs k_1}^{Q_1}B_{j,\bs k_2}^{Q_1}]e_{\bs k_1}(y,z)e_{\bs k_2}(y,z)
\Bigr|
\Biggr)^2
\Biggr\}
\\
&=O\Biggl(
\biggl(\frac{1}{N\Delta_N^\alpha}\biggr)^2
(\Delta_N^{\alpha/2} \lor 1)
\Biggr)
=O(\Delta_N^{2(1-\alpha)}),
\end{align*}
and $\sup_{(y,z)\in D_\delta}\VV[Z_N^{Q_1}(y,z)]
=O(\Delta_N^{1\land2(1-\alpha)})$. 
Therefore, we obtain \eqref{eq5-0202}.

Note also that 
for any continuous function $h(y,z)$ with respect to $(y,z)\in D_\delta$,
\begin{align}
\frac{1}{m}\sum_{j_1=1}^{m_1}\sum_{j_2=1}^{m_2} h(\widetilde y_{j_1},\widetilde z_{j_2})
\to
\frac{1}{|D_\delta|}\iint_{D_\delta}h(y,z)\dd y\dd z,
\label{eq5-0203}
\end{align}
and that for any continuous function $h_\nu(y,z)$ with respect to 
$(y,z,\nu)\in D_\delta\times\Xi_1$,
\begin{align}
\sup_{\nu\in\Xi_1}
\Biggl|
\frac{1}{m}\sum_{j_1=1}^{m_1}\sum_{j_2=1}^{m_2} 
h_\nu(\widetilde y_{j_1},\widetilde z_{j_2})
-\frac{1}{|D_\delta|}\iint_{D_\delta}h_\nu(y,z)\dd y\dd z
\Biggr|
\to0.
\label{eq5-0204}
\end{align}

We will show \eqref{eq5-0103}-\eqref{eq5-0106}. 

\textit{Proof of \eqref{eq5-0103}. }
Let
\begin{align*}
U(\nu,\nu^*)
=\frac{1}{|D_\delta|}
\iint_{D_\delta}
(g_\nu(y,z)-g_{\nu^*}(y,z))^2 \dd y\dd z.
\end{align*}
For the proof of the consistency, it is enough to show that
\begin{itemize}
\item[(i)]
If $U(\nu,\nu^*)=0$, then $\nu=\nu^*$,

\item[(ii)]
$\displaystyle\sup_{\nu\in\Xi_1}
\biggl|\frac{U_{N,m}^{(1)}(\nu)}{m}-U(\nu,\nu^*)\biggr|\pto0$.
\end{itemize}

If $U(\nu,\nu^*)=0$, then $g_\nu(y,z)=g_{\nu^*}(y,z)$ for any $(y,z)\in D_\delta$. 
It implies that
$\ee^{-(\kappa y+\eta z)}s=\ee^{-(\kappa^* y+\eta^* z)}s^*$
for any $(y,z)\in D_\delta$, that is,
$\nu=\nu^*$.

Let $h_\nu(y,z)=g_{\nu^*}(y,z)-g_\nu(y,z)$. 
It follows from $\xi_N(y,z,\nu)=\xi_N^*(y,z)+h_\nu(y,z)$ that
\begin{align}
&\sup_{\nu\in\Xi_1}
\biggl|\frac{U_{N,m}^{(1)}(\nu)}{m}-U(\nu,\nu^*)\biggr|
\nonumber
\\
&=
\sup_{\nu\in\Xi_1}
\Biggl|
\frac{1}{m}
\sum_{j_1=1}^{m_1}\sum_{j_2=1}^{m_2}
\xi_{N}(\widetilde y_{j_1},\widetilde z_{j_2},\nu)^2
-\frac{1}{|D_\delta|}
\iint_{D_\delta}h_\nu(y,z)^2\dd y\dd z
\Biggr|
\nonumber
\\
&\le
\frac{1}{m}\sum_{j_1=1}^{m_1}\sum_{j_2=1}^{m_2}
\xi_N^*(\widetilde y_{j_1},\widetilde z_{j_2})^2
+2\sup_{(y,z,\nu)\in D_\delta\times\Xi_1}
|h_\nu(y,z)|
\frac{1}{m}\sum_{j_1=1}^{m_1}\sum_{j_2=1}^{m_2}
|\xi_N^*(\widetilde y_{j_1},\widetilde z_{j_2})|
\nonumber
\\
&\qquad+
\sup_{\nu\in\Xi_1}
\Biggl|
\frac{1}{m}\sum_{j_1=1}^{m_1}\sum_{j_2=1}^{m_2}
h_\nu(\widetilde y_{j_1},\widetilde z_{j_2})^2
-\frac{1}{|D_\delta|}
\iint_{D_\delta} h_\nu(y,z)^2\dd y\dd z
\Biggr|.
\label{eq5-0301}
\end{align}
According to \eqref{eq5-0202}, \eqref{eq5-0204} and the Schwartz inequality,
it holds that  
\begin{align}\label{eq5-0302}
\frac{1}{m}\sum_{j_1=1}^{m_1}\sum_{j_2=1}^{m_2}
\EE[\xi_N^*(\widetilde y_{j_1},\widetilde z_{j_2})^2]
=o(1),
\quad
\frac{1}{m}\sum_{j_1=1}^{m_1}\sum_{j_2=1}^{m_2}
\EE[|\xi_N^*(\widetilde y_{j_1},\widetilde z_{j_2})|]
=o(1)
\end{align}
and
\begin{align}
\sup_{\nu\in\Xi_1}
\Biggl|
\frac{1}{m}\sum_{j_1=1}^{m_1}\sum_{j_2=1}^{m_2}
h_{\nu}(\widetilde y_{j_1},\widetilde z_{j_2})^2
-\frac{1}{|D_\delta|}
\iint_{D_\delta} h_{\nu}(y,z)^2\dd y\dd z
\Biggr|\to0.
\label{eq5-0303}
\end{align}
Therefore, 
we obtain from \eqref{eq5-0301}-\eqref{eq5-0303} and 
the boundedness of $h_\nu(y,z)$ that
\begin{align*}
&\sup_{\nu\in\Xi_1}
\biggl|\frac{U_{N,m}^{(1)}(\nu)}{m}-U(\nu,\nu^*)\biggr|
\pto0.
\end{align*}

\textit{Proof of \eqref{eq5-0104}. }
It follows 
from \eqref{eq5-0202}, \eqref{eq5-0203} and
the boundedness of $K_\nu(y,z)$ on $D_\delta\times\Xi_1$ that
\begin{equation*}
\frac{1}{m}\sum_{j_1=1}^{m_1}\sum_{j_2=1}^{m_2}
\EE\bigl[
\bigl|
\xi_N^*(\widetilde y_{j_1},\widetilde z_{j_2})
K_\nu(\widetilde y_{j_1},\widetilde z_{j_2})
\bigr|
\bigr]
=o(1)
\end{equation*}
and that
\begin{equation*}
\frac{1}{m}
\sum_{j_1=1}^{m_1}\sum_{j_2=1}^{m_2}
L_{\nu^*}(\widetilde y_{j_1},\widetilde z_{j_2})
\to 
\frac{1}{|D_\delta|}
\iint_{D_\delta} L_{\nu^*}(y,z)\dd y\dd z
=L(\nu^*).
\end{equation*}
Therefore, we see from \eqref{eq5-0102} that \eqref{eq5-0104} holds.

\textit{Proof of \eqref{eq5-0105}. }
From \eqref{eq5-0102}, 
there exist uniformly continuous functions 
$V_1(y,z,\nu)$ and $V_2(y,z,\nu)$
with respect to $(y,z,\nu) \in D_\delta\times \Xi_1$ such that
\begin{align*}
\partial^2{U}_{N,m}^{(1)}(\nu)
=
\sum_{j_1=1}^{m_1}\sum_{j_2=1}^{m_2}
Z_{N}^{Q_1}(\widetilde y_{j_1},\widetilde z_{j_2})
V_1(\widetilde y_{j_1},\widetilde z_{j_2},\nu)
+\sum_{j_1=1}^{m_1}\sum_{j_2=1}^{m_2}
V_2(\widetilde y_{j_1},\widetilde z_{j_2},\nu).
\end{align*}
Let $\epsilon>0$ and $\epsilon_N\downarrow0$. 
It follows from the uniform continuity of $V_k$ on $D_\delta\times \Xi_1$ that
\begin{align*}
\sup_{(y,z)\in D_\delta}\sup_{|\nu-\nu^*|\le\epsilon_N}
|V_k(y,z,\nu)-V_k(y,z,\nu^*)|
\le \epsilon
\end{align*}
for large $N$, and that 
\begin{align*}
&\frac{1}{m}\sup_{|\nu-\nu^*|\le\epsilon_N}
|\partial^2{U}_{N,m}^{(1)}(\nu)-\partial^2{U}_{N,m}^{(1)}(\nu^*)|
\\
&=
\frac{1}{m}
\sup_{|\nu-\nu^*|\le\epsilon_N}
\biggl|
\sum_{j_1=1}^{m_1}\sum_{j_2=1}^{m_2}
\Bigl(
Z_{N}^{Q_1}(\widetilde y_{j_1},\widetilde z_{j_2})
\{V_1(y_{j_2},\widetilde z_{j_2},\nu)
-V_1(\widetilde y_{j_1},\widetilde z_{j_2},\nu^*)\}
\\
&\qquad\qquad
-\{V_2(y_{j_2},\widetilde z_{j_2},\nu)
-V_2(\widetilde y_{j_1},\widetilde z_{j_2},\nu^*)\}
\Bigr)
\biggr|
\\
&\le
\frac{1}{m}\sum_{j_1=1}^{m_1}\sum_{j_2=1}^{m_2}
\Bigl(
Z_{N}^{Q_1}(\widetilde y_{j_1},\widetilde z_{j_2})
\sup_{(y,z)\in D_\delta}\sup_{|\nu-\nu^*|\le\epsilon_N}
|V_1(y,z,\nu)-V_1(y,z,\nu^*)|
\\
&\qquad\qquad
+\sup_{(y,z)\in D_\delta}\sup_{|\nu-\nu^*|\le\epsilon_N}
|V_2(y,z,\nu)-V_2(y,z,\nu^*)|
\Bigr)
\\
&\le
\frac{\epsilon}{m}\sum_{j_1=1}^{m_1}\sum_{j_2=1}^{m_2}
(Z_{N}^{Q_1}(\widetilde y_{j_1},\widetilde z_{j_2})+1).
\end{align*}
Therefore, we obtain from \eqref{eq5-0201} that for large $N$,
\begin{align*}
&\EE\biggl[
\frac{1}{m}\sup_{|\nu-\nu^*|\le\epsilon_N}
|\partial^2{U}_{N,m}^{(1)}(\nu)-\partial^2{U}_{N,m}^{(1)}(\nu^*)|
\biggr]
\\
&\le
\frac{\epsilon}{m}\sum_{j_1=1}^{m_1}\sum_{j_2=1}^{m_2}
\EE[Z_{N}^{Q_1}(\widetilde y_{j_1},\widetilde z_{j_2})+1]
\lesssim \epsilon,
\end{align*}
and the proof of \eqref{eq5-0105} is complete.

\textit{Proof of \eqref{eq5-0106}. }
Let $\varphi_\nu(y,z)=\ee^{-(\kappa y+\eta z)}(1,-s y,-s z)^\TT$.
Noting that 
\begin{align*}
\EE[\xi_N^*(y,z)]=O(\Delta_N^{1-\alpha}),
\quad
\EE\bigl[(\xi_N^*(y,z)-\EE[\xi_N^*(y,z)])^2\bigr]
=O(\Delta_N^{1\land2(1-\alpha)})
\end{align*}
uniformly in $(y,z)\in D_\delta$,
we obtain from $m=O(N^\rho)$ and 
$\frac{\rho}{2}-\frac{1}{2}\land(1-\alpha)+\gamma<0$ that
\begin{align*}
&-\frac{N^{\gamma}}{m^{1/2}}\partial{U}_{N,m}^{(1)}(\nu^*)^\TT
\\
&=
\frac{N^\gamma}{m^{1/2}}
\frac{\Gamma(1-\alpha)}{2\pi\alpha}
\sum_{j_1=1}^{m_1}\sum_{j_2=1}^{m_2}
\bigl(
\xi_N^*(\widetilde y_{j_1},\widetilde z_{j_2})
-\EE[\xi_N^*(\widetilde y_{j_1},\widetilde z_{j_2})]
\bigr)
\varphi_{\nu^*}(\widetilde y_{j_1}, \widetilde z_{j_2})
\\
&\qquad+
\frac{N^\gamma}{m^{1/2}}
\frac{\Gamma(1-\alpha)}{2\pi\alpha}
\sum_{j_1=1}^{m_1}\sum_{j_2=1}^{m_2}
\EE[\xi_N^*(\widetilde y_{j_1},\widetilde z_{j_2})]
\varphi_{\nu^*}(\widetilde y_{j_1}, \widetilde z_{j_2})
\\
&\le
m^{1/2}N^\gamma 
\frac{\Gamma(1-\alpha)}{2\pi\alpha}
\Biggl(
\frac{1}{m}
\sum_{j_1=1}^{m_1}\sum_{j_2=1}^{m_2}
\bigl(
\xi_N^*(\widetilde y_{j_1},\widetilde z_{j_2})
-\EE[\xi_N^*(\widetilde y_{j_1},\widetilde z_{j_2})]
\bigr)^2
\Biggr)^{1/2}
\\
&\qquad\qquad\times
\Biggl(
\frac{1}{m}
\sum_{j_1=1}^{m_1}\sum_{j_2=1}^{m_2}
|\varphi_{\nu^*}(\widetilde y_{j_1}, \widetilde z_{j_2})|^2
\Biggr)^{1/2}
\\
&\qquad+
\frac{N^\gamma}{m^{1/2}}
\frac{\Gamma(1-\alpha)}{2\pi\alpha}
\sum_{j_1=1}^{m_1}\sum_{j_2=1}^{m_2}
\EE[\xi_N^*(\widetilde y_{j_1},\widetilde z_{j_2})]
\varphi_{\nu^*}(\widetilde y_{j_1}, \widetilde z_{j_2})
\\
&=O_p\bigl(m^{1/2}N^{-\frac{1}{2}\land(1-\alpha)+\gamma}\bigr)
+O\bigl(m^{1/2}N^{-(1-\alpha)+\gamma}\bigr)
\\
&=o_p(1).
\end{align*}
\end{proof}

\begin{proof}[\bf Proof of Theorem \ref{th2}]
(2) For any $\bs r=(r_1,r_2)$, $r_1.r_2\ge1$, let
\begin{align*}
\mathcal A_n
=\sum_{i=1}^{n}
\bigl\{
(\hat x_{\bs r}^{Q_1}(\widetilde t_i)-x_{\bs r}^{Q_1}(\widetilde t_i))
-(\hat x_{\bs r}^{Q_1}(\widetilde t_{i-1})-x_{\bs r}^{Q_1}(\widetilde t_{i-1}))
\bigr\}^2.
\end{align*}
Note that
\begin{align}
&\Biggl|\hat\sigma_{\bs r}^2
-\sum_{i=1}^{n}(x_{\bs r}^{Q_1}(\widetilde t_i)-x_{\bs r}^{Q_1}(\widetilde t_{i-1}))^2
\Biggr|
\nonumber
\\
&=
\Biggl|
\sum_{i=1}^{n}(\hat x_{\bs r}^{Q_1}(\widetilde t_i)
-\hat x_{\bs r}^{Q_1}(\widetilde t_{i-1}))^2
-\sum_{i=1}^{n}(x_{\bs r}^{Q_1}(\widetilde t_i)-x_{\bs r}^{Q_1}(\widetilde t_{i-1}))^2
\Biggr|
\nonumber
\\
&=
\Biggl|
\sum_{i=1}^{n}\bigl\{
(\hat x_{\bs r}^{Q_1}(\widetilde t_i)-x_{\bs r}^{Q_1}(\widetilde t_i))
-(\hat x_{\bs r}^{Q_1}(\widetilde t_{i-1})-x_{\bs r}^{Q_1}(\widetilde t_{i-1}))
\bigr\}^2
\nonumber
\\
&\qquad
+2\sum_{i=1}^{n}\bigl\{
(\hat x_{\bs r}^{Q_1}(\widetilde t_i)-x_{\bs r}^{Q_1}(\widetilde t_i))
-(\hat x_{\bs r}^{Q_1}(\widetilde t_{i-1})-x_{\bs r}^{Q_1}(\widetilde t_{i-1}))
\bigr\}(x_{\bs r}^{Q_1}(\widetilde t_i)-x_{\bs r}^{Q_1}(\widetilde t_{i-1}))
\Biggr|
\nonumber
\\
&\lesssim
\mathcal A_n
+\mathcal A_n^{1/2}
\Biggl(
\sum_{i=1}^{n}
(x_{\bs r}^{Q_1}(\widetilde t_i)-x_{\bs r}^{Q_1}(\widetilde t_{i-1}))^2
\Biggr)^{1/2}
\label{eq6-0101}
\end{align}
and 
\begin{align*}
\sum_{i=1}^{n}
(x_{\bs r}^{Q_1}(\widetilde t_i)-x_{\bs r}^{Q_1}(\widetilde t_{i-1}))^2
=O_p(1).
\end{align*}

Let $\Delta_n=\lfloor\frac{N}{n}\rfloor\frac{1}{N}$,
$D_{j_1,j_2}=(y_{j_1-1},y_{j_1}]\times(z_{j_2-1},z_{j_2}]$,
$\Delta_i X^{Q_1}(y,z)=X_{\widetilde t_i}^{Q_1}(y,z)-X_{\widetilde t_{i-1}}^{Q_1}(y,z)$
and 
$\Delta_i x_{\bs r}^{Q_1}
=x_{\bs r}^{Q_1}(\widetilde t_i)-x_{\bs r}^{Q_1}(\widetilde t_{i-1})$.
Since
\begin{align*}
\hat x_{\bs r}^{Q_1}(\widetilde t_i)-\hat x_{\bs r}^{Q_1}(\widetilde t_{i-1})
&=
\frac{2}{M}\sum_{j_1=1}^{M_1}\sum_{j_2=1}^{M_2}
\Delta_i X^{Q_1}(y_{j_1},z_{j_2})\sin(\pi r_1 y_{j_1})\sin(\pi r_2 z_{j_2})
\ee^{\frac{\hat\kappa}{2}y_{j_1}}\ee^{\frac{\hat\eta}{2}z_{j_2}},
\\
x_{\bs r}^{Q_1}(\widetilde t_i)-x_{\bs r}^{Q_1}(\widetilde t_{i-1})
&=
2\iint_{D}\Delta_i X^{Q_1}(y,z)
\sin(\pi r_1 y)\sin(\pi r_2 z)\ee^{\frac{\kappa^*}{2}y}\ee^{\frac{\eta^*}{2}z}
\dd y\dd z,
\end{align*}
it follows that 
\begin{align*}
&(\hat x_{\bs r}^{Q_1}(\widetilde t_i)-x_{\bs r}^{Q_1}(\widetilde t_i))
-(\hat x_{\bs r}^{Q_1}(\widetilde t_{i-1})-x_{\bs r}^{Q_1}(\widetilde t_{i-1}))
\\
&=
\frac{2}{M}\sum_{j_1=1}^{M_1}\sum_{j_2=1}^{M_2}
\Delta_i X^{Q_1}(y_{j_1},z_{j_2})\sin(\pi r_1 y_{j_1})\sin(\pi r_2 z_{j_2})
\bigl(\ee^{\frac{\hat\kappa}{2}y_{j_1}}\ee^{\frac{\hat\eta}{2}z_{j_2}}
-\ee^{\frac{\kappa^*}{2}y_{j_1}}\ee^{\frac{\eta^*}{2}z_{j_2}}
\bigr)
\\
&\qquad+
2\sum_{j_1=1}^{M_1}\sum_{j_2=1}^{M_2}\iint_{D_{j_1,j_2}}
\Delta_i X^{Q_1}(y_{j_1},z_{j_2})
\\
&\qquad\qquad\times
\bigl(\sin(\pi r_1 y_{j_1})\sin(\pi r_2 z_{j_2})
\ee^{\frac{\kappa^*}{2}y_{j_1}}\ee^{\frac{\eta^*}{2}z_{j_2}}
-\sin(\pi r_1 y)\sin(\pi r_2 z)
\ee^{\frac{\kappa^*}{2}y}\ee^{\frac{\eta^*}{2}z}
\bigr)
\dd y\dd z
\\
&\qquad+
2\sum_{j_1=1}^{M_1}\sum_{j_2=1}^{M_2}\iint_{D_{j_1,j_2}}
\bigl(\Delta_i X^{Q_1}(y_{j_1},z_{j_2})-\Delta_i X^{Q_1}(y,z)\bigr)
\sin(\pi r_1 y)\sin(\pi r_2 z)
\ee^{\frac{\kappa^*}{2}y}\ee^{\frac{\eta^*}{2}z}
\dd y\dd z
\\
&=:2(\mathcal B_{1,i}+\mathcal B_{2,i}+\mathcal B_{3,i})
\end{align*}
and that
\begin{align*}
\mathcal A_n
\lesssim
\sum_{i=1}^{n}
(\mathcal B_{1,i}^2+\mathcal B_{2,i}^2+\mathcal B_{3,i}^2).
\end{align*}

For the evaluation of $\mathcal B_{1,i}$,
we obtain from the Taylor expansion that 
\begin{align*}
\mathcal B_{1,i}^2
&\le
\frac{1}{M}\sum_{j_1=1}^{M_1}\sum_{j_2=1}^{M_2}
(\Delta_i X^{Q_1})^2(y_{j_1},z_{j_2})\sin^2(\pi r_1 y_{j_1})\sin^2(\pi r_2 z_{j_2})
\bigl(\ee^{\frac{\hat\kappa}{2}y_{j_1}}\ee^{\frac{\hat\eta}{2}z_{j_2}}
-\ee^{\frac{\kappa}{2}y_{j_1}}\ee^{\frac{\eta}{2}z_{j_2}}
\bigr)^2
\\
&=\frac{1}{M}\sum_{j_1=1}^{M_1}\sum_{j_2=1}^{M_2}
(\Delta_i X^{Q_1})^2(y_{j_1},z_{j_2})\sin^2(\pi r_1 y_{j_1})\sin^2(\pi r_2 z_{j_2})
\\
&\qquad\times
\Biggl\{
\frac{1}{2}\int_0^1(y_{j_1},z_{j_2})
\exp\biggl(
\frac{y_{j_1}}{2}(\kappa^*+u(\hat\kappa-\kappa^*))
+\frac{z_{j_2}}{2}(\eta^*+u(\hat\eta-\eta^*))
\biggr)\dd u
\begin{pmatrix}
\hat\kappa-\kappa^*
\\
\hat\eta-\eta^*
\end{pmatrix}
\Biggr\}^2
\\
&\le \frac{1}{mN^{2\gamma}}\frac{1}{M}\sum_{j_1=1}^{M_1}\sum_{j_2=1}^{M_2}
(\Delta_i X^{Q_1})^2(y_{j_1},z_{j_2})
\\
&\qquad\times
\Biggl|
\frac{1}{2}\int_0^1(y_{j_1},z_{j_2})
\exp\biggl(
\frac{y_{j_1}}{2}(\kappa^*+u(\hat\kappa-\kappa^*))
+\frac{z_{j_2}}{2}(\eta^*+u(\hat\eta-\eta^*))
\biggr)\dd u
\Biggr|^2
\\
&\qquad\times
mN^{2\gamma}(|\hat\kappa-\kappa^*|^2+|\hat\eta-\eta^*|^2)
\\
&=:\mathcal C_{1,i}
\times mN^{2\gamma}(|\hat\kappa-\kappa^*|^2+|\hat\eta-\eta^*|^2).
\end{align*}
Note that uniformly in $(y,z)\in D$,
\begin{align*}
\EE[(\Delta_i X^{Q_1})^2(y,z)]
\lesssim
\frac{1}{n^{\alpha}}.
\end{align*}
Let $\epsilon_1$, $\epsilon_2$ be arbitrary positive numbers.
On $\Omega_1=\{|\hat\kappa-\kappa^*|+|\hat\eta-\eta^*|<\epsilon_1\}$,
\begin{align*}
\mathcal C_n
:=n\sum_{i=1}^{n}\mathcal C_{1,i}
&\lesssim
\frac{n}{mN^{2\gamma}}\sum_{i=1}^{n}
\frac{1}{M}
\sum_{j_1=1}^{M_1}\sum_{j_2=1}^{M_2}
(\Delta_i X^{Q_1})^2(y_{j_1},z_{j_2}),
\end{align*}
and
\begin{align*}
P(|\mathcal C_n|>\epsilon_2)
&=P(\{|\mathcal C_n|>\epsilon_2\}\cap \Omega_1)
+P(\{|\mathcal C_n|>\epsilon_2\}\cap \Omega_1^{\mathrm c})
\nonumber
\\
&\le
P\Biggl(
\frac{n}{mN^{2\gamma}}\sum_{i=1}^{n}
\frac{1}{M}\sum_{j_1=1}^{M_1}\sum_{j_2=1}^{M_2}
(\Delta_i X^{Q_1})^2(y_{j_1},z_{j_2})
\gtrsim\epsilon_2
\Biggr)
+P(\Omega_1^{\mathrm c})
\nonumber
\\
&\lesssim
\frac{n}{\epsilon_2 mN^{2\gamma}}\sum_{i=1}^{n}
\frac{1}{M}\sum_{j_1=1}^{M_1}\sum_{j_2=1}^{M_2}
\EE[(\Delta_i X^{Q_1})^2(y_{j_1},z_{j_2})]
+P(\Omega_1^{\mathrm c})
\nonumber
\\
&\lesssim
\frac{n^{2-\alpha}}{\epsilon_2 mN^{2\gamma}}+P(\Omega_1^{\mathrm c}).
\end{align*}
From $\frac{n^{2-\alpha}}{mN^{2\gamma}}\to0$ and Theorem \ref{th1}, 
one has $\mathcal C_n=o_p(1)$ and
\begin{align*}
n\sum_{i=1}^{n}
\mathcal B_{1,i}^2
=o_p(1).
\end{align*}

Noting that for the evaluation of $\mathcal B_{2,i}$,
\begin{align*}
\EE[\mathcal B_{2,i}^2]
&\le
\sum_{j_1=1}^{M_1}\sum_{j_2=1}^{M_2}\iint_{D_{j_1,j_2}}
\EE[(\Delta_i X^{Q_1})^2(y_{j_1},z_{j_2})]
\\
&\qquad\qquad\times
\bigl(\sin(\pi r_1 y_{j_1})\sin(\pi r_2 z_{j_2})
\ee^{\frac{\kappa}{2}y_{j_1}}\ee^{\frac{\eta}{2}z_{j_2}}
-\sin(\pi r_1 y)\sin(\pi r_2 z)
\ee^{\frac{\kappa}{2}y}\ee^{\frac{\eta}{2}z}
\bigr)^2
\dd y\dd z
\\
&\lesssim
\sum_{j_1=1}^{M_1}\sum_{j_2=1}^{M_2}
\EE[(\Delta_i X^{Q_1})^2(y_{j_1},z_{j_2})]
\iint_{D_{j_1,j_2}}
\biggl(\frac{1}{M_1^2}+\frac{1}{M_2^2}\biggr)
\dd y\dd z
\\
&\lesssim
\frac{1}{n^\alpha}
\biggl(\frac{1}{M_1^2}+\frac{1}{M_2^2}\biggr)
\lesssim
\frac{1}{n^\alpha(M_1^2 \land M_2^2)},
\end{align*}
one has that under
$\frac{n^{2-\alpha+\epsilon}}{M_1^{2\epsilon} \land M_2^{2\epsilon}}\to0$, 
\begin{equation*}
n\sum_{i=1}^{n}
\mathcal B_{2,i}^2
=O_p\biggl(
\frac{n^{2-\alpha}}{M_1^2 \land M_2^2}
\biggr)
=o_p(1).
\end{equation*}

For the evaluation of $\mathcal B_{3,i}$, we have that
\begin{align*}
\mathcal B_{3,i}^2
&\le
\sum_{j_1=1}^{M_1}\sum_{j_2=1}^{M_2}\iint_{D_{j_1,j_2}}
\bigl(\Delta_i X^{Q_1}(y_{j_1},z_{j_2})-\Delta_i X^{Q_1}(y,z)\bigr)^2
\sin^2(\pi r_1 y)\sin^2(\pi r_2 z)
\ee^{\kappa^*y}\ee^{\eta^*z}
\dd y\dd z
\\
&\lesssim
\sum_{j_1=1}^{M_1}\sum_{j_2=1}^{M_2}\iint_{D_{j_1,j_2}}
\bigl(\Delta_i X^{Q_1}(y_{j_1},z_{j_2})-\Delta_i X^{Q_1}(y,z)\bigr)^2
\dd y\dd z.
\end{align*}
Since for $(y,z)\in D_{j_1,j_2}$,
\begin{equation*}
\partial_y e_{\bs k}(y,z)
=
2\Bigl(
\pi k_1 \cos(\pi k_1 y)-\frac{\kappa}{2}\sin(\pi k_1 y)
\Bigr)\sin(\pi k_2 z)
\ee^{-\frac{\kappa}{2}y}\ee^{-\frac{\eta}{2}z},
\end{equation*}
\begin{equation*}
|\partial e_{\bs k}(y,z)|
\lesssim
\Bigl| \pi k_1 \cos(\pi k_1 y)-\frac{\kappa}{2}\sin(\pi k_1 y) \Bigr|
+\Bigl| \pi k_2 \cos(\pi k_2 z)-\frac{\eta}{2}\sin(\pi k_2 z) \Bigr|
\lesssim
|\bs k|,
\end{equation*}
\begin{align*}
\Delta_i X^{Q_1}(y_{j_1},z_{j_2})-\Delta_i X^{Q_1}(y,z)
&=\sum_{\bs k\in\mathbb N^2}
\Delta_i x_{\bs k}^{Q_1}(e_{\bs k}(y_{j_1},z_{j_2})-e_{\bs k}(y,z)),
\\
|e_{\bs k}(y_{j_1},z_{j_2})-e_{\bs k}(y,z)|
&\le
\int_0^1
\bigl|\partial e_{\bs k}(y_{j_1}+u(y-y_{j_1}),z_{j_2}+u(z-z_{j_2}))\bigr|\dd u
\\
&\qquad\times
\bigl(|y-y_{j_1}|^2+|z-z_{j_2}|^2\bigr)^{1/2}
\\
&\lesssim
|\bs k|
\biggl(\frac{1}{M_1^2}+\frac{1}{M_2^2}\biggr)^{1/2}
\lesssim
\frac{\lambda_{\bs k}^{1/2}}{M_1\land M_2}
\end{align*}
and $|e_{\bs k}(y_{j_1},z_{j_2})-e_{\bs k}(y,z)|\lesssim 1$, it holds that
for $\epsilon<\alpha$,
\begin{align*}
&\EE\bigl[(\Delta_i X^{Q_1}(y_{j_1},z_{j_2})-\Delta_i X^{Q_1}(y,z))^2\bigr]
\\
&=\sum_{\bs k,\bs \ell\in\mathbb N^2}
\EE[\Delta_i x_{\bs k}^{Q_1}\Delta_i x_{\bs \ell}^{Q_1}]
(e_{\bs k}(y_{j_1},z_{j_2})-e_{\bs k}(y,z))
(e_{\bs \ell}(y_{j_1},z_{j_2})-e_{\bs \ell}(y,z))
\\
&=
\sum_{\bs k,\bs \ell\in\mathbb N^2}
\EE[A_{i,\bs k}A_{i,\bs \ell}]
(e_{\bs k}(y_{j_1},z_{j_2})-e_{\bs k}(y,z))
(e_{\bs \ell}(y_{j_1},z_{j_2})-e_{\bs \ell}(y,z))
\\
&\qquad+
\sum_{\bs k,\bs \ell\in\mathbb N^2}
\EE[B_{i,\bs k}^{Q_1}B_{i,\bs \ell}^{Q_1}]
(e_{\bs k}(y_{j_1},z_{j_2})-e_{\bs k}(y,z))
(e_{\bs \ell}(y_{j_1},z_{j_2})-e_{\bs \ell}(y,z))
\\
&\lesssim
\sum_{\bs k\in\mathbb N^2}
\frac{1-\ee^{-\lambda_{\bs k}
\Delta_{n}
}}{\lambda_{\bs k}^{1+\alpha}}
(e_{\bs k}(y_{j_1},z_{j_2})-e_{\bs k}(y,z))^2
\\
&=
\sum_{\bs k\in\mathbb N^2}
\frac{1-\ee^{-\lambda_{\bs k}\Delta_{n}}}{\lambda_{\bs k}^{1+\alpha-\epsilon}}
\frac{(e_{\bs k}(y_{j_1},z_{j_2})-e_{\bs k}(y,z))^2}{\lambda_{\bs k}^\epsilon}
\\
&\lesssim
\sum_{\bs k\in\mathbb N^2}
\frac{1-\ee^{-\lambda_{\bs k}\Delta_{n}}}{\lambda_{\bs k}^{1+\alpha-\epsilon}}
\frac{1}{\lambda_{\bs k}^\epsilon}
\biggl(
\frac{\lambda_{\bs k}^{1/2}}{M_1 \land M_2}\land 1
\biggr)^2
\\
&\le
\sum_{\bs k\in\mathbb N^2}
\frac{1-\ee^{-\lambda_{\bs k}\Delta_{n}}}{\lambda_{\bs k}^{1+\alpha-\epsilon}}
\frac{1}{\lambda_{\bs k}^\epsilon}
\biggl(
\frac{\lambda_{\bs k}^{1/2}}{M_1 \land M_2}
\biggr)^{2\epsilon}
1^{2-2\epsilon}
\\
&=
\frac{1}{M_1^{2\epsilon} \land M_2^{2\epsilon}}
\sum_{\bs k\in\mathbb N^2}
\frac{1-\ee^{-\lambda_{\bs k}\Delta_{n}}}{\lambda_{\bs k}^{1+\alpha-\epsilon}}
=O\biggl(
\frac{1}{n^{\alpha-\epsilon}(M_1^{2\epsilon} \land M_2^{2\epsilon})}
\biggr)
\end{align*}
and that under
$\frac{n^{2-\alpha+\epsilon}}{M_1^{2\epsilon} \land M_2^{2\epsilon}}\to0$, 
\begin{equation*}
n\sum_{i=1}^{n}
\mathcal B_{3,i}^2
=O_p\biggl(
\frac{n^{2-\alpha+\epsilon}}{M_1^{2\epsilon} \land M_2^{2\epsilon}}
\biggr)
=o_p(1).
\end{equation*}

Therefore, 
it holds that 
under $\frac{n^{2-\alpha}}{mN^{2\gamma}}\to0$ and
$\frac{n^{2-\alpha+\epsilon}}{M_1^{2\epsilon} \land M_2^{2\epsilon}}\to0$,
$n\mathcal A_n=o_p(1)$ and 
\begin{align*}
\sqrt{n}\Biggl(
\hat\sigma_{\bs r}^2
-\sum_{i=1}^{n}(x_{\bs r}^{Q_1}(\widetilde t_i)-x_{\bs r}^{Q_1}(\widetilde t_{i-1}))^2
\Biggr)
=o_p(1).
\end{align*}
Since
\begin{equation}\label{eq6-0102}
\sqrt{n}
\begin{pmatrix}
\displaystyle\sum_{i=1}^{n}
(x_{1,1}^{Q_1}(\widetilde t_i)-x_{1,1}^{Q_1}(\widetilde t_{i-1}))^2
-(\sigma_{1,1}^*)^2
\\
\displaystyle
\sum_{i=1}^{n}(x_{1,2}^{Q_1}(\widetilde t_i)-x_{1,2}^{Q_1}(\widetilde t_{i-1}))^2
-(\sigma_{1,2}^*)^2
\end{pmatrix}
\dto
N
\Biggl(0,
\begin{pmatrix}
2(\sigma_{1,1}^*)^4 & 0
\\
0 & 2(\sigma_{1,2}^*)^4 
\end{pmatrix}
\Biggr),
\end{equation}
it follows that
\begin{equation}\label{eq6-0103}
\sqrt{n}
\begin{pmatrix}
\hat\sigma_{1,1}^2-(\sigma_{1,1}^*)^2
\\
\hat\sigma_{1,2}^2-(\sigma_{1,2}^*)^2
\end{pmatrix}
\dto 
N
\Biggl(0,
\begin{pmatrix}
2(\sigma_{1,1}^*)^4 & 0
\\
0 & 2(\sigma_{1,2}^*)^4 
\end{pmatrix}
\Biggr).
\end{equation}

Let
\begin{align*}
G(x,y)
=\biggl(
\frac{1}{y^{1/\alpha}}
-\frac{1}{x^{1/\alpha}}
\biggr)^{-\frac{\alpha}{1-\alpha}},
\quad
H(x,y)=(x^{-1}G(x,y))^{1/\alpha}.
\end{align*}
Note that under 
$\frac{n^{2-\alpha}}{mN^{2\gamma}}\to0$ and
$\frac{n^{2-\alpha+\epsilon}}{M_1^{2\epsilon} \land M_2^{2\epsilon}}\to0$,
\begin{align*}
&\sqrt{n}
\Biggl\{
\biggl(\frac{1}{\hat s}\biggr)^{\frac{1}{1-\alpha}}
-\biggl(\frac{1}{s^*}\biggr)^{\frac{1}{1-\alpha}}
\Biggr\}
\\
&=\frac{\sqrt{n}}{m^{1/2} N^\gamma}
\int_0^1\frac{-1}{1-\alpha}
\biggl(\frac{1}{s^*+u(\hat s-s^*)}\biggr)^{\frac{1}{1-\alpha}+1}\dd u\,
m^{1/2} N^\gamma(\hat s-s^*)
\\
&=o_p(1),
\end{align*}
\begin{align*}
&\sqrt{n}(\hat\theta_2-\theta_2^*)
\\
&=\sqrt{n}
\Biggl[
\biggl\{
\frac{3\pi^2}{\hat s^{1/\alpha}}
\biggl(
\frac{1}{(\hat \sigma_{1,2}^2)^{1/\alpha}}
-\frac{1}{(\hat \sigma_{1,1}^2)^{1/\alpha}}
\biggr)^{-1}
\biggr\}^{\frac{\alpha}{1-\alpha}}
-
\biggl\{
\frac{3\pi^2}{(s^*)^{1/\alpha}}
\biggl(
\frac{1}{(\sigma_{1,2}^*)^{2/\alpha}}
-\frac{1}{(\sigma_{1,1}^*)^{2/\alpha}}
\biggr)^{-1}
\biggr\}^{\frac{\alpha}{1-\alpha}}
\Biggl]
\\
&=\sqrt{n}
\Biggl[
\Biggl\{
\biggl(
\frac{3\pi^2}{\hat s^{1/\alpha}}
\biggr)^{\frac{\alpha}{1-\alpha}}
-\biggl(
\frac{3\pi^2}{(s^*)^{1/\alpha}}
\biggr)^{\frac{\alpha}{1-\alpha}}
\Biggr\}
\biggl(
\frac{1}{(\hat \sigma_{1,2}^2)^{1/\alpha}}
-\frac{1}{(\hat \sigma_{1,1}^2)^{1/\alpha}}
\biggr)^{-\frac{\alpha}{1-\alpha}}
\\
&\qquad+
\biggl(\frac{3\pi^2}{(s^*)^{1/\alpha}}\biggr)^{\frac{\alpha}{1-\alpha}}
\Biggl\{
\biggl(
\frac{1}{(\hat \sigma_{1,2}^2)^{1/\alpha}}
-\frac{1}{(\hat \sigma_{1,1}^2)^{1/\alpha}}
\biggr)^{-\frac{\alpha}{1-\alpha}}
-
\biggl(
\frac{1}{(\sigma_{1,2}^*)^{2/\alpha}}
-\frac{1}{(\sigma_{1,1}^*)^{2/\alpha}}
\biggr)^{-\frac{\alpha}{1-\alpha}}
\Biggr\}
\Biggl]
\\
&=
\biggl(\frac{3\pi^2}{(s^*)^{1/\alpha}}\biggr)^{\frac{\alpha}{1-\alpha}}
\sqrt{n}
\Biggl\{
\biggl(
\frac{1}{(\hat \sigma_{1,2}^2)^{1/\alpha}}
-\frac{1}{(\hat \sigma_{1,1}^2)^{1/\alpha}}
\biggr)^{-\frac{\alpha}{1-\alpha}}
-
\biggl(
\frac{1}{(\sigma_{1,2}^*)^{2/\alpha}}
-\frac{1}{(\sigma_{1,1}^*)^{2/\alpha}}
\biggr)^{-\frac{\alpha}{1-\alpha}}
\Biggr\}
+o_p(1)
\\
&=
\biggl(
\frac{3\pi^2\theta_2^*}{(\sigma^*)^{2/\alpha}}
\biggr)^{\frac{\alpha}{1-\alpha}}
\theta_2^*
\sqrt{n}
\Bigl(G(\hat \sigma_{1,1}^2,\hat \sigma_{1,2}^2)
-G\bigl((\sigma_{1,1}^*)^2,(\sigma_{1,2}^*)^2\bigr)\Bigl)
+o_p(1)
\\
&=
\biggl(
\frac{3\pi^2\theta_2^*}{(\sigma^*)^{2/\alpha}}
\biggr)^{\frac{\alpha}{1-\alpha}}
\theta_2^*
\partial G\bigl((\sigma_{1,1}^*)^2,(\sigma_{1,2}^*)^2\bigr)
\sqrt{n}
\begin{pmatrix}
\hat\sigma_{1,1}^2-(\sigma_{1,1}^*)^2
\\
\hat\sigma_{1,2}^2-(\sigma_{1,2}^*)^2
\end{pmatrix}
+o_p(1)
\end{align*}
and 
\begin{align*}
&\sqrt{n}(\hat\lambda_{1,1}-\lambda_{1,1}^*)
\\
&=\sqrt{n}
\Biggl\{
\biggl(\frac{3\pi^2}{\hat s}\biggr)^{\frac{1}{1-\alpha}}
\biggl(
\frac{1}{(\hat\sigma_{1,2}^2)^{1/\alpha}}
-\frac{1}{(\hat\sigma_{1,1}^2)^{1/\alpha}}
\biggr)^{-\frac{1}{1-\alpha}}
\frac{1}{(\hat\sigma_{1,1}^2)^{1/\alpha}}
\\
&\qquad\qquad-
\biggl(\frac{3\pi^2}{s^*}\biggr)^{\frac{1}{1-\alpha}}
\biggl(
\frac{1}{(\sigma_{1,2}^*)^{2/\alpha}}
-\frac{1}{(\sigma_{1,1}^*)^{2/\alpha}}
\biggr)^{-\frac{1}{1-\alpha}}
\frac{1}{(\sigma_{1,1}^*)^{2/\alpha}}
\Biggr\}
\\
&=\sqrt{n}
\Biggl[
\Biggl\{
\biggl(\frac{3\pi^2}{\hat s}\biggr)^{\frac{1}{1-\alpha}}
-\biggl(\frac{3\pi^2}{s^*}\biggr)^{\frac{1}{1-\alpha}}
\Biggr\}
\biggl(
\frac{1}{(\hat\sigma_{1,2}^2)^{1/\alpha}}
-\frac{1}{(\hat\sigma_{1,1}^2)^{1/\alpha}}
\biggr)^{-\frac{1}{1-\alpha}}
\frac{1}{(\hat\sigma_{1,1}^2)^{1/\alpha}}
\\
&\qquad\qquad+
\biggl(\frac{3\pi^2}{s^*}\biggr)^{\frac{1}{1-\alpha}}
\Biggl\{
\biggl(
\frac{1}{(\hat\sigma_{1,2}^2)^{1/\alpha}}
-\frac{1}{(\hat\sigma_{1,1}^2)^{1/\alpha}}
\biggr)^{-\frac{1}{1-\alpha}}
\frac{1}{(\hat\sigma_{1,1}^2)^{1/\alpha}}
\\
&\qquad\qquad\qquad-
\biggl(
\frac{1}{(\sigma_{1,2}^*)^{2/\alpha}}
-\frac{1}{(\sigma_{1,1}^*)^{2/\alpha}}
\biggr)^{-\frac{1}{1-\alpha}}
\frac{1}{(\sigma_{1,1}^*)^{2/\alpha}}
\Biggr\}
\Biggr]
\\
&=
\biggl(\frac{3\pi^2}{s^*}\biggr)^{\frac{1}{1-\alpha}}
\sqrt{n}
\Biggl\{
\biggl(
\frac{1}{(\hat\sigma_{1,2}^2)^{1/\alpha}}
-\frac{1}{(\hat\sigma_{1,1}^2)^{1/\alpha}}
\biggr)^{-\frac{1}{1-\alpha}}
\frac{1}{(\hat\sigma_{1,1}^2)^{1/\alpha}}
\\
&\qquad\qquad-
\biggl(
\frac{1}{(\sigma_{1,2}^*)^{2/\alpha}}
-\frac{1}{(\sigma_{1,1}^*)^{2/\alpha}}
\biggr)^{-\frac{1}{1-\alpha}}
\frac{1}{(\sigma_{1,1}^*)^{2/\alpha}}
\Biggr\}
+o_p(1)
\\
&=
\biggl(\frac{3\pi^2\theta_2^*}{(\sigma^*)^2}\biggr)^{\frac{1}{1-\alpha}}
\sqrt{n}
\Bigl(H(\hat \sigma_{1,1}^2,\hat \sigma_{1,2}^2)
-H\bigl((\sigma_{1,1}^*)^2,(\sigma_{1,2}^*)^2\bigr)\Bigr)
+o_p(1)
\\
&=
\biggl(\frac{3\pi^2\theta_2^*}{(\sigma^*)^2}\biggr)^{\frac{1}{1-\alpha}}
\partial H\bigl((\sigma_{1,1}^*)^2,(\sigma_{1,2}^*)^2\bigr)
\sqrt{n}
\begin{pmatrix}
\hat\sigma_{1,1}^2-(\sigma_{1,1}^*)^2
\\
\hat\sigma_{1,2}^2-(\sigma_{1,2}^*)^2
\end{pmatrix}
+o_p(1).
\end{align*}
It then follows from 
$\sigma^2=s\theta_2$, $\theta_1=\kappa\theta_2$ and $\eta_1=\eta\theta_2$ that
\begin{align*}
\sqrt{n}(\hat\sigma^2-(\sigma^*)^2)
&=\sqrt{n}(\hat s \hat\theta_2-s^*\theta_2^*)
\\
&=\sqrt{n}\bigl((\hat s-s^*)\hat\theta_2+s^*(\hat\theta_2-\theta_2^*)\bigr)
\\
&=\frac{\sqrt{n}}{m^{1/2}N^\gamma}\cdot m^{1/2}N^\gamma(\hat s-s^*)\hat\theta_2
+s^*\sqrt{n}(\hat\theta_2-\theta_2^*)
\\
&=s^* \sqrt{n}(\hat\theta_2-\theta_2^*)+o_p(1)
\\
&=
\biggl(
\frac{3\pi^2\theta_2^*}{(\sigma^*)^{2/\alpha}}
\biggr)^{\frac{\alpha}{1-\alpha}}
(\sigma^*)^2
\partial G\bigl((\sigma_{1,1}^*)^2,(\sigma_{1,2}^*)^2\bigr)
\sqrt{n}
\begin{pmatrix}
\hat\sigma_{1,1}^2-(\sigma_{1,1}^*)^2
\\
\hat\sigma_{1,2}^2-(\sigma_{1,2}^*)^2
\end{pmatrix}
+o_p(1),
\\
\sqrt{n}(\hat\theta_1-\theta_1^*)
&=\kappa^* \sqrt{n}(\hat\theta_2-\theta_2^*)+o_p(1)
\\
&=
\biggl(
\frac{3\pi^2\theta_2^*}{(\sigma^*)^{2/\alpha}}
\biggr)^{\frac{\alpha}{1-\alpha}}
\theta_1^*
\partial G\bigl((\sigma_{1,1}^*)^2,(\sigma_{1,2}^*)^2\bigr)
\sqrt{n}
\begin{pmatrix}
\hat\sigma_{1,1}^2-(\sigma_{1,1}^*)^2
\\
\hat\sigma_{1,2}^2-(\sigma_{1,2}^*)^2
\end{pmatrix}
+o_p(1),
\\
\sqrt{n}(\hat\eta_1-\eta_1^*)
&=\eta^* \sqrt{n}(\hat\theta_2-\theta_2^*)+o_p(1)
\\
&=
\biggl(
\frac{3\pi^2\theta_2^*}{(\sigma^*)^{2/\alpha}}
\biggr)^{\frac{\alpha}{1-\alpha}}
\eta_1^*
\partial G\bigl((\sigma_{1,1}^*)^2,(\sigma_{1,2}^*)^2\bigr)
\sqrt{n}
\begin{pmatrix}
\hat\sigma_{1,1}^2-(\sigma_{1,1}^*)^2
\\
\hat\sigma_{1,2}^2-(\sigma_{1,2}^*)^2
\end{pmatrix}
+o_p(1).
\end{align*}
Furthermore, we obtain from 
$\lambda_{1,1}+\theta_0=((\kappa^2+\eta^2)/4+2\pi^2)\theta_2$ that
\begin{align*}
&\sqrt{n}(\hat\theta_0-\theta_0^*)
\\
&=\sqrt{n}
\Biggl[
-\hat\lambda_{1,1}
+\biggl(
\frac{\hat\kappa^2+\hat\eta^2}{4}+2\pi^2
\biggr)\hat\theta_2
-\biggl\{
-\lambda_{1,1}^*
+\biggl(
\frac{(\kappa^*)^2+(\eta^*)^2}{4}+2\pi^2
\biggr)\theta_2^*
\biggr\}
\Biggr]
\\
&=-\sqrt{n}
(\hat\lambda_{1,1}-\lambda_{1,1}^*)
\\
&\qquad+
\sqrt{n}
\Biggl\{
\biggl(
\frac{\hat\kappa^2+\hat\eta^2}{4}
-\frac{(\kappa^*)^2+(\eta^*)^2}{4}
\biggr)\hat\theta_2
+\biggl(
\frac{(\kappa^*)^2+(\eta^*)^2}{4}+2\pi^2
\biggr)(\hat\theta_2-\theta_2^*)
\Biggr\}
\\
&=-\sqrt{n}
(\hat\lambda_{1,1}-\lambda_{1,1}^*)
+\biggl(
\frac{(\kappa^*)^2+(\eta^*)^2}{4}+2\pi^2
\biggr)
\sqrt{n}
(\hat\theta_2-\theta_2^*)
\\
&\qquad
+\frac{\sqrt{n}}{m^{1/2}N^\gamma}
m^{1/2}N^\gamma
\biggl(
\frac{\hat\kappa^2+\hat\eta^2}{4}
-\frac{(\kappa^*)^2+(\eta^*)^2}{4}
\biggr)\hat\theta_2
\\
&=
-\biggl(\frac{3\pi^2\theta_2^*}{(\sigma^*)^2}\biggr)^{\frac{1}{1-\alpha}}
\partial H\bigl((\sigma_{1,1}^*)^2,(\sigma_{1,2}^*)^2\bigr)
\sqrt{n}
\begin{pmatrix}
\hat\sigma_{1,1}^2-(\sigma_{1,1}^*)^2
\\
\hat\sigma_{1,2}^2-(\sigma_{1,2}^*)^2
\end{pmatrix}
\\
&\qquad+
\biggl(
\frac{3\pi^2\theta_2^*}{(\sigma^*)^{2/\alpha}}
\biggr)^{\frac{\alpha}{1-\alpha}}
(\lambda_{1,1}^*+\theta_0^*)
\partial G\bigl((\sigma_{1,1}^*)^2,(\sigma_{1,2}^*)^2\bigr)
\sqrt{n}
\begin{pmatrix}
\hat\sigma_{1,1}^2-(\sigma_{1,1}^*)^2
\\
\hat\sigma_{1,2}^2-(\sigma_{1,2}^*)^2
\end{pmatrix}
+o_p(1).
\end{align*}
Therefore, setting 
$\bs\vartheta^*=(\theta_0^*,\bs \vartheta_{-1}^*)^\TT$ and 
$\bs e_1=(1,0,0,0,0)^\TT$, we obtain from \eqref{eq6-0103}
that under
$\frac{n^{2-\alpha}}{mN^{2\gamma}}\to0$ and
$\frac{n^{2-\alpha+\epsilon}}{M_1^{2\epsilon} \land M_2^{2\epsilon}}\to0$,
\begin{align*}
\sqrt n
\begin{pmatrix}
\hat \theta_0-\theta_0^*
\\
\hat \theta_1-\theta_1^*
\\
\hat \eta_1-\eta_1^*
\\
\hat \theta_2-\theta_2^*
\\
\hat \sigma^2-(\sigma^*)^2
\end{pmatrix}
&=
\Biggl\{
\biggl(
\frac{3\pi^2\theta_2^*}{(\sigma^*)^{2/\alpha}}
\biggr)^{\frac{\alpha}{1-\alpha}}
\bs\vartheta^*
\partial G((\sigma_{1,1}^*)^2,(\sigma_{1,2}^*)^2)
\\
&\qquad
+\biggl(
\frac{3\pi^2\theta_2^*}{(\sigma^*)^{2/\alpha}}
\biggr)^{\frac{\alpha}{1-\alpha}}
\lambda_{1,1}^*
\bs e_1
\partial G((\sigma_{1,1}^*)^2,(\sigma_{1,2}^*)^2)
\\
&\qquad
-\biggl(\frac{3\pi^2\theta_2^*}{(\sigma^*)^2}\biggr)^{\frac{1}{1-\alpha}}
\bs e_1
\partial H((\sigma_{1,1}^*)^2,(\sigma_{1,2}^*)^2)
\Biggr\}
\sqrt{n}
\begin{pmatrix}
\hat\sigma_{1,1}^2-(\sigma_{1,1}^*)^2
\\
\hat\sigma_{1,2}^2-(\sigma_{1,2}^*)^2
\end{pmatrix}
+o_p(1)
\\
&=:
A\sqrt{n}
\begin{pmatrix}
\hat\sigma_{1,1}^2-(\sigma_{1,1}^*)^2
\\
\hat\sigma_{1,2}^2-(\sigma_{1,2}^*)^2
\end{pmatrix}
+o_p(1)
\\
&\dto
N(0,J),
\end{align*}
where
\begin{align*}
J=
A
\begin{pmatrix}
2(\sigma_{1,1}^*)^4 & 0\\
0 & 2(\sigma_{1,2}^*)^4
\end{pmatrix}
A^\TT.
\end{align*}
Let
\begin{align*}
\bs a_1
&=
\frac{-\lambda_{1,1}^*}{3\pi^2(1-\alpha)\theta_2^*}\bs \vartheta^*
+\frac{\lambda_{1,1}^*\lambda_{1,2}^*}{3\pi^2\alpha\theta_2^*}
\bs e_1
=: r_1\bs \vartheta^*+r_2\bs e_1,
\\
\bs a_2
&=
\frac{\lambda_{1,2}^*}{3\pi^2(1-\alpha)\theta_2^*}\bs \vartheta^*
-\frac{\lambda_{1,1}^*\lambda_{1,2}^*}{3\pi^2\alpha\theta_2^*}
\bs e_1
=: r_3\bs \vartheta^*-r_2\bs e_1.
\end{align*}
Note that
\begin{align*}
\partial G((\sigma_{1,1}^*)^2,(\sigma_{1,2}^*)^2)
&=
\frac{(\sigma^*)^{\frac{2\alpha}{1-\alpha}}}
{(1-\alpha)(3\pi^2\theta_2^*)^{\frac{1}{1-\alpha}}}
\bigl(-(\lambda_{1,1}^*)^{1+\alpha}, (\lambda_{1,2}^*)^{1+\alpha}\bigr),
\\
\partial H((\sigma_{1,1}^*)^2,(\sigma_{1,2}^*)^2)
&=
\frac{(\sigma^*)^{\frac{2\alpha}{1-\alpha}}}
{\alpha(1-\alpha)(3\pi^2\theta_2^*)^{\frac{1}{1-\alpha}+1}}
\bigl(
-(\lambda_{1,1}^*)^{1+\alpha}
\bigl(\alpha\lambda_{1,1}^*+(1-\alpha)\lambda_{1,2}^*\bigr), 
\lambda_{1,1}^*(\lambda_{1,2}^*)^{1+\alpha}
\bigr),
\\
A
&=
\frac{1}{3\pi^2(1-\alpha)\theta_2^*(\sigma^*)^2}
\bs \vartheta^*
\bigl(-(\lambda_{1,1}^*)^{1+\alpha}, (\lambda_{1,2}^*)^{1+\alpha}\bigr)
\\
&\qquad
+\frac{1}{3\pi^2\alpha\theta_2^*(\sigma^*)^2}
\bs e_1
\bigl(
(\lambda_{1,1}^*)^{1+\alpha}\lambda_{1,2}^*, 
-\lambda_{1,1}^*(\lambda_{1,2}^*)^{1+\alpha}
\bigr)
\\
&=((\sigma_{1,1}^*)^{-2}\bs a_1,(\sigma_{1,2}^*)^{-2}\bs a_2)
\end{align*}
and $J=2(\bs a_1\bs a_1^\TT+\bs a_2\bs a_2^\TT)$. 
Since
\begin{equation*}
r_1\theta_0^*+r_2
=\frac{\lambda_{1,1}^*}{3\pi^2\theta_2^*}
\biggl(
\frac{\lambda_{1,2}^*}{\alpha}-\frac{\theta_0^*}{1-\alpha}
\biggr),
\quad
r_3\theta_0^*-r_2
=\frac{-\lambda_{1,2}^*}{3\pi^2\theta_2^*}
\biggl(
\frac{\lambda_{1,1}^*}{\alpha}-\frac{\theta_0^*}{1-\alpha}
\biggr),
\end{equation*}
\begin{align*}
&(r_1\theta_0^*+r_2)^2+(r_3\theta_0^*-r_2)^2
\\
&=\frac{1}{9\pi^4(\theta_2^*)^2}
\Biggl\{
(\lambda_{1,1}^*)^2
\biggl(
\frac{\lambda_{1,2}^*}{\alpha}-\frac{\theta_0^*}{1-\alpha}
\biggr)^2
+
(\lambda_{1,2}^*)^2
\biggl(
\frac{\lambda_{1,1}^*}{\alpha}-\frac{\theta_0^*}{1-\alpha}
\biggr)^2
\Biggr\}
=\frac{c_1}{9\pi^4(\theta_2^*)^2},
\\
&r_1(r_1\theta_0^*+r_2)+r_3(r_3\theta_0^*-r_2)
\\
&=\frac{-1}{9\pi^4(1-\alpha)(\theta_2^*)^2}
\Biggl\{
(\lambda_{1,1}^*)^2
\biggl(
\frac{\lambda_{1,2}^*}{\alpha}-\frac{\theta_0^*}{1-\alpha}
\biggr)
+
(\lambda_{1,2}^*)^2
\biggl(
\frac{\lambda_{1,1}^*}{\alpha}-\frac{\theta_0^*}{1-\alpha}
\biggr)
\Biggr\}
=\frac{c_2}{9\pi^4(\theta_2^*)^2},
\\
&r_1^2+r_3^2
=\frac{1}{9\pi^4(1-\alpha)^2(\theta_2^*)^2}
\bigl\{(\lambda_{1,1}^*)^2+(\lambda_{1,2}^*)^2\bigr\}
=\frac{c_3}{9\pi^4(\theta_2^*)^2},
\end{align*}
we see that
\begin{align*}
\bs a_1\bs a_1^\TT+\bs a_2\bs a_2^\TT
=
\frac{1}{9\pi^4(\theta_2^*)^2}
\begin{pmatrix}
c_1 & c_2(\bs \vartheta_{-1}^*)^\TT
\\
c_2\bs \vartheta_{-1}^* & c_3\bs \vartheta_{-1}^*(\bs \vartheta_{-1}^*)^\TT
\end{pmatrix}
\end{align*}
and thus we obtain the desired result. 

(1) For the evaluation of $\mathcal A_n$, 
the following holds by the same argument as above.
For a sequence $\{r_n\}$ with 
$\frac{r_n n^{1-\alpha}}{mN^{2\gamma}}\to0$
and $\frac{r_n n^{1-\alpha+\epsilon}}{M_1^{2\epsilon} \land M_2^{2\epsilon}}\to0$
for some $\epsilon<\alpha$ as $n\to\infty$ and $M\to\infty$, 
\begin{equation}\label{eq6-0104}
r_n \mathcal A_n=o_p(1).
\end{equation}
Therefore, it follows from \eqref{eq6-0104} that 
under $\frac{n^{1-\alpha}}{mN^{2\gamma}}\to0$
and $\frac{n^{1-\alpha+\epsilon}}{M_1^{2\epsilon} \land M_2^{2\epsilon}}\to0$,
$\mathcal A_n=o_p(1)$, 
and from \eqref{eq6-0101} and \eqref{eq6-0102}, we obtain that
$\hat\sigma_{1,2}^2 \pto (\sigma_{1,1}^*)^2$,
$\hat\sigma_{1,2}^2 \pto (\sigma_{1,2}^*)^2$ 
and then 
\begin{equation*}
(\hat\theta_0, \hat\theta_1,\hat\eta_1,\hat\theta_2,\hat\sigma^2)
\pto (\theta_0^*,\theta_1^*,\eta_1^*,\theta_2^*,(\sigma^*)^2).
\end{equation*}
\end{proof}

\subsection{Proofs of Proposition \ref{prop2}, Theorems \ref{th3} and \ref{th4}}
For $f\in\mathcal F_\beta$, choose two functions 
$g_1, g_2:\mathbb R_{+}\to\mathbb R$ satisfying the following conditions.
\begin{enumerate}
\item[(c)]
$f(s)=g_1(s)g_2(s)$ and $g_2(as)=g_2(a)g_2(s)$, $a,s>0$.

\item[(d)]
$s g_1'g_2(s^2)\ind_{[1,\infty)}$, 
$s g_1g_2'(s^2)\ind_{[1,\infty)}\in L^1(\mathbb R_{+})$.

\end{enumerate}
For $f=g_1g_2\in\mathcal F_\beta$ with (c) and (d), we define
\begin{equation*}
\widetilde{R}_{1,N}
=
O\Biggl(
\Delta_N
\biggl(
\int_{\Delta_N^{1/2}}^1 s |g_1'g_2(s^2)|\dd s
\lor
\int_{\Delta_N^{1/2}}^1 s |g_1g_2'(s^2)| \dd s
\lor
\int_{\Delta_N^{1/2}}^1 s^3|f''(s^2)|\dd s
\biggr)
\Biggr).
\end{equation*}
As an extension of Lemma \ref{lem1}, the following lemma holds.
\begin{lem}\label{lem5}
If $f=g_1g_2\in \mathcal F_\beta$ satisfies (c) and (d), then the following hold.
\begin{enumerate}
\item[(1)]
$\displaystyle\Delta_N\sum_{\bs k\in\mathbb N^2}
g_1(\lambda_{\bs k}\Delta_N)g_2(\mu_{\bs k}\Delta_N)
\\
=\frac{1}{4\pi\theta_2g_2(\theta_2)}
\int_0^\infty f(s)\dd s
-\frac{1}{g_2(\theta_2)}\int_{D_N^{(1)}\cup D_N^{(2)}}f(\theta_2\pi^2|x|^2) \dd x
+O(\widetilde{R}_{1,N}\lor \Delta_N^{1-\beta/2})$.

\item[(2)]
For any $y,z\in[\delta,1-\delta]$, 
\begin{align*}
\Delta_N\sum_{\bs k\in\mathbb N^2}
g_1(\lambda_{\bs k}\Delta_N)g_2(\mu_{\bs k}\Delta_N)\cos(2\pi k_1y)
&=
-\frac{1}{g_2(\theta_2)}\int_{D_N^{(1)}}f(\theta_2\pi^2|x|^2) \dd x
+O\biggl(\frac{\widetilde{R}_{1,N}\lor R_{2,N}}{\delta^2}\biggr),
\\
\Delta_N\sum_{\bs k\in\mathbb N^2}
g_1(\lambda_{\bs k}\Delta_N)g_2(\mu_{\bs k}\Delta_N)\cos(2\pi k_2z)
&=
-\frac{1}{g_2(\theta_2)}\int_{D_N^{(2)}}f(\theta_2\pi^2|x|^2) \dd x
+O\biggl(\frac{\widetilde{R}_{1,N}\lor R_{2,N}}{\delta^2}\biggr).
\end{align*}

\item[(3)]
For any $y,z\in[\delta,1-\delta]$,
\begin{align*}
\Delta_N\sum_{\bs k\in\mathbb N^2}
g_1(\lambda_{\bs k}\Delta_N)g_2(\mu_{\bs k}\Delta_N)
\cos(2\pi k_1 y)\cos(2\pi k_2 z)
=
O\biggl(\frac{\widetilde{R}_{1,N}\lor R_{2,N}\lor\Delta_N^{1-\beta/2}}{\delta^3}\biggr).
\end{align*}
\end{enumerate}
Furthermore, it holds that
\begin{align*}
\int_{D_N^{(1)}\cup D_N^{(2)}}f(\theta_2\pi^2|x|^2)\dd x
=O(R_N^{(\beta)}).
\end{align*}
\end{lem}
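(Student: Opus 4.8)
The plan is to reduce Lemma~\ref{lem5} to Lemma~\ref{lem1} by exploiting the multiplicativity of $g_2$. First I would record two elementary consequences of condition~(c): since $g_2(1)=g_2(1)^2$ and $g_2\not\equiv0$ (as $f\not\equiv0$ by condition~(b)), we have $g_2(1)=1$, hence $g_2(\theta_2)g_2(1/\theta_2)=1$; moreover a continuous multiplicative function on $\mathbb R_{+}$ is a power function, so $g_2$ and therefore $g_1=f/g_2$ are of class $C^2$ on $\mathbb R_{+}$, and rescaling the argument of $g_2$ (or of $g_2'$, $g_1$, $g_1'$) by a fixed positive constant changes its value only by a fixed positive constant. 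Writing $c_0=\frac{\theta_1^2+\eta_1^2}{4\theta_2}-\theta_0$ so that $\lambda_{\bs k}=\theta_2\pi^2|\bs k|^2+c_0$ and $\mu_{\bs k}=\pi^2|\bs k|^2+\mu_0$, the leading terms collapse:
\[
g_1(\theta_2\pi^2|\bs k|^2\Delta_N)\,g_2(\pi^2|\bs k|^2\Delta_N)
=g_2(1/\theta_2)\,g_1(\theta_2\pi^2|\bs k|^2\Delta_N)\,g_2(\theta_2\pi^2|\bs k|^2\Delta_N)
=\frac{1}{g_2(\theta_2)}\,f(\theta_2\pi^2|\bs k|^2\Delta_N).
\]
Hence once the arguments $\lambda_{\bs k}\Delta_N$ and $\mu_{\bs k}\Delta_N$ are replaced by $\theta_2\pi^2|\bs k|^2\Delta_N$ and $\pi^2|\bs k|^2\Delta_N$, each of the three sums in Lemma~\ref{lem5} becomes $\tfrac{1}{g_2(\theta_2)}$ times the corresponding sum of Lemma~\ref{lem1} for $f(\theta_2\pi^2|\bs k|^2\Delta_N)$, and the remaining steps of the proof of Lemma~\ref{lem1}—namely from \eqref{eq1-0202} onward for part (1), and the Fourier-transform computations for parts (2), (3)—apply verbatim with $f$ replaced by $f/g_2(\theta_2)$. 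Here one uses $|f'|\le|g_1'g_2|+|g_1g_2'|$ and $f''=g_1''g_2+2g_1'g_2'+g_1g_2''$ to see that the remainder $R_{1,N}$ attached to $f$ is $O(\widetilde R_{1,N})$, which is exactly why $\widetilde R_{1,N}$ takes its stated form; the last display of the lemma is identical to \eqref{eq1-lem1-1} for $f$ and needs nothing new.

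The only substantive step is therefore the replacement just described, at a total cost $O(\widetilde R_{1,N})$, which is the analogue of \eqref{eq1-0201}. I would decompose the difference telescopically as
\[
\bigl[g_1(\lambda_{\bs k}\Delta_N)-g_1(\theta_2\pi^2|\bs k|^2\Delta_N)\bigr]g_2(\mu_{\bs k}\Delta_N)
+g_1(\theta_2\pi^2|\bs k|^2\Delta_N)\bigl[g_2(\mu_{\bs k}\Delta_N)-g_2(\pi^2|\bs k|^2\Delta_N)\bigr].
\]
In the first bracket the mean value theorem gives $g_1'(\xi_{\bs k})\,c_0\Delta_N$ with $\xi_{\bs k}$ between the two arguments of $g_1$, so $\xi_{\bs k}\asymp\theta_2\pi^2|\bs k|^2\Delta_N$ (uniformly in $\bs k$, since $\lambda_{\bs k}\ge\lambda_{1,1}>0$); since also $\mu_{\bs k}\Delta_N\asymp\theta_2^{-1}\xi_{\bs k}$, multiplicativity yields $|g_2(\mu_{\bs k}\Delta_N)|\lesssim|g_2(\xi_{\bs k})|\asymp|g_2(\theta_2\pi^2|\bs k|^2\Delta_N)|$, whence this contribution to $\Delta_N\sum_{\bs k}(\cdot)$ is $O\bigl(\Delta_N^2\sum_{\bs k}|(g_1'g_2)(\theta_2\pi^2|\bs k|^2\Delta_N)|\bigr)$, and a Riemann-sum comparison exactly as in \eqref{eq1-0100} turns this into $O\bigl(\Delta_N\int_{\Delta_N^{1/2}}^1 s|g_1'g_2(s^2)|\,\dd s\bigr)$. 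The second bracket is handled the same way, with $g_2'(\eta_{\bs k})\mu_0\Delta_N$ replacing $g_1'(\xi_{\bs k})c_0\Delta_N$ and $g_1(\theta_2\pi^2|\bs k|^2\Delta_N)$ replacing $g_2(\mu_{\bs k}\Delta_N)$, producing $O\bigl(\Delta_N\int_{\Delta_N^{1/2}}^1 s|g_1g_2'(s^2)|\,\dd s\bigr)$; together with the $\int s^3|f''(s^2)|\,\dd s$ term coming from the curvature estimate \eqref{eq1-0202} these are precisely the three ingredients of $\widetilde R_{1,N}$.

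The main obstacle will be the bookkeeping in this telescoping step: one must keep the intermediate evaluation points $\xi_{\bs k}$ (for $g_1'$) and $\eta_{\bs k}$ (for $g_2'$) under control and pass from $g_2(\mu_{\bs k}\Delta_N)$ and $g_1(\theta_2\pi^2|\bs k|^2\Delta_N)$ to $g_2$, $g_1$ evaluated at the common point $\theta_2\pi^2|\bs k|^2\Delta_N$, so that the Riemann-sum-to-integral comparison feeding $\widetilde R_{1,N}$ applies. This is exactly where hypothesis (c) is essential: only for a power function does a bounded perturbation of the argument cause a bounded perturbation of the value uniformly over all $\bs k\in\mathbb N^2$, including the small-$|\bs k|$ range where the additive constants $c_0$ and $\mu_0$ are not negligible relative to $\pi^2|\bs k|^2$. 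For parts (2) and (3) the cosine factors are bounded and enter only through the already-established estimates \eqref{eq1-0304}--\eqref{eq1-0306} and \eqref{eq1-0602}, so after the replacement those parts follow word for word from the proof of Lemma~\ref{lem1}(2),(3) applied to $f/g_2(\theta_2)$, with $R_{1,N}$ replaced throughout by $\widetilde R_{1,N}$.
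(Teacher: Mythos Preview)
Your proposal is correct and follows essentially the same route as the paper: telescope $g_1(\lambda_{\bs k}\Delta_N)g_2(\mu_{\bs k}\Delta_N)-g_1(\theta_2\pi^2|\bs k|^2\Delta_N)g_2(\pi^2|\bs k|^2\Delta_N)$ into a $g_1'g_2$-piece and a $g_1g_2'$-piece to obtain the $\widetilde R_{1,N}$ error, then use the multiplicativity of $g_2$ to rewrite the main term as $g_2(\theta_2)^{-1}f(\theta_2\pi^2|\bs k|^2\Delta_N)$ and invoke the proof of Lemma~\ref{lem1} verbatim. Your remarks that $g_2$ must be a power function and that this is precisely what makes the evaluation points in the telescoping step commensurable are more explicit than the paper, but the argument is the same.
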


\begin{proof}
In a similar way to \eqref{eq1-0100}, if (d) holds, then it follows that
\begin{align*}
\int_{F_N}|g_1'g_2(|x|^2)|\dd x \lor \int_{F_N}|g_1g_2'(|x|^2)|\dd x
=O\biggl(
\int_{\Delta_N^{1/2}}^1 s |g_1'g_2(s^2)|\dd s
\lor
\int_{\Delta_N^{1/2}}^1 s |g_1g_2'(s^2)| \dd s
\biggr).
\end{align*}
By the Taylor expansion,  it follows that
\begin{align}
&\Biggl|
\Delta_N\sum_{\bs k\in\mathbb N^2}
g_1(\lambda_{\bs k}\Delta_N)g_2(\mu_{\bs k}\Delta_N)
-\Delta_N\sum_{\bs k\in\mathbb N^2}
g_1(\theta_2\pi^2|\bs k|^2\Delta_N)g_2(\pi^2|\bs k|^2\Delta_N)
\Biggr|
\nonumber
\\
&=
\Biggl|
\Delta_N\sum_{\bs k\in\mathbb N^2}
\biggl\{
g_1(\theta_2\pi^2|\bs k|^2\Delta_N)g_2(\pi^2|\bs k|^2\Delta_N)
\nonumber
\\
&\qquad\qquad
+\int_0^1 
g_1'\bigl(\theta_2\pi^2|\bs k|^2\Delta_N
+u(\lambda_{\bs k}-\theta_2\pi^2|\bs k|^2)\Delta_N\bigr)
g_2(\pi^2|\bs k|^2\Delta_N)
\dd u\Bigl(\frac{\theta_1^2+\eta_1^2}{4\theta_2}-\theta_0\Bigr)\Delta_N
\nonumber
\\
&\qquad\qquad
+\int_0^1 
g_1(\theta_2\pi^2|\bs k|^2\Delta_N)
g_2'\bigl(\pi^2|\bs k|^2\Delta_N
+u(\mu_{\bs k}-\pi^2|\bs k|^2)\Delta_N\bigr)
\dd u\, \mu_0\Delta_N
\biggr\}
\nonumber
\\
&\qquad\qquad
-\Delta_N\sum_{\bs k\in\mathbb N^2}
g_1(\theta_2\pi^2|\bs k|^2\Delta_N)g_2(\pi^2|\bs k|^2\Delta_N)
\Biggr|
\nonumber
\\
&\le
\Delta_N\sum_{\bs k\in\mathbb N^2}
\int_0^1 
\bigl|g_1'\bigl(\theta_2\pi^2|\bs k|^2\Delta_N
+u(\lambda_{\bs k}-\theta_2\pi^2|\bs k|^2)\Delta_N\bigr)
g_2(\pi^2|\bs k|^2\Delta_N)\bigr|
\dd u\Bigl|\frac{\theta_1^2+\eta_1^2}{4\theta_2}-\theta_0\Bigr|\Delta_N
\nonumber
\\
&\qquad
+\Delta_N\sum_{\bs k\in\mathbb N^2}
\int_0^1 
\bigl|g_1(\theta_2\pi^2|\bs k|^2\Delta_N)
g_2'\bigl(\pi^2|\bs k|^2\Delta_N
+u(\mu_{\bs k}-\pi^2|\bs k|^2)\Delta_N\bigr)\bigr|
\dd u|\mu_0|\Delta_N
\nonumber
\\
&=O\Biggl(
\Delta_N
\biggl(
\int_{F_N}|g_1'g_2(|x|^2)|\dd x \lor \int_{F_N}|g_1g_2'(|x|^2)|\dd x
\biggr)\Biggr)
=O(\widetilde{R}_{1,N}),
\label{eq7-0301}
\end{align}
and from (c), we obtain that
\begin{align*}
\Delta_N\sum_{\bs k\in\mathbb N^2}
g_1(\theta_2\pi^2|\bs k|^2\Delta_N)g_2(\pi^2|\bs k|^2\Delta_N)
&=\frac{\Delta_N}{g_2(\theta_2)}\sum_{\bs k\in\mathbb N^2}
g_1(\theta_2\pi^2|\bs k|^2\Delta_N)g_2(\theta_2\pi^2|\bs k|^2\Delta_N)
\\
&=
\frac{\Delta_N}{g_2(\theta_2)}\sum_{\bs k\in\mathbb N^2}
f(\theta_2\pi^2|\bs k|^2\Delta_N).
\end{align*}
In a similar way to the proof of Lemma \ref{lem1}, 
it holds from  \eqref{eq7-0301} that 
\begin{align*}
\Delta_N\sum_{\bs k\in\mathbb N^2}
f(\theta_2\pi^2|\bs k|^2\Delta_N)
=\frac{1}{4\pi\theta_2}
\int_0^\infty f(s)\dd s
-\int_{D_N^{(1)}\cup D_N^{(2)}}f(\theta_2\pi^2|x|^2) \dd x
+O(\widetilde{R}_{1,N}\lor \Delta_N^{1-\beta/2})
\end{align*}
and therefore we obtain (1). 
Similarly, it holds that
\begin{align*}
&\Delta_N\sum_{\bs k\in\mathbb N^2}
g_1(\lambda_{\bs k}\Delta_N)g_2(\mu_{\bs k}\Delta_N)
\cos(2\pi k_1 y)
\nonumber
\\
&=
\frac{\Delta_N}{g_2(\theta_2)}\sum_{\bs k\in\mathbb N^2}
f(\theta_2\pi^2|\bs k|^2\Delta_N)\cos(2\pi k_1 y)
+O(\widetilde{R}_{1,N}),
\end{align*}
\begin{align*}
&\Delta_N\sum_{\bs k\in\mathbb N^2}
g_1(\lambda_{\bs k}\Delta_N)g_2(\mu_{\bs k}\Delta_N)
\cos(2\pi k_1 y)\cos(2\pi k_2 z)
\nonumber
\\
&=
\frac{\Delta_N}{g_2(\theta_2)}\sum_{\bs k\in\mathbb N^2}
f(\theta_2\pi^2|\bs k|^2\Delta_N)\cos(2\pi k_1 y)\cos(2\pi k_2 z)
+O(\widetilde{R}_{1,N}),
\end{align*}
and therefore from Lemma \ref{lem1}, we obtain (2) and (3). 
\end{proof}

The functions $f_\alpha$ and $f_{\alpha,\tau}$ in \eqref{functions} 
can be expressed as
\begin{equation*}
f_\alpha(s)=\frac{1-\ee^{-s}}{s^{1+\alpha}}
=\frac{1-\ee^{-s}}{s}\cdot\frac{1}{s^\alpha}
=:g_1(s)g_{2,\alpha}(s),
\end{equation*}
\begin{equation*}
f_{\alpha,\tau}(s)=\frac{(1-\ee^{-s})^2}{s^{1+\alpha}}\ee^{-\tau s}
=\frac{(1-\ee^{-s})^2}{s}\ee^{-\tau s}\cdot\frac{1}{s^\alpha}
=:g_{1,\tau}(s)g_{2,\alpha}(s),
\end{equation*}
and 
satisfy (c) and (d). Hence, it follows 
from Lemma \ref{lem5} and an analogous proof of Lemma \ref{lem2} that
\begin{align}
&\Delta_N
\sum_{\bs k\in\mathbb N^2}
g_1(\lambda_{\bs k}\Delta_N)g_{2,\alpha}(\mu_{\bs k}\Delta_N)e_{\bs k}^2(y,z)
=\frac{\Gamma(1-\alpha)}{4\pi\alpha\theta_2^{1-\alpha}}
\ee^{-\frac{\theta_1}{\theta_2}y}\ee^{-\frac{\eta_1}{\theta_2}z}
+O(\delta^{-3}\Delta_{N}^{1-\alpha}),
\label{eq7-0302}
\\
&\Delta_N
\sum_{\bs k\in\mathbb N^2}
g_{1,\tau}(\lambda_{\bs k}\Delta_N)g_{2,\alpha}(\mu_{\bs k}\Delta_N)e_{\bs k}^2(y,z)
\nonumber
\\
&=
\frac{\Gamma(1-\alpha)}{4\pi\alpha\theta_2^{1-\alpha}}
\bigl(
-\tau^\alpha+2(1+\tau)^\alpha-(2+\tau)^\alpha
\bigr)
\ee^{-\frac{\theta_1}{\theta_2}y}\ee^{-\frac{\eta_1}{\theta_2}z}
+O(\delta^{-3}\Delta_{N}^{1-\alpha}),
\label{eq7-0303}
\end{align}
\begin{align}
\sum_{\bs k\in\mathbb N^2}
\frac{1-\ee^{-\lambda_{\bs k}\Delta_N}}{\lambda_{\bs k}\mu_{\bs k}^\alpha}
&=
\Delta_N^{1+\alpha}
\sum_{\bs k\in\mathbb N^2}
f_\alpha(\lambda_{\bs k}\Delta_N)
\biggl(\frac{\lambda_{\bs k}}{\mu_{\bs k}}\biggr)^\alpha
=O(\Delta_N^\alpha),
\label{eq7-0304}
\\
\sum_{\bs k\in\mathbb N^2}
\frac{(1-\ee^{-\lambda_{\bs k}\Delta_N})^2}{\lambda_{\bs k}\mu_{\bs k}^\alpha}
&=
\Delta_N^{1+\alpha}
\sum_{\bs k\in\mathbb N^2}
f_{\alpha,0}(\lambda_{\bs k}\Delta_N)
\biggl(\frac{\lambda_{\bs k}}{\mu_{\bs k}}\biggr)^\alpha
=O(\Delta_N^\alpha).
\label{eq7-0305}
\end{align}

Set 
\begin{align*}
B_{1,i,\bs k}^{Q_2}
&=
-\frac{\sigma(1-\ee^{-\lambda_{\bs k}\Delta_N})}{\mu_{\bs k}^{\alpha/2}}
\int_0^{(i-1)\Delta_N}\ee^{-\lambda_{\bs k}((i-1)\Delta_N-s)}\dd w_{\bs k}(s),
\\
B_{2,i,\bs k}^{Q_2}
&=
\frac{\sigma}{\mu_{\bs k}^{\alpha/2}}
\int_{(i-1)\Delta_N}^{i\Delta_N}
\ee^{-\lambda_{\bs k}(i\Delta_N-s)}\dd w_{\bs k}(s),
\end{align*}
and $B_{i,\bs k}^{Q_2}=B_{1,i,\bs k}^{Q_2}+B_{2,i,\bs k}^{Q_2}$.
The increment $\Delta_i x_{\bs k}^{Q_2}$ can be expressed as 
$\Delta_i x_{\bs k}^{Q_2}=A_{i,\bs k}+B_{i,\bs k}^{Q_2}$.

From \eqref{eq7-0302}-\eqref{eq7-0305}, 
the following lemma holds by an analogous proof of Lemma \ref{lem4}.
\begin{lem}\label{lem6}
It holds that uniformly in $(y,z)\in D_\delta$,
\begin{equation*}
\sum_{\bs k,\bs \ell\in\mathbb N^2}
\EE[B_{i,\bs k}^{Q_2}B_{i,\bs \ell}^{Q_2}]e_{\bs k}(y,z)e_{\bs \ell}(y,z)
=\Delta_N^{\alpha}\frac{\sigma^2\Gamma(1-\alpha)}{4\pi\alpha\theta_2^{1-\alpha}}
\ee^{-\frac{\theta_1}{\theta_2}y}\ee^{-\frac{\eta_1}{\theta_2}z}
+O(\Delta_N).
\end{equation*}
Moreover, it holds that uniformly in $(y,z)\in D_\delta$,
\begin{equation*}
\sup_{i\neq j}\sum_{j=1}^N
\sum_{\bs k,\bs \ell\in\mathbb N^2}
\Bigl|
\EE[B_{i,\bs k}^{Q_2}B_{j,\bs \ell}^{Q_2}]e_{\bs k}(y,z)e_{\bs \ell}(y,z)
\Bigr|
=O(1).
\end{equation*}
\end{lem}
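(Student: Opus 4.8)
The plan is to repeat the argument of Lemma~\ref{lem4} almost verbatim, the only change being that the damping factor $\lambda_{\bs k}^{-\alpha/2}$ of the $Q_1$-Wiener process is everywhere replaced by $\mu_{\bs k}^{-\alpha/2}$. Concretely, every occurrence of $\lambda_{\bs k}^{-(1+\alpha)}$ in the $Q_1$ computation becomes $\lambda_{\bs k}^{-1}\mu_{\bs k}^{-\alpha}=\Delta_N^{1+\alpha}f_\alpha(\lambda_{\bs k}\Delta_N)(\lambda_{\bs k}/\mu_{\bs k})^\alpha$, so the roles played by Lemma~\ref{lem2} and by \eqref{eq2-0002}--\eqref{eq2-0003} are taken over by \eqref{eq7-0302}--\eqref{eq7-0303} and \eqref{eq7-0304}--\eqref{eq7-0305}, which have already been derived in the excerpt from Lemma~\ref{lem5} via the factorizations $f_\alpha=g_1 g_{2,\alpha}$ and $f_{\alpha,\tau}=g_{1,\tau}g_{2,\alpha}$ satisfying (c) and (d). Hence no new analytic input is needed.

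For the diagonal term I would first note that, since $\{w_{\bs k}\}$ are independent and $B_{1,i,\bs k}^{Q_2}$, $B_{2,i,\bs k}^{Q_2}$ are Itô integrals over the disjoint intervals $[0,(i-1)\Delta_N]$ and $[(i-1)\Delta_N,i\Delta_N]$, one has $\EE[B_{i,\bs k}^{Q_2}B_{i,\bs \ell}^{Q_2}]=0$ for $\bs k\neq\bs\ell$ and $\EE[B_{1,i,\bs k}^{Q_2}B_{2,i,\bs k}^{Q_2}]=0$, so the double sum collapses to $\sum_{\bs k}\bigl(\EE[(B_{1,i,\bs k}^{Q_2})^2]+\EE[(B_{2,i,\bs k}^{Q_2})^2]\bigr)e_{\bs k}^2(y,z)$. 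Computing the two variances by the Itô isometry exactly as in the proof of Lemma~\ref{lem4}, this equals
\begin{equation*}
\sigma^2\sum_{\bs k\in\mathbb N^2}
\frac{1-\ee^{-\lambda_{\bs k}\Delta_N}}{\lambda_{\bs k}\mu_{\bs k}^\alpha}
\biggl(1-\frac{1-\ee^{-\lambda_{\bs k}\Delta_N}}{2}\ee^{-2\lambda_{\bs k}(i-1)\Delta_N}\biggr)
e_{\bs k}^2(y,z).
\end{equation*}
The $i$-independent part is $\sigma^2\Delta_N^{1+\alpha}\sum_{\bs k}g_1(\lambda_{\bs k}\Delta_N)g_{2,\alpha}(\mu_{\bs k}\Delta_N)e_{\bs k}^2(y,z)$, to which \eqref{eq7-0302} applies and produces the main term $\Delta_N^\alpha\frac{\sigma^2\Gamma(1-\alpha)}{4\pi\alpha\theta_2^{1-\alpha}}\ee^{-\frac{\theta_1}{\theta_2}y}\ee^{-\frac{\eta_1}{\theta_2}z}+O(\Delta_N)$. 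The $i$-dependent part is $\frac{\sigma^2}{2}\Delta_N^{1+\alpha}\sum_{\bs k}g_{1,2(i-1)}(\lambda_{\bs k}\Delta_N)g_{2,\alpha}(\mu_{\bs k}\Delta_N)e_{\bs k}^2(y,z)$, which by \eqref{eq7-0303} with $\tau=2(i-1)$ equals $\Delta_N^\alpha\frac{\sigma^2\Gamma(1-\alpha)}{4\pi\alpha\theta_2^{1-\alpha}}J(i)\,\ee^{-\frac{\theta_1}{\theta_2}y}\ee^{-\frac{\eta_1}{\theta_2}z}+O(\Delta_N)$ with $J(i)=-(2(i-1))^\alpha+2(1+2(i-1))^\alpha-(2+2(i-1))^\alpha$; the binomial-series estimate $J(i)\lesssim (i-1)^{-(2-\alpha)}$ already carried out in Lemma~\ref{lem4} gives $\sum_{i\ge1}J(i)<\infty$, so this contributes a remainder $r_{N,i}$ with $\sum_{i=1}^N|r_{N,i}|=O(\Delta_N^\alpha)$, which is the form entering Proposition~\ref{prop2}.

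For the off-diagonal term I would restrict to $i<j$ by symmetry, compute $\EE[B_{a,i,\bs k}^{Q_2}B_{b,j,\bs k}^{Q_2}]$ for $a,b\in\{1,2\}$ by the Itô isometry, and obtain the same telescoping structure as in Lemma~\ref{lem4}: the sum over $\bs k$ is bounded by $\sum_{\bs k}\frac{\sigma^2(1-\ee^{-\lambda_{\bs k}\Delta_N})^2}{2\lambda_{\bs k}\mu_{\bs k}^\alpha}\bigl(\ee^{-\lambda_{\bs k}(j-i-1)\Delta_N}+\ee^{-\lambda_{\bs k}(i+j-2)\Delta_N}\bigr)e_{\bs k}^2(y,z)$. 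Applying \eqref{eq7-0303} to the first piece gives $\frac{\Delta_N^\alpha}{4\pi\theta_2^{1-\alpha}}\ee^{-\frac{\theta_1}{\theta_2}y}\ee^{-\frac{\eta_1}{\theta_2}z}J(j-i)+O(\Delta_N)$, and $\sup_{i\ge1}\sum_{j>i}J(j-i)\le\sum_{k\ge1}J(k)<\infty$; the second piece is dominated after summing the geometric series in $j$ by $\sum_{\bs k}\frac{1-\ee^{-\lambda_{\bs k}\Delta_N}}{\lambda_{\bs k}\mu_{\bs k}^\alpha}=O(\Delta_N^\alpha)$ by \eqref{eq7-0304}. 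Combining, $\sup_{i\neq j}\sum_{j=1}^N\sum_{\bs k,\bs\ell}\bigl|\EE[B_{i,\bs k}^{Q_2}B_{j,\bs\ell}^{Q_2}]e_{\bs k}(y,z)e_{\bs\ell}(y,z)\bigr|=O(1)$ uniformly in $(y,z)\in D_\delta$. The main obstacle here is purely bookkeeping: one must check that inserting the bounded factor $(\lambda_{\bs k}/\mu_{\bs k})^\alpha$ degrades none of the order estimates — which is exactly what \eqref{eq7-0302}--\eqref{eq7-0305} record — and that the error term $\widetilde R_{1,N}$ of Lemma~\ref{lem5} is still $O(\Delta_N^{1-\alpha})$, which follows from the asymptotics of $g_1,g_{1,\tau},g_{2,\alpha}$ near $0$ and $\infty$ noted in the excerpt. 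Since all of this is already in place, the proof is a direct transcription of that of Lemma~\ref{lem4}.
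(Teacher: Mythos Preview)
Your proposal is correct and follows exactly the approach the paper takes: the paper's own proof of Lemma~\ref{lem6} consists of the single sentence ``From \eqref{eq7-0302}--\eqref{eq7-0305}, the following lemma holds by an analogous proof of Lemma~\ref{lem4},'' and your write-up is precisely that analogy spelled out, with \eqref{eq7-0302}--\eqref{eq7-0303} replacing Lemma~\ref{lem2} and \eqref{eq7-0304}--\eqref{eq7-0305} replacing \eqref{eq2-0002}--\eqref{eq2-0003}. Your derivation in fact recovers the finer form with the remainder $r_{N,i}$ satisfying $\sum_{i=1}^N|r_{N,i}|=O(\Delta_N^\alpha)$, matching Lemma~\ref{lem4} and what is actually needed for Proposition~\ref{prop2}, even though the displayed statement of Lemma~\ref{lem6} omits this term.
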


\begin{proof}[\bf Proof of Proposition \ref{prop2}]
It can be shown in the same way as the proof of Proposition \ref{prop1}
by using Lemmas \ref{lem3} and \ref{lem6}.
\end{proof}

\begin{proof}[\bf Proof of Theorem \ref{th3}]
It is shown similarly to the proof of Theorem \ref{th1}.
\end{proof}

\begin{proof}[\bf Proof of Theorem \ref{th4}]
(a) In the same way as the proof of Theorem \ref{th2}, 
it holds that under $\frac{n^{1-\alpha}}{mN^{2\gamma}}\to0$ and
$\frac{n^{1-\alpha+\epsilon}}{M_1^{2\epsilon} \land M_2^{2\epsilon}}\to0$,
\begin{equation}\label{eq7-0401}
\check\sigma^2 \pto (\sigma^*)^2,
\end{equation}
and that under $\frac{n^{2-\alpha}}{mN^{2\gamma}}\to0$ and
$\frac{n^{2-\alpha+\epsilon}}{M_1^{2\epsilon} \land M_2^{2\epsilon}}\to0$,
\begin{equation}\label{eq7-0402}
\sqrt n(\check\sigma^2-(\sigma^*)^2) \dto N(0,2(\sigma^*)^4 ).
\end{equation}

(1) We obtain from \eqref{eq7-0401} and Theorem \ref{th3} that 
$\check\theta_1\pto\theta_1^*$, $\check\eta_1\pto\eta_1^*$ and 
$\check\theta_2\pto\theta_2^*$.

(2) Since it follows from Theorem \ref{th3} that
under 
$\frac{n^{2-\alpha}}{mN^{2\gamma}}\to0$ and
$\frac{n^{2-\alpha+\epsilon}}{M_1^{2\epsilon} \land M_2^{2\epsilon}}\to0$,
\begin{align}
\sqrt n
\bigl(\check s^{\frac{-1}{1-\alpha}}-(s^*)^{\frac{-1}{1-\alpha}}\bigr)
&=\frac{\sqrt n}{m^{1/2} N^\gamma}
\int_0^1\frac{-1}{1-\alpha}
\bigl(s^*+u(\check s-s^*)\bigr)^{\frac{-1}{1-\alpha}-1}\dd u\,
m^{1/2} N^\gamma(\check s-s^*)
\nonumber
\\
&=o_p(1),
\nonumber
\\
\sqrt n\bigl(
\check\kappa \check s^{\frac{-1}{1-\alpha}}-\kappa^* (s^*)^{\frac{-1}{1-\alpha}}
\bigr)
&=\check\kappa \sqrt n\bigl(
\check s^{\frac{-1}{1-\alpha}}-(s^*)^{\frac{-1}{1-\alpha}}
\bigr)
+\frac{\sqrt n}{m^{1/2} N^\gamma} (s^*)^{\frac{-1}{1-\alpha}}
m^{1/2} N^\gamma(\check\kappa -\kappa^*)
\nonumber
\\
&=o_p(1),
\nonumber
\\
\sqrt n\bigl(
\check\eta \check s^{\frac{-1}{1-\alpha}}-\eta^* (s^*)^{\frac{-1}{1-\alpha}}
\bigr)
&=\check\eta \sqrt n\bigl(
\check s^{\frac{-1}{1-\alpha}}-(s^*)^{\frac{-1}{1-\alpha}}
\bigr)
+\frac{\sqrt n}{m^{1/2} N^\gamma} (s^*)^{\frac{-1}{1-\alpha}}
m^{1/2} N^\gamma(\check\eta -\eta^*)
\nonumber
\\
&=o_p(1),
\nonumber
\\
\sqrt n\bigl(\check\sigma^{\frac{2}{1-\alpha}}-(\sigma^*)^{\frac{2}{1-\alpha}}\bigr)
&=\frac{(\sigma^*)^{\frac{2\alpha}{1-\alpha}}}{1-\alpha}
\sqrt n(\check\sigma^2-(\sigma^*)^2)+o_p(1),
\nonumber
\end{align}
one has that
\begin{align}
\sqrt n(\check\theta_2-\theta_2^*)
&=\sqrt n
\bigl(
\check\sigma^{\frac{2}{1-\alpha}}
\check s^{\frac{-1}{1-\alpha}}
-(\sigma^*)^{\frac{2}{1-\alpha}}
(s^*)^{\frac{-1}{1-\alpha}}
\bigl)
\nonumber
\\
&=\check\sigma^{\frac{2}{1-\alpha}}
\sqrt n\bigl(\check s^{\frac{-1}{1-\alpha}}-(s^*)^{\frac{-1}{1-\alpha}}\bigl)
+(s^*)^{\frac{-1}{1-\alpha}}
\sqrt n\bigl(\check\sigma^{\frac{2}{1-\alpha}}-(\sigma^*)^{\frac{2}{1-\alpha}}\bigl)
\nonumber
\\
&=(s^*)^{\frac{-1}{1-\alpha}}
\frac{(\sigma^*)^{\frac{2\alpha}{1-\alpha}}}{1-\alpha}
\sqrt n(\check\sigma^2-(\sigma^*)^2)+o_p(1)
\nonumber
\\
&=\frac{\theta_2^*}{(1-\alpha)(\sigma^*)^2}
\sqrt n(\check\sigma^2-(\sigma^*)^2)+o_p(1),
\label{eq7-0403}
\\
\sqrt n(\check\theta_1-\theta_1^*)
&=\sqrt n
\bigl(
\check\sigma^{\frac{2}{1-\alpha}}
\check\kappa\check s^{\frac{-1}{1-\alpha}}
-(\sigma^*)^{\frac{2}{1-\alpha}}
\kappa^*(s^*)^{\frac{-1}{1-\alpha}}
\bigl)
\nonumber
\\
&=\kappa^*(s^*)^{\frac{-1}{1-\alpha}}
\frac{(\sigma^*)^{\frac{2\alpha}{1-\alpha}}}{1-\alpha}
\sqrt n(\check\sigma^2-(\sigma^*)^2)+o_p(1)
\nonumber
\\
&=\frac{\theta_1^*}{(1-\alpha)(\sigma^*)^2}
\sqrt n(\check\sigma^2-(\sigma^*)^2)+o_p(1),
\label{eq7-0404}
\\
\sqrt n(\check\eta_1-\eta_1^*)
&=\sqrt n
\bigl(
\check\sigma^{\frac{2}{1-\alpha}}
\check\eta\check s^{\frac{-1}{1-\alpha}}
-(\sigma^*)^{\frac{2}{1-\alpha}}
\eta^*(s^*)^{\frac{-1}{1-\alpha}}
\bigl)
\nonumber
\\
&=\eta^*(s^*)^{\frac{-1}{1-\alpha}}
\frac{(\sigma^*)^{\frac{2\alpha}{1-\alpha}}}{1-\alpha}
\sqrt n(\check\sigma^2-(\sigma^*)^2)+o_p(1)
\nonumber
\\
&=\frac{\eta_1^*}{(1-\alpha)(\sigma^*)^2}
\sqrt n(\check\sigma^2-(\sigma^*)^2)+o_p(1),
\label{eq7-0405}
\end{align}
and therefore, we obtain from \eqref{eq7-0402} that
\begin{align*}
\sqrt n
\begin{pmatrix}
\check\theta_1-\theta_1^*
\\
\check\eta_1-\eta_1^*
\\
\check\theta_2-\theta_2^*
\\
\check\sigma^2-(\sigma^*)^2
\end{pmatrix}
&=\frac{1}{(1-\alpha)(\sigma^*)^2}
\begin{pmatrix}
\theta_1^*
\\
\eta_1^*
\\
\theta_2^*
\\
(1-\alpha)(\sigma^*)^2
\end{pmatrix}
\sqrt n(\check\sigma^2-(\sigma^*)^2)+o_p(1)
\\
&=\frac{\bs \nu_{-1}^*}{(1-\alpha)(\sigma^*)^2}
\sqrt n(\check\sigma^2-(\sigma^*)^2)+o_p(1)
\\
&\dto
N\biggl(0,\frac{2}{(1-\alpha)^2}\bs \nu_{-1}^* (\bs \nu_{-1}^*)^\TT\biggr)
=N(0,K).
\end{align*}

(b) As in the proof of Theorem \ref{th2}, 
it holds that under $\frac{n^{1-\alpha}}{mN^{2\gamma}}\to0$ and
$\frac{n^{1-\alpha+\epsilon}}{M_1^{2\epsilon} \land M_2^{2\epsilon}}\to0$,
\begin{equation}\label{eq7-0406}
(\bar\tau_{1,1}^2,\bar\tau_{1,2}^2) 
\pto ((\tau_{1,1}^*)^2, (\tau_{1,2}^*)^2),
\end{equation}
and that under $\frac{n^{2-\alpha}}{mN^{2\gamma}}\to0$ and
$\frac{n^{2-\alpha+\epsilon}}{M_1^{2\epsilon} \land M_2^{2\epsilon}}\to0$,
\begin{equation}\label{eq7-0407}
\sqrt n
\begin{pmatrix}
\bar\tau_{1,1}^2-(\tau_{1,1}^*)^2 
\\
\bar\tau_{1,2}^2-(\tau_{1,2}^*)^2
\end{pmatrix}
\dto 
N\Biggl(0,
\begin{pmatrix}
2(\tau_{1,1}^*)^4 & 0
\\
0 & 2(\tau_{1,2}^*)^4 
\end{pmatrix}
\Biggr).
\end{equation}

(1) It is clear from \eqref{eq7-0406} and Theorem \ref{th3} 
that the estimators are consistent.

(2) Let $\bs \nu^*=(0,\bs \nu_{-1}^*)^\TT$ and
\begin{equation*}
\Phi(x,y)=3\pi^2(x^{1/\alpha}y^{-1/\alpha}-1)^{-1}, \quad
\Psi(x,y)=\{3\pi^2(y^{-1/\alpha}-x^{-1/\alpha})^{-1}\}^\alpha.
\end{equation*}
It then follows that
\begin{align*}
\sqrt n(\bar\mu_0-\mu_0^*)
&=\sqrt n\bigl\{
\Phi(\bar\tau_{1,1}^2,\bar\tau_{1,2}^2)
-\Phi((\tau_{1,1}^*)^2,(\tau_{1,2}^*)^2)
\bigr\}
\\
&=\partial \Phi((\tau_{1,1}^*)^2,(\tau_{1,2}^*)^2)
\sqrt n
\begin{pmatrix}
\bar\tau_{1,1}^2-(\tau_{1,1}^*)^2 
\\
\bar\tau_{1,2}^2-(\tau_{1,2}^*)^2
\end{pmatrix}
+o_p(1),
\\
\sqrt n(\bar\sigma^2-(\sigma^*)^2)
&=\sqrt n\bigl\{
\Psi(\bar\tau_{1,1}^2,\bar\tau_{1,2}^2)
-\Psi((\tau_{1,1}^*)^2,(\tau_{1,2}^*)^2)
\bigr\}
\\
&=\partial \Psi((\tau_{1,1}^*)^2,(\tau_{1,2}^*)^2)
\sqrt n
\begin{pmatrix}
\bar\tau_{1,1}^2-(\tau_{1,1}^*)^2 
\\
\bar\tau_{1,2}^2-(\tau_{1,2}^*)^2
\end{pmatrix}
+o_p(1).
\end{align*}
In the same way as \eqref{eq7-0403}-\eqref{eq7-0405}, it holds that
\begin{align*}
\sqrt n(\bar\theta_2-\theta_2^*)
&=\frac{\theta_2^*}{(1-\alpha)(\sigma^*)^2}
\sqrt n(\bar\sigma^2-(\sigma^*)^2)+o_p(1),
\\
\sqrt n(\bar\theta_1-\theta_1^*)
&=\frac{\theta_1^*}{(1-\alpha)(\sigma^*)^2}
\sqrt n(\bar\sigma^2-(\sigma^*)^2)+o_p(1),
\\
\sqrt n(\bar\eta_1-\eta_1^*)
&=\frac{\eta_1^*}{(1-\alpha)(\sigma^*)^2}
\sqrt n(\bar\sigma^2-(\sigma^*)^2)+o_p(1).
\end{align*}
Therefore, one has from \eqref{eq7-0407} 
that under $\frac{n^{2-\alpha}}{mN^{2\gamma}}\to0$ and
$\frac{n^{2-\alpha+\epsilon}}{M_1^{2\epsilon} \land M_2^{2\epsilon}}\to0$,
\begin{align*}
\sqrt n
\begin{pmatrix}
\bar \mu_0-\mu_0^*
\\
\bar\theta_1-\theta_1^*
\\
\bar\eta_1-\eta_1^*
\\
\bar\theta_2-\theta_2^*
\\
\bar\sigma^2-(\sigma^*)^2
\end{pmatrix}
&=
\biggl\{
\bs e_1 \partial \Phi((\tau_{1,1}^*)^2,(\tau_{1,2}^*)^2)
+\bs \nu^*
\frac{\partial \Psi((\tau_{1,1}^*)^2,(\tau_{1,2}^*)^2)}{(1-\alpha)(\sigma^*)^2}
\biggr\}
\sqrt n
\begin{pmatrix}
\bar\tau_{1,1}^2-(\tau_{1,1}^*)^2 
\\
\bar\tau_{1,2}^2-(\tau_{1,2}^*)^2
\end{pmatrix}
+o_p(1)
\\
&=:B\sqrt n
\begin{pmatrix}
\bar\tau_{1,1}^2-(\tau_{1,1}^*)^2 
\\
\bar\tau_{1,2}^2-(\tau_{1,2}^*)^2
\end{pmatrix}
+o_p(1)
\\
&\dto N(0,L),
\end{align*}
where 
\begin{equation*}
L=B\begin{pmatrix}
2(\tau_{1,1}^*)^4 & 0
\\
0 & 2(\tau_{1,2}^*)^4 
\end{pmatrix}
B^\TT.
\end{equation*}
Since
\begin{equation*}
\partial \Phi(\tau_{1,1}^2,\tau_{1,2}^2)
=\frac{\mu_{1,1}\mu_{1,2}}{3\pi^2\alpha\sigma^2}(-\mu_{1,1}^\alpha,\mu_{1,2}^\alpha),
\quad
\partial \Psi(\tau_{1,1}^2,\tau_{1,2}^2)
=\frac{1}{3\pi^2}(-\mu_{1,1}^{1+\alpha},\mu_{1,2}^{1+\alpha})
\end{equation*}
and $\tau_{k,\ell}^4=\sigma^4\mu_{k,\ell}^{-2\alpha}$, 
it follows that
\begin{equation*}
\partial \Phi((\tau_{1,1}^*)^2,(\tau_{1,2}^*)^2)
\begin{pmatrix}
2(\tau_{1,1}^*)^4 & 0
\\
0 & 2(\tau_{1,2}^*)^4 
\end{pmatrix}
\partial \Phi((\tau_{1,1}^*)^2,(\tau_{1,2}^*)^2)^\TT
=\frac{4(\mu_{1,1}^*)^2(\mu_{1,2}^*)^2}{9\pi^4\alpha^2}
=\frac{2d_1}{9\pi^4},
\end{equation*}
\begin{equation*}
\partial \Phi((\tau_{1,1}^*)^2,(\tau_{1,2}^*)^2)
\begin{pmatrix}
2(\tau_{1,1}^*)^4 & 0
\\
0 & 2(\tau_{1,2}^*)^4 
\end{pmatrix}
\frac{\partial \Psi((\tau_{1,1}^*)^2,(\tau_{1,2}^*)^2)^\TT}{(1-\alpha)(\sigma^*)^2}
=\frac{2\mu_{1,1}^*\mu_{1,2}^*(\mu_{1,1}^*+\mu_{1,2}^*)}{9\pi^4\alpha(1-\alpha)}
=\frac{2d_2}{9\pi^4},
\end{equation*}
\begin{equation*}
\frac{\partial \Psi((\tau_{1,1}^*)^2,(\tau_{1,2}^*)^2)}{(1-\alpha)(\sigma^*)^2}
\begin{pmatrix}
2(\tau_{1,1}^*)^4 & 0
\\
0 & 2(\tau_{1,2}^*)^4 
\end{pmatrix}
\frac{\partial \Psi((\tau_{1,1}^*)^2,(\tau_{1,2}^*)^2)^\TT}{(1-\alpha)(\sigma^*)^2}
=\frac{2\{(\mu_{1,1}^*)^2+(\mu_{1,2}^*)^2\}}{9\pi^4(1-\alpha)^2}
=\frac{2d_3}{9\pi^4}.
\end{equation*}
Hence, it holds that
\begin{align*}
L
&=
\frac{2}{9\pi^4}\Bigl(
d_1\bs e_1\bs e_1^\TT
+d_2\bigl\{\bs e_1 (\bs \nu^*)^\TT+\bs \nu^* \bs e_1^\TT \bigr\}
+d_3\bs \nu^*(\bs \nu^*)^\TT
\Bigr)
\\
&=
\frac{2}{9\pi^4}
\begin{pmatrix}
d_1 & d_2 (\bs \nu_{-1}^*)^\TT
\\
d_2 \bs \nu_{-1}^* & d_3 \bs \nu_{-1}^*(\bs \nu_{-1}^*)^\TT
\end{pmatrix}.
\end{align*}
\end{proof}

\section*{Appendix} \label{appendix}

In order to help us understand the characteristics of the parameters 
of the SPDE \eqref{2d_spde}, 
we can refer some sample paths
with different values of the parameters as follows.

\subsection*{characteristic of $\kappa = \theta_1/\theta_2$}
$\kappa$ affects the variation of the sample path when y is varied.
Figures \ref{fig4}-\ref{fig6} are the sample paths, where $\kappa$ is changed.

Figure \ref{fig4} is the sample path with $\theta=(0,1.5,0.2,0.2,1)$ and $\kappa=7.5$.
Figure \ref{fig4} shows that if $\kappa$ is large, the variation of the sample path  
is large near $y = 0$ and small near $y = 1$.
\begin{figure}[H]
\begin{minipage}{0.32\hsize}
\begin{center}
\includegraphics[width=4.5cm]{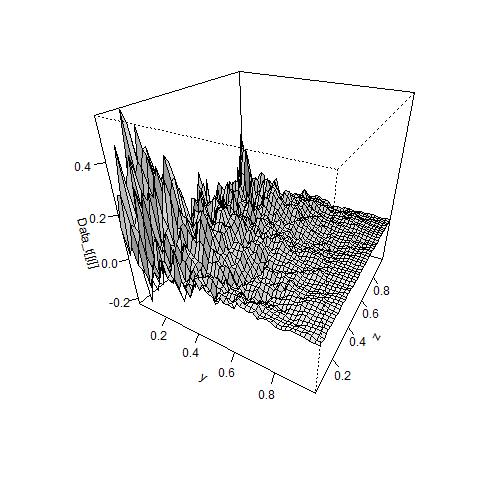}
\captionsetup{labelformat=empty,labelsep=none}
\subcaption{Cross sections of 
the sample path at $t = 0.5$}
\end{center}
\end{minipage}
\begin{minipage}{0.32\hsize}
\begin{center}
\includegraphics[width=4.5cm]{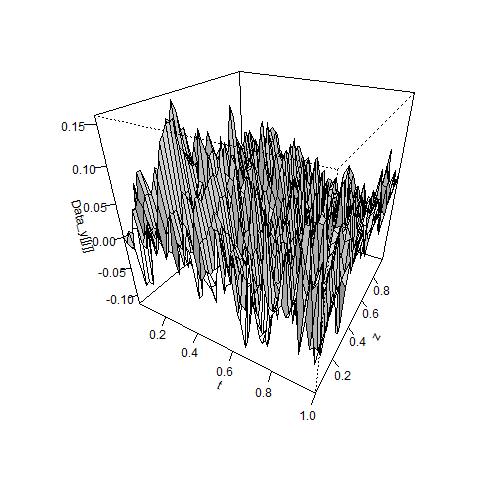}
\captionsetup{labelformat=empty,labelsep=none}
\subcaption{Cross sections of 
the sample path at $y = 0.5$}
\end{center}
\end{minipage}
\begin{minipage}{0.32\hsize}
\begin{center}
\includegraphics[width=4.5cm]{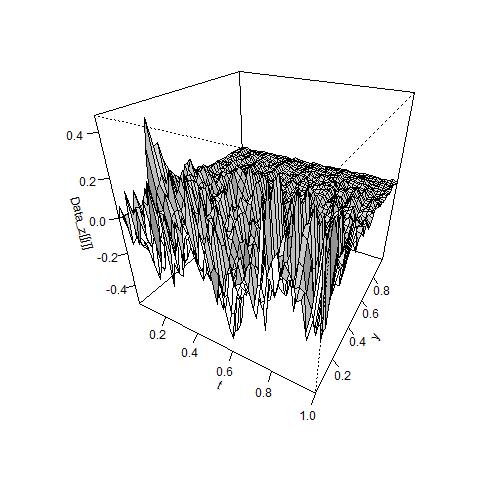}
\captionsetup{labelformat=empty,labelsep=none}
\subcaption{Cross sections of 
the sample path at $z = 0.5$}
\end{center}
\end{minipage}
\caption{Sample path with $\theta=(0,1.5,0.2,0.2,1)$ and $\kappa=7.5$}
\label{fig4}
\end{figure}

Figure \ref{fig5} is the sample path with $\theta=(0,0,0.2,0.2,1)$ and $\kappa=0$.
Figure \ref{fig5} shows that if $\kappa$ is close to $0$, there is not much difference 
between the variation of the sample path near $y=0$ and that near $y=1$.
\begin{figure}[H]
\begin{minipage}{0.32\hsize}
\begin{center}
\includegraphics[width=4.5cm]{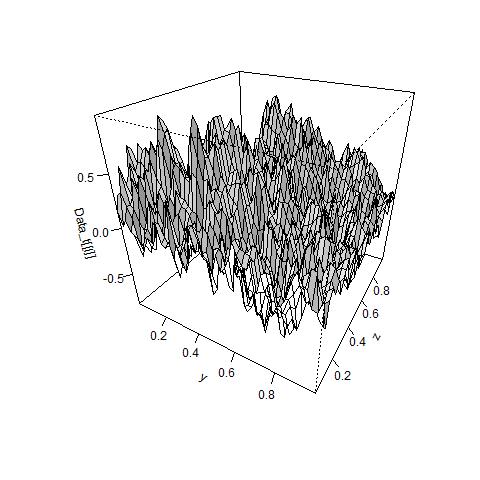}
\captionsetup{labelformat=empty,labelsep=none}
\subcaption{Cross sections of 
the sample path at $t = 0.5$}
\end{center}
\end{minipage}
\begin{minipage}{0.32\hsize}
\begin{center}
\includegraphics[width=4.5cm]{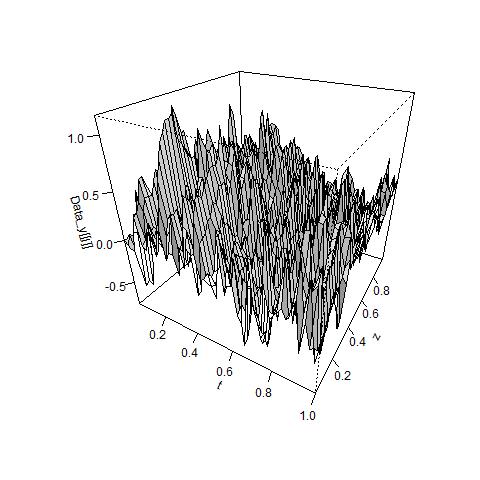}
\captionsetup{labelformat=empty,labelsep=none}
\subcaption{Cross sections of 
the sample path at $y = 0.5$}
\end{center}
\end{minipage}
\begin{minipage}{0.32\hsize}
\begin{center}
\includegraphics[width=4.5cm]{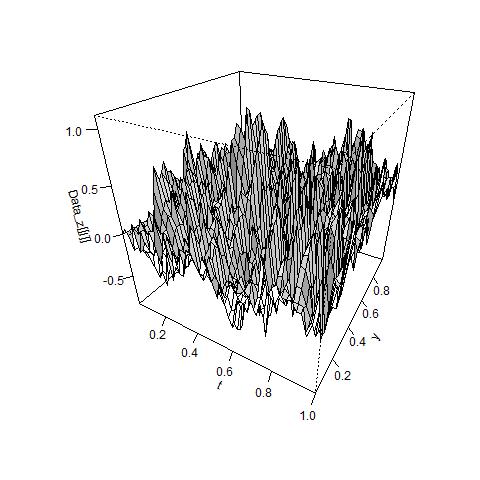}
\captionsetup{labelformat=empty,labelsep=none}
\subcaption{Cross sections of 
the sample path at $z = 0.5$}
\end{center}
\end{minipage}
\caption{Sample path with $\theta=(0,0,0.2,0.2,1)$ and $\kappa=0$}
\label{fig5}
\end{figure}

Figure \ref{fig6} is the sample path with $\theta=(0,-1.5,0.2,0.2,1)$ and $\kappa=-7.5$.
Figure \ref{fig6} shows that if $\kappa$ is small, the variation of the sample path  
is large near $y = 1$ and small near $y = 0$.
\begin{figure}[H]
\begin{minipage}{0.32\hsize}
\begin{center}
\includegraphics[width=4.5cm]{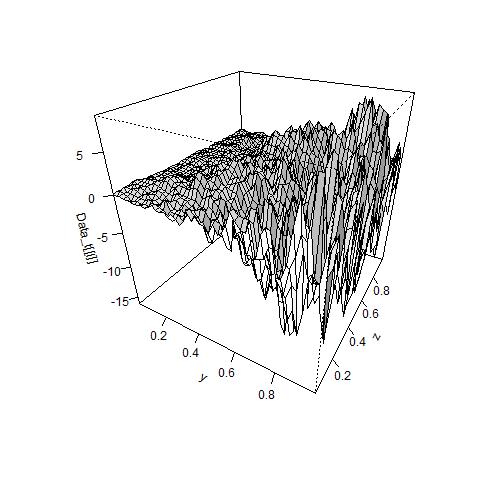}
\captionsetup{labelformat=empty,labelsep=none}
\subcaption{Cross sections of 
the sample path at $t = 0.5$}
\end{center}
\end{minipage}
\begin{minipage}{0.32\hsize}
\begin{center}
\includegraphics[width=4.5cm]{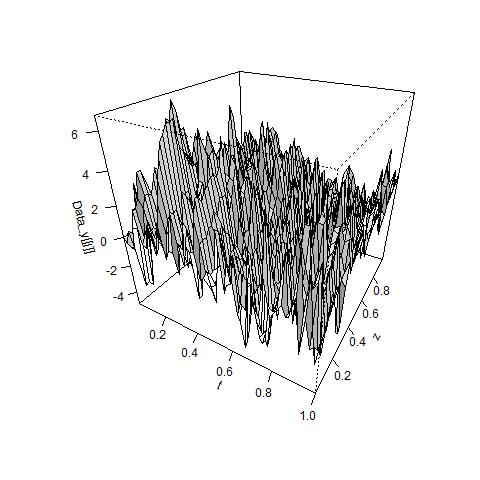}
\captionsetup{labelformat=empty,labelsep=none}
\subcaption{Cross sections of 
the sample path at $y = 0.5$}
\end{center}
\end{minipage}
\begin{minipage}{0.32\hsize}
\begin{center}
\includegraphics[width=4.5cm]{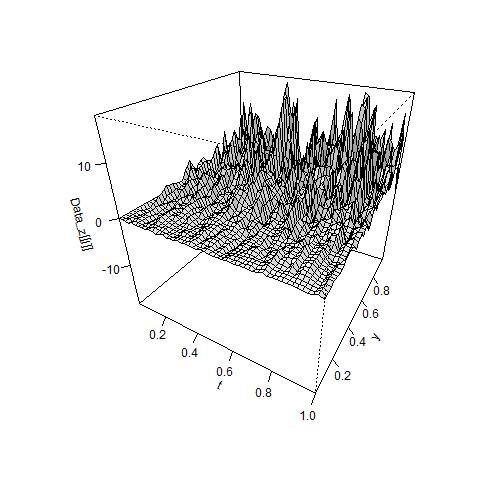}
\captionsetup{labelformat=empty,labelsep=none}
\subcaption{Cross sections of 
the sample path at $z = 0.5$}
\end{center}
\end{minipage}
\caption{Sample path with $\theta=(0,-1.5,0.2,0.2,1)$ and $\kappa=-7.5$}
\label{fig6}
\end{figure}

\subsection*{characteristic of $\eta = \eta_1/\theta_2$}
$\eta$ affects the variation of the sample path when $z$ is varied.
Figures \ref{fig7}-\ref{fig9} are the sample paths, where $\eta$ is changed.

Figure \ref{fig7} is the sample path with $\theta=(0,0.2,1.5,0.2,1)$ and $\eta=7.5$.
Figure \ref{fig7} shows that if $\eta$ is large, the variation of the sample path  
is large near $z = 0$ and small near $z = 1$.
\begin{figure}[H]
\begin{minipage}{0.32\hsize}
\begin{center}
\includegraphics[width=4.5cm]{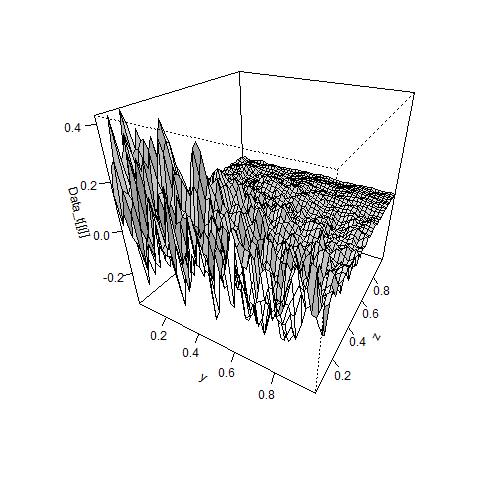}
\captionsetup{labelformat=empty,labelsep=none}
\subcaption{Cross sections of 
the sample path at $t = 0.5$}
\end{center}
\end{minipage}
\begin{minipage}{0.32\hsize}
\begin{center}
\includegraphics[width=4.5cm]{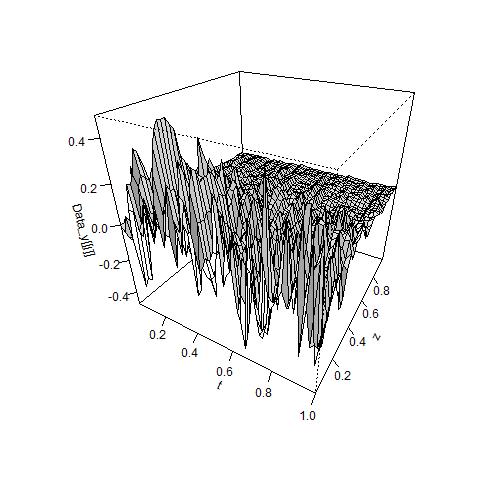}
\captionsetup{labelformat=empty,labelsep=none}
\subcaption{Cross sections of 
the sample path at $y = 0.5$}
\end{center}
\end{minipage}
\begin{minipage}{0.32\hsize}
\begin{center}
\includegraphics[width=4.5cm]{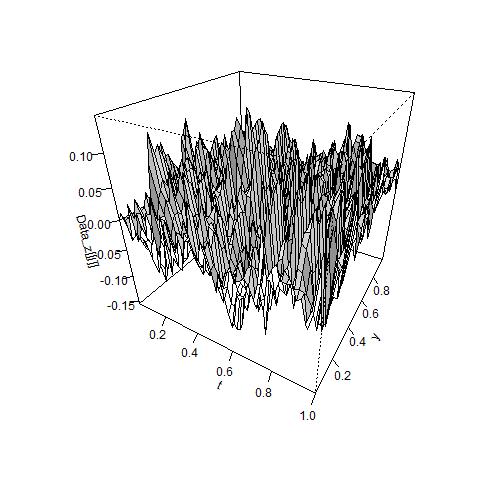}
\captionsetup{labelformat=empty,labelsep=none}
\subcaption{Cross sections of 
the sample path at $z = 0.5$}
\end{center}
\end{minipage}
\caption{Sample path with $\theta=(0,0.2,1.5,0.2,1)$ and $\eta=7.5$}
\label{fig7}
\end{figure}

Figure \ref{fig8} is the sample path with $\theta=(0,0.2,0,0.2,1)$ and $\kappa=0$.
Figure \ref{fig8} shows that if $\eta$ is close to $0$, there is not much difference 
between the variation of the sample path near $z=0$ and that near $z=1$.
\begin{figure}[H]
\begin{minipage}{0.32\hsize}
\begin{center}
\includegraphics[width=4.5cm]{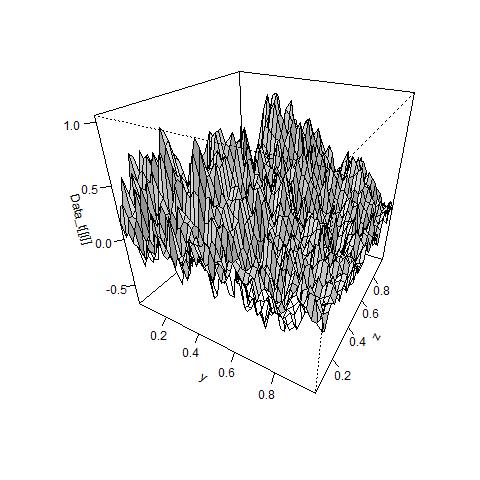}
\captionsetup{labelformat=empty,labelsep=none}
\subcaption{Cross sections of 
the sample path at $t = 0.5$}
\end{center}
\end{minipage}
\begin{minipage}{0.32\hsize}
\begin{center}
\includegraphics[width=4.5cm]{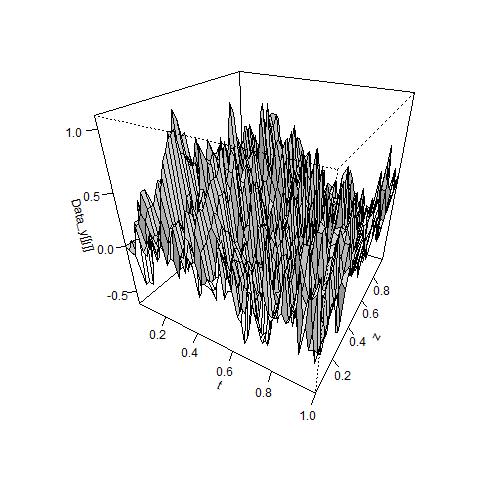}
\captionsetup{labelformat=empty,labelsep=none}
\subcaption{Cross sections of 
the sample path at $y = 0.5$}
\end{center}
\end{minipage}
\begin{minipage}{0.32\hsize}
\begin{center}
\includegraphics[width=4.5cm]{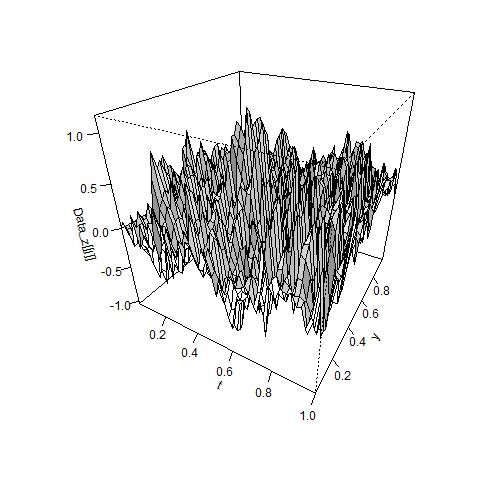}
\captionsetup{labelformat=empty,labelsep=none}
\subcaption{Cross sections of 
the sample path at $z = 0.5$}
\end{center}
\end{minipage}
\caption{Sample path with $\theta=(0,0.2,0,0.2,1)$ and $\eta=0$}
\label{fig8}
\end{figure}

Figure \ref{fig9} is the sample path with $\theta=(0,0.2,-1.5,0.2,1)$ and $\eta=-7.5$.
Figure \ref{fig9} shows that if $\eta$ is small, the variation of the sample path  
is large near $z = 1$ and small near $z = 0$.
\begin{figure}[H]
\begin{minipage}{0.32\hsize}
\begin{center}
\includegraphics[width=4.5cm]{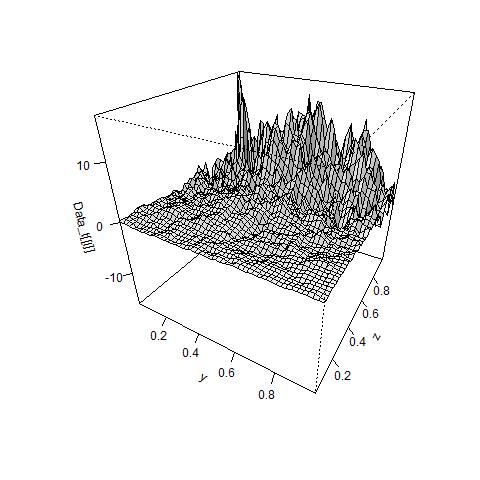}
\captionsetup{labelformat=empty,labelsep=none}
\subcaption{Cross sections of 
the sample path at $t = 0.5$}
\end{center}
\end{minipage}
\begin{minipage}{0.32\hsize}
\begin{center}
\includegraphics[width=4.5cm]{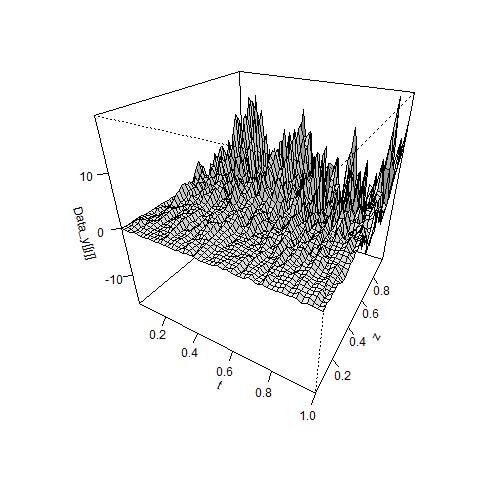}
\captionsetup{labelformat=empty,labelsep=none}
\subcaption{Cross sections of 
the sample path at $y = 0.5$}
\end{center}
\end{minipage}
\begin{minipage}{0.32\hsize}
\begin{center}
\includegraphics[width=4.5cm]{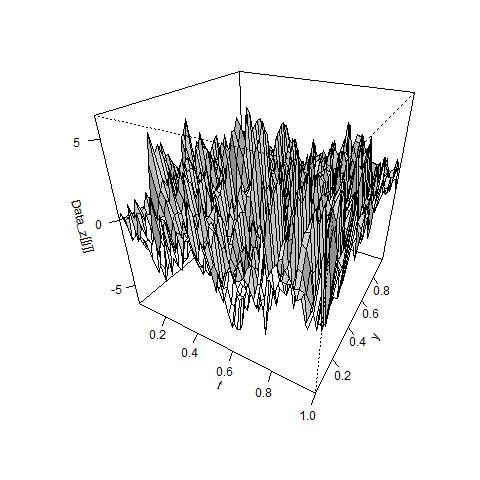}
\captionsetup{labelformat=empty,labelsep=none}
\subcaption{Cross sections of 
the sample path at $z = 0.5$}
\end{center}
\end{minipage}
\caption{Sample path with $\theta=(0,0.2,-1.5,0.2,1)$ and $\kappa=-7.5$}
\label{fig9}
\end{figure}


\end{document}